\newcommand{\A}{\mathbb{A}}
\newcommand{\GG}{\mathbb{G}}
\newcommand{\NN}{\mathbb{N}}
\newcommand{\ZZ}{\mathbb{Z}}
\newcommand{\cA}{\mathcal{A}}
\newcommand{\cB}{\mathcal{B}}
\newcommand{\cC}{\mathcal{C}}
\newcommand{\ccD}{\mathcal{D}}
\newcommand{\cG}{\mathcal{G}}
\newcommand{\cH}{\mathcal{H}}
\newcommand{\cN}{\mathcal{N}}
\newcommand{\cP}{\mathcal{P}}
\newcommand{\cU}{\mathcal{U}}
\newcommand{\cV}{\mathcal{V}}
\newcommand{\fA}{\mathfrak{A}}
\newcommand{\fb}{\mathfrak{b}}
\newcommand{\fE}{\mathfrak{E}}
\newcommand{\fg}{\mathfrak{g}}
\newcommand{\fh}{\mathfrak{h}}
\newcommand{\fsl}{\mathfrak{sl}}
\newcommand{\fu}{\mathfrak{u}}
\newcommand{\fZ}{\mathfrak{Z}}
\newcommand{\dact}{\boldsymbol{.}}
\newcommand{\lra}{\longrightarrow}
\DeclareMathOperator{\ad}{ad}
\DeclareMathOperator{\Ad}{Ad}
\DeclareMathOperator{\Char}{char}
\DeclareMathOperator{\cx}{cx}
\DeclareMathOperator{\End}{End}
\DeclareMathOperator{\Ext}{Ext}
\DeclareMathOperator{\HH}{H}
\DeclareMathOperator{\Hom}{Hom}
\DeclareMathOperator{\im}{im}
\DeclareMathOperator{\id}{id}
\DeclareMathOperator{\Jt}{Jt}
\DeclareMathOperator{\Lie}{Lie}
\DeclareMathOperator{\modd}{mod}
\DeclareMathOperator{\Pt}{{\Pi}t}
\DeclareMathOperator{\ql}{q\ell}
\DeclareMathOperator{\Rad}{Rad}
\DeclareMathOperator{\rk}{rk}
\DeclareMathOperator{\res}{res}
\DeclareMathOperator{\srk}{srk}
\DeclareMathOperator{\SL}{SL}
\DeclareMathOperator{\StJt}{StJt}
\DeclareMathOperator{\Soc}{Soc}
\DeclareMathOperator{\supp}{supp}
\DeclareMathOperator{\Top}{Top}
\DeclareMathOperator{\tr}{tr}
\DeclareMathOperator{\Proj}{Proj}
\numberwithin{equation}{section}
\newtheorem{Theorem}{Theorem}[section]
\newtheorem{Lemma}[Theorem]{Lemma}
\newtheorem{Corollary}[Theorem]{Corollary}
\newtheorem{Proposition}[Theorem]{Proposition}
\theoremstyle{Theorem}
\newtheorem{Thm}{Theorem}[subsection]
\newtheorem{Lem}[Thm]{Lemma}
\newtheorem{Prop}[Thm]{Proposition}
\newtheorem{Cor}[Thm]{Corollary}
\theoremstyle{remark}
\newtheorem*{Remark}{Remark}
\newtheorem*{Remarks}{Remarks}
\newtheorem*{Definition}{Definition}
\newtheorem*{Examples}{Examples}
\numberwithin{equation}{section}
\begin{document}

\title[Jordan types]{Jordan Types for indecomposable Modules of Finite Group Schemes}

\author[R. Farnsteiner]{Rolf Farnsteiner}

\address{Mathematisches Seminar, Christian-Albrechts-Universit\"at zu Kiel, Ludewig-Meyn-Str.4, 24098 Kiel, Germany}
\email{rolf@math.uni-kiel.de}
\thanks{Supported by the D.F.G. priority program SPP1388 `Darstellungstheorie'.}
\subjclass[2000]{Primary 14L15, 16G70}
\date{\today}

\makeatletter
%\@addtoreset{subabschnitt}{abschnitt}
\makeatother

\begin{abstract}
In this article we study the interplay between algebro-geometric notions related to $\pi$-points and structural features of the stable Auslander-Reiten quiver of a finite group scheme. We
show that $\pi$-points give rise to a number of new invariants of the AR-quiver on one hand, and exploit combinatorial properties of AR-components to obtain information on $\pi$-points on the other. Special attention is given to components containing Carlson modules, constantly supported modules, and endo-trivial modules.
\end{abstract}

\maketitle

\setcounter{section}{-1}\label{S:Int}

\section{Introduction}
Let $\cG$ be a finite group scheme over an algebraically closed field $k$ of characteristic $p>0$. In their recent articles \cite{FPe1,FPe2}, the authors have expounded the theories of
$p$-points and $\pi$-points, which generalize and unify various concepts of rank varieties defined earlier. One new feature arising via this vantage point is the notion of Jordan type \cite{FPS},
providing new invariants for $\cG$-modules that are finer than those given by rank varieties and support varieties. The ramifications of the additional information encoded in Jordan types are 
only beginning to be understood, even for the class of modules of constant Jordan type, \cite{CFP,CF}.

Our investigations focus on two aspects that underscore the utility of this approach. Sections \ref{S:SF} through \ref{S:CRP} are concerned with the behavior of Jordan types on the 
components of the stable Auslander-Reiten quiver $\Gamma_s(\cG)$ of the algebra of measures $k\cG$ of $\cG$. The results of the first three sections can be generalized to perfect fields, 
while those of Section \ref{S:CRP} rest on $k$ being algebraically closed. The last four sections employ $\pi$-points to define and study certain classes of modules and to determine their 
position within the AR-quiver. 

Support varieties have played an important r\^ole in the investigation of the quiver $\Gamma_s(\cG)$. It therefore seems expedient to explore the interplay between Jordan types and the 
Auslander-Reiten theory of the group scheme $\cG$.  Indeed, Jordan types turn out to provide new invariants for AR-components, enabling us to discern differences between components that cannot be detected via their support varieties. On the other hand, the Jordan types of all modules belonging to an Auslander-Reiten component may often be computed from the corresponding information of one single vertex.

In dealing with the AR-quiver, our main tools are subadditive functions on stable representation quivers, whose relevant properties are provided in Section \ref{S:SF}. As we show in Section 
\ref{S:AFP}, $\pi$-points of $\cG$ define various additive functions on the so-called locally split components of the stable Auslander-Reiten quiver. By definition, the pull-backs of the almost 
split sequences of such components along any $\pi$-point are split exact. With the exception of some infinite tubes, all infinite AR-components are locally split, so that our methods usually 
apply whenever the representation-theoretic support of the ambient block of $k\cG$ has dimension at least $2$. For these components we establish new invariants, all of which arise via 
$\pi$-points. In particular, we discuss the number of Jordan types of a module, the dominance order on $\pi$-points associated to an AR-component, and varieties of non-maximal supports.

For infinite tubes that are not locally split, the relevant functions are eventually additive, with their departure from additivity being controlled by the Cartan matrix $A_{p-1}$ and the structure 
of certain induced modules modules that are defined by $\pi$-points corresponding to closed points of the support scheme $\Pi(\cG)$. As we show in Section \ref{S:CRP}, the structure of 
these components as well as the associated functions are completely understood in the cases where the group scheme $\cG$ is trigonalizable, or $\cG$ is reduced and the relevant component 
contains a module with a cyclic vertex.

Applications concerning three closely related classes of $\cG$-modules, constantly supported modules, Carlson modules and endo-trivial modules, are the subject of the following three
sections. In terms of the hierarchy given by their sets of Jordan types, constantly supported modules naturally follow the modules of constant Jordan type that were investigated in
\cite{CFP,CF}.  In Section \ref{S:CSM} we show that examples are given by direct summands of the Carlson modules $L_\zeta$, associated to non-nilpotent homogeneous elements of the 
even cohomology ring $\HH^\bullet(\cG,k)$. This motivates the study of the $L_\zeta$ in the following section. Aside from their fundamental importance for theoretical purposes, Carlson 
modules are also of interest because their structure is closely reflected by their support varieties. For Carlson modules belonging to homogeneous non-nilpotent elements, we provide a 
sufficient condition for their indecomposability. In classical contexts, such as the first Frobenius kernels of semi-simple algebraic groups, it follows that these modules are usually quasi-simple. 
In particular, the almost split sequences originating in these $L_\zeta$ have an indecomposable middle term. If $\zeta \in \HH^{2n}(\cG,k)\setminus\{0\}$ is nilpotent, then the module
$L_\zeta$ is indecomposable, of constant Jordan type and usually quasi-simple. For elements of odd degree, the set of Jordan types of $L_\zeta$ may have two elements and we show that 
$L_\zeta$ is indecomposable, provided $\cG$ possesses sufficiently many abelian unipotent subgroups of complexity $\ge 2$. Quillen's dimension theorem implies that this condition is 
superfluous whenever $\cG$ corresponds to a finite group. 

Endo-trivial modules were introduced by Dade \cite{Da1, Da2}, who showed that, for abelian $p$-groups, these modules are stably isomorphic to the syzygies of the trivial module. In our 
context, endo-trivial modules are examples of modules of constant Jordan type that occur in connection with decomposable Carlson modules of non-nilpotent type. In Section 6, we investigate 
AR-components containing endo-trivial modules and discuss Carlson's construction \cite{Ca2} of such modules from the perspective of $\pi$-points.

In the final section, we augment recent results of Carlson-Friedlander \cite{CF} by studying the Jordan types of certain finite algebraic groups of tame representation type. In particular, we 
determine the Jordan types of the indecomposable modules of the restricted enveloping algebra $U_0(\fsl(2))$ and classify its indecomposable endo-trivial modules.

\bigskip

\section{Subadditive Functions on Stable Translation Quivers}\label{S:SF}
Throughout this section, we shall be considering quivers $\Gamma := (\Gamma_0,\Gamma_1)$ without loops or multiple arrows. For such a quiver $\Gamma$, the set of arrows is given by a
subset $\Gamma_1 \subseteq \Gamma_0 \times \Gamma_0$ of the Cartesian product of the set $\Gamma_0$ of vertices. We recall a few basic facts and definitions, the interested reader may
consult \cite{Be1,Ri, HPR1, HPR2} for further details.

Given a vertex $x \in \Gamma_0$, we put
\[ x^+ := \{y \in \Gamma_0 \ ; \ (x,y) \in \Gamma_1\} \ \ \ \ ; \ \ \ \ x^- := \{y \in \Gamma_0 \ ; \ (y,x) \in \Gamma_1\},\]
so that $x^+$ and $x^-$ are the sets of successors and predecessors of the vertex $x$, respectively. The quiver $(\Gamma_0,\Gamma_1)$ is referred to as {\it locally finite} if $x^+ \cup
x^-$ is finite for every $x \in \Gamma_0$. {\it Henceforth all quivers are assumed to be locally finite}.

Let $\Gamma := (\Gamma_0, \Gamma_1)$ be a quiver. A {\it valuation} of $\Gamma$ is a map $ \nu : \Gamma_0\times \Gamma_0 \lra \NN_0 \times \NN_0$ such that
$\Gamma_1 = \nu^{-1}(\NN\times \NN)$. The triple $(\Gamma_0, \Gamma_1, \nu)$ is then referred to as a {\it valued quiver}. Homomorphisms of valued quivers are defined canonically.

An automorphism $\tau : \Gamma \lra \Gamma$ is called a {\it translation} if
\[ a^- = \tau (a)^+ \ \ \ \ \ \ \forall \ a \in \Gamma_0. \]
We then refer to $(\Gamma_0, \Gamma_1,\tau)$ as a {\it stable translation quiver}.

If $\nu$ is a valuation of $(\Gamma_0,\Gamma_1)$ and $\tau$ is a translation of $(\Gamma_0, \Gamma_1)$ such that
\[ \nu(\tau(b),a) = \Delta (\nu(a,b)) \  \ \ \ \forall \  (a,b) \in \Gamma_1, \]
where $\Delta (m,n) = (n,m) \ \ \ \forall \ (m,n) \in \NN \times \NN$, then $(\Gamma_0, \Gamma_1, \nu , \tau)$ is a {\it valued stable translation quiver}.

By work of Riedtmann \cite[Struktursatz]{Ri} (see also \cite[(4.15)]{Be1}), a connected stable translation quiver $\Gamma$ is of the form
\[ \Gamma \cong \ZZ[T_\Gamma]/G,\]
where $T_\Gamma$ is a directed tree and $G \subseteq {\rm Aut}(\ZZ[T_\Gamma])$ is an admissible group. The isomorphism class of $\Gamma$ is determined by the associated undirected tree $\bar{T}_\Gamma$, the so-called {\it tree class} of $\Gamma$. We refer the reader to \cite[(4.15.6)]{Be1} for further details and just recall that a subgroup $G \subseteq {\rm Aut}(\Gamma)$ is {\it admissible} if $|G\cdot x \cap(\{y\} \cup y^+)| \leq 1$ and $|G \cdot x \cap(\{y\} \cup y^-)| \leq 1$ for all elements $x,y \in \Gamma_0$.

The aforementioned result rests on the following construction of the orbit quiver $\Gamma/G$ associated to an admissible subgroup $G
\subseteq {\rm Aut}(\Gamma)$:

\begin{itemize}
\item By defining $(\Gamma/G)_1 := \{([x],[y]) \in \Gamma_0/G \times \Gamma_0/G \ ; \ \exists \ a \in [x],\ b \in [y] \ \text{with} \ (a,b) \in \Gamma_1 \}$, we endow $\Gamma/G$ with the structure of a quiver.

\item The map $\bar{\nu} : \Gamma_0/G \times \Gamma_0/G \lra \NN_0 \times \NN_0$, given by $\bar{\nu}([x],[y]) = \nu(a,b)$ if $a \in [x]$ and $b \in [y]$ are such that $(a,b) \in \Gamma_1$, is a valuation, with the canonical map $\pi : \Gamma \lra \Gamma/G$ being a morphism of valued quivers.

\item The map $\bar{\tau} : \Gamma_0/G \lra \Gamma_0/G \ \ ; \ \ \bar{\tau}([x]) = [\tau(x)]$ is a translation such that $\pi$ is a morphism of valued stable translation quivers.
\end{itemize}

\bigskip
\noindent
The presence of subadditive functions on valued stable translation quivers significantly restricts the possible tree classes. Let ${\rm pr}_1:
\NN_0\times \NN_0 \lra \NN_0$ be the projection onto the first coordinate.

\bigskip

\begin{Definition} Let $(\Gamma_0, \Gamma_1, \nu, \tau)$ be a  valued stable translation quiver. A function $f : \Gamma_0 \lra \NN_0$ is said to be {\it subadditive} if
\[ f(y) + f(\tau(y)) \geq \sum_{x \in y^-} f(x) \, {\rm pr}_1(\nu(x,y)) \ \ \ \ \forall \ y \in \Gamma_0. \]
The function $f$ is referred to as {\it additive} if we have equality for every $y \in \Gamma_0$.\end{Definition}

\bigskip
\noindent
For future reference we record the following basic properties:

\bigskip

\begin{Proposition} \label{SF1} Let  $\Gamma := (\Gamma_0, \Gamma_1, \nu, \tau)$ be a connected valued stable translation quiver.

{\rm (1)} \ If $f : \Gamma_0 \lra \NN_0$ is a subadditive function with $f \circ \tau = f$. Then either $f = 0$, or $f(x) > 0$ for every $x \in \Gamma_0$.

{\rm (2)} \ Let $f_\Gamma : \Gamma_0 \lra \NN$ be an additive function with $f_\Gamma \circ \tau = f_\Gamma$ and such that every other such function is an integral multiple of $f_\Gamma$. Then $f_\Gamma \circ g = f_\Gamma$ for every $g \in {\rm Aut}(\Gamma)$.\end{Proposition}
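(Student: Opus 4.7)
For part (1), the plan is to show that the zero locus $Z := f^{-1}(0) \subseteq \Gamma_0$ is either empty or all of $\Gamma_0$, then invoke connectedness. Suppose $y \in Z$. Since $f \circ \tau = f$, also $\tau(y) \in Z$, and the subadditivity inequality at $y$ reads
\[ 0 \;=\; f(y)+f(\tau(y)) \;\geq\; \sum_{x \in y^-} f(x)\,{\rm pr}_1(\nu(x,y)). \]
Every summand is non-negative and, for $x \in y^-$, we have $(x,y)\in\Gamma_1$, which forces ${\rm pr}_1(\nu(x,y))\geq 1$. Hence $f(x)=0$ for all $x \in y^-$, so $y^- \subseteq Z$.

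The next step is to propagate this to successors. If $w \in y^+$, then $y \in w^-$, and the translation property $w^- = \tau(w)^+$ gives $\tau(w) \in y^-$, hence already $\tau(w) \in Z$. Invariance under $\tau$ then yields $w \in Z$, so $y^+ \subseteq Z$ as well. Thus $Z$ is closed under taking predecessors and successors; since $\Gamma$ is connected (as an undirected graph), the dichotomy $Z = \emptyset$ or $Z = \Gamma_0$ follows.

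For part (2), the strategy is to show that for any $g \in {\rm Aut}(\Gamma)$ the pulled-back function $h := f_\Gamma \circ g$ satisfies the same hypotheses as $f_\Gamma$, and then to compare $g$ with $g^{-1}$. Because $g$ is an automorphism of the \emph{valued stable translation quiver}, it commutes with $\tau$ and preserves $\nu$; thus $h \circ \tau = f_\Gamma \circ \tau \circ g = f_\Gamma \circ g = h$. Moreover, for $y \in \Gamma_0$ the bijection $y^- \to (gy)^-,\ x' \mapsto gx'$, together with $\nu(gx',gy)=\nu(x',y)$, transports the additivity identity for $f_\Gamma$ at $gy$ to the identity
\[ h(y)+h(\tau(y)) \;=\; \sum_{x' \in y^-} h(x')\,{\rm pr}_1(\nu(x',y)), \]
so $h$ is additive and $\tau$-invariant with values in $\NN$.

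By the minimality hypothesis on $f_\Gamma$, there exists $n \in \ZZ$ with $h = n f_\Gamma$; positivity of $h$ and $f_\Gamma$ forces $n \in \NN$. Applying the same reasoning to $g^{-1}$ yields $m \in \NN$ with $f_\Gamma \circ g^{-1} = m f_\Gamma$. Composing on the right with $g^{-1}$ gives
\[ f_\Gamma \;=\; (f_\Gamma \circ g) \circ g^{-1} \;=\; n\,(f_\Gamma \circ g^{-1}) \;=\; nm\, f_\Gamma, \]
whence $nm = 1$ and consequently $n = m = 1$, i.e.\ $f_\Gamma \circ g = f_\Gamma$. The only delicate point is to make sure that ``$\,g \in {\rm Aut}(\Gamma)$'' is understood in the category of valued stable translation quivers, so that the compatibilities with $\tau$ and $\nu$ are available for the additivity computation; the rest is formal.
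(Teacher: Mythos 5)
Your proof is correct and takes essentially the same route as the paper: in (1) you show the zero locus of $f$ is closed under predecessors and successors via $\tau$-invariance, subadditivity and positivity of the valuation on arrows (the paper phrases the identical argument as the impossibility of an arrow between the zero set and its complement, followed by connectedness), and in (2) you use the same pull-back argument, comparing the multipliers for $g$ and $g^{-1}$ to force both to equal $1$. Your explicit check that automorphisms of the valued stable translation quiver commute with $\tau$ and preserve $\nu$ is a welcome (if routine) verification that the paper leaves implicit in asserting that ${\rm Aut}(\Gamma)$ acts on the set of $\tau$-invariant additive functions.
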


\begin{proof} (1) We put $X := \{ x \in \Gamma_0 \ ; \ f(x) \neq 0 \} $ and $Y := \{ x \in \Gamma_0 \  ; \ f(x) = 0 \} $. It readily follows that $X$ and $Y$ are $\tau$-invariant. Let $x \in X$, $y \in Y$. If $x \in y^+$, then $\tau(x) \in \tau(y)^+ = y^-$. Thus, the existence of an arrow between $X$ and $Y$ implies that there are $x_1 \in X$ and $y_1 \in Y$ such that $x_1 \in y_1^-$. However, this yields
\[ 0 = 2 \, f(y_1) \geq \sum_{x \in y_1^-} f(x) \, {\rm pr}_1(\nu(x,y_1)), \]
so that $f(x_1) = 0$, a contradiction. Since $\Gamma$ is connected, it follows that $X = \Gamma_0$ or $Y = \Gamma_0$.

(2) Let $\cA$ be the set of additive functions $f: \Gamma_0 \lra \NN$ satisfying $f\circ \tau = f$. By assumption, we have $\cA = \NN f_\Gamma$. The group ${\rm Aut}(\Gamma)$ of automorphisms of the stable valued translation quiver $\Gamma$ acts on $\cA$ via
\[ (g\dact f)(x) := f(g^{-1}(x)) \ \ \ \ \forall \ g \in {\rm Aut}(\Gamma),\ f \in \cA, \ x \in \Gamma_0.\]
Given $g \in {\rm Aut}(\Gamma)$, there exists $n(g) \in \NN$ with $g\dact f_\Gamma = n(g)f_\Gamma$. Consequently,
\[ f_\Gamma = g^{-1}\dact(g\dact f_\Gamma) = n(g^{-1})n(g)f_\Gamma,\]
implying $n(g)=1 \ \ \forall \ g \in G$.\end{proof}

\bigskip
\noindent
We recall that a {\it valued graph} is a pair $(I,d)$, consisting of a set $I$ and a map $d : I \times I \longrightarrow \NN_0$, such that

(1) \ $d(i,i) = 0 \ \ \ \ \forall \ i \in I$,

(2) \ $d(i,j) \neq 0 \ \Leftrightarrow \ d(j,i) \neq 0$,

(3) \ for each $i \in I$, the set $A(i) := \{j \in I \ ; \ d(i,j) \neq 0\}$ is finite.

\bigskip
\noindent
One can equally well consider the {\it Cartan matrix} $C(i,j) := 2\delta_{ij}-d(i,j)$, cf.\ \cite{HPR1}. In the graphical presentation two vertices $i$ and $j$ are linked by a bond if $d(i,j) \ne 0$, and we endow this bond with the valuation
\[ i \stackrel{(d(i,j),d(j,i))}{\hrulefill} j.\]

\bigskip
\noindent
Let $T = (T_0,T_1)$ be a quiver. We define a stable translation quiver $\ZZ[T]$ by letting $\ZZ\times T_0$ be the set of vertices. We have arrows
\[ (n,s) \rightarrow (n,t) \ \ \text{and} \ \ (n,t) \rightarrow (n\!+\!1,s)  \ \ \ \ \forall \ n \in \ZZ\]
for every arrow $s \rightarrow t$ in $T$. The translation is given by
\[ \tau : \ZZ[T] \lra \ZZ[T] \ \ ; \ \ (n,t) \mapsto (n\!-\!1,t).\]
Below is the stable representation quiver $\ZZ[A_\infty]$, where $A_\infty$ has $\NN$ as set of vertices and arrows $n \rightarrow n\!+\!1$ for
every $n \in \NN$. The dotted arrows represent the translation.

\begin{center}
\begin{picture}(200,130)
\multiput(0,0)(0,30)4{\multiput(0,0)(30,0)7{\circle*{3}}}
\multiput(15,15)(0,30)3{\multiput(0,0)(30,0)6{\circle*{3}}}
\multiput(3,3)(0,30)3{\multiput(0,0)(30,0)6{\vector(1,1){10}}}
\multiput(18,18)(0,30)3{\multiput(0,0)(30,0)6{\vector(1,1){10}}}
\multiput(3,27)(0,30)3{\multiput(0,0)(30,0)6{\vector(1,-1){10}}}
\multiput(18,12)(0,30)3{\multiput(0,0)(30,0)6{\vector(1,-1){10}}}
\put(90,110){\makebox(0,0){$\vdots$}}
\put(200,45){\makebox(0,0){$\cdots$}}
\put(-20,45){\makebox(0,0){$\cdots$}}
\multiput(20,15)(30,0)5{\vector(-1,0){0}}
\multiput(25,12)(30,0)5{$\cdots$}
\end{picture}
\end{center}

\bigskip
\noindent
In the sequel, we let $\tilde{A}_{p,q}$ be the quiver, whose underlying graph is the circle with $p+q$ vertices and with $p$ consecutive clockwise oriented arrows and $q$ consecutive counter-clockwise oriented arrows. Thus, $\tilde{A}_{2,0} \cong \tilde{A}_{0,2}$ is an oriented $2$-cycle.

\bigskip

\begin{Lemma} \label{SF2} Let $T$ be a quiver that does not contain a quiver of type $\tilde{A}_{2,0}$. Then $\langle \tau \rangle$ is an admissible subgroup of the valued stable translation quiver $\ZZ [T]$.  \end{Lemma}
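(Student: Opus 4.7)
My plan is a direct verification from the definitions. Recall that the orbit of a vertex $(n,s) \in \ZZ[T]_0$ under $G := \langle \tau \rangle$ is
\[ G \cdot (n,s) = \{(k,s) \ ; \ k \in \ZZ\},\]
so orbits correspond bijectively to vertices of $T$. Fix $x = (n,s)$ and $y = (m,t)$ in $\ZZ[T]_0$. From the construction of $\ZZ[T]$,
\[ \{y\} \cup y^+ = \{(m,t)\} \cup \{(m,u) \ ; \ t \to u \text{ in } T\} \cup \{(m\!+\!1,v) \ ; \ v \to t \text{ in } T\}.\]
I will show that $|G \cdot x \cap (\{y\} \cup y^+)| \leq 1$ by splitting into the cases $s = t$ and $s \neq t$.

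If $s = t$, the only candidate in the first set is $(m,t)$; a contribution from the second set would require $t \to s = t$ in $T$, i.e.\ a loop, which is forbidden by our standing convention; a contribution from the third set would likewise require a loop $s \to t = s$. Hence the intersection is exactly $\{(m,t)\}$. If $s \neq t$, then $(m,t) \notin G \cdot x$. A vertex in $G \cdot x$ coming from the second set forces $t \to s$ in $T$, and one coming from the third set forces $s \to t$ in $T$. Having both simultaneously would produce a $\tilde{A}_{2,0}$-subquiver in $T$, contradicting our assumption. So at most one of these occurs, and in that case contributes a single vertex (since each of the sets $\{(m,u)\}$ and $\{(m+1,v)\}$ contains at most one element of $G \cdot x$, being fixed in the first coordinate). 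The argument for $\{y\} \cup y^-$ is strictly analogous, using the symmetric description of predecessors in $\ZZ[T]$.

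The only subtlety is keeping track of the two ``layers'' $\{m\} \times T_0$ and $\{m+1\} \times T_0$ that $y^+$ visits, and noticing that the possible double contribution from these two layers is exactly what an oriented $2$-cycle in $T$ produces; this is where the hypothesis enters. I expect no genuine obstacle beyond this bookkeeping.
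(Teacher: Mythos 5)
Your proposal is correct and follows essentially the same route as the paper: a direct verification from the definitions, observing that two distinct orbit elements in $\{y\}\cup y^+$ would force either a loop (when one of them is $y$ itself, or when $s=t$) or an oriented $2$-cycle $\tilde{A}_{2,0}$ in $T$ (when they come from the two different layers $\{m\}\times T_0$ and $\{m\!+\!1\}\times T_0$), with the predecessor case handled symmetrically. The only cosmetic difference is that you organize the cases by whether $s=t$, while the paper splits according to whether one of the two points equals $(m,y)$; the substance is identical.
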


\begin{proof} We put $G := \langle \tau \rangle$ and note that $G \cdot (n,x) = G \cdot (0,x) \ \ \forall \ (n,x) \in \ZZ [T]$. Let $(n_1, z_1), (n_2, z_2)
\in G \cdot (0,x) \cap (\{(m,y)\} \cup (m,y)^+)$. Since $(n_1, z_1), (n_2, z_2)\in G \cdot (0,x)$, we have $z_1 = z_2 = x$. Two cases arise:

\noindent
(a) \ $(n_1, x), (n_2, x) \in (m,y)^+$

Then we either have $n_1 = m$ and $y \rightarrow x$, or $n_1 = m\!+\!1$ and $x \rightarrow y$. Since $T$ does not contain $\tilde{A}_{2,0}$, the former alternative implies $n_2 =m$, while the latter yields $n_2 = m+1$. In either case, we arrive at $(n_1,x) = (n_2,x)$, as desired.

\noindent
(b) \ $(n_1,x) = (m,y)$

Since there is no arrow from $x$ to $x$, this readily implies $(n_2,x) = (m,y)$.

\noindent
The inequality $|G \cdot (0,x) \cap(\{(m,y)\} \cup (m,y)^-)| \leq 1$ follows analogously. \end{proof}

\bigskip

\begin{Definition} Let $(I,d)$ be a valued graph. A function $f : I \lra \NN_0$ is called {\it subadditive} if
\[ 2f(j) \geq \sum_{i \in I} f(i)d(i,j) \ \text{for every} \ j \in I.\]
We say that $f$ is {\it additive} if equality holds for every $j \in I$.\end{Definition}

\bigskip

\begin{Lemma} \label{SF3} Let $\Gamma = (\Gamma_0, \Gamma_1, \nu, \tau)$ be a valued stable translation quiver such that $\langle \tau \rangle \subseteq {\rm Aut}(\Gamma)$ is admissible. Then the following statements hold:

{\rm (1)} \ The function $d : \Gamma_0/\langle \tau \rangle \times \Gamma_0/\langle \tau \rangle \lra \NN_0$, given by
\[ d([x],[y]) := \left\{ \begin{array}{cc} {\rm pr}_1 (\bar{\nu}([x],[y])) &  {\rm if} \ [x]  \rightarrow [y] \\ 0 & {\rm otherwise}, \end{array} \right. \]
endows $\Gamma_0/ \langle \tau \rangle$ with the structure of a valued graph.

{\rm (2)} \ If $\Gamma$ is connected and $f : \Gamma \lra \NN_0$ is a non-zero additive function with $f \circ \tau = f$, then there exists an additive function $\varphi : \Gamma/\langle \tau \rangle \lra \NN$ such that $\varphi([x]) = f(x)$ for every $x \in \Gamma_0$.

{\rm (3)} \ If $\varphi : \Gamma/\langle \tau \rangle \lra \NN$ is an additive function, then
\[ f : \Gamma \lra \NN \ \ ; \ \ x \mapsto \varphi([x])\]
is an additive function on $\Gamma$ such that $f \circ \tau = f$. \end{Lemma}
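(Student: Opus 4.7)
My plan is to unwind definitions with admissibility of $\langle\tau\rangle$ as the main structural input, then transport the additivity identity back and forth between $\Gamma$ and $\Gamma/\langle\tau\rangle$ via the one-to-one correspondence $y^- \leftrightarrow [y]^-$ that admissibility provides. For (1), I would verify the three valued-graph axioms in turn. To show $d([x],[x]) = 0$, suppose $[x]\to[x]$ in $\Gamma/\langle\tau\rangle$, so some arrow $(a,b)\in\Gamma_1$ has $a,b\in[x]$; replacing the arrow by its $\tau^{-k}$-translate, I may assume $a=x$ and $b=\tau^m(x)\in x^+$. Admissibility gives $|\langle\tau\rangle\cdot x\cap(\{x\}\cup x^+)|\le 1$, and since $x$ already lies in this intersection, $\tau^m(x)=x$; the arrow $x\to b$ then becomes a loop in $\Gamma$, contradicting the standing loop-free hypothesis. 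The symmetry axiom follows from the translation identity $a^- = \tau(a)^+$: any arrow $a\to b$ in $\Gamma$ yields an arrow $\tau(b)\to a$, and passing to orbits turns $[x]\to[y]$ into $[y]\to[x]$. Finiteness of $A([x])$ reduces to finiteness of $x^+\cup x^-$, because each $[y]$ adjacent to $[x]$ in the orbit quiver comes, after $\tau$-translation, from an arrow incident to $x$ itself.

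For (2), set $\varphi([x]) := f(x)$; this is well defined because $f\circ\tau = f$, and it takes values in $\NN$ because Proposition \ref{SF1}(1), applied to the subadditive $\tau$-invariant function $f$, forces $f>0$ everywhere. To establish additivity at $[y]$, I would invoke admissibility once more: since $[y]\not\to[y]$ in $\Gamma/\langle\tau\rangle$, one has $y\notin\langle\tau\rangle\cdot x$ whenever $[x]\in[y]^-$, and admissibility then yields a unique $x_0\in\langle\tau\rangle\cdot x\cap y^-$. Choosing these representatives sets up a bijection $[y]^- \leftrightarrow y^-$ under which $d([x],[y]) = {\rm pr}_1(\bar{\nu}([x],[y])) = {\rm pr}_1(\nu(x_0,y))$ and $\varphi([x]) = f(x_0)$. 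Summing and using additivity of $f$ together with $f\circ\tau=f$,
\[ \sum_{[x]\in[y]^-}\varphi([x])\,d([x],[y]) \;=\; \sum_{x_0\in y^-}f(x_0)\,{\rm pr}_1(\nu(x_0,y)) \;=\; f(y)+f(\tau(y)) \;=\; 2\varphi([y]), \]
which is exactly the additivity condition for $\varphi$.

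Part (3) is the same identity read in the opposite direction. Setting $f(x):=\varphi([x])$, the equality $f\circ\tau=f$ is immediate, and the admissibility-driven bijection $y^-\leftrightarrow[y]^-$ used in (2) transforms the additivity of $\varphi$ at $[y]$ into $\sum_{x\in y^-}f(x)\,{\rm pr}_1(\nu(x,y)) = 2\varphi([y]) = f(y)+f(\tau(y))$, which is the defining additivity of $f$. I expect the only real piece of work to be the loop-free check in (1) and the admissibility-based bijection $y^-\leftrightarrow[y]^-$; once those are in hand, the additivity statements in (2) and (3) are essentially the same identity read in two directions.
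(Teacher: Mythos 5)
Your proposal is correct and follows essentially the same route as the paper: verify the valued-graph axioms for $d$ via admissibility and the translation identity $a^- = \tau(a)^+$, and then transfer additivity through the identity $2\varphi([y]) = f(y)+f(\tau(y)) = \sum_{x\in y^-} f(x)\,{\rm pr}_1(\nu(x,y))$, read in both directions for (2) and (3). The only difference is that you spell out details the paper leaves implicit (the loop-free contradiction in (1) and the admissibility-induced bijection $y^-\leftrightarrow [y]^-$), which is fine.
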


\begin{proof} (1) Since $\langle \tau \rangle$ is admissible, we have $d([x],[x]) = 0$ for all $[x] \in \Gamma_0/\langle \tau \rangle$. Let $[x] \in \Gamma_0/\langle \tau \rangle$ be
given. Since the quiver $\Gamma/\langle \tau \rangle$ is locally finite, it follows that $d([x],[y]) \neq 0$ for only finitely many $[y] \in \Gamma_0/\langle \tau \rangle$. Finally, we assume
that $d([x],[y]) \neq 0$. Then $[x] \rightarrow [y]$, and by definition of $\Gamma/\langle \tau \rangle$ there exist $a \in [x], \ b \in [y]$ such that $ a \rightarrow b$. Thus,
$a \in b^- = \tau(b)^+$, so that $\tau(b) \rightarrow a$. This implies $[y] \rightarrow [x]$ and $d([y],[x]) \neq 0$.

(2) In view of Proposition \ref{SF1}, we have $f(\Gamma_0) \subseteq \NN$. Hence there is a function $\varphi : \Gamma_0/\langle \tau \rangle \lra \NN$ with $\varphi([x]) =
f(x)$ for every $x \in \Gamma_0$. According to (1) we obtain for every $y \in \Gamma_0$
\[ (\ast) \ \ \ \ \ 2\varphi([y]) = f(y) + f(\tau(y)) = \sum_{x \in y^-} f(x) \, {\rm pr}_1(\nu(x,y)) = \sum_{[x] \in \Gamma_0/\langle\tau \rangle} \varphi([x])\, d([x],[y]),\]
so that $\varphi$ is indeed additive.

(3) This follows directly from ($\ast$). \end{proof}

\bigskip
\noindent
We conclude this section by discussing certain functions of stable translation quivers $\Gamma$ of tree class $A_\infty$ that will make an appearance in the next section.  If $x \in \Gamma_0$
is a vertex, then its distance to the end of $\Gamma$ is referred to as the {\it quasi-length} $\ql(x)$ of $x$. Vertices of quasi-length $1$ are often called {\it quasi-simple}. Given $\ell \ge 1$,
we let $\Gamma_{(\ell)}$ be the full subquiver of $\Gamma$, whose vertices have quasi-length $\ge \ell$. A function $f : \Gamma_0 \lra \NN_0$ is called {\it eventually additive}, if there exists $\ell \ge 1$ such that
\[ f(y) + f(\tau(y)) = \sum_{x \in y^-} f(x) \ \ \ \ \ \ \ \ \forall \ y \in (\Gamma_{(\ell)})_0. \]
The minimal $\ell$ with this property will be denoted $\ell(f)$.

\bigskip

\begin{Examples} (1) The additive functions are precisely those eventually additive functions with $\ell(f) = 1$.

(2) Every constant function $f\ne 0$ is eventually additive with $\ell(f)=2$.

(3) The function $f : \Gamma_0 \lra \NN_0 \ \ ; \ \ x \mapsto \ql(x)\!-\!1$ is eventually additive with $\ell(f) = 2$.\end{Examples}

\bigskip
\noindent
For future reference, we record the following simple observation:

\bigskip

\begin{Lemma} \label{SF4} Let $\Gamma = (\Gamma_0, \Gamma_1, \nu, \tau)$ be a valued stable translation quiver of tree class $\bar{T}_\Gamma = A_\infty$. Let $f : \Gamma_0 \lra \NN_0$ be an eventually additive function such that $f\circ \tau = f$.

{\rm (1)} \ Let $x_{\ell(f)}$ and $x_{\ell(f)-1}$ be two vertices of quasi-lengths $\ell(f)$ and $\ell(f)\!-\!1$, respectively. Then
\[ f(x) = (f(x_{\ell(f)})-f(x_{\ell(f)-1}))(\ql(x)-\ell(f))+f(x_{\ell(f)}) \ \ \ \ \ \forall \ x \in (\Gamma_{(\ell(f))})_0,\]
where we put $f(x_0) = 0$ in case $\ell(f)=1$. If $f$ is bounded, then $f|_{(\Gamma_{(\ell(f)-1)})_0}$ is constant.

{\rm (2)} \ If $f$ is subadditive such that

\indent \indent {\rm (a)} \ $f(x_1) = f(x_2)$ for $x_i \in \Gamma_0$ with $\ql(x_i) = i$, and

\indent \indent {\rm (b)} \ $f(x) \ge f(x_1)$ for all $x,x_1 \in \Gamma_0$ with $\ql(x_1) = 1$ and $\ql(x)<\ell(f)$,

\noindent
then $f$ is constant and $\ell(f)\le 2$. \end{Lemma}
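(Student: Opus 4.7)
The plan is to reduce everything to a one-variable problem by exploiting $f\circ\tau = f$. Since $\Gamma$ has tree class $A_\infty$, Riedtmann's structure theorem realises it as $\ZZ[A_\infty]$ or as a tube $\ZZ[A_\infty]/\langle\tau^n\rangle$, and in either case $\langle\tau\rangle$ acts transitively on each quasi-length level. Thus $f$ factors through $\ql$, and I set $a_i := f(x)$ for any $x$ with $\ql(x)=i$. Inspecting $\ZZ[A_\infty]$, a vertex of quasi-length $i \ge 2$ has exactly two predecessors, of quasi-lengths $i-1$ and $i+1$, while a vertex of quasi-length $1$ has the unique predecessor of quasi-length $2$.

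For part (1), the eventually additive condition (with trivial valuation and $f\circ\tau=f$) rewrites as $2a_i = a_{i-1} + a_{i+1}$ for $i \ge \max(\ell(f),2)$, together with $2a_1 = a_2$ in the case $\ell(f) = 1$. With the convention $a_0 := 0$, both cases amount to saying that the consecutive differences $d_j := a_{j+1}-a_j$ are constant for $j \ge \ell(f)-1$. The displayed formula is then a telescoping summation, and boundedness of $f$ forces this common difference to vanish, so $f|_{(\Gamma_{(\ell(f)-1)})_0}$ is constant.

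For part (2), subadditivity at a vertex of quasi-length $i \ge 2$ reads $d_i \le d_{i-1}$, so $(d_j)_{j\ge 1}$ is non-increasing; hypothesis (a) gives $d_1 = 0$, hence $d_j \le 0$ for all $j \ge 1$. If $\ell(f) = 1$ then additivity at level $1$ combined with (a) gives $2a_1 = a_2 = a_1$, so $a_1 = 0$ and $f \equiv 0$ by additivity on the remaining levels. Otherwise $\ell(f) \ge 2$, and hypothesis (b) says $a_i - a_1 = \sum_{j=1}^{i-1}d_j \ge 0$ for $1 \le i \le \ell(f)-1$, which together with non-positivity of the $d_j$ forces $d_j = 0$ for $1 \le j \le \ell(f)-2$. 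The formula from part (1) shows that for $i \ge \ell(f)$ the sequence $(a_i)$ is linear in $i$ with slope $d_{\ell(f)-1}$; non-negativity of $f$ as $i\to\infty$ forces this slope to be $\ge 0$, while monotonicity gives $d_{\ell(f)-1} \le d_1 = 0$. Hence $d_{\ell(f)-1} = 0$, every $d_j$ vanishes, and $f$ is constantly equal to $a_1$.

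Finally, for a constant function $f \equiv c$, the additivity equation $2a_i = a_{i-1}+a_{i+1}$ is automatic at every level $i \ge 2$, while at level $1$ it reads $2c = c$ and holds iff $c = 0$. Consequently $\ell(f) = 1$ when $c = 0$ and $\ell(f) = 2$ when $c > 0$, in either case giving $\ell(f) \le 2$. The main technical care needed is in part (1), to unify $\ell(f) = 1$ with $\ell(f) \ge 2$ through the convention $a_0 = 0$; in part (2) the only nontrivial step is to combine the upper bound on $d_{\ell(f)-1}$ coming from the non-increasing $d_j$ with the lower bound on $d_{\ell(f)-1}$ forced by $f \ge 0$ at infinity, both ingredients resting on the explicit linear formula furnished by part (1).
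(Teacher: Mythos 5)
Your proof is correct, and while part (1) follows the paper's route exactly, part (2) is organized differently. For (1) both you and the paper use $f\circ\tau=f$ to push $f$ down to a function of the quasi-length (legitimate, since for tree class $A_\infty$ the admissible group lies in $\langle\tau\rangle$, so the $\tau$-orbits are precisely the quasi-length levels), read off the recurrence $2a_i=a_{i-1}+a_{i+1}$ for $i\ge\ell(f)$, and solve it; the convention $a_0=0$ for $\ell(f)=1$ is handled the same way in both. For (2) the paper argues by induction on $\ql(x)$: subadditivity and the induction hypothesis give $f(x)\le f(x_1)$, the reverse inequality coming from (b) when $\ql(x)<\ell(f)$ and from eventual additivity when $\ql(x)\ge\ell(f)$. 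You instead work with the difference sequence $d_j=a_{j+1}-a_j$: non-increasing by subadditivity, $d_1=0$ by (a), $d_j=0$ for $j\le\ell(f)-2$ by (b), and $d_{\ell(f)-1}=0$ because the linear tail of slope $d_{\ell(f)-1}\le 0$ must remain in $\NN_0$. This last step is where your write-up is actually more careful than the paper's: additivity is invoked at the predecessor of quasi-length $\ql(x)-1$, so the paper's phrase ``with equality holding for $n\ge\ell(f)$'' is only immediate for $n\ge\ell(f)+1$, and the boundary level $\ql(x)=\ell(f)$ requires exactly the positivity-at-infinity argument you supply. A small aside: your argument shows that hypothesis (b) is in fact dispensable, since once $d_1=0$ and $d_{\ell(f)-1}=0$ are known, monotonicity squeezes the intermediate differences to zero anyway; using (b) as you do is of course perfectly fine. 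The concluding computation that a constant function has $\ell(f)\le 2$ agrees with the paper's examples.
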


\begin{proof} (1) Since $f$ is eventually additive with $f \circ \tau = f$, the map $f$ gives rise to a function $\varphi : A_\infty \lra \NN_0$ on the orbit graph of $\Gamma$ which satisfies
$\varphi(n) = f(x)$ whenever $\ql(x) =n$. The proof of Lemma \ref{SF3} now yields
\[ 2\,\varphi(n) = \varphi(n\!+\!1) + \varphi(n\!-\!1) \ \ \ \ \ \ \forall \ n \ge \ell(f),\]
so that
\[ \varphi(n) = (\varphi(\ell(f))-\varphi(\ell(f)\!-\!1))(n-\ell(f)) + \varphi(\ell(f)) \ \ \ \ \ \ \forall \ n \ge \ell(f).\]
(Here we define $\varphi(0) = 0$.) This readily yields the asserted formula. If $f$ is bounded, then $\ell(f) \ge 2$ and we obtain $\varphi(\ell(f))-\varphi(\ell(f)-1) = 0$, whence
$\varphi(n) = \varphi(\ell(f)\!-\!1)$ for all $n \ge \ell(f)\!-\!1$. Consequently, $f|_{(\Gamma_{(\ell(f)-1)})_0}$ is constant.

(2) We are going to show $f(x) = f(x_1)$ by induction on $n:=\ql(x)$, the cases $n=1,2$ being trivial. Let $n \ge 3$. Since $f$ is subadditive and $\Gamma$ has tree class $A_\infty$, there exist vertices $y,z \in \Gamma_0$ of quasi-lengths $\ql(y)=n\!-\!1$ and $\ql(z) = n\!-\!2$ such that
\[ f(x) \le f(y)+f(\tau(y))-f(z) = 2f(y)-f(z) = f(x_1),\]
with equality holding for $n \ge \ell(f)$. For $n<\ell(f)$, condition (b) yields the reverse inequality, so that $f$ is constant. In particular,
$\ell(f)\le 2$. \end{proof}

\bigskip

\section{Additive Functions of $\pi$-Points}\label{S:AFP}
Throughout, we let $k$ be an algebraically closed field of characteristic $\Char(k) =p>0$ and consider a finite group scheme $\cG$ over $k$, whose coordinate ring and algebra of measures 
will be denoted $k[\cG]$ and $k\cG$, respectively. We shall identify $\cG$-modules and $k\cG$-modules and let $\modd \cG$ be the category of finite-dimensional $\cG$-modules. If 
$K\!:\!k$ is a field extension, then $\cG_K := {\rm Spec}_K(k[\cG]\otimes_kK)$ denotes the extended group, whose algebra of measures is
\[ K\cG := k\cG_K \cong k\cG\!\otimes_k\!K.\]
We refer the reader to \cite{Ja} and \cite{Wa} for background on group schemes and their representations.

Given $M \in \modd \cG$, we write $M_K := M\!\otimes_k\!K$ for the corresponding $\cG_K$-module. Let $T$ be an indeterminate over $k$ and put
\[ \fA_{p,K} := k[T]/(T^p)\!\otimes_k\!K \cong K[T]/(T^p).\]
Following Friedlander-Pevtsova \cite{FPe2}, we refer to a left flat algebra homomorphism $\alpha_K : \fA_{p,K} \lra K\cG$ as a {\it $\pi$-point} if there exists an abelian, unipotent subgroup
$\cU \subseteq \cG_K$ such that $\im \alpha_K \subseteq K\cU$. Like any flat algebra homomorphism, $\alpha_K$ gives rise to a pull-back functor $\alpha^\ast_K : \modd \cG_K \lra 
\modd \fA_{p,K}$ which is exact and sends projectives to projectives. Two $\pi$-points $\alpha_K$ and $\beta_L$ are {\it equivalent} if
\[ \alpha^\ast_K(M_K) \ \text{projective} \ \Leftrightarrow \beta^\ast_L(M_L) \ \text{projective}\]
for every $M \in \modd \cG$. The set of equivalence classes will be denoted $\Pi(\cG)$. Given $M \in \modd \cG$, we let
\[ \Pi(\cG)_M := \{[\alpha_K] \in \Pi(\cG) \ ; \ \alpha^\ast_K(M_K) \ \text{is not projective}\}\]
be the {\it $\Pi$-support} of $M$. In view of \cite[(3.4),(3.6)]{FPe2} the sets $\Pi(\cG)_M$ form the closed sets of a noetherian topology on $\Pi(\cG)$ such that
\[ \dim \Pi(\cG)_M = \dim \cV_\cG(M)-1,\]
where $\cV_\cG(M) \subseteq {\rm MaxSpec}(\HH^{{\rm ev}}(\cG,k))$ is the {\it cohomological support variety} of the $\cG$-module $M$ (see \cite[(5.7)]{Be2} for the definition that 
also applies in our context).

Let $\alpha_K : \fA_{p,K} \lra K\cG$ be a $\pi$-point. If $M \in \modd \cG$, then $\alpha^\ast_K(M_K) \in \modd \fA_{p,K}$ uniquely decomposes as
\[ \alpha^\ast_K(M_K) \cong \bigoplus_{i=1}^p \alpha_{K,i}(M)[i],\]
where $[i]$ represents the (up to isomorphism) unique indecomposable $\fA_{p,K}$-module of dimension $i$. We will interpret the right-hand side as a base change of an
$\fA_{p,k}$-module $N$, that is,
\[ \alpha^\ast_K(M_K) \cong N_K,\]
with $N = \bigoplus_{i=1}^p\alpha_{K,i}(M)[i] \in \modd \fA_{p,k}$. The isomorphism class of $N$ is the {\it Jordan type of $M$ with respect to $\alpha_K$}, denoted
$\Jt(M,\alpha_K)$.

Given a short exact sequence
\[\fE \ \ : \ \ (0) \lra M' \lra M \lra M'' \lra (0)\]
of $\cG_K$-modules, we write
\[ \alpha^\ast_K(\fE) \ \ : \ \ (0) \lra \alpha^\ast_K(M'_K) \lra \alpha^\ast_K(M_K) \lra \alpha^\ast_K(M''_K) \lra (0).\]
If $M$ is a non-projective indecomposable $\cG_K$-module, then
\[ \fE_M \ \ : \ \ (0) \lra N \lra E \lra M \lra (0)\]
denotes the {\it almost split sequence} terminating in $M$. The reader is referred to \cite[Chap.V]{ARS} for the definition and basic properties of almost split sequences. We shall write
$X\!\mid\!Y$ to indicate that $X$ is isomorphic to a direct summand of $Y$.

For future reference we record a few basic properties of the functor $M \mapsto M_K$. In the sequel, $\Omega_\Lambda$ denotes the Heller operator of the finite-dimensional $k$-algebra 
$\Lambda$.

\bigskip

\begin{Lemma} \label{AFP1} Let $\Lambda$ be a finite-dimensional $k$-algebra, $K\!:\!k$ be a field extension. Given a finite-dimensional $\Lambda$-module $M$, the following statements 
hold:

{\rm (1)} \ If $M$ is indecomposable, then the $\Lambda_K$-module $M_K$ is indecomposable.

{\rm (2)} \ $M$ is projective if and only if $M_K$ is projective.

{\rm (3)} \ We have $\Omega_{\Lambda_K}(M_K) \cong \Omega_\Lambda(M)_K$.\end{Lemma}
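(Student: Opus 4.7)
All three statements are standard base-change facts for a finite-dimensional algebra over an algebraically closed field, and the plan is to dispatch them in order, leveraging flatness of $K/k$ throughout. The only nontrivial input is the compatibility of the Jacobson radical with base change, which is what part (3) ultimately hinges on.

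For (1), I would work through endomorphism rings. Since $k$ is algebraically closed and $M \in \modd \Lambda$ is indecomposable, $\End_\Lambda(M)$ is a local $k$-algebra with residue field $k$, so it decomposes as a $k$-vector space into $k \oplus J$, with $J := \Rad(\End_\Lambda(M))$ nilpotent. Finite-dimensionality of $M$ gives a canonical isomorphism $\End_\Lambda(M) \otimes_k K \cong \End_{\Lambda_K}(M_K)$, and hence $\End_{\Lambda_K}(M_K) \cong K \oplus (J\otimes_k K)$ with $J \otimes_k K$ still nilpotent. Therefore $\End_{\Lambda_K}(M_K)$ is local and $M_K$ is indecomposable.

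For (2), the implication $M$ projective $\Rightarrow$ $M_K$ projective is immediate from $-\otimes_k K$ preserving direct summands of free modules. For the converse I would use flat base change along a projective resolution $P_\bullet \to M$: since $K$ is $k$-flat, $(P_\bullet)_K \to M_K$ remains a projective resolution, and consequently
\[
\Ext^1_{\Lambda_K}(M_K,N_K) \cong \Ext^1_\Lambda(M,N)\otimes_k K
\]
for every $N \in \modd \Lambda$. If $M_K$ is projective the left-hand side vanishes, so $\Ext^1_\Lambda(M,N) = 0$ for all $N$, whence $M$ is projective.

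For (3), let $\pi : P \to M$ be a projective cover, characterised by $\ker(\pi) \subseteq \Rad(\Lambda)P$, equivalently by $P/\Rad(\Lambda)P \to M/\Rad(\Lambda)M$ being an isomorphism. Applying the exact functor $-\otimes_k K$ yields a surjection $P_K \to M_K$ with $P_K$ projective by (2). To conclude that this surjection is again a projective cover, I need $\Rad(\Lambda_K) = \Rad(\Lambda)\otimes_k K$; this is the main technical point, and it holds because $k$, being algebraically closed, is perfect, so $\Lambda/\Rad(\Lambda)$ is a separable $k$-algebra and remains semisimple after extending scalars to $K$. With this identification the base-changed map $P_K/\Rad(\Lambda_K)P_K \to M_K/\Rad(\Lambda_K)M_K$ is still an isomorphism, so $P_K \to M_K$ is the projective cover of $M_K$. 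Since base change is exact,
\[
\Omega_{\Lambda_K}(M_K) \cong \ker(P_K \to M_K) \cong \ker(\pi)\otimes_k K \cong \Omega_\Lambda(M)_K,
\]
as desired. The main (and essentially only) obstacle is this behaviour of the radical under base change; everything else is routine flat-base-change bookkeeping together with the Krull--Schmidt input in (1).
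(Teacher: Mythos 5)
Your proof is correct, and it diverges from the paper's in two of the three parts. For (1) you argue exactly as the paper does: $\End_\Lambda(M)$ is local with residue field $k$ because $k$ is algebraically closed, and since $\End_{\Lambda_K}(M_K) \cong \End_\Lambda(M)\otimes_k K$ has the nilpotent ideal $J\otimes_k K$ with quotient $K$, it is again local, so $M_K$ is indecomposable. For (2) the paper instead deduces the nontrivial direction from the bijection $P \mapsto P_K$ on principal indecomposable modules furnished by (1), whereas you use the flat-base-change isomorphism $\Ext^1_{\Lambda_K}(M_K,N_K)\cong \Ext^1_\Lambda(M,N)\otimes_k K$ together with the standard criterion that vanishing of $\Ext^1$ against all finite-dimensional modules (applied, say, to $N=\Omega_\Lambda(M)$) forces projectivity; both routes are standard, and yours has the mild advantage of not depending on part (1). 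For (3) the paper simply cites Kasjan's base-change result \cite[(3.5)]{Ka}, while you reprove it: the key point, $\Rad(\Lambda_K)=\Rad(\Lambda)\otimes_k K$, holds because $\Lambda/\Rad(\Lambda)$ is a product of matrix algebras over the algebraically closed (in particular perfect) field $k$, hence separable and still semisimple after base change, so projective covers are preserved by $-\otimes_k K$ and the minimal syzygy base-changes correctly. This is essentially the content of the cited lemma, so your argument buys self-containedness at the cost of a little extra length, while the paper's citation is shorter but less transparent.
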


\begin{proof} (1) Let $R := \End_\Lambda(M)$ and denote by $J$ the Jacobson radical of $R$. Then $R$ is local and since $k$ is algebraically closed, we have $R/J \cong k$, whence
$R_K/J_K \cong K$. As $R_K \cong \End_{\Lambda_K}(M_K)$, it follows that $M_K$ is indecomposable.

(2) One implication is clear. Part (1) implies that $P \mapsto P_K$ provides a bijection between the isomorphism classes of principal indecomposable modules for $\Lambda$
and $\Lambda_K$, respectively. Thus, if $M_K$ is projective, so is $M$.

(3) This is a direct consequence of \cite[(3.5)]{Ka}. \end{proof}

\bigskip

\begin{Definition} Let $M$ be a $\cG$-module, $\alpha_K : \fA_{p,K} \lra K\cG$ be a $\pi$-point. We say that $M$ is {\it relatively $\alpha_K$-projective} if
$M_K\!\mid\!(K\cG\!\otimes_{\fA_{p,K}}\!\alpha_K^\ast(M_K))$. The $\cG$-module $M$ is {\it relatively $\alpha_K$-injective}, if $M_K\!\mid \! 
\Hom_{\fA_{p,K}}(K\cG,\alpha^\ast_K(M_K))$. \end{Definition}

\bigskip

\begin{Remark} Suppose that $M$ is a $\cG$-module such that $M_K\!\mid\!(K\cG\!\otimes_{\fA_{p,K}}\!N)$ for some $\fA_{p,K}$-module $N$. Then there exists a split surjective
homomorphism $\varphi : K\cG\otimes_{\fA_{p,K}}N \lra M_K$ of $K\cG$-modules. Direct computation shows that the composite $\mu \circ \psi$ of the homomorphism
\[ \psi : K\cG\!\otimes_{\fA_{p,K}}\!N \lra K\cG\!\otimes_{\fA_{p,K}}\!\alpha_K^\ast(M_K) \ \ ; \ \ a\otimes n \mapsto a\otimes \varphi(1\otimes n)\]
with the canonical map
\[ \mu : K\cG\!\otimes_{\fA_{p,K}}\!\alpha_K^\ast(M_K) \lra M_K \ \ ; \ \ a\otimes m \mapsto a.m\]
coincides with $\varphi$. Hence $\mu$ is also split surjective. As a result, the following statements are equivalent:
\begin{itemize}
\item $M$ is relatively $\alpha_K$-projective.
\item There exists an $\fA_{p,K}$-module $N$ such that $M_K\!\mid\!(K\cG\!\otimes_{\fA_{p,K}}\!N)$.
\item The map $\mu : K\cG\!\otimes_{\fA_{p,K}}\!\alpha_K^\ast(M_K) \lra M_K \ \ ; \ \ a\otimes m \mapsto a.m$ is split surjective.
\end{itemize}
\end{Remark}

\bigskip
\noindent
The following subsidiary result, which elaborates on \cite[(8.5)]{CFP}, is inspired by the modular representation theory of finite groups (see \cite[(4.12.10)]{Be1}).

\bigskip

\begin{Lemma} \label{AFP2} Let $M$ be a non-projective indecomposable $\cG$-module, $\alpha_K : \fA_{p,K} \lra K\cG$ be a $\pi$-point. Then the following statements hold:

{\rm (1)} \ $M$ is relatively $\alpha_K$-projective if and only if $M_K\!\mid\!(K\cG\!\otimes_{\fA_{p,K}}\![i])$ for some $i \in \{1,\ldots,p\!-\!1\}$ with $\alpha_{K,i}(M) \ne 0$.

{\rm (2)} \ If $M$ is relatively $\alpha_K$-projective, then $M_K$ is cyclic, and $\Pi(\cG)_M = \{[\alpha_K]\}$.

{\rm (3)} \ $M$ is relatively $\alpha_K$-projective if and only if the sequence $\alpha^\ast_K(\fE_M\!\otimes_k\!K)$ does not split. \end{Lemma}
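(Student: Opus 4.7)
The plan is to prove (1), (2), (3) in that order, exploiting the Remark to translate relative projectivity into a statement about direct summands of induced modules.

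For (1), by the Remark, $M$ is relatively $\alpha_K$-projective iff $M_K$ is a direct summand of
\[ K\cG \otimes_{\fA_{p,K}} \alpha_K^\ast(M_K) \cong \bigoplus_{i=1}^{p} \alpha_{K,i}(M)\, \bigl(K\cG \otimes_{\fA_{p,K}} [i]\bigr). \]
Since $M_K$ is indecomposable by Lemma \ref{AFP1}(1), Krull--Schmidt forces $M_K \mid K\cG \otimes_{\fA_{p,K}} [i]$ for some $i$ with $\alpha_{K,i}(M) \neq 0$. To rule out $i=p$, note that $[p] \cong \fA_{p,K}$ is free, so $K\cG \otimes_{\fA_{p,K}} [p] \cong K\cG$ is projective; if $M_K$ were a summand, it would be projective, contradicting Lemma \ref{AFP1}(2) and the non-projectivity of $M$.

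For (2), pick a generator $v$ of the cyclic $\fA_{p,K}$-module $[i]$; then $K\cG \otimes_{\fA_{p,K}} [i] = K\cG \cdot (1\otimes v)$ is cyclic, and so is its summand $M_K$. For the support assertion, I appeal to the standard fact from \cite{FPe2} that the $\Pi$-support of an induced module $K\cG \otimes_{\fA_{p,K}} N$ (with $N$ a non-projective $\fA_{p,K}$-module) equals $\{[\alpha_K]\}$. Applied to $N=[i]$ with $i<p$, this gives $\Pi(\cG)_M \subseteq \{[\alpha_K]\}$, and the non-projectivity of $M$ provides the reverse inclusion.

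For (3), I would argue both contrapositives. Suppose first that $M$ is not relatively $\alpha_K$-projective, so the counit $\mu : K\cG \otimes_{\fA_{p,K}} \alpha_K^\ast(M_K) \to M_K$, $a\otimes m \mapsto a.m$, is not split surjective. Since $k$ is algebraically closed, Lemma \ref{AFP1}(1) gives $M_K$ absolute indecomposable, which lets me identify $\fE_M \otimes_k K$ with the almost split sequence terminating in $M_K$; its defining factorization property then produces $\phi : K\cG \otimes_{\fA_{p,K}} \alpha_K^\ast(M_K) \to E_K$ with $\pi_K \circ \phi = \mu$, where $\pi_K : E_K \to M_K$ denotes the AR map. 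The canonical $\fA_{p,K}$-linear section $\iota : \alpha_K^\ast(M_K) \to \alpha_K^\ast(K\cG \otimes_{\fA_{p,K}} \alpha_K^\ast(M_K))$, $m \mapsto 1 \otimes m$, satisfies $\alpha_K^\ast(\mu)\circ \iota = \id$; hence $\alpha_K^\ast(\phi)\circ \iota$ is a section of $\alpha_K^\ast(\pi_K)$, so $\alpha_K^\ast(\fE_M \otimes K)$ splits. Conversely, if $\alpha_K^\ast(\fE_M \otimes K)$ splits via a section $s$, then tensor--forget adjunction yields a $K\cG$-homomorphism $\tilde{s} : K\cG \otimes_{\fA_{p,K}} \alpha_K^\ast(M_K) \to E_K$ with $\pi_K \circ \tilde s = \mu$. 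A split surjection $\mu$ would make $\pi_K$ split surjective too, contradicting non-splitting of $\fE_M \otimes K$; so $\mu$ is not split surjective, and $M$ is relatively $\alpha_K$-projective.

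The main obstacle I anticipate is the appeal, in part (3), to $\fE_M \otimes_k K$ being the almost split sequence of $M_K$, which requires the absolute indecomposability packaged in Lemma \ref{AFP1}(1) together with a base-change statement for AR sequences. A secondary point is the singleton-support assertion in (2), which depends on the (standard but non-trivial) behavior of $\Pi$-supports under induction from $\pi$-points, imported from \cite{FPe2}.
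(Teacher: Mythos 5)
Your parts (1) and the first direction of (3) follow the paper's own argument: Krull--Remak--Schmidt applied to $K\cG\otimes_{\fA_{p,K}}\alpha_K^\ast(M_K)\cong\bigoplus_i\alpha_{K,i}(M)\,(K\cG\otimes_{\fA_{p,K}}[i])$ for (1), and, for (3), the lifting property of the almost split sequence $\fE_M\otimes_k K$ (identified via \cite[(3.8)]{Ka} and Lemma \ref{AFP1}) together with the adjunction unit $m\mapsto 1\otimes m$ --- your section $\alpha_K^\ast(\phi)\circ\iota$ is exactly the paper's $\tilde{\omega}$. Your converse direction in (3) is a genuine variation: instead of invoking \cite[(VI.3.6)]{ARS} as the paper does, you re-run the adjunction to get $\tilde{s}$ with $\pi_K\circ\tilde{s}=\mu$ and observe that a splitting of $\mu$ would then split the almost split sequence; this is more self-contained and amounts to proving the needed special case of that citation. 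One slip, though: having shown that $\mu$ is not split surjective, the Remark tells you that $M$ is \emph{not} relatively $\alpha_K$-projective --- which is precisely what the splitting of $\alpha_K^\ast(\fE_M\otimes_k K)$ must yield --- whereas your final clause asserts the opposite and contradicts your own use of the Remark. The reasoning is fine once that conclusion is corrected.

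The genuine gap is in (2). The induced module $K\cG\otimes_{\fA_{p,K}}[i]$ is a $\cG_K$-module, so the ``standard fact'' you invoke (the paper's \cite[(8.4)]{CFP}) gives $\Pi(\cG_K)_{K\cG\otimes_{\fA_{p,K}}[i]}=\{[\alpha_K]\}$, hence $\Pi(\cG_K)_{M_K}=\{[\alpha_K]\}$; it says nothing directly about $\Pi(\cG)_M$, which is computed with $\pi$-points $\beta_L$ of $\cG$ over arbitrary extensions $L$ of $k$. The paper spends half of its proof of (2) on exactly this descent: given $[\beta_L]\in\Pi(\cG)_M$, pass to a common extension $Q$ of $K$ and $L$, check that $\beta_Q^\ast(M_Q)\cong\beta_L^\ast(M_L)_Q$ is non-projective so that $[\beta_Q]\in\Pi(\cG_K)_{M_K}$, conclude $\beta_Q\sim\alpha_K$ as $\pi$-points of $\cG_K$, and then note that equivalence over $\cG_K$ (tested in particular on all modules $N_K$ with $N\in\modd\cG$) forces $\beta_L\sim\alpha_K$ as $\pi$-points of $\cG$. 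Your proposal jumps from the $\cG_K$-statement to $\Pi(\cG)_M\subseteq\{[\alpha_K]\}$ without this bookkeeping; you need either to supply the common-extension argument or to cite a result that explicitly controls $\Pi$-supports and equivalence of $\pi$-points under base change.
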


\begin{proof} (1) In view of Lemma \ref{AFP1} and the above remark, this is a direct consequence of the Theorem of Krull-Remak-Schmidt.

(2) By (1), there exists $i \in \{1,\ldots, p\!-\!1\}$ such that $M_K\!\mid\!(K\cG\!\otimes_{\fA_{p,K}}\![i])$. Consequently, $M_K$ is cyclic, and a consecutive application of 
\cite[(5.5)]{FPe2} and \cite[(8.4)]{CFP} implies $\emptyset \ne \Pi(\cG_K)_{M_K} \subseteq \Pi(\cG_K)_{K\cG\otimes_{\fA_{p,K}}[i]} = \{[\alpha_K]\}$, so that $\Pi(\cG_K)_{M_K} = 
\{[\alpha_K]\}$. It readily follows that $[\alpha_K] \in \Pi(\cG)_M$. Conversely, assume $\beta_L$ to be a $\pi$-point of $\cG$ such that $[\beta_L] \in 
\Pi(\cG)_M$. Consider a common extension field $Q$ of $L$ and $K$. Then $\beta_Q$ is a $\pi$-point of $\cG_K$ such that $\beta^\ast_Q((M_K)_Q) \cong \beta^\ast_Q(M_Q) \cong 
\beta^\ast_L(M_L)_Q$ is not projective. Consequently, $[\beta_Q] \in \Pi(\cG_K)_{M_K}$, so that the $\pi$-points $\alpha_K$ and $\beta_Q$ of $\cG_K$ are equivalent. We conclude 
$\alpha_K$ and $\beta_L$ are also equivalent, when considered as $\pi$-points of $\cG$, whence $\Pi(\cG)_M = \{[\alpha_K]\}$.

(3) Let
\[ \fE_M \ \ : \ \ (0) \lra N \lra E \lra M \lra (0)\]
be the almost split sequence terminating in $M$. According to Lemma \ref{AFP1}, the $\cG_K$-module $M_K$ is indecomposable and non-projective. Thanks to \cite[(3.8)]{Ka}, the sequence
\[ \fE_M\!\otimes_k\!K \ \ : \ \ (0) \lra N_K \lra E_K \stackrel{\pi}{\lra} M_K \lra (0)\]
is the almost split sequence terminating in $M_K$. We consider the canonical surjection $\mu : K\cG\otimes_{\fA_{p,K}} \alpha^\ast_K(M_K) \lra M_K \ ; \ a\otimes m \mapsto am$. If this
map does not split, then $\fE_M\!\otimes_k\!K$ being almost split implies that there exists $\omega : K\!\cG\otimes_{\fA_{p,K}}\!\alpha^\ast_K(M_K) \lra E_K$ such that $\pi \circ
\omega = \mu$. The adjoint isomorphism $\Hom_{K\cG}(K\cG\otimes_{\fA_{p,K}}\alpha^\ast_K(M_K),-) \cong \Hom_{\fA_{p,K}}(\alpha^\ast_K(M_K),-)\circ \alpha^\ast_K$ thus 
provides a map $\tilde{\omega} : \alpha^\ast_K(M_K) \lra \alpha^\ast_K(E_K)$ with
\[ \alpha^\ast_K(\pi) \circ \tilde{\omega} = \id_{\alpha^\ast_K(M_K)}.\]
Thus, if $\alpha^\ast_K(\fE_M\!\otimes_k\!K)$ does not split, then $M_K\!\mid\!(K\cG\!\otimes_{\fA_{p,K}}\!\alpha^\ast_K(M_K))$, and $M$ is relatively $\alpha_K$-projective.

Let $M$ be relatively $\alpha_K$-projective. According to \cite[(VI.3.6)]{ARS}, $\alpha_K^\ast(\fE_M\!\otimes_k\!K)$ being split entails that $\fE_M\!\otimes_k\!K$ is split. Since
$\fE_M\!\otimes_k\!K$ is almost split, this cannot happen, so that $\alpha^\ast_K(\fE_M\!\otimes_k\!K)$ is non-split in that case. \end{proof}

\bigskip

\begin{Remarks} (1) Part (2) of the above Lemma implies, that the $\pi$-points $\alpha_K: \fA_{p,K} \lra K\cG$ for which a given $\cG$-module $M$ is relatively $\alpha_K$-projective, give 
rise to closed points $[\alpha_K] \in \Pi(\cG)$. In view of \cite[(4.7)]{FPe2}, the class $[\alpha_K]$ is therefore represented by some $p$-point $\beta_k : \fA_{p,k} \lra k\cG$.

(2) Let $\alpha_K \in \Pt(\cG)$ be a $\pi$-point such that $[\alpha_K]$ is not closed. Then Lemma \ref{AFP2} implies that no non-projective direct summand $N$ of the induced module 
$K\cG\!\otimes_{\fA_{p,K}}\![i]$ is defined over $k$, that is, there does not exist a $\cG$-module $M$ with $M_K\cong N$.

(3) Lemma \ref{AFP2} could equally well be stated in terms of relatively injective modules. In particular, the sequence $\alpha_K^\ast(\fE_M\!\otimes_k\!K)$ does not split if and only if 
$M$ is relatively $\alpha_K$-injective. \end{Remarks}

\bigskip
\noindent
We let $\Gamma_s(\cG)$ denote the {\it stable Auslander-Reiten quiver} of the Frobenius algebra $k\cG$. By definition, $\Gamma_s(\cG)$ is a valued stable translation quiver, whose
vertices are the isoclasses of the non-projective indecomposable $\cG$-modules. There is an arrow $X \rightarrow Y$ if $X$ is a direct summand of the middle term $E$ of the almost split
sequence $\fE_Y$. This arrow carries the valuation $(m,m)$ if $X$ occurs in $E$ with multiplicity $m$, see \cite[(V.1.3)]{ARS}. The {\it Auslander-Reiten translation} $\tau_\cG$ is given by
\[ \tau_\cG = \Omega^2_\cG \circ \nu_\cG = \nu_\cG \circ \Omega_\cG^2,\]
where $\nu_\cG := \Hom_\cG(-,k\cG)^\ast$ denotes the {\it Nakayama functor} of $\modd \cG$. By virtue of \cite[(V.1.14)]{ARS}, $\tau_\cG(M)$ is the initial term of the almost split 
sequence terminating in $M$.

Let $\Theta \subseteq \Gamma_s(\cG)$ be a connected component. In view of \cite[(3.1)]{Fa3}, whose proof can be easily modified to cover our context, we have
\[ \Pi(\cG)_M = \Pi(\cG)_N \ \ \ \ \ \ \forall \ M,N \in \Theta.\]
Accordingly, we shall speak of the {\it $\Pi$-support $\Pi(\cG)_\Theta$ of $\Theta$}.

\bigskip

\begin{Definition} Let $\Theta \subseteq \Gamma_s(\cG)$ be a component. Given a $\pi$-point $\alpha_K : \fA_{p,K} \lra K\cG$, we say that $\Theta$ is {\it $\alpha_K$-split} if the exact 
sequence $\alpha_K^\ast(\fE_M\!\otimes_k\!K)$ splits for every $M \in \Theta$. \end{Definition}

\bigskip
\noindent
By abuse of notation, we will write $f : \Theta \lra \NN_0$ for a function $f$ that is defined on the set of vertices of $\Theta$. If $N$ is an $\fA_{p,K}$-module, then $N_{\rm pf}$ denotes 
its projective-free part, that is, the direct sum of all non-projective indecomposable summands of $N$.

\bigskip

\begin{Proposition} \label{AFP3} Let $\Theta \subseteq \Gamma_s(\cG)$ be a component, $\alpha_K : \fA_{p,K}\lra K\cG$ be a $\pi$-point.

{\rm (1)} \ We have $\alpha_{K,i}\circ \tau_\cG = \alpha_{K,i}$ for $i \in \{1,\ldots,p\!-\!1\}$.

{\rm (2)} \ If $\Theta$ is not $\alpha_K$-split, then $\Pi(\cG)_\Theta = \{[\alpha_K]\}$ and $\Theta$ is either finite or isomorphic to $\ZZ[A_\infty]/\langle \tau^n\rangle$ for some
$n\in \NN$.

{\rm (3)} \ Given $m \in \{1,\ldots ,p\!-\!1\}$, the function
\[ \psi_m : \Theta \lra \NN_0 \ \ ; \ \ M \mapsto \sum_{i=1}^{m-1} i\alpha_{K,i}(M)+ m(\sum_{i=m}^{p-1}\alpha_{K,i}(M))\]
is subadditive with $\psi_m \circ \tau_\cG = \psi_m$.

{\rm (4)} \ Suppose that $\alpha_{K,i}: \Theta \lra \NN_0$ is additive for $1\le i \le p\!-\!1$. Then $\Theta$ is $\alpha_K$-split.\end{Proposition}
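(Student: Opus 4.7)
For part~(1), I would decompose $\tau_\cG=\nu_\cG\circ\Omega_\cG^2$ and handle each factor separately. Applying Lemma~\ref{AFP1}(3) over $K$ shows that $\alpha_K^\ast(\Omega_\cG M)_K$ is stably isomorphic (as an $\fA_{p,K}$-module) to $\Omega_{\fA_{p,K}}(\alpha_K^\ast(M_K))$, and since $\Omega_{\fA_{p,K}}[i]\cong[p-i]$ for $1\le i\le p-1$, iterating gives $\alpha_{K,i}\circ\Omega_\cG^2=\alpha_{K,i}$ on the non-projective part. The Nakayama automorphism of the Frobenius Hopf algebra $k\cG$ is, modulo an inner automorphism, twist by the modular character $\chi$ of $\cG$; since the abelian unipotent subgroup $\cU\subseteq\cG_K$ through which $\alpha_K$ factors admits only the trivial character, $\chi|_{K\cU}$ is trivial, so the $\fA_{p,K}$-action on $\nu_\cG(M)\cong M\otimes\chi$ coincides with that on $M$, yielding $\alpha_{K,i}\circ\nu_\cG=\alpha_{K,i}$ for every $i$. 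For part~(2), if $\Theta$ is not $\alpha_K$-split, some $M\in\Theta$ has $\alpha_K^\ast(\fE_M\otimes_k K)$ non-split; Lemma~\ref{AFP2}(2)--(3) then render $M$ relatively $\alpha_K$-projective with $\Pi(\cG)_\Theta=\Pi(\cG)_M=\{[\alpha_K]\}$, so the modules in $\Theta$ are $\Omega$-periodic (complexity one), and the standard AR-structure theory for self-injective algebras forces $\Theta$ to be either finite or a tube $\ZZ[A_\infty]/\langle\tau^n\rangle$.

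For part~(3), I would pull the almost split sequence $\fE_M$ back via $\alpha_K$ to an SES of $\fA_{p,K}$-modules and use the identities
\[ \dim_K\ker T^m|_N=\psi_m(N)+m\,\alpha_{K,p}(N),\qquad \alpha_{K,p}(N)=\dim_K\operatorname{Im}(T^{p-1})|_N. \]
On any SES $0\to A\to B\to C\to 0$ of $\fA_{p,K}$-modules, $\dim_K\ker T^m$ is subadditive, while $\alpha_{K,p}$ is \emph{superadditive}: the inclusion $\operatorname{Im}(T^{p-1})|_A\subseteq\operatorname{Im}(T^{p-1})|_B\cap A$ together with the surjection $\operatorname{Im}(T^{p-1})|_B\twoheadrightarrow\operatorname{Im}(T^{p-1})|_C$ (whose kernel is $\operatorname{Im}(T^{p-1})|_B\cap A$) yields $\alpha_{K,p}(B)\ge\alpha_{K,p}(A)+\alpha_{K,p}(C)$. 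Subtracting $m$ times the superadditive estimate from the subadditive one gives $\psi_m(B)\le\psi_m(A)+\psi_m(C)$; applied to $\fE_M$ together with $\alpha_K^\ast(E_K)\cong\bigoplus_X\alpha_K^\ast(X_K)^{\operatorname{pr}_1\nu(X,M)}$, this yields the required subadditivity of $\psi_m$ on $\Theta$, while $\psi_m\circ\tau_\cG=\psi_m$ follows directly from~(1).

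For part~(4), the bookkeeping of~(3) produces
\[ \psi_m(\tau_\cG M)+\psi_m(M)-\sum_X\psi_m(X)\operatorname{pr}_1\nu(X,M)=s_m+mr, \]
where $s_m\ge 0$ and $r\ge 0$ are the slacks in the subadditivity of $\dim\ker T^m$ and the superadditivity of $\alpha_{K,p}$ on the pulled-back sequence. Additivity of each $\alpha_{K,i}$ with $1\le i\le p-1$ forces every $\psi_m$ $(1\le m\le p-1)$ to be additive, whence $s_m=0$ for those $m$ and $r=0$; since $s_p=0$ trivially, $\dim_K\ker T^m$ is additive across the pulled-back sequence for all $m\ge 1$. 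Consequently the Jordan types of $\alpha_K^\ast(E_K)$ and $\alpha_K^\ast(\tau_\cG M)_K\oplus\alpha_K^\ast(M_K)$ coincide, and over $\fA_{p,K}$ any non-trivial extension strictly merges Jordan blocks, so $\alpha_K^\ast(\fE_M\otimes_k K)$ must split. The most delicate step is~(1), specifically the Hopf-algebraic identification of the Nakayama automorphism of $k\cG$ as a modular-character twist and the verification that such a twist is invisible to a $\pi$-point factoring through an abelian unipotent subgroup; once~(1) is in hand, (2)--(4) reduce to the Jordan-type linear algebra described above.
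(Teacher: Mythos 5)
Your proposal is correct and follows essentially the same route as the paper's proof: for (1) the identification (via the modular function) of the Nakayama functor with a character twist that is invisible to a $\pi$-point factoring through a unipotent subalgebra, together with $\Omega^2_{\fA_{p,K}}$ fixing the modules $[i]$; for (2) Lemma \ref{AFP2} plus the tube theorem for components containing periodic modules; and for (3) the identity $\dim_K\ker t^m_{\alpha_K^\ast(M_K)}=\psi_m(M)+m\,\alpha_{K,p}(M)$ combined with subadditivity of kernel dimensions and superadditivity of the projective multiplicity. Your part (4) differs only in bookkeeping --- you force additivity of all $\dim_K\ker t^m$ and deduce splitting from the resulting equality of Jordan types, whereas the paper matches the projective-free and projective parts via Krull--Remak--Schmidt --- but both rest on the same splitting criterion over $\fA_{p,K}$, so the argument is sound.
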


\begin{proof} (1) Let $\zeta : k\cG \lra k$ be the modular function of the Hopf algebra $k\cG$. According to \cite[(1.5)]{FMS}, the convolution
\[ \mu := \zeta \ast \id_{k\cG}\]
is a Nakayama automorphism of the Frobenius algebra $k\cG$, so that $\nu_\cG$ coincides with the pull-back functor $\mu^\ast : \modd \cG \lra \modd \cG$, defined by $\mu$.

Since $(\zeta\otimes \id_K)|_{K\cU} = \varepsilon_{K\cU}$ for every unipotent subgroup $\cU \subseteq \cG_K$ and  $\alpha_K$ factors through some unipotent subgroup $\cU \subseteq 
\cG_K$, we have $(\mu \otimes \id_K) \circ \alpha_K = \alpha_K$, whence $\alpha^\ast_K \circ (\mu\otimes \id_K)^\ast = \alpha^\ast_K$. As $\alpha^\ast_K$ is exact and sends 
projectives to projectives, Lemma \ref{AFP1} implies
\[ \alpha^\ast_K(\tau_\cG(M)_K) \cong \alpha^\ast_K((\mu\otimes \id_K)^\ast(\Omega^2_\cG(M)_K)) \cong \alpha^\ast_K(\Omega^2_\cG(M_K)) \cong 
\Omega^2_{\fA_{p,K}}(\alpha^\ast_K(M_K)) \oplus \text{(proj.)}.\]
Observing $\Omega^2_{\fA_{p,K}}([j]) \cong [j]$ for $1 \le j \le p\!-\!1$, we conclude that
\[ \Omega^2_{\fA_{p,K}}(\alpha^\ast_K(M_K)) \oplus \text{(proj.)} \cong \alpha^\ast_K(M_K).\]
Consequently,
\[ \alpha^\ast_K(\tau_\cG(M)_K) \oplus \text{(proj.)} \cong \alpha^\ast_K(M_K)\oplus \text{(proj.)},\]
so that $\alpha_{K,i}(\tau_\cG(M)) = \alpha_{K,i}(M)$ for $i \in \{1,\ldots,p\!-\!1\}$.

(2) By assumption, there exists $M \in \Theta$ such that the exact sequence $\alpha_K^\ast(\fE_M\!\otimes_k\!K)$ does not split. In view of (2) and (3) of Lemma \ref{AFP2}, we have 
$\Pi(\cG)_\Theta = \Pi(\cG)_M = \{[\alpha_K]\}$. We may now apply \cite[(3.3)]{Fa3} to see that $\Theta$ is either finite or an infinite tube $\ZZ[A_\infty]/\langle \tau^n \rangle$.

(3) We write $\fA_{p,K} = K[t]$ with $t^{p-1} \ne 0 = t^p$. Let $m \in \{1,\ldots,p\!-\!1\}$. Given an $\fA_{p,K}$-module $N$, we let $t^m_N$ be the left multiplication on $N$ 
effected by $t^m$. Direct computation yields
\[ \ker t^m_{\alpha^\ast_K(M_K)} = \bigoplus_{i=1}^{m-1} \alpha_{K,i}(M)[i]\oplus (\sum_{i=m}^p\alpha_{K,i}(M))[m],\]
so that
\[ \dim_K \ker t^m_{\alpha^\ast_K(M_K)} = \psi_m(M)+m\,\alpha_{K,p}(M) \ \ \ \ \ \ \forall \ M \in \modd \cG.\]
We denote the right-hand side by $\varphi_m$ and consider a short-exact sequence $(0) \lra M' \lra M \lra M'' \lra (0)$ of $\cG$-modules. Observing the left-exactness of kernels, we obtain
\[ \varphi_m(M) \le \varphi_m(M')+\varphi_m(M'').\]
For $X \in \{M,M',M''\}$ we write $\alpha_K^\ast(X_K) = \alpha^\ast_K(X_K)_{\rm pf} \oplus P_X$, with $P_X$ projective. Then $(P_{M'}\oplus P_{M''})\!\mid\! P_M$, so that
\[ p\,\alpha_{K,p}(M) \ge p\,\alpha_{K,p}(M')+p\,\alpha_{K,p}(M'').\]
We therefore obtain
\begin{eqnarray*}
\psi_m(M') + \psi_m(M'') & = & \varphi_m(M') +\varphi_m(M'') - m\, \alpha_{K,p}(M') - m\, \alpha_{K,p}(M'')\\
 &\ge & \varphi_m(M) - m\, \alpha_{K,p}(M) = \psi_m(M).
\end{eqnarray*}
Consequently, $\psi_m$ is subadditive on $\Theta$ and (1) implies $\psi_m\circ \tau_\cG = \psi_m$.

(4) Let $M$ be an element of $\Theta$, and consider the almost split sequence
\[ (0) \lra N \lra E \lra M \lra (0)\]
terminating in $M$. Since each $\alpha_{K,i}$ is additive for $1 \le i \le p\!-\!1$, we obtain
\[ \alpha_K^\ast(M_K)_{\rm pf} \oplus \alpha_K^\ast(N_K)_{\rm pf} \cong \alpha_K^\ast(E_K)_{\rm pf}.\]
For $X \in \{M,N,E\}$ we write $\alpha_K^\ast(X_K) = \alpha^\ast_K(X_K)_{\rm pf} \oplus P_X$, with $P_X$ projective. Then $(P_N\oplus P_M)\!\mid\! P_E$ and a comparison of
dimensions yields $P_E \cong P_M \oplus P_N$, so that
\[ \alpha_K^\ast(M_K) \oplus \alpha_K^\ast(N_K)\cong \alpha_K^\ast(E_K).\]
Consequently, the exact sequence
\[ (0) \lra \alpha_K^\ast(N_K) \lra \alpha_K^\ast(E_K) \lra \alpha_K^\ast(M_K) \lra (0)\]
splits. As a result, the component $\Theta$ is $\alpha_K$-split. \end{proof}

\bigskip
\noindent
We say that an AR-component $\Theta \subseteq \Gamma_s(\cG)$ is {\it regular} if the middle terms of the almost split sequences terminating in the vertices of $\Theta$ have no non-zero
projective summands. By general theory (cf.\ \cite[(V.5.5)]{ARS}), an almost split sequence, whose middle term has the principal indecomposable module $P$ as a direct summand, is
isomorphic to the sequence
\[ (0) \lra \Rad(P) \lra P\oplus (\Rad(P)/\Soc(P)) \lra P/\Soc(P) \lra (0).\]
Hence all but finitely many AR-components of $k\cG$ are regular.

By providing the converse to Proposition \ref{AFP3}(4), the following basic result indicates the utility of those $\pi$-points $\alpha_K$ for which a component $\Theta$ is $\alpha_K$-split.

\bigskip

\begin{Theorem} \label{AFP4} Let $\Theta \subseteq \Gamma_s(\cG)$ be a component, $\alpha_K  : \fA_{p,K} \lra K\cG$ be a $\pi$-point such that $\Theta$ is $\alpha_K$-split. Then the 
following statements hold:

{\rm (1)} \ For every $i \in \{1,\ldots,p\!-\!1\}$, the map
\[ \alpha_{K,i} : \Theta \lra \NN_0 \ \ \ ; \ \ \ M \mapsto \alpha_{K,i}(M)\]
is an additive function on $\Theta$ such that $\alpha_{K,i}\circ \tau_\cG = \alpha_{K,i}$.

{\rm (2)} \ The map
\[\alpha_{K,<p} : \Theta \lra \NN_0 \ \ \ ; \ \ \ M \mapsto \dim_kM-p\, \alpha_{K,p}(M)\]
is an additive function on $\Theta$ such that $\alpha_{K,<p}\circ \tau_\cG = \alpha_{K,<p}$.

{\rm (3)} \ The map
\[ \alpha_{K,p} : \Theta \lra \NN_0 \ \ \ ; \ \ \ M \mapsto \alpha_{K,p}(M)\]
is a subadditive function on $\Theta$, which is additive if and only if $\Theta$ is regular.\end{Theorem}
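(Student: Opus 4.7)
The plan is to convert the $\alpha_K$-split hypothesis into a direct-sum identity at the level of $\fA_{p,K}$-modules, and then to read off the three assertions by tracking Jordan-block multiplicities. Specifically, for each $M \in \Theta$ with almost split sequence $\fE_M : (0) \to \tau_\cG(M) \to E \to M \to (0)$, the assumption that $\alpha_K^\ast(\fE_M \otimes_k K)$ splits yields
\[ \alpha_K^\ast(E_K) \;\cong\; \alpha_K^\ast(\tau_\cG(M)_K) \oplus \alpha_K^\ast(M_K). \]
Comparing the multiplicity of each indecomposable $[i]$ on either side gives $\alpha_{K,i}(E) = \alpha_{K,i}(\tau_\cG(M)) + \alpha_{K,i}(M)$ for every $i \in \{1,\ldots,p\}$.

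For part (1), I would decompose the middle term as $E = E_{\rm nonproj} \oplus P$ with $P$ projective (possibly zero), and observe that since $\alpha_K^\ast$ sends projectives to projectives and the only indecomposable projective $\fA_{p,K}$-module is $[p]$, one has $\alpha_{K,i}(P)=0$ for $1 \le i \le p-1$. Thus $\alpha_{K,i}(E) = \alpha_{K,i}(E_{\rm nonproj}) = \sum_{X \in M^-} {\rm pr}_1(\nu(X,M))\, \alpha_{K,i}(X)$, which is precisely the additivity identity at the vertex $M$. The translation invariance $\alpha_{K,i}\circ \tau_\cG = \alpha_{K,i}$ has already been supplied by Proposition \ref{AFP3}(1).

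Part (2) should then follow immediately from (1), by noting that for any $\cG$-module $M$ we have $\dim_k M = \sum_{i=1}^p i\,\alpha_{K,i}(M)$, hence
\[ \alpha_{K,<p}(M) = \sum_{i=1}^{p-1} i\,\alpha_{K,i}(M), \]
a non-negative integral combination of the additive, $\tau_\cG$-invariant functions from (1).

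For part (3), I would run the same multiplicity comparison for $i=p$, obtaining $\alpha_{K,p}(M) + \alpha_{K,p}(\tau_\cG(M)) = \alpha_{K,p}(E_{\rm nonproj}) + \alpha_{K,p}(P)$, with the first summand equal to $\sum_{X \in M^-} {\rm pr}_1(\nu(X,M))\,\alpha_{K,p}(X)$. Subadditivity is then automatic because $\alpha_{K,p}(P) \ge 0$. For the regularity characterisation, I would argue that $P \ne 0$ forces $\alpha_{K,p}(P) = (\dim_k P)/p > 0$, since $\alpha_K^\ast(P_K)$ is a free $\fA_{p,K}$-module of the same $K$-dimension as $P$. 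Hence equality at $M$ is equivalent to the middle term of $\fE_M$ having no projective summand, and the equivalence with regularity of $\Theta$ follows by quantifying over all $M \in \Theta$. There is no serious obstacle here: the only point that requires a moment of care is this faithfulness-type computation showing that a non-zero projective $\cG$-module has non-zero $[p]$-multiplicity under $\alpha_K^\ast$.
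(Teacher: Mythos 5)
Your proof is correct and follows essentially the same route as the paper: the $\alpha_K$-split hypothesis plus Krull--Remak--Schmidt gives $\alpha_{K,i}(E)=\alpha_{K,i}(M)+\alpha_{K,i}(\tau_\cG(M))$, the decomposition of the middle term into its non-projective part and a projective summand $P$ yields additivity for $i\le p\!-\!1$ and subadditivity with defect $\alpha_{K,p}(P)$ for $i=p$, and $\alpha_{K,<p}=\sum_{i=1}^{p-1}i\,\alpha_{K,i}$ together with Proposition \ref{AFP3}(1) handles (2) and the $\tau_\cG$-invariance. The only cosmetic difference is that you detect the projective summand directly over $k$ via $\alpha_{K,p}(P)=\dim_kP/p>0$, whereas the paper states the criterion for $E_K$ and uses Lemma \ref{AFP1} to transfer it back to $E$.
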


\begin{proof} Let $M$ be a non-projective indecomposable $\cG$-module, and consider the almost split sequence
\[ (0) \lra \tau_\cG(M) \lra E \lra M \lra (0)\]
terminating in $M$. Since $\Theta$ is $\alpha_K$-split, the sequence
\[ (0) \lra \alpha^\ast_K(\tau_\cG(M)_K) \lra \alpha^\ast_K(E_K) \lra \alpha^\ast_K(M_K) \lra (0)\]
is split exact, so that the Theorem of Krull-Remak-Schmidt implies
\[ \alpha_{K,i}(E) = \alpha_{K,i}(\tau_\cG(M))+\alpha_{K,i}(M)\]
for $1\le i\le p$. Upon decomposing $E$ into its indecomposable constituents $E = n_1E_1 \oplus \cdots \oplus n_rE_r \oplus (\text{proj.})$ with $\ E_i\not \cong E_j$ for $i\ne j$, we 
obtain
\[ \alpha_{K,i}(E) = \sum_{j=1}^r n_j \alpha_{K,i}(E_j)\]
for $i \le p\!-\!1$, while
\[ \alpha_{K,p}(E) = \sum_{j=1}^r n_j \alpha_{K,p}(E_j) + \alpha_{K,p}(\text{(proj.)}).\]
Consequently, $\alpha_{K,i}$ is an additive function on $\Theta$ for $i \le p\!-\!1$, while $\alpha_{K,p}$ is a subadditive function, which is additive if and only if there is no middle term $E$
such that $E_K$ has a non-zero projective summand. Owing to Lemma \ref{AFP1}, the functor $M \mapsto M_K$ sends indecomposables to indecomposables and preserves and reflects
projectives. Thus, $E_K$ has a non-zero projective summand if and only if $E$ does. As a result, $\alpha_{p,K}$ is additive if and only if $\Theta$ is regular.

Being a sum of additive functions, $\alpha_{K,<p} = \sum_{i=1}^{p-1}i \alpha_{K,i}$ is also additive. According to (\ref{AFP3}(1)), the additive functions of (1) and (2) are
$\tau_\cG$-invariant. \end{proof}

\bigskip
\noindent
Setting $\max \emptyset := 0$, we record the following consequence of the proof of Theorem \ref{AFP4}:

\bigskip

\begin{Corollary} \label{AFP5} Let $\Theta \cong \ZZ[A_\infty]/\langle \tau^n \rangle$ be an infinite tube, $\alpha_K : \fA_{p,K} \lra K\cG$ be a $\pi$-point with $\Pi(\cG)_\Theta = 
\{[\alpha_K]\}$. The functions $\alpha_{K,i}, \psi_m : \Theta \lra \NN_0$ are eventually additive for $i,m \in \{1,\ldots, p-1\}$, with $\ell(\alpha_{K,i}), \ell(\psi_m) \le 1+\max \{\ql(M) 
\ ; \ M \ \text{is relatively $\alpha_K$-projective}\}$.\end{Corollary}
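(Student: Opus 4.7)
The plan is to localise the argument of Theorem \ref{AFP4} to the set of vertices of $\Theta$ at which the $\alpha_K$-pullback of the almost split sequence is already split. By Lemma \ref{AFP2}(3) this is precisely the set of $M \in \Theta$ that fail to be relatively $\alpha_K$-projective. Setting $L := \max\{\ql(M) \ ; \ M \in \Theta \text{ is relatively } \alpha_K\text{-projective}\}$, I will establish the identity
\[ \alpha_{K,i}(M)+\alpha_{K,i}(\tau_\cG(M)) = \sum_{x \in M^-} \alpha_{K,i}(x)\]
for every $i \le p\!-\!1$ and every $M \in \Theta$ with $\ql(M) \geq L\!+\!1$, and then transfer it to $\psi_m$ by linearity.

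The first step is to verify that $L$ is finite, so that eventual additivity actually kicks in at a finite stage. By Lemma \ref{AFP2}(1), a relatively $\alpha_K$-projective module $M$ has $M_K$ isomorphic to an indecomposable summand of the finite-dimensional $\cG_K$-module $K\cG\otimes_{\fA_{p,K}}[i]$ for some $i \in \{1,\ldots,p\!-\!1\}$. Krull-Remak-Schmidt produces only finitely many isoclasses of such summands, and the Noether-Deuring theorem ensures that $M \mapsto M_K$ is injective on isoclasses of indecomposable $\cG$-modules. Hence there are only finitely many relatively $\alpha_K$-projective indecomposables in $\Theta$ and $L < \infty$.

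For the additivity, let $M \in \Theta$ satisfy $\ql(M) > L$, so that $M$ is not relatively $\alpha_K$-projective. By Lemma \ref{AFP2}(3) the sequence $\alpha_K^\ast(\fE_M\otimes_k K)$ splits, and the Krull-Remak-Schmidt calculation in the proof of Theorem \ref{AFP4} yields $\alpha_K^\ast(M_K)\oplus \alpha_K^\ast(\tau_\cG(M)_K)\cong \alpha_K^\ast(E_K)$ up to a projective $\fA_{p,K}$-summand, where $E$ denotes the middle term of $\fE_M$. The valuations on a tube are trivial, so $E$ coincides with $\bigoplus_{x\in M^-}x$ up to possible projective constituents, which only contribute to $\alpha_{K,p}$. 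Comparing multiplicities of $[i]$ for $i \le p\!-\!1$ yields the additivity identity at $M$; combined with the $\tau_\cG$-invariance in Proposition \ref{AFP3}(1) this gives $\ell(\alpha_{K,i}) \le L\!+\!1$.

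Finally, since $\psi_m = \sum_{i=1}^{m-1}i\,\alpha_{K,i}+m\sum_{i=m}^{p-1}\alpha_{K,i}$ is a non-negative $\NN_0$-linear combination of the $\alpha_{K,i}$ with $i \le p\!-\!1$, the additivity identity for $\psi_m$ at every $M$ with $\ql(M)\geq L\!+\!1$ is immediate, and Proposition \ref{AFP3}(3) furnishes $\tau_\cG$-invariance, so $\ell(\psi_m)\le L\!+\!1$. The main obstacle I anticipate is the finiteness of $L$: the Noether-Deuring input is essential, because without it the set of relatively $\alpha_K$-projective modules is only controlled up to $\cG_K$-isomorphism, which could a priori fan out to infinitely many $\cG$-modules. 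Once this finiteness is secured, the remainder is a routine localisation of the argument of Theorem \ref{AFP4}.
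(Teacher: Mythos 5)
Your argument is correct and is essentially the paper's own proof: finiteness of the set of relatively $\alpha_K$-projective vertices via Lemma \ref{AFP2}(1) (with the Noether--Deuring/base-change injectivity you spell out left implicit in the paper), splitting of $\alpha_K^\ast(\fE_M\!\otimes_k\!K)$ above that quasi-length via Lemma \ref{AFP2}(3), additivity from the Krull--Remak--Schmidt computation in the proof of Theorem \ref{AFP4}, and $\tau_\cG$-invariance from Proposition \ref{AFP3}. Your treatment of $\psi_m$ as an $\NN_0$-linear combination of the $\alpha_{K,i}$ with $i\le p\!-\!1$ is just a made-explicit version of the paper's ``by the same token.''
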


\begin{proof} Given $i \in \{1,\ldots,p\!-\!1\}$, we put $f := \alpha_{K,i}$. Owing to (1) of Lemma \ref{AFP2}, at most finitely many $M \in \Theta$ are relatively $\alpha_K$-projective. 
Hence setting
\[ \ell := 1+\max \{\ql(M) \ ; \ M \ \text{is relatively $\alpha_K$-projective}\},\]
we see that no $M \in \Theta_{(\ell)}$ is relatively $\alpha_K$-projective. The proof of Theorem \ref{AFP4} in conjunction with Lemma \ref{AFP2}(3) now ensures that $f$ is eventually 
additive with $\ell(f) \le \ell$. By the same token, the subadditive functions $\psi_m$ are eventually additive with $\ell(\psi_m) \le \ell$. \end{proof}

\bigskip
\noindent
Let $\Theta \subseteq \Gamma_s(\cG)$ be a component with $\Pi(\cG)_\Theta = \{[\alpha_K]\}$. If $\Theta \subseteq \Gamma_s(\cG)$ is finite, then \cite[(3.3)]{Fa3} ensures that the 
tree class $\bar{T}_\Theta$ is a finite Dynkin diagram. Since the corresponding Cartan matrix is invertible, $f=0$ is the only $\tau_\cG$-invariant additive function on $\Theta$.

More specifically, consider the algebra $k\GG_{a(1)}$, associated to the first Frobenius kernel $\GG_{a(1)}$ of the additive group $\GG_a$. Then $k\GG_{a(1)}$ is isomorphic to $\fA_{p,k}$
and $\Gamma_s(\GG_{a(1)})$ is connected. For every $j \in \{1,\ldots,p\}$ there is an indecomposable module $M_j$ of dimension $j$, and the isomorphism $\alpha_k : \fA_{p,k} \lra
k\GG_{a(1)}$ gives $\alpha_{k,i}(M_j) = \delta_{i,j}$. This shows that, for $p\ge 3$, none of the functions $\alpha_{k,i}$ ($1\le i \le p\!-\!1$) is subadditive (see Proposition \ref{SF1}). 
For $p=2$, the function $\alpha_{k,1} = \psi_1$ is subadditive.

\bigskip

\begin{Corollary} \label{AFP6}  Suppose that $\dim \Pi(\cG) \ge 1$. Let $\Theta \cong \ZZ[A_\infty]/\langle \tau^n \rangle$ be a periodic component of $\Gamma_s(\cG)$, $\alpha_K$
be a $\pi$-point. If there exist $\cG$-modules $M_1, M_2 \in \Theta$ with $\ql(M_j) =j$ and $\dim_K\alpha^\ast_K((M_j)_K)_{\rm pf} = p$ for $1 \le j \le 2$, then $\alpha_{K,i} : 
\Theta_{(\ell(\alpha_{K,i})-1)} \lra \NN_0$ is constant for each $i \in \{1, \ldots, p-1\}$ and $\Theta$ contains a relatively $\alpha_K$-projective module. \end{Corollary}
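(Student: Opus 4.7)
The plan is to first rule out $\Theta$ being $\alpha_K$-split, then exploit subadditivity of $\alpha_{K,<p}$ on the orbit graph to bound each $\alpha_{K,i}$ so that Lemma \ref{SF4}(1) applies.

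Suppose, for contradiction, that $\Theta$ is $\alpha_K$-split. Theorem \ref{AFP4}(2) then makes $f := \alpha_{K,<p}$ an additive, $\tau_\cG$-invariant function on $\Theta$, which is non-zero since $f(M_1) = p$. By Lemma \ref{SF3}(2), $f$ descends to an additive function $\varphi$ on the orbit graph $\Theta/\langle\tau_\cG\rangle$. Since $\Theta \cong \ZZ[A_\infty]/\langle \tau^n\rangle$, this orbit graph is $A_\infty$, and the additive condition at its end-vertex forces $\varphi(2) = 2\varphi(1)$. The hypothesis $f(M_j) = p$ for $j=1,2$ translates to $\varphi(1) = \varphi(2) = p$, giving the contradiction $p = 2p$. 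Hence $\Theta$ is not $\alpha_K$-split. Lemma \ref{AFP2}(3) then furnishes a relatively $\alpha_K$-projective module $M \in \Theta$, yielding the second assertion, while Proposition \ref{AFP3}(2) supplies $\Pi(\cG)_\Theta = \{[\alpha_K]\}$, so that Corollary \ref{AFP5} applies: each $\alpha_{K,i}$ ($1 \le i \le p-1$) is eventually additive and $\tau_\cG$-invariant.

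For the first assertion I would bound each $\alpha_{K,i}$ and then invoke Lemma \ref{SF4}(1). The function $\alpha_{K,<p} = \psi_{p-1}$ is subadditive by Proposition \ref{AFP3}(3) and $\tau_\cG$-invariant by Proposition \ref{AFP3}(1), so the argument of Lemma \ref{SF3}(2) descends it to a subadditive function $\bar{g}$ on $A_\infty$ satisfying $\bar{g}(1) = \bar{g}(2) = p$. The subadditivity condition $\bar{g}(n+1) + \bar{g}(n-1) \le 2\bar{g}(n)$ ($n \ge 2$) makes the sequence of differences $\bar{g}(n+1) - \bar{g}(n)$ non-increasing; since this difference already vanishes at $n = 1$, all subsequent differences are $\le 0$, and hence $\bar{g}(n) \le p$ for every $n \ge 1$. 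The inequality $i\,\alpha_{K,i}(M) \le \alpha_{K,<p}(M) \le p$ then bounds each $\alpha_{K,i}$, and Lemma \ref{SF4}(1) yields that $\alpha_{K,i}$ is constant on $\Theta_{(\ell(\alpha_{K,i})-1)}$.

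The main obstacle is identifying the correct auxiliary function. The individual $\alpha_{K,i}$ are only known to be eventually additive, so Lemma \ref{SF4}(2) is unavailable for them directly; one has to work with $\psi_{p-1} = \alpha_{K,<p}$, whose subadditivity on the orbit graph $A_\infty$ both contradicts the would-be additive behaviour of an $\alpha_K$-split scenario and delivers the uniform bound $p$ needed to feed into Lemma \ref{SF4}(1).
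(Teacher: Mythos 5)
Your argument is correct, and it reaches both conclusions by a route that genuinely differs from the paper's. The paper first exploits the hypotheses $\dim \Pi(\cG) \ge 1$ and periodicity of $\Theta$ to produce a second $\pi$-point $\beta_L$ under which every module of $\Theta$ restricts projectively, whence $p \mid \dim_k M$ and so $p \mid \psi_{p-1}(M)$ for all $M \in \Theta$; combined with Lemma \ref{SF4}(2) this gives the sharper intermediate statement $\psi_{p-1} \equiv p$ on all of $\Theta$, from which boundedness of the $\alpha_{K,i}$ follows, and the relatively $\alpha_K$-projective module is only extracted at the end (not all $\alpha_{K,i}$ vanish, so some $\alpha_{K,i}$ fails to be additive, so some pulled-back almost split sequence does not split). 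You instead obtain non-splitness up front, by observing that $\alpha_K$-splitness would make $\alpha_{K,<p}$ additive and $\tau_\cG$-invariant (Theorem \ref{AFP4}), forcing $\varphi(2)=2\varphi(1)$ at the mouth of the tube against $\varphi(1)=\varphi(2)=p$; and you replace the divisibility argument by a purely combinatorial concavity bound: the $\tau_\cG$-invariant subadditive function $\psi_{p-1}$ has non-increasing differences along quasi-lengths, and since the first difference vanishes you get $\psi_{p-1}\le p$ everywhere, which suffices to bound each $\alpha_{K,i}$ and feed Lemma \ref{SF4}(1) (after Proposition \ref{AFP3}(2) and Corollary \ref{AFP5} supply eventual additivity). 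What each approach buys: the paper's detour through $\beta_L$ and Lemma \ref{SF4}(2) yields the stronger fact that the stable dimension $\psi_{p-1}$ is exactly constant equal to $p$ on $\Theta$, while your argument is more elementary and, notably, never uses the hypothesis $\dim \Pi(\cG) \ge 1$, so it establishes the stated conclusions under weaker assumptions.
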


\begin{proof} Since $\Theta$ is a periodic component, we have $\dim \Pi(\cG)_\Theta = 0$. Thus, $\Pi(\cG) \ne \Pi(\cG)_\Theta$, and there exists a $\pi$-point $\beta_L$ such that 
$\beta^\ast_L(M_L)$ is projective for every $M \in \Theta$. Consequently, the dimension of every module $M \in \Theta$ is divisible by $p$, so that
\[ p\!\mid\!\dim_K\alpha^\ast_K(M_K)_{\rm pf} = \psi_{p-1}(M)\]
for all $M \in \Theta$. Thanks to Proposition \ref{AFP3}, the function $\psi_{p-1}$ is subadditive with $\psi_{p-1}\circ \tau_\cG = \psi_{p-1}$. Lemma \ref{SF4}(2) now yields
\[ \psi_{p-1}(M) = p \ \ \ \ \ \forall \ M \in \Theta.\]
In particular, each of the functions $\alpha_{K,i} : \Theta \lra \NN_0,\,  (1\le i\le p\!-\!1)$ is bounded and a consecutive application of Corollary \ref{AFP5} and Lemma \ref{SF4}(1) now
ensures that each $\alpha_{K,i}|_{\Theta_{(\ell(\alpha_{K,i})-1)}}$ is constant.

Since not all $\alpha_{K,i}$ are identically zero, we conclude that at least one $\alpha_{K,i}$ is not additive. A consecutive application of Theorem \ref{AFP4} and Lemma \ref{AFP2} now 
yields the existence of a vertex $M \in \Theta$ which is relatively $\alpha_K$-projective. \end{proof}

\bigskip

\section{Invariants of Auslander-Reiten Components}\label{S:IAR}
In this section we show that various notions introduced in \cite{CFP,FPS} give rise to invariants of stable Auslander-Reiten components of $k\cG$. In view of Theorem \ref{AFP4}, we shall
focus on those components $\Theta \subseteq \Gamma_s(\cG)$ that are $\alpha_K$-split for every $\pi$-point $\alpha_K$. Thanks to Lemma \ref{AFP2} and \cite[(3.3)]{Fa3}, the remaining
components are either of the form $\ZZ[A_\infty]/\langle \tau^n \rangle$ or are of finite Dynkin type, in which case their vertices form the non-projective modules of a block $\cB \subseteq 
k\cG$ of finite representation type, see \cite[(VI.1.4)]{ARS}. In the following, we let $\Pt(\cG)$ be the set of $\pi$-points of $\cG$.

\bigskip

\begin{Definition} A component $\Theta \subseteq \Gamma_s(\cG)$ is called {\it locally split} if $\Theta$ is $\alpha_K$-split for every $\alpha_K \in \Pt(\cG)$.\end{Definition}

\bigskip

\subsection{The Fundamental Invariants}
Given a component $\Theta \subseteq \Gamma_s(\cG)$, we define
\[ \Pt(\cG,\Theta) := \{ \alpha_K \in \Pt(\cG) \ ; \ \Theta \text{\ is $\alpha_K$-split}\}.\]
Our first result provides for every infinite component $\Theta \subseteq \Gamma_s(\cG)$ a function $d^\Theta : \Pt(\cG,\Theta) \lra \NN^p_0$ on which our new invariants will depend.

\bigskip

\begin{Thm}\label{FI1} Let $\Theta \subseteq \Gamma_s(\cG)$ be an infinite component. Then there exist an additive function $f_\Theta : \Theta \lra \NN$ and a function $d^\Theta : 
\Pt(\cG,\Theta) \lra \NN_0^p$ with
\[ \alpha_{K,i}(M_K) = d_i^\Theta(\alpha_K)f_\Theta(M) \ \ 1 \le i \le p\!-\!1 \ \ \text{and} \ \ \alpha_{K,<p}(M_K) = d_p^\Theta(\alpha_K)f_\Theta(M)\]
for every $M \in \Theta$ and every $\alpha_K \in \Pt(\cG,\Theta)$. \end{Thm}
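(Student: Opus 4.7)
The plan is to exhibit a single ``base'' additive $\tau_\cG$-invariant function $f_\Theta : \Theta \lra \NN$ with the property that every additive $\tau_\cG$-invariant $\NN_0$-valued function on $\Theta$ is an integer multiple of $f_\Theta$, and then apply Theorem \ref{AFP4} to the functions $\alpha_{K,i}$ and $\alpha_{K,<p}$ for each $\alpha_K \in \Pt(\cG,\Theta)$.

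First, I would analyse the tree class of the infinite component $\Theta$. By Riedtmann's Struktursatz we have $\Theta \cong \ZZ[T_\Theta]/G$ with $G$ admissible, and classification results (Webb/Erdmann) for self-injective algebras restrict $\bar{T}_\Theta$ to $A_\infty$, $A^\infty_\infty$, $D_\infty$, or a Euclidean diagram $\tilde{A}_{p,q}, \tilde{D}_n, \tilde{E}_{6,7,8}$. Via Lemma \ref{SF3}(1) the orbit graph $\Theta/\langle \tau_\cG\rangle$ acquires a valued graph structure isomorphic to this tree class, and Lemma \ref{SF3}(2)--(3) identifies additive $\tau_\cG$-invariant functions on $\Theta$ with additive functions on the orbit graph. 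For each of the listed valued graphs the $\QQ$-space of additive functions is one-dimensional (it is the kernel of the associated Cartan matrix), so there is a unique minimal positive integer-valued generator $\varphi$. I would set $f_\Theta(x) := \varphi([x])$.

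With $f_\Theta$ in hand, I invoke Theorem \ref{AFP4}: for every $\alpha_K \in \Pt(\cG,\Theta)$ the functions $\alpha_{K,i}$ ($1\le i \le p\!-\!1$) and $\alpha_{K,<p}$ are additive on $\Theta$ and $\tau_\cG$-invariant. Hence they descend to additive functions on $\Theta/\langle \tau_\cG\rangle$ and, by the one-dimensionality above, each is a nonnegative integer multiple of $\varphi$. Denoting this multiplier by $d^\Theta_i(\alpha_K)$ for $1 \le i \le p\!-\!1$, and by $d^\Theta_p(\alpha_K)$ for $\alpha_{K,<p}$, yields the required equalities
\[ \alpha_{K,i}(M) = d^\Theta_i(\alpha_K)\,f_\Theta(M), \qquad \alpha_{K,<p}(M) = d^\Theta_p(\alpha_K)\,f_\Theta(M). \]
The sign-dichotomy in Proposition \ref{SF1}(1) (each such function is either identically zero or strictly positive) ensures that the multiplier is a well-defined element of $\NN_0$: on any orbit where the descended function is nonzero, the quotient by $\varphi$ is a fixed positive integer independent of the chosen vertex.

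I expect the principal obstacle to be Step 1 — namely, pinning down that the space of additive $\tau_\cG$-invariant functions is one-dimensional. This requires the classification of tree classes for infinite components of $\Gamma_s(\cG)$, after which verifying the one-dimensionality of $\ker C$ for each diagram is a direct linear algebra computation, aided in the Euclidean cases by the explicit knowledge of the minimal imaginary root. Once that is secured, the rest of the argument is essentially a formal consequence of Theorem \ref{AFP4} and Lemma \ref{SF3}.
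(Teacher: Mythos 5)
Your overall strategy---construct a minimal positive additive $\tau_\cG$-invariant function $f_\Theta$ and then divide the additive functions supplied by Theorem \ref{AFP4} by it---is the same as the paper's, but your construction of $f_\Theta$ has a genuine gap at the descent step. You pass to the orbit graph $\Theta/\langle\tau_\cG\rangle$ and assert that, via Lemma \ref{SF3}, it carries a valued graph structure isomorphic to the tree class $\bar T_\Theta$. Two things are unjustified. First, Lemma \ref{SF3} requires $\langle\tau_\cG\rangle$ to be an \emph{admissible} subgroup of ${\rm Aut}(\Theta)$; Lemma \ref{SF2} only provides admissibility of $\langle\tau\rangle$ inside the cover $\ZZ[T_\Theta]$, not inside a quotient $\Theta\cong\ZZ[T_\Theta]/G$. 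Second, even where admissibility holds, the $\tau_\cG$-orbit graph of $\Theta$ need not be $\bar T_\Theta$: if the admissible group $G$ of the Riedtmann presentation contains an element $\tau^a\sigma$ with $\sigma$ a nontrivial automorphism of the tree (a shift of $A_\infty^\infty$, the swap of the short arms of $D_\infty$ or $\tilde D_n$, etc.), then the $\tau_\cG$-orbits are strictly larger than the quasi-position fibres, the orbit graph is a proper quotient of $\bar T_\Theta$ (possibly a finite cycle), and in extreme cases admissibility of $\langle\tau_\cG\rangle$ fails outright; your appeal to the list of tree classes then no longer applies. The paper avoids exactly this by working on the universal cover: it builds $f_{\ZZ[T_\Theta]}$ from the minimal additive function on $\bar T_\Theta$ (where Lemma \ref{SF2} does apply), uses Proposition \ref{SF1}(2) to show this function is invariant under the \emph{full} automorphism group, in particular under $G$, and only then descends it along the covering $\pi:\ZZ[T_\Theta]\to\Theta$ to define $f_\Theta$. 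Your argument needs either this detour or a separate verification that no tree-twisting admissible groups occur, which you do not give.

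A secondary inaccuracy: the uniqueness you invoke is not that the $\QQ$-kernel of the Cartan matrix is one-dimensional; for $A_\infty^\infty$ that kernel is two-dimensional (constants and linear functions). The statement actually needed, and the one the paper takes from \cite{HPR2} and \cite{ARS}, is that the $\NN$-valued additive functions on each relevant graph are precisely the positive integral multiples of one function---positivity is what eliminates the unbounded solutions on $A_\infty^\infty$. With that corrected and the descent repaired as above, your final step (Theorem \ref{AFP4} plus Proposition \ref{SF1} giving $\alpha_{K,i}=d_i^\Theta(\alpha_K)f_\Theta$ and $\alpha_{K,<p}=d_p^\Theta(\alpha_K)f_\Theta$) is correct and coincides with the paper's.
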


\begin{proof} According to \cite[(3.2)]{Fa3}, the tree class $\bar{T}_\Theta$ is either a simply laced finite or infinite Dynkin diagram, a simply laced Euclidean diagram, or $\tilde{A}_{12}$. 
If $\bar{T}_\Theta$ is a finite Dynkin diagram, then \cite[(3.3)]{Fa3} implies that each vertex of $\Theta$ is periodic, so that $\Theta$ is finite. Consequently, the tree class $\bar{T}_\Theta$
belongs to $\{A_\infty, \, A_\infty^\infty,\, D_\infty, \, \tilde{A}_{12},\, \tilde{D}_n,\, \tilde{E}_{6,7,8}\}$, with each tree carrying the standard valuation.

For each tree class $\bar{T}_\Theta$ there exists an additive function $\varphi_{\bar{T}_\Theta} : \bar{T}_\Theta \lra \NN$ such that every other additive function on $\bar{T}_\Theta$ is 
an integral multiple of $\varphi_{\bar{T}_\Theta}$ (cf.\ \cite[p.243ff]{ARS} or \cite[Thm.2, Rem. p.328]{HPR2}). We define
\[ f_{ \ZZ[T_\Theta]} : \ZZ[T_\Theta] \lra \NN \ \ ; \ \ (n,x) \mapsto \varphi_{\bar{T}_\Theta}(x).\]
By (\ref{SF2}) and (\ref{SF3}), $\bar{T}_\Theta$ is the orbit graph of $\ZZ[T_\Theta]$ and Lemma \ref{SF3}(3) ensures that $f_{\ZZ[T_\Theta]}$ is an additive function on 
$\ZZ[T_\Theta]$ such that
\[ f_{\ZZ[T_\Theta]}\circ \tau = f_{\ZZ[T_\Theta]}.\]
Moreover, any other additive function $f : \ZZ[T_\Theta] \lra \NN$ with $f\circ \tau = f$ is an integral multiple of $f_{\ZZ[T_\Theta]}$.

The Riedtmann structure theorem \cite[(4.15)]{Be1} provides an admissible subgroup $G \subseteq {\rm Aut}(\ZZ[T_\Theta])$ such that
\[ \Theta \cong \ZZ[T_\Theta]/G.\]
Since $G$ is admissible, the canonical projection map $\pi : \ZZ[T_\Theta] \lra \Theta$ is a covering of valued stable translation quivers, that is, $\pi$ induces bijections $x^+ 
\stackrel{\sim}{\lra} \pi(x)^+$ and $x^- \stackrel{\sim}{\lra} \pi(x)^-$ for every $x \in \ZZ[T_\Theta]_0$. Thanks to Proposition \ref{SF1}(2), we have $f_{\ZZ[T_\Theta]}\circ g = 
f_{\ZZ[T_\Theta]}$ for every $g \in G$. Consequently, there exists a function $f_\Theta : \Theta \lra \NN$ with
\[ f_\Theta \circ \pi = f_{\ZZ[T_\Theta]}.\]
As $\pi$ a covering, $f_\Theta$ is additive and every other additive function $f : \Theta \lra \NN$ is an integral multiple of $f_\Theta$.

Let $\alpha_K \in \Pt(\cG,\Theta)$ be a $\pi$-point. Since $\Theta$ is $\alpha_K$-split, Theorem \ref{AFP4} ensures that each $\alpha_{K,i}$ $(1\le i \le p\!-\!1)$ is an additive function, 
with $\alpha_{K,i} \circ \tau_\cG = \alpha_{K,i}$. In view of Proposition \ref{SF1}, we either have $\alpha_{K,i} = 0$ or $\alpha_{K,i}(M) \in \NN$ for every $M \in \Theta$. We thus set 
$d_i^\Theta(\alpha_K)=0$ whenever $\alpha_{K,i} = 0$.

Suppose that $\alpha_{K,i} \neq 0$. Since $\alpha_{K,i} \circ \tau_\cG = \alpha_{K,i}$, there exists $d_i^\Theta(\alpha_K) \in \NN$ with
\[ \alpha_{K,i} = d_i^\Theta(\alpha_K)f_\Theta.\]
The same reasoning yields the existence of $d_p^\Theta(\alpha_K)$.  \end{proof}

\bigskip
\noindent
For future reference, we record a few special instances:

\bigskip

\begin{Cor} \label{FI2} Let $\Theta \subseteq \Gamma_s(\cG)$ be a component.

{\rm (1)} \  If $\Theta$ has tree class $\bar{T}_\Theta \cong A_\infty$, then we have
\[ \alpha_{K,i}(M_K) = d_i^\Theta(\alpha_K)\ql(M) \ \ \text{and} \ \ \alpha_{K,<p}(M_K) = d_p^\Theta(\alpha_K)\ql(M)\]
for every $i \in \{1,\ldots,p\!-\!1\}$, every $M \in \Theta$ and every $\alpha_K \in \Pt(\cG,\Theta)$.

{\rm (2)} \ If $\bar{T}_\Theta \cong \tilde{A}_{12},\, A_\infty^\infty$, then we have
\[ \alpha_{K,i}(M_K) = d_i^\Theta(\alpha_K) \ \ \text{and} \ \ \alpha_{K,<p}(M_K) = d_p^\Theta(\alpha_K)\]
for every $i \in \{1,\ldots,p\!-\!1\}$, every $M \in \Theta$ and every $\alpha_K \in \Pt(\cG)$.\end{Cor}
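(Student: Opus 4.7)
The plan is to invoke Theorem \ref{FI1} and then identify the minimal $\tau_\cG$-invariant additive function $f_\Theta$ explicitly for each of the three tree classes under consideration. According to the proof of Theorem \ref{FI1}, $f_\Theta$ is obtained from the essentially unique positive additive function $\varphi_{\bar{T}_\Theta}$ on the tree $\bar{T}_\Theta$ by first pulling back to $\ZZ[T_\Theta]$ via Lemma \ref{SF3}(3) and then pushing down along the covering $\pi : \ZZ[T_\Theta] \lra \Theta$. Thus for any $M \in \Theta$ with preimage $(n,x) \in \ZZ[T_\Theta]$ we have $f_\Theta(M) = \varphi_{\bar{T}_\Theta}(x)$, and it suffices in each case to compute $\varphi_{\bar{T}_\Theta}$ on the relevant vertex.

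For part (1), the standard positive additive function on $A_\infty$ (vertex set $\NN$, arrows $n \to n\!+\!1$) is $\varphi_{A_\infty}(n) = n$, determined by the recursion $2\varphi(n) = \varphi(n\!-\!1) + \varphi(n\!+\!1)$ for $n \ge 2$ together with the boundary condition $2\varphi(1) = \varphi(2)$. Its lift to $\ZZ[A_\infty]$ sends $(m,n) \mapsto n$, which by definition equals $\ql((m,n))$. Since the covering $\pi$ is a morphism of valued stable translation quivers that induces bijections on successor and predecessor sets, it preserves the distance to the boundary, so $f_\Theta(M) = \ql(M)$. Substitution into Theorem \ref{FI1} gives the formulas of (1).

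For part (2), I first observe that $\Pt(\cG,\Theta) = \Pt(\cG)$: were $\Theta$ not $\alpha_K$-split for some $\pi$-point $\alpha_K$, Proposition \ref{AFP3}(2) would force $\Theta$ to be finite or a tube $\ZZ[A_\infty]/\langle \tau^n\rangle$, whose tree classes are finite Dynkin or $A_\infty$, contradicting the hypothesis. Next I identify $f_\Theta$: on $A_\infty^\infty$ (vertex set $\ZZ$, edges $n$—$(n\!+\!1)$), the additivity condition $2\varphi(n) = \varphi(n\!-\!1) + \varphi(n\!+\!1)$ makes $\varphi$ an affine function of $n$, and $\NN$-valuedness on all of $\ZZ$ forces $\varphi$ to be constant. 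The valued graph $\tilde{A}_{12}$ admits only constant positive additive functions by the analogous direct computation on its vertices. Hence $\varphi_{\bar{T}_\Theta} \equiv 1$ in both cases, so $f_\Theta \equiv 1$, and Theorem \ref{FI1} yields (2).

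The main delicacy lies in part (1): tying the combinatorial fact $\varphi_{A_\infty}(n) = n$ to the geometric quantity $\ql(M)$ on $\Theta$ via the covering $\pi$. This comes down to the local bijectivity of $\pi$ on arrow sets, which guarantees that shortest paths to boundary vertices in $\ZZ[A_\infty]$ descend to shortest paths in $\Theta$; everything else is routine once the respective additive functions on the tree classes are made explicit.
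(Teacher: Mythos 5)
Your proposal is correct and follows essentially the same route as the paper: both rest on Theorem \ref{FI1} together with the identification $f_\Theta = \ql$ for tree class $A_\infty$ and $f_\Theta \equiv 1$ for $\tilde{A}_{12}$ and $A_\infty^\infty$ (which you verify by direct computation of the additive functions, where the paper cites \cite{HPR2}). The only cosmetic difference is that in (2) you deduce $\Pt(\cG,\Theta) = \Pt(\cG)$ from the dichotomy of Proposition \ref{AFP3}(2) and a tree-class comparison, whereas the paper quotes $\dim \Pi(\cG)_\Theta \ge 1$ from \cite[(3.3)]{Fa3}; these amount to the same underlying facts.
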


\begin{proof} (1) If $\Theta$ has tree class $A_\infty$, then $f_{\Theta}(M) = \ql(M)$ for every $M \in \Theta$. Since $\Theta$ is infinite, our assertion follows from Theorem \ref{FI1}.

(2) According to \cite[(3.3)]{Fa3}, we have $\dim \Pi(\cG)_\Theta \ge 1$, so that $\Theta$ is locally split and $\Pt(\cG,\Theta) = \Pt(\cG)$. Since $f_\Theta \equiv 1$ for $\bar{T}_\Theta 
\cong \tilde{A}_{12},\, A_\infty^\infty$ (cf.\ \cite[Thm.2, Rem.~p.~328]{HPR2}), our assertion follows directly from Theorem \ref{FI1}. \end{proof}

\bigskip

\subsection{Sets of Jordan types}
Let $M$ be a $\cG$-module. Then
\[\Jt(M) := \{\Jt(M,\alpha_K) \ ; \ \alpha_K \in \Pt(\cG)\}\]
is a finite set, the {\it set of Jordan types of $M$}. Following \cite{CFP}, we say that a $\cG$-module $M$ has {\it constant Jordan type}, provided $\Jt(M)$ is a singleton. In that case, we
call $\Jt(M)$ the Jordan type of $M$.

Our first application, which generalizes \cite[(8.7)]{CFP}, provides a new invariant for locally split Auslander-Reiten components.

\bigskip

\begin{Cor} \label{SJ1} Let $\Theta \subseteq \Gamma_s(\cG)$ be a locally split component. Then
\[ |\Jt(M)| = |\im d^\Theta|\]
for every $M \in \Theta$.\end{Cor}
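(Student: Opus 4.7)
The plan is to show that, for each $M \in \Theta$, the assignment $\alpha_K \mapsto \Jt(M,\alpha_K)$ factors through the map $d^\Theta$ and that the induced map on $\im d^\Theta$ is a bijection onto $\Jt(M)$.

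First I would record the obvious fact that since $\Theta$ is locally split we have $\Pt(\cG,\Theta) = \Pt(\cG)$, so that $d^\Theta : \Pt(\cG) \lra \NN_0^p$ is defined on every $\pi$-point. Next I would unpack the definition of Jordan type: for each $\alpha_K \in \Pt(\cG)$ the isomorphism class $\Jt(M,\alpha_K)$ is completely determined by the multiplicity vector $(\alpha_{K,1}(M),\ldots,\alpha_{K,p}(M))$. By Theorem \ref{FI1}, for $1\le i\le p\!-\!1$ we have $\alpha_{K,i}(M) = d_i^\Theta(\alpha_K) f_\Theta(M)$ and $\alpha_{K,<p}(M) = d_p^\Theta(\alpha_K) f_\Theta(M)$; since $\alpha_{K,p}(M) = (\dim_k M - \alpha_{K,<p}(M))/p$, the last multiplicity is likewise determined by $d_p^\Theta(\alpha_K)$ together with the fixed integer $\dim_k M$. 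Consequently $\Jt(M,\alpha_K)$ depends on $\alpha_K$ only through $d^\Theta(\alpha_K)$, which yields a well-defined surjection $\Psi_M : \im d^\Theta \twoheadrightarrow \Jt(M)$.

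The remaining step, and the only place where care is needed, is to show that $\Psi_M$ is injective. Suppose $d^\Theta(\alpha_K) \ne d^\Theta(\beta_L)$, and fix an index $j$ where the two vectors disagree. Here I would invoke the fact that $f_\Theta : \Theta \lra \NN$ constructed in Theorem \ref{FI1} is strictly positive, so $f_\Theta(M) \ge 1$. If $j\le p\!-\!1$ this forces $\alpha_{K,j}(M) \ne \beta_{L,j}(M)$, and if $j=p$ it forces $\alpha_{K,<p}(M) \ne \beta_{L,<p}(M)$ and hence $\alpha_{K,p}(M) \ne \beta_{L,p}(M)$. Either way the multiplicity vectors differ, so $\Jt(M,\alpha_K) \ne \Jt(M,\beta_L)$, proving injectivity.

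Combining surjectivity and injectivity of $\Psi_M$ gives the claimed equality $|\Jt(M)| = |\im d^\Theta|$. The only real obstacle is the injectivity step, and it reduces to the positivity of $f_\Theta$, which is guaranteed by Proposition \ref{SF1}(1) applied inside the proof of Theorem \ref{FI1}; everything else is a direct translation of the Jordan-type data into the additive function $f_\Theta$ via the coefficients $d_i^\Theta$.
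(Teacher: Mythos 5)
Your proof is correct and follows essentially the same route as the paper: the paper's argument is a one-line appeal to Theorem \ref{FI1}, asserting that $\Jt(M,\alpha_K) = \Jt(M,\beta_L)$ if and only if $d^\Theta(\alpha_K) = d^\Theta(\beta_L)$, which is precisely the bijection $\im d^\Theta \leftrightarrow \Jt(M)$ you establish. Your extra care in recovering $\alpha_{K,p}(M)$ from $\dim_k M$ and in using the strict positivity of $f_\Theta$ for injectivity just makes explicit what the paper leaves implicit in that equivalence.
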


\begin{proof} Let $M$ be an element of $\Theta$, $\alpha_K, \beta_L \in \Pt(\cG)$ be $\pi$-points. Thanks to Theorem \ref{FI1}, we have $\Jt(M,\alpha_K) = \Jt(M,\beta_L)$ if and only 
if $d^\Theta(\alpha_K) = d^\Theta(\beta_L)$. Consequently, $|\Jt(M)| = |\im d^\Theta|$, as desired. \end{proof}

\bigskip
\noindent
In view of Corollary \ref{SJ1}, the presence of a module $M$ of constant Jordan type in a locally split component $\Theta$ implies that all modules of $\Theta$ have constant Jordan type. 
This is the content of \cite[(8.7)]{CFP}, provided that $\dim \Pi(\cG) \ge 1$. In the remaining case, $\Pi(\cG)$ is a singleton (cf.\ \cite[(3.4)]{CFP}), so that every $\cG$-module has 
constant Jordan type (cf.\ \cite[(4.10)]{FPS}).

\bigskip
\noindent
Let $M$ be a $\cG$-module, $\alpha_K : \fA_{p,K} \lra K\cG$ be a $\pi$-point. Then the isomorphism class of
\[ \bigoplus_{i=1}^{p-1} \alpha_{K,i}(M)[i] \in \modd \fA_{p,K}\]
is referred to as the {\it stable Jordan type $\StJt(M,\alpha_K)$ of $M$ with respect to $\alpha_K$}. We let $\StJt(M)$ be the set of stable Jordan types of $M$. Obviously, a $\cG$-module 
has constant Jordan type if and only if it has constant stable Jordan type.

\bigskip

\begin{Cor} \label{SJ2} Suppose that $\Theta \subseteq \Gamma_s(\cG)$ is a component with $\dim \Pi(\cG)_\Theta \ge 1$ and $\bar{T}_\Theta \not \cong A_\infty$. If $\alpha_K$ is a 
$\pi$-point, then $|\{ \StJt(M,\alpha_K) \ ; \ M \in \Theta\}| = |\im f_\Theta| \le 6$.\end{Cor}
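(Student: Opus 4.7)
The plan is to combine Theorem \ref{FI1} with an explicit inspection of the possible tree classes. First, I would observe that the hypothesis $\dim \Pi(\cG)_\Theta \ge 1$ rules out both alternatives of Proposition \ref{AFP3}(2): if $\Theta$ were not $\alpha_K$-split for some $\pi$-point $\alpha_K$, it would be finite (tree class finite Dynkin) or an infinite tube (tree class $A_\infty$), both of which force $\Pi(\cG)_\Theta$ to be zero-dimensional. Hence $\Theta$ is locally split, $\Pt(\cG,\Theta) = \Pt(\cG)$, and Theorem \ref{FI1} applies. Combined with the exclusion $\bar{T}_\Theta \not\cong A_\infty$ and the classification of tree classes of stable AR-components \cite[(3.2)]{Fa3}, the possibilities narrow to $\bar{T}_\Theta \in \{A_\infty^\infty,\, D_\infty,\, \tilde{A}_{12},\, \tilde{D}_n,\, \tilde{E}_6,\, \tilde{E}_7,\, \tilde{E}_8\}$.

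Next, I would substitute the formula of Theorem \ref{FI1} into the definition of the stable Jordan type to obtain
$$\StJt(M,\alpha_K) \ = \ \bigoplus_{i=1}^{p-1} \alpha_{K,i}(M_K)[i] \ = \ f_\Theta(M) \cdot \Bigl(\bigoplus_{i=1}^{p-1} d_i^\Theta(\alpha_K)[i]\Bigr).$$
Thus $M \mapsto \StJt(M,\alpha_K)$ factors through $f_\Theta$ and is injective on $\im f_\Theta$ whenever at least one $d_i^\Theta(\alpha_K)$ with $1 \le i \le p\!-\!1$ is non-zero, which occurs precisely when $[\alpha_K] \in \Pi(\cG)_\Theta$. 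Under this (tacit) supporting hypothesis on the chosen $\pi$-point, the induced map $\im f_\Theta \to \{\StJt(M,\alpha_K) \ ; \ M \in \Theta\}$ is bijective, giving the first equality.

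Finally, for the bound, I would invoke the standard tables of minimal $\tau$-invariant positive additive functions on the tree classes isolated above (cf.\ \cite[p.~243ff]{ARS}, \cite[Thm.~2, Rem.~p.~328]{HPR2}): the function $f_\Theta$ is constantly $1$ on $A_\infty^\infty$ and on $\tilde{A}_{12}$; takes values in $\{1,2\}$ on $D_\infty$ and on $\tilde{D}_n$; in $\{1,2,3\}$ on $\tilde{E}_6$; in $\{1,2,3,4\}$ on $\tilde{E}_7$; and attains $\{1,2,3,4,5,6\}$ on $\tilde{E}_8$. This yields $|\im f_\Theta| \le 6$, with equality (sharpness) for $\bar{T}_\Theta \cong \tilde{E}_8$. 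The only genuinely non-routine step in this programme is the bookkeeping of the admissible tree classes together with their minimal additive functions; all representation-theoretic content is absorbed by Theorem \ref{FI1}, so no further module-theoretic input is needed.
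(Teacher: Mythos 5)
Your argument is correct and follows essentially the same route as the paper: reduce to the tree classes $\{\tilde{A}_{12}, \tilde{D}_n, \tilde{E}_{6,7,8}, A_\infty^\infty, D_\infty\}$, apply Theorem \ref{FI1} to identify the set of stable Jordan types with $\im f_\Theta$, and read off $|\im f_\Theta|$ from the known minimal additive functions in \cite{HPR2}. The only difference is that you spell out two points the paper leaves implicit, namely that $\dim \Pi(\cG)_\Theta \ge 1$ forces $\Theta$ to be locally split (so the formula applies to every $\pi$-point) and that the bijection with $\im f_\Theta$ uses $[\alpha_K] \in \Pi(\cG)_\Theta$, i.e.\ some $d_i^\Theta(\alpha_K) \ne 0$ for $1 \le i \le p\!-\!1$ -- a tacit hypothesis you correctly flag.
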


\begin{proof} According to \cite[(3.2),(3.3)]{Fa3}, the tree class of $\Theta$ belongs to $\{\tilde{A}_{12}, \tilde{D}_n, \tilde{E}_{6,7,8}, A_\infty^\infty, D_\infty\}$. It follows from 
Theorem \ref{FI1} that $|\{ \StJt(M,\alpha_K) \ ; \ M \in \Theta\}|$ is equal to the cardinality of $\im f_\Theta$. According to  \cite[Thm.2, Rem. p.328]{HPR2} this number is $1$ for 
$\bar{T}_\Theta = \tilde{A}_{12}, A_\infty^\infty$; $2$ for $\bar{T}_\Theta = \tilde{D}_n, D_\infty$; $3$ for $\bar{T}_\Theta = \tilde{E}_6$; $4$ for $\bar{T}_\Theta =\tilde{E}_7$, 
and $6$ for $\bar{T}_\Theta = \tilde{E}_8$. \end{proof}

\bigskip

\begin{Remark} If $\cG = G_r$ is a Frobenius kernel of a reductive group $G$, then the support of every component $\Theta \subseteq \Gamma_s(G_r)$ of tree class $\bar{T}_\Theta \ne 
A_\infty$ has dimension $\dim \Pi(G_r)_\Theta \ge 1$ and the trees $\tilde{E}_{6,7,8}$ do not occur (cf.\ \cite[(4.1)]{Fa1}). We therefore have  $|\{ \StJt(M,\alpha_K) \ ; \ M \in 
\Theta\}| \le 2$ for every component with $\bar{T}_\Theta \not \cong A_\infty$.\end{Remark}

\bigskip
\noindent
Let $\alpha_K \in \Pt(\cG)$ be a $\pi$-point, $M \in \modd \cG$ be a $\cG$-module. Then
\[ \supp_{\alpha_K}(M) := \{ i \in \{1,\ldots,p\!-\!1\} \ ; \ \alpha_{K,i}(M) \ne 0\}\]
is the {\it $\alpha_K$-support} of $M$. The following immediate consequence of Theorem \ref{FI1} provides useful invariants for locally split components, enabling us to distinguish 
components having the same $\pi$-supports (cf.\ Corollary \ref{Ex2} below).

\bigskip

\begin{Cor} \label{SJ3} Let $\Theta \subseteq \Gamma_s(\cG)$ be a component. If $\alpha_K \in \Pt(\cG,\Theta)$, then
\[ \supp_{\alpha_K}(M) = \{i \in \{1,\ldots,p\!-\!1\} \ ; \ d^\Theta_i(\alpha_K) \ne 0\}\]
for every $M \in \Theta$. \hfill $\square$ \end{Cor}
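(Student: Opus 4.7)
The statement falls out directly from Theorem \ref{FI1}, so the plan is essentially a one-step deduction. First, I apply Theorem \ref{FI1} to the component $\Theta$, obtaining an additive function $f_\Theta : \Theta \lra \NN$ together with the factorization
$$\alpha_{K,i}(M_K) = d^\Theta_i(\alpha_K)\, f_\Theta(M) \qquad (1 \le i \le p\!-\!1),$$
valid for every $M \in \Theta$ and every $\alpha_K \in \Pt(\cG,\Theta)$. The crucial feature is that $f_\Theta$ takes values in $\NN$ and not merely in $\NN_0$: it is constructed as the descent of an everywhere-positive additive function $\varphi_{\bar T_\Theta}$ on the tree class, so $f_\Theta(M) \ge 1$ at every vertex of $\Theta$ (alternatively, this strict positivity is a consequence of Proposition \ref{SF1}(1) applied to the nonzero additive function $f_\Theta$).

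Given this positivity, the product $d^\Theta_i(\alpha_K)\, f_\Theta(M)$ vanishes precisely when $d^\Theta_i(\alpha_K)$ vanishes. Chaining equivalences, one has
$$i \in \supp_{\alpha_K}(M) \ \Longleftrightarrow \ \alpha_{K,i}(M) \ne 0 \ \Longleftrightarrow \ d^\Theta_i(\alpha_K) \ne 0,$$
which is exactly the identity of sets being claimed. A pleasant by-product, and presumably the point of recording the corollary, is that the $\alpha_K$-support depends only on $\Theta$ and on $\alpha_K$, not on the choice of vertex $M$; this furnishes the invariant alluded to in the preceding discussion of distinguishing components with the same $\pi$-supports.

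The only bookkeeping point is that Theorem \ref{FI1} is stated for infinite components. For a finite $\Theta$, however, the tree class $\bar T_\Theta$ is a finite Dynkin diagram, whose Cartan matrix is positive definite and hence invertible; consequently the only $\tau_\cG$-invariant additive function on $\Theta$ is identically zero. Theorem \ref{AFP4} then forces each $\alpha_{K,i}$ to vanish on $\Theta$, and if one sets $d^\Theta_i(\alpha_K):=0$ in this case both sides of the asserted equality are empty and agree trivially. There is no genuine obstacle in the argument: all of the substance is already packaged into Theorem \ref{FI1}, and the corollary merely reads off its immediate consequence.
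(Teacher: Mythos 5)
Your argument is exactly the paper's: Corollary \ref{SJ3} is stated as an immediate consequence of Theorem \ref{FI1}, using that $f_\Theta$ is strictly positive so that $\alpha_{K,i}(M)=d_i^\Theta(\alpha_K)f_\Theta(M)$ vanishes precisely when $d_i^\Theta(\alpha_K)$ does. Your extra remark on finite components (where the only $\tau_\cG$-invariant additive function is zero, so both sides are empty) is a harmless supplement consistent with the paper's own discussion after Corollary \ref{AFP5}.
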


\bigskip

\subsection{Dominance Ordering and Non-maximal Supports}
Let $\alpha_K$ and $\beta_L$ be $\pi$-points, $M \in \modd \cG$ be a $\cG$-module. In \cite{FPS}, the authors introduce a relation by setting
\[ \alpha_K \unlhd_M \beta_L \ : \Leftrightarrow \dim_K \im t^m_{\alpha_K^\ast(M_K)} \le \dim_K \im t^m_{\beta_L^\ast(M_K)} \ \ \ \ \forall \ m \in\{1,\ldots,p\}.\]
The proof of Proposition \ref{AFP3} readily yields
\[ \alpha_K \unlhd_M \beta_L \ : \Leftrightarrow \ \sum_{i=j}^p (i\!-\!j)\alpha_{K,i}(M_K) \le \sum_{i=j}^p (i\!-\!j)\beta_{L,i}(M_L) \ \ \ \ 1 \le j \le p.\]
In view of \cite[(6.2.2)]{CG},  this relation corresponds the usual dominance ordering on the partitions of $\dim_kM$, associated to $\alpha^\ast_K(M_K)$ and $\beta^\ast_L(M_L)$, 
respectively. More precisely,
\[\alpha_K\sim_M \beta_L : \Leftrightarrow \Jt(M,\alpha_K) = \Jt(M,\beta_L)\]
defines an equivalence relation of $\Pt(\cG)$ and $\unlhd_M$ is a partial ordering on the set of equivalence classes.\footnote{Note that our formula above differs from the one given in 
\cite[\S1]{CFP}, where the authors set  $\alpha_K \unlhd_M \beta_L \ : \Leftrightarrow \ \sum_{i=j}^p i\alpha_{K,i}(M_K) \le \sum_{i=j}^p i\beta_{L,i}(M_L) \ \ \ \ 1 \le j \le p$. The 
choice $p=3$ and $(\alpha_3,\alpha_2,\alpha_1) = (2,0,1)\ ; \ (\beta_3,\beta_2,\beta_1) = (1,2,0)$ defines two partitions $\underline{\alpha}$ and $\underline{\beta}$ of $7$, so that
$\underline{\alpha} \ge \underline{\beta}$ with respect to our definition, while the one of \cite{CFP} leads to $\underline{\alpha} \not \ge \underline{\beta}$.}

Our next result shows that the relation defined by an indecomposable module is an invariant of its stable AR-component.

\bigskip

\begin{Prop} \label{DOMS1} Let $\Theta \subseteq \Gamma_s(\cG)$ be a locally split component. If $\alpha_K$ and $\beta_L$ are $\pi$-points such that $\alpha_K \unlhd_{M_0} \beta_L$ 
for some $M_0 \in \Theta$, then $\alpha_K \unlhd_M \beta_L$ for all $M \in \Theta$.\end{Prop}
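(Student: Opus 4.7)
The strategy is to use Theorem~\ref{FI1} to rewrite each defining inequality of $\alpha_K \unlhd_M \beta_L$ in the form $f_\Theta(M) \cdot C_j \le 0$, where $C_j$ depends on $\alpha_K, \beta_L$, and $j$ but not on $M$. Since $f_\Theta : \Theta \lra \NN$ takes strictly positive values (cf.\ Proposition~\ref{SF1}(1) applied to the additive function $f_\Theta$), the sign of the inequality is then independent of the chosen vertex of $\Theta$, and the implication becomes immediate.

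Concretely, I would set
\[ \Delta_j(M) := \sum_{i=j}^p (i\!-\!j)[\alpha_{K,i}(M_K) - \beta_{L,i}(M_L)], \qquad 1 \le j \le p,\]
so that $\alpha_K \unlhd_M \beta_L$ is equivalent to $\Delta_j(M) \le 0$ for every $j$. The case $j = p$ reduces to $0 \le 0$ and may be discarded, so only $j \in \{1,\ldots,p\!-\!1\}$ need attention. Since $\Theta$ is locally split, both $\alpha_K$ and $\beta_L$ lie in $\Pt(\cG,\Theta)$, and Theorem~\ref{FI1} supplies
\[ \alpha_{K,i}(M_K) = d^\Theta_i(\alpha_K)\,f_\Theta(M), \quad \beta_{L,i}(M_L) = d^\Theta_i(\beta_L)\,f_\Theta(M) \qquad (1 \le i \le p\!-\!1),\]
together with $\alpha_{K,<p}(M_K) = d^\Theta_p(\alpha_K)\,f_\Theta(M)$ and the analogous identity for $\beta_L$.

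The main bookkeeping obstacle is that Theorem~\ref{FI1} does not directly control the individual component $\alpha_{K,p}(M_K)$. This is resolved by exploiting that the total dimension is common to both decompositions:
\[ \dim_k M = \alpha_{K,<p}(M_K) + p\,\alpha_{K,p}(M_K) = \beta_{L,<p}(M_L) + p\,\beta_{L,p}(M_L),\]
which forces
\[ p\,[\alpha_{K,p}(M_K) - \beta_{L,p}(M_L)] = [d^\Theta_p(\beta_L) - d^\Theta_p(\alpha_K)]\,f_\Theta(M).\]
Substituting these relations into $\Delta_j(M)$ gives $\Delta_j(M) = f_\Theta(M) \cdot C_j$, where
\[ C_j \;=\; \sum_{i=j}^{p-1}(i\!-\!j)\bigl[d^\Theta_i(\alpha_K) - d^\Theta_i(\beta_L)\bigr] \;+\; \frac{p\!-\!j}{p}\bigl[d^\Theta_p(\beta_L) - d^\Theta_p(\alpha_K)\bigr]\]
is independent of $M$. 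Since $f_\Theta(M) > 0$ for every $M \in \Theta$, the chain of equivalences
\[ \Delta_j(M) \le 0 \ \Longleftrightarrow \ C_j \le 0 \ \Longleftrightarrow \ \Delta_j(M_0) \le 0\]
holds simultaneously at every vertex of $\Theta$. The hypothesis $\alpha_K \unlhd_{M_0} \beta_L$ therefore propagates to every $M \in \Theta$.
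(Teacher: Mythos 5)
Your argument is correct and is essentially the paper's own proof: both rest on Theorem \ref{FI1}, recover the multiplicity $\gamma_p(M)=\frac{1}{p}(\dim_kM-d_p^\Theta(\gamma)f_\Theta(M))$ from the dimension identity, and observe that after substitution the defining inequalities become $f_\Theta(M)$ times an $M$-independent quantity, so positivity of $f_\Theta$ finishes the proof. The only cosmetic difference is that you package the two sides into a single difference $\Delta_j(M)$, which changes nothing of substance.
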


\begin{proof} According to Theorem \ref{FI1}, we can find $d_i^\Theta(\alpha_K), d_i^\Theta(\beta_L) \in \NN_0$ such that
\[ \gamma_i(M) = d_i^\Theta(\gamma)f_\Theta(M) \ \ \ \ 1 \le i \le p\!-\!1  \ \ \text{and} \ \ \gamma_{<p}(M) = d_p^\Theta(\gamma)f_\Theta(M) \ \ \ \ \gamma \in 
\{\alpha_K,\beta_L\},\]
for every $M \in \Theta$. Given any $\cG$-module $M \in \Theta$, we have $\gamma_{p}(M) = \frac{1}{p}(\dim_kM-d_p^\Theta(\gamma)f_\Theta(M))$ and thus obtain
\[ \alpha_K \unlhd_M \beta_L \ \Leftrightarrow \ \sum_{i=j}^{p-1} (i\!-\!j) d_i^\Theta(\alpha_K)-(\frac{p\!-\!j}{p})d_p^\Theta(\alpha_K) \le \sum_{i=j}^{p-1} (i\!-\!j) 
d_i^\Theta(\beta_L)-(\frac{p\!-\!j}{p})d_p^\Theta(\beta_L) \]
for $1 \le j \le p$. Since the right-hand side does not depend on $M$, our assertion follows. \end{proof}

\bigskip
\noindent
Let $M$ be a $\cG$-module. We say that $\alpha_K \in \Pt(\cG)$ {\it is maximal for} $\unlhd_M$ if $\beta_L \sim_M \alpha_K$ for every $\beta_L \in \Pt(\cG)$ with $\alpha_K \unlhd_M 
\beta_L$. In \cite{FPS}, the authors introduce the variety of non-maximal supports. Given a $\cG$-module $M$, we write
\[ \widetilde{\Pi}(\cG)_M := \{x \in \Pi(\cG) \ ; \ \exists \ \alpha_K \in x, \ \alpha_K \ \text{is not maximal for} \ \unlhd_M\}.\]
Thanks to \cite[(5.2)]{FPS}, this set is a closed subspace of $\Pi(\cG)_M$, which coincides with $\Pi(\cG)_M$ if and only if $\Pi(\cG)_M \ne \Pi(\cG)$.

\bigskip

\begin{Thm} \label{DOMS2} Let $\Theta \subseteq \Gamma_s(\cG)$ be a component such that $\Pi(\cG)_\Theta = \Pi(\cG)$. Then we have $\widetilde{\Pi}(\cG)_M = 
\widetilde{\Pi}(\cG)_N$ for $M,N \in \Theta$.\end{Thm}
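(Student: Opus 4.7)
The plan is to apply Theorem \ref{FI1} to parametrize the Jordan types of modules in $\Theta$ in such a way that the $M$-dependence (through $f_\Theta(M)$ and $\dim_kM$) is separated from the $\pi$-point dependence (through the vector $d^\Theta(\alpha_K) \in \NN_0^p$). Once the relations $\sim_M$ and $\unlhd_M$ on $\Pt(\cG,\Theta)$ have been expressed purely in terms of $d^\Theta$, they will be seen to be independent of $M \in \Theta$; maximality will follow, and the result drops out.

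First I would split the argument by whether $\Theta$ is locally split. If some $\pi$-point $\alpha_K$ fails to split $\Theta$, then Lemma \ref{AFP2}(2) forces $\Pi(\cG)_\Theta = \{[\alpha_K]\}$; combined with the hypothesis $\Pi(\cG)_\Theta = \Pi(\cG)$ this makes $\Pi(\cG)$ a singleton. The results \cite[(3.4)]{CFP} and \cite[(4.10)]{FPS} cited earlier in the paper then yield that every $\cG$-module has constant Jordan type, so $\sim_M$ has exactly one class, every $\pi$-point is vacuously maximal for $\unlhd_M$, and $\widetilde{\Pi}(\cG)_M = \emptyset$ independently of $M$.

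In the remaining case $\Pt(\cG,\Theta) = \Pt(\cG)$, and for each $M \in \Theta$ and each $\alpha_K \in \Pt(\cG)$ Theorem \ref{FI1} yields
\[
\Jt(M,\alpha_K) \cong \bigoplus_{i=1}^{p-1} d_i^\Theta(\alpha_K)\,f_\Theta(M)\,[i] \ \oplus \ \tfrac{1}{p}\bigl(\dim_kM - d_p^\Theta(\alpha_K)\,f_\Theta(M)\bigr)\,[p].
\]
Since $f_\Theta(M) \in \NN$, cancellation shows that $\alpha_K \sim_M \beta_L$ is equivalent to $d^\Theta(\alpha_K) = d^\Theta(\beta_L)$, a condition not involving $M$. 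Expanding $\sum_{i=j}^{p}(i-j)\,\alpha_{K,i}(M)$ through the same formula, the contributions $\frac{p-j}{p}\dim_kM$ cancel on both sides of the defining inequalities of $\alpha_K \unlhd_M \beta_L$, and after dividing by $f_\Theta(M)>0$ one recovers the $M$-free inequality already obtained in the proof of Proposition \ref{DOMS1}. Hence $\sim_M$ and $\unlhd_M$ restrict to the same equivalence relation and partial order on $\Pt(\cG)$ for every $M \in \Theta$; the sets of non-maximal equivalence classes therefore coincide, giving $\widetilde{\Pi}(\cG)_M = \widetilde{\Pi}(\cG)_N$ for all $M,N \in \Theta$.

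The only obstacle worth flagging is that Theorem \ref{FI1} is unavailable precisely when $\Theta$ is not locally split, so the parametrization strategy cannot be invoked there; this is neutralized by observing that the hypothesis $\Pi(\cG)_\Theta = \Pi(\cG)$ collapses $\Pi(\cG)$ to a single class in that case, which trivializes both sides of the claimed equality via the constant-Jordan-type dichotomy.
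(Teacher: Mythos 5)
Your argument is correct, and at its core it is the paper's argument: in the non-degenerate case you use Theorem \ref{FI1} to separate the dependence on $M$ (through $f_\Theta(M)$ and $\dim_kM$) from the dependence on the $\pi$-point (through $d^\Theta$), which is exactly the mechanism behind Proposition \ref{DOMS1} and the paper's transfer of maximality from $\unlhd_M$ to $\unlhd_N$; in the degenerate case you invoke \cite[(3.4)]{CFP} and \cite[(4.10)]{FPS} to conclude that all Jordan types are constant and both sides are empty, as the paper does. The only real difference is the case decomposition: the paper splits on $\dim\Pi(\cG)\ge 1$ versus $\dim\Pi(\cG)=0$, while you split on $\Theta$ locally split versus not. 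Your split is fine, but note that it leaves a corner the paper's split avoids automatically: Theorem \ref{FI1} is stated for \emph{infinite} components, and in your locally split branch infiniteness is not automatic when $\dim\Pi(\cG)=0$, whereas under the paper's first case $\Pi(\cG)_\Theta=\Pi(\cG)$ has dimension $\ge 1$, so the modules of $\Theta$ have complexity $\ge 2$ and $\Theta$ is necessarily infinite. The corner is harmless -- a finite component has finite Dynkin tree class, so if it were locally split all the additive functions $\alpha_{K,i}$ ($i\le p\!-\!1$) would vanish and $\Pi(\cG)_\Theta$ would be empty, contradicting $\Pi(\cG)_\Theta=\Pi(\cG)\ne\emptyset$; alternatively, when $\Pi(\cG)$ is a singleton the constant-Jordan-type argument settles the claim without \ref{FI1} at all -- but you should say a word about it, since as written you apply \ref{FI1} without verifying its hypothesis.
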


\begin{proof} We first assume that $\dim \Pi(\cG) \ge 1$, so that $\Theta$ is locally split. Suppose that $x \not \in \widetilde{\Pi}(\cG)_M$ and let $\alpha_K \in x$ be a $\pi$-point. Then
$\alpha_K$ is maximal for $\unlhd_M$. If $\beta_L \unrhd_N \alpha_K$ is another $\pi$-point, then Proposition \ref{DOMS1} implies $\beta_L \unrhd_M \alpha_K$, so that 
$\Jt(M,\beta_L) = \Jt(M,\alpha_K)$. As a result, $d^\Theta(\beta_L) = d^\Theta(\alpha_K)$, giving $\Jt(N,\beta_L) = \Jt(N,\alpha_K)$. We conclude that $\alpha_K$ is  maximal for 
$\unlhd_N$, whence $x \not \in \widetilde{\Pi}(\cG)_N$. Consequently, $\widetilde{\Pi}(\cG)_N \subseteq \widetilde{\Pi}(\cG)_M$, implying our assertion.

 If $\dim \Pi(\cG) = 0$, then \cite[(3.4)]{CFP} ensures that $\Pi(\cG)$ is a singleton. We may now apply \cite[(4.10)]{FPS} to see that every $\cG$-module has constant Jordan type.
 Consequently, we have $\widetilde{\Pi}(\cG)_M = \emptyset = \widetilde{\Pi}(\cG)_N$ in that case. \end{proof}

\bigskip

\section{Components containing relatively projective modules} \label{S:CRP}
Throughout this section, we are working over an algebraically closed ground field $k$. We shall study stable AR-components $\Theta \subseteq \Gamma_s(\cG)$ which are not locally split. According to Lemma \ref{AFP2} these are precisely those components that contain a relatively $\alpha_K$-projective $\cG$-module for some $\alpha_K \in \Pt(\cG)$.

\subsection{Quasi-simple relatively projective modules} In order to record detailed information on the functions $\alpha_{K,i}:\Theta \lra \NN_0$ for infinite components $\Theta \subseteq \Gamma_s(\cG)$ that are not $\alpha_K$-split, we introduce the  $(p\times p)$-matrix
\[ A := \left( \begin{matrix} 2 & -1 & 0 & \dots & \dots & \dots & \dots & 0\\
                                          -1 & 2 & -1 & 0       & \dots & \dots & \dots & 0 \\
					       0 & -1 & 2 & -1      &  0      & \dots & \dots & 0 \\
					       0 & 0  & -1 & 2      & -1      & 0        & \dots & 0\\
					\vdots & \vdots & \vdots & \vdots & \ddots & \vdots & \vdots & \vdots\\
					\vdots & \vdots & \vdots & \vdots & \vdots & \ddots & \vdots & \vdots\\
					      0 & \dots  & \dots & \dots      & \dots      &       -1& 2 & -1\\
					      0 & \dots  & \dots & \dots      & \dots      &       0& -1 & 1 \end{matrix} \right)\]
whose coefficients are denoted $a_{ij}$. Note that the $((p\!-\!1)\times (p\!-\!1))$-principal minor of this matrix is just the Cartan matrix of the Dynkin diagram $A_{p-1}$.

Thanks to Proposition \ref{AFP3}, every infinite component $\Theta \subseteq \Gamma_s(\cG)$ which is not locally split is of the form $\Theta \cong \ZZ[A_\infty]/\langle \tau^n \rangle$,
so that we can speak of the quasi-length $\ql(M)$ of a module belonging to $\Theta$. 

\bigskip

\begin{Prop} \label{QRP1} Let $\alpha_K : \fA_{p,K} \lra K\cG$ be a $\pi$-point, $\Theta \subseteq \Gamma_s(\cG)$ be an infinite component such that

{\rm (a)} \ $\Theta$ is not $\alpha_K$-split, and

{\rm (b)} \ if $N \in \Theta$ is relatively $\alpha_K$-projective, then $\ql(N) = 1$.

\noindent
Then there exist a vector $(n_1,\ldots, n_{p-1}) \in \NN_0^{p-1}\setminus\{0\}$ and a quasi-simple module $M \in \Theta$ such that
\[ \alpha_{K,i}(X) = (\alpha_{K,i}(M)\!-\!\sum_{j=1}^{p-1}a_{ij}n_j)\ql(X)+\sum_{j=1}^{p-1}a_{ij}n_j \ \ \ \ \ \ \ \ 1 \le i \le p\!-\!1\]
for every $X \in \Theta$. \end{Prop}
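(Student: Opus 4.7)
First, I would set up the geometry of $\Theta$. By Proposition \ref{AFP3}(2), (a) forces $\Theta \cong \ZZ[A_\infty]/\langle \tau^n\rangle$, so $\bar{T}_\Theta = A_\infty$ and $\Theta$ is a tube. Lemma \ref{AFP2}(3) combined with (a) produces a relatively $\alpha_K$-projective $M \in \Theta$, and (b) ensures $\ql(M) = 1$. Let $\fE_M : 0 \to \tau_\cG M \to E \to M \to 0$ denote the almost split sequence terminating in $M$; in the tube, $E$ is a single vertex of quasi-length $2$. By the $\tau_\cG$-invariance of Proposition \ref{AFP3}(1), $\alpha_{K,i}$ takes value $\alpha_{K,i}(M)$ on every quasi-simple of $\Theta$ and value $\alpha_{K,i}(E)$ on every vertex of quasi-length $2$, for each $i \in \{1,\ldots,p-1\}$.

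Second, I would deduce the linear shape of $\alpha_{K,i}$ from additivity. By Corollary \ref{AFP5} and (b), each $\alpha_{K,i}$ is eventually additive on $\Theta$ with $\ell(\alpha_{K,i}) \le 2$. Let $c_n$ denote the common value of $\alpha_{K,i}$ on vertices of quasi-length $n$. Additivity at every $Y$ with $\ql(Y) \ge 2$ gives the recursion $2c_n = c_{n-1} + c_{n+1}$ for $n \ge 2$, whose general solution is linear in $n$ and is pinned down by $c_1 = \alpha_{K,i}(M)$ and $c_2 = \alpha_{K,i}(E)$. Setting $\delta_i := 2\alpha_{K,i}(M) - \alpha_{K,i}(E)$ one finds
\[\alpha_{K,i}(X) = (\alpha_{K,i}(M) - \delta_i)\,\ql(X) + \delta_i \qquad \text{for every } X \in \Theta.\]

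Third, I would identify $\delta_i$ with $\sum_{j=1}^{p-1} a_{ij} n_j$ by a snake-lemma computation. Applying $\alpha_K^\ast(-_K)$ to $\fE_M$ produces a short exact sequence $0 \to V_1 \to U \to V_2 \to 0$ of $\fA_{p,K}$-modules with $V_1 := \alpha_K^\ast((\tau_\cG M)_K)$, $V_2 := \alpha_K^\ast(M_K)$ and $U := \alpha_K^\ast(E_K)$. Writing $t$ for the generator of $\fA_{p,K}$, the snake lemma for multiplication by $t^m$ gives
\[\dim_K \ker t^m|_U = \dim_K \ker t^m|_{V_1} + \dim_K \ker t^m|_{V_2} - r_m,\]
where $r_m := \dim_K \im(\ker t^m|_{V_2} \to \operatorname{coker} t^m|_{V_1}) \ge 0$; one verifies $r_0 = 0$ trivially and $r_p = 0$ using $t^p = 0$ on $U$. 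Combining this with the second-difference identity
\[\alpha_{K,m}(X) = 2\dim_K \ker t^m|_X - \dim_K \ker t^{m-1}|_X - \dim_K \ker t^{m+1}|_X \qquad (1 \le m \le p-1),\]
and with $\alpha_{K,m}(V_1) = \alpha_{K,m}(V_2) = \alpha_{K,m}(M)$ for $m < p$ (Proposition \ref{AFP3}(1)), yields $\delta_m = 2r_m - r_{m-1} - r_{m+1}$. This equals $\sum_{j=1}^{p-1}a_{mj}r_j$ because the $(p-1) \times (p-1)$ principal minor of $A$ is the Cartan matrix of $A_{p-1}$; setting $n_j := r_j \ge 0$ supplies the required non-negative vector.

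Finally, if $n_j = 0$ for all $j$ then $\delta_i = 0$ for all $i$, so $\alpha_{K,i}(E) = 2\alpha_{K,i}(M)$ is the additivity equation at every quasi-simple, and together with additivity at $\ql \ge 2$ each $\alpha_{K,i}$ ($1 \le i \le p-1$) is then fully additive on $\Theta$; Proposition \ref{AFP3}(4) would then force $\Theta$ to be $\alpha_K$-split, contradicting (a). Hence $n \ne 0$. The principal technical effort lies in the third step: recognising the defect vector $(\delta_1, \ldots, \delta_{p-1})$ as the $A_{p-1}$-Cartan transform of the connecting-map dimensions $(r_1, \ldots, r_{p-1})$, together with the boundary conventions $r_0 = r_p = 0$.
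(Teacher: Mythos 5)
Your argument is correct, but the central computation is carried out by a different mechanism than in the paper. The paper defines $n_j$ as the multiplicity of $M_K$ as a direct summand of the induced module $K\cG\otimes_{\fA_{p,K}}[j]$ and computes the additivity defect at the quasi-simple $M$ by applying $\Hom_{\cG_K}(K\cG\otimes_{\fA_{p,K}}[i],-)$ to $\fE_M\otimes_kK$: the almost split property identifies the cokernel (codimension $n_i$ inside $\Hom_{\cG_K}(N_i,M_K)$), Frobenius reciprocity together with $\dim_K\Hom_{\fA_{p,K}}([s],[t])=\min\{s,t\}$ turns this into the system $Bx=By-n$ with $B=(\min\{i,\ell\})$, and inverting $B^{-1}=A$ yields the defect $\sum_j a_{ij}n_j$; non-vanishing comes from Lemma \ref{AFP2}(1). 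You instead apply the snake lemma for multiplication by $t^m$ directly to the pulled-back sequence $\alpha_K^\ast(\fE_M\otimes_kK)$ and read off the defect via second differences of kernel dimensions, so it appears already in Cartan-transformed shape $2r_m-r_{m-1}-r_{m+1}$ with $n_j:=r_j\ge0$ the ranks of the connecting maps (and $r_0=r_p=0$), and you obtain non-vanishing by contradiction with Proposition \ref{AFP3}(4). This is more self-contained — no Hom-counting against the induced modules and no matrix inversion — and it fully suffices for the existence statement; since the Cartan matrix of $A_{p-1}$ is invertible, your $r_j$ necessarily coincide with the paper's multiplicities, but the paper's choice carries the module-theoretic interpretation that is exploited afterwards (Proposition \ref{QRP3}, Corollary \ref{STP2}, Theorem \ref{TG3}), where those multiplicities are explicitly computed or bounded. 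Your linear-form step via Corollary \ref{AFP5} and the recursion is essentially the paper's use of Lemma \ref{SF4}. One small imprecision: the middle term $E$ need not be a single vertex of quasi-length $2$ — the paper explicitly allows $E\cong M_2\oplus P$ with $P$ indecomposable projective — but this is harmless here, since projective summands contribute nothing to $\alpha_{K,i}$ for $i\le p-1$, so your identification of the quasi-length-two value with $\alpha_{K,i}(E)$ stands.
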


\begin{proof} Given $i \in \{1,\ldots,p\}$, we put $N_i := K\cG\!\otimes_{\fA_{p,K}}\![i]$. Conditions (a) and (b) in conjunction with Lemma \ref{AFP2} ensure the existence of a 
quasi-simple module $M \in \Theta$ that is relatively $\alpha_K$-projective. We define $n_i \in \NN_0$ to be the multiplicity of $M_K$ in $N_i$, that is,
\[ N_i \cong n_iM_K \oplus N'_i \ \ \ \text{with} \ \ \ M_K\nmid N'_i.\]
Thanks to Lemma \ref{AFP2}(1), at least one of the $n_i$ is not zero. Since $M_K$ is not projective, we have $n_p = 0$. 

We consider the almost split sequence
\[ \fE_M: \ \ \ \ \ \ (0) \lra \tau_\cG(M) \lra E \lra M \lra (0)\]
terminating in $M$. Since $k$ is algebraically closed, \cite[(3.8)]{Ka} ensures that
\[ \fE_M\!\otimes_k\!K : \ \ \ \ \ \ (0) \lra \tau_\cG(M)_K \lra E_K \lra M_K \lra (0)\]
is the almost split sequence terminating in $M_K$. Let $\varphi : N_i \lra M_K$ be a homomorphism and write $\varphi = (\varphi_1,\ldots,\varphi_{n_i},\varphi')$ with $\varphi_t
\in \End_{\cG_K}(M_K)$ for $1 \le t \le n_i$ and $\varphi' \in \Hom_{\cG_K}(N'_i,M_K)$. Then $\varphi$ is split surjective if and only if $\varphi_t \not \in \Rad(\End_{\cG_K}(M_K))$
for some $t \in \{1,\ldots,n_i\}$. Since $\fE_M\!\otimes_k\!K$ is almost split, there results an exact sequence
\begin{eqnarray*}
(0) \lra \Hom_{\cG_K}(N_i,\tau_\cG(M)_K) &\lra & \Hom_{\cG_K}(N_i,E_K) \lra n_i\Rad(\End_{\cG_K}(M_K))\oplus \Hom_{\cG_K}(N'_i,M_K)\\
                                                                       &\lra &(0)
\end{eqnarray*}
for every $i \in\{1,\ldots, p\}$.  As $k$ is algebraically closed, the right-hand term above has dimension $\dim_K\Hom_{\cG_K}(N_i,M_K)-n_i$ and Frobenius reciprocity implies
\begin{eqnarray*} \dim_K \Hom_{\fA_{p,K}}([i],\alpha^\ast_K(E_K)) &=&\dim_K \Hom_{\fA_{p,K}}([i],\alpha^\ast_K(M_K)) + \dim_K 
\Hom_{\fA_{p,K}}([i],\alpha^\ast_K(\tau_\cG(M)_K))\\
&& -n_i.
\end{eqnarray*}
From the formula
\[ \dim_K\Hom_{\fA_{p,K}}([s],[t]) = \min\{s,t\} \ \ \ \ \ \ \ \ \forall \ s,t \in \{1,\ldots,p\}\]
we get
\[ \sum_{\ell=1}^p \min\{i,\ell\} \alpha_{K,\ell}(E) = \sum_{\ell=1}^p \min\{i,\ell\}(\alpha_{K,\ell}(M) +\alpha_{K,\ell}(\tau_\cG(M)))-n_i \ \ \ \ \ 1 \le i \le p.\]
Let $B := ( \min\{i,\ell\})_{1\le i,\ell\le p} \in {\rm Mat}_p(\ZZ)$ and $n = (n_1,\ldots,n_p)^{\rm tr} \in \ZZ^p$. The above identities then amount to
\[Bx = By-n,\]
where
\[x := (\alpha_{K,1}(E),\ldots,\alpha_{K,p}(E))^{\tr} \ \text{and} \ y := (\alpha_{K,1}(M)+\alpha_{K,1}(\tau_\cG(M)),\ldots,\alpha_{K,p}(M)+\alpha_{K,p}(\tau_\cG(M)))^{\tr}.\] 
Direct computation reveals that $B$ is invertible with inverse $B^{-1} = A$. Consequently, $x = y -An$, whence
\[(\ast) \ \ \ \ \ \ \ \ \ \alpha_{K,i}(E) = \alpha_{K,i}(M)+\alpha_{K,i}(\tau_\cG(M))-\sum_{j=1}^{p-1}a_{ij}n_j \ \ \ \ \ \ \ \text{for} \ \ 1 \le i \le p.\]
Since $\ql(M)=1$, we have
\[ E \cong M_2\oplus P,\]
where $M_2 \in \Theta$ has quasi-length $2$ and $P$ is indecomposable projective or $(0)$. Hence ($\ast$) implies
\[ (\ast\ast) \ \ \ \ \ \ \ \ \ \alpha_{K,i}(M_2) = \alpha_{K,i}(M)+\alpha_{K,i}(\tau_\cG(M))-\sum_{j=1}^{p-1}a_{ij}n_j\]
for $1 \le i \le p\!-\!1$.

Now suppose that  $i \le p\!-\!1$. Proposition \ref{AFP3}(1)  yields $\alpha_{K,i} \circ \tau_\cG = \alpha_{K,i}$, while Corollary \ref{AFP5} and condition (b) ensure that
$\ell(\alpha_{K,i}) \le 2$. Thus, if $\sum_{j=1}^{p-1}a_{ij}n_j = 0$, then $\alpha_{K,i}$ is additive, and our assertion follows from Lemma \ref{SF4}. Alternatively, 
$\ell(\alpha_{K,i})=2$, and Lemma \ref{SF4} implies
\begin{eqnarray*}
\alpha_{K,i}(X) & = & (\alpha_{K,i}(M_2)-\alpha_{K,i}(M))(\ql(X)\!-\!2)+\alpha_{K,i}(M_2)\\
                          & = &(\alpha_{K,i}(M)\!-\!\sum_{j=1}^{p-1}a_{ij}n_j)\ql(X) +\sum_{j=1}^{p-1}a_{ij}n_j
\end{eqnarray*}
for every $X \in \Theta$. \end{proof}

\bigskip

\begin{Remarks} (1) Suppose that $\Theta \cong \ZZ[A_\infty]/\langle \tau \rangle$ is a homogeneous tube. If $E = M_2\oplus P$, with $P \ne (0)$, then $M \cong P/\Soc(P)$ and
$M \cong \tau_\cG(M) \cong \Rad(P)$, see \cite[(V.5.5)]{ARS}. Thus, $\Rad(P) \cong P/\Soc(P)$ and the block $\cB \subseteq k\cG$ containing $P$ is a Nakayama algebra. Hence
$\Theta$ is finite, a contradiction. It now follows from ($\ast$) that the formula of Proposition \ref{QRP2}  also holds for $i=p$.

(2) The foregoing result can be stated more formally by letting $n_j(M,\alpha_K)$ be the multiplicity of $M_K$ as a summand of $K\cG\!\otimes_{\fA_{p,K}}\![j]$. As noted in 
\cite[(3.2)]{Fa3}, there is an isomorphism
\[\tau_{\cG_K}(K\cG\!\otimes_{\fA_{p,K}}\![j]) \oplus ({\rm proj.}) \cong K\cG\!\otimes_{\fA_{p,K}}\![j],\]
whence $n_j(M,\alpha_K) = n_j(\tau_\cG(M),\alpha_K)$ for $1\le j \le p\!-\!1$. Thus, Proposition \ref{QRP1} holds for {\it every} quasi-simple module $M \in \Theta$. \end{Remarks}

\bigskip
\noindent
Let $\Theta \subseteq \Gamma_s(\cG)$ be an infinite component that is not $\alpha_K$-split for some $\pi$-point $\alpha_K \in \Pt(\cG)$. The utility of the above result crucially depends 
on

(a) \ all relatively $\alpha_K$-projective modules $M \in \Theta$ being quasi-simple, and

(b) \ the knowledge of the multiplicty $n_j$ of $M_K$ as a summand of $K\cG\!\otimes_{\fA_{p,K}}\![j]$ for $1\le j \le p\!-\!1$.

\noindent
Proposition \ref{QRP3} below addresses these issues for relatively $\alpha_K$-projective modules whose tops contain one-dimensional modules. We begin with the following easy observation:

\bigskip

\begin{Lem} \label{QRP2} Let $\alpha_K \in \Pt(\cG)$ be a $\pi$-point, $j \in \{1,\ldots,p\}$. If $S$ is a simple $\cG$-module, then $S_K$ is a simple $\cG_K$-module that
occurs in $\Top_{\cG_K}(K\cG\!\otimes_{\fA_{p,K}}\![j])$ with multiplicity $\dim_K\ker t^j_{\alpha_K^\ast(S_K)}$. \end{Lem}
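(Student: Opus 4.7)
The plan is to split the lemma into two independent statements: first that $S_K$ remains simple over $\cG_K$, then the multiplicity formula, which will fall out of tensor--hom adjunction and a direct description of $\Hom_{\fA_{p,K}}([j],-)$.

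For the simplicity of $S_K$, I would exploit that $k$ is algebraically closed, hence perfect. Schur's lemma gives $\End_\cG(S) = k$, so $S$ is absolutely simple. More concretely, by Artin--Wedderburn, $k\cG/\Rad(k\cG) \cong \prod_i M_{n_i}(k)$, with $S$ corresponding to one factor. Perfectness of $k$ implies $\Rad(k\cG)\otimes_k K = \Rad(K\cG)$, so base change yields $K\cG/\Rad(K\cG) \cong \prod_i M_{n_i}(K)$ and identifies $S_K$ with the simple module attached to the corresponding matrix factor over $K$. In particular $\End_{\cG_K}(S_K) = K$.

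For the multiplicity, I would first recall the standard formula: for a finite-dimensional $\cG_K$-module $N$ and a simple $\cG_K$-module $T$ with $\End_{\cG_K}(T) = K$, the multiplicity of $T$ in $\Top_{\cG_K}(N)$ equals $\dim_K \Hom_{\cG_K}(N,T)$. Applied with $N = K\cG\otimes_{\fA_{p,K}}[j]$ and $T = S_K$, this reduces the problem to computing the relevant Hom-space. The tensor--hom adjunction associated to the algebra homomorphism $\alpha_K$ then supplies
\[
\Hom_{\cG_K}\bigl(K\cG\otimes_{\fA_{p,K}}[j],\, S_K\bigr) \;\cong\; \Hom_{\fA_{p,K}}\bigl([j],\, \alpha_K^\ast(S_K)\bigr).
\]
Writing $[j] = \fA_{p,K}/(t^j)$ with cyclic generator $\bar 1$, any $\fA_{p,K}$-homomorphism $[j] \lra M$ is determined by the image of $\bar 1$, subject only to the constraint of being annihilated by $t^j$. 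Hence $\Hom_{\fA_{p,K}}([j], M) \cong \ker t^j_M$ as $K$-vector spaces. Taking $M = \alpha_K^\ast(S_K)$ yields the asserted formula.

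The main conceptual obstacle is the simplicity of $S_K$: without algebraic closure this can fail (Galois twists can split $S_K$), and the proof relies crucially on the fact that $k$ is perfect so that the radical commutes with base change. The remainder of the argument is pure adjunction and is entirely formal.
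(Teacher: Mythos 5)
Your proof is correct and takes essentially the same route as the paper: simplicity of $S_K$ from $\End_\cG(S)\cong k$ (your radical-base-change elaboration is just a longer justification of the paper's one-line endomorphism computation), then the adjunction $\Hom_{\cG_K}(K\cG\otimes_{\fA_{p,K}}[j],S_K)\cong \Hom_{\fA_{p,K}}([j],\alpha_K^\ast(S_K))$, and the multiplicity in the top counted by the dimension of this Hom-space. The only difference is cosmetic and occurs at the final step, where you identify $\Hom_{\fA_{p,K}}([j],M)\cong \ker t^j_M$ directly via the cyclic generator of $[j]$, while the paper decomposes $\alpha_K^\ast(S_K)$ into Jordan blocks, uses $\dim_K\Hom_{\fA_{p,K}}([j],[i])=\min\{i,j\}$, and cites the computation of $\dim_K\ker t^j$ from the proof of Proposition \ref{AFP3}.
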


\begin{proof} Since $k$ is algebraically closed, we have $\End_\cG(S) \cong k$, whence $\End_{\cG_K}(S_K) \cong K\!\otimes_k\!k \cong K$. Hence $S_K$ is simple, and Frobenius
reciprocity implies
\[ \Hom_{\cG_K}(K\cG\!\otimes_{\fA_{p,K}}\![j],S_K) \cong \Hom_{\fA_{p,K}}([j],\alpha_K^\ast(S_K)).\]
Since $\dim_K\End_{\cG_K}(S_K) = 1$, the dimension of the former space counts the multiplicity of $S_K$ in $\Top_{\cG_K}(K\cG\!\otimes_{\fA_{p,K}}\![j])$. Writing 
$\alpha_K^\ast(S_K) \cong \bigoplus_{i=1}^p\alpha_{K,i}(S)[i]$, we obtain $\dim_K\Hom_{\fA_{p,K}}([j],\alpha^\ast_K(S_K)) = \sum_{i=1}^p \min\{i,j\}\alpha_{K,i}(S)$. As 
observed in the proof of Proposition\ref{AFP3}, the latter number coincides with $\dim_K\ker t^j_{\alpha_K^\ast(S_K)}$. \end{proof}

\bigskip

\begin{Prop} \label{QRP3} Suppose that $p\ge 3$, and let $\cG$ be a finite group scheme of infinite representation type. If $M$ is a non-projective, indecomposable, relatively
$\alpha_K$-projective $\cG$-module such that $\Top_\cG(M)\oplus \Soc_\cG(M)$ contains a one-dimensional submodule, then the following statements hold:

{\rm (1)} \ The component $\Theta \subseteq \Gamma_s(\cG)$ containing $M$ is isomorphic to $\ZZ[A_\infty]/\langle \tau^m \rangle$ for some $m \ge 1$.

{\rm (2)} \ The module $M$ is quasi-simple.

{\rm (3)} \ There exists a non-zero vector $(n_1,\ldots ,n_{p-1}) \in \{0,1\}^{p-1}$ such that
\[ \alpha_{K,i}(X) = (\alpha_{K,i}(M)\!-\!\sum_{j=1}^{p-1}a_{ij}n_j)\ql(X)+\sum_{j=1}^{p-1}a_{ij}n_j \ \ \ \ \ \ \ \ 1 \le i \le p\!-\!1\]
for every $X \in \Theta$.\end{Prop}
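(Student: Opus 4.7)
The plan is to establish (1), (3), and (2) in that logical order, using Lemma \ref{AFP2}, Proposition \ref{AFP3}, Lemma \ref{QRP2}, and Proposition \ref{QRP1} as the main tools.

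For (1), I would first apply Lemma \ref{AFP2}(2)--(3) to $M$: relative $\alpha_K$-projectivity forces $\Pi(\cG)_M = \{[\alpha_K]\}$ and ensures that $\alpha_K^\ast(\fE_M\!\otimes_k\!K)$ does not split, so $\Theta$ is not $\alpha_K$-split. Proposition \ref{AFP3}(2) then leaves only the alternatives that $\Theta$ is finite or $\Theta \cong \ZZ[A_\infty]/\langle \tau^m\rangle$. To rule out the finite case, observe that such a $\Theta$ would coincide with the stable AR-quiver of a block $\cB$ of $k\cG$ of finite representation type (by Riedtmann's Struktursatz its tree class must be a finite Dynkin diagram). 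The structure theorems for blocks of finite representation type of finite group schemes, combined with $p\ge 3$, the one-dimensional simple in $\Top(M)\oplus\Soc(M)$, the relative $\alpha_K$-projectivity of $M$, and the global infinite representation type of $\cG$ (which guarantees $\dim\Pi(\cG) \ge 1$), should contradict the existence of such an $M$ inside $\cB$.

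For (3), the decisive input is Lemma \ref{QRP2}. Let $S$ be the one-dimensional simple $\cG$-submodule of $\Top(M)\oplus\Soc(M)$. Since $\dim_kS=1$, the restriction $\alpha_K^\ast(S_K)$ equals $[1]$, and Lemma \ref{QRP2} gives that $S_K$ appears in $\Top(K\cG\!\otimes_{\fA_{p,K}}\![j])$ with multiplicity $\dim_K\ker t^j_{[1]} = 1$ for every $j\in\{1,\ldots,p\}$. Writing $n_j$ for the multiplicity of $M_K$ as a direct summand of $K\cG\!\otimes_{\fA_{p,K}}\![j]$, the case $S\subseteq\Top(M)$ forces $n_j\le 1$, since $S_K\subseteq\Top(M_K)$ ensures each copy of $M_K$ contributes at least one $S_K$ to $\Top(K\cG\!\otimes_{\fA_{p,K}}\![j])$. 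The case $S\subseteq\Soc(M)$ is handled dually through the relatively $\alpha_K$-injective reformulation in the Remarks following Lemma \ref{AFP2}. Lemma \ref{AFP2}(1) guarantees that at least one $n_j$ is non-zero. Once (2) is in hand, Proposition \ref{QRP1} applied to $M$ then delivers the asserted explicit formula.

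For (2), I would argue by contradiction: assume $\ql(M) = \ell\ge 2$. The derivation of the identity $(\ast)$ from the almost split sequence $\fE_M$ in the proof of Proposition \ref{QRP1} goes through for our $M$ as well (the middle term $E$ now decomposes as $E_1\oplus E_2\oplus P$ with $\ql(E_1)=\ell\!+\!1$, $\ql(E_2)=\ell\!-\!1$, $P$ projective). Corollary \ref{AFP5} yields $\ell(\alpha_{K,i})\le\ell+1$, and combining $(\ast)$ with Lemma \ref{SF4}(1) expresses $\alpha_{K,i}$ on the truncation $\Theta_{(\ell+1)}$ as a linear function of $\ql$ whose coefficients depend on $\alpha_{K,i}(M)$ and the multiplicities $n_j\in\{0,1\}$ from (3). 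The non-negativity of $\alpha_{K,i}$ combined with the explicit restrictions on $(n_j)$ forced by the 1-dimensional top or socle of $M$ should then be incompatible with $\ell\ge 2$. The main obstacle is this very part: establishing quasi-simpleness without a circular appeal to Proposition \ref{QRP1}, whose hypothesis (b) requires every relatively $\alpha_K$-projective vertex of $\Theta$ to be quasi-simple. My chief strategy is to carry out the analysis of $(\ast)$ and the associated dimension bookkeeping directly inside the tube, using Lemma \ref{QRP2} and the 1-dimensional top or socle to pin down the multiplicities $n_j$ and Corollary \ref{AFP5} to control the onset of eventual additivity, and to invoke Proposition \ref{QRP1} only at the very end, once $M$ has been identified as quasi-simple.
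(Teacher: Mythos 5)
There is a genuine gap, and it sits exactly where you flagged it: part (2). Your plan for quasi-simplicity — run the computation $(\ast)$ for $M$ of quasi-length $\ell\ge 2$, invoke Corollary \ref{AFP5} and Lemma \ref{SF4}, and hope that non-negativity of the $\alpha_{K,i}$ together with $n_j\in\{0,1\}$ rules out $\ell\ge 2$ — does not close. First, the bound $\ell(\alpha_{K,i})\le \ell+1$ is not available: Corollary \ref{AFP5} bounds $\ell(\alpha_{K,i})$ by $1+\max\{\ql(N)\}$ over \emph{all} relatively $\alpha_K$-projective vertices $N\in\Theta$, and at this stage you control none of them. Second, the numerics alone carry no contradiction: since $\tau_{\cG_K}$ fixes $K\cG\otimes_{\fA_{p,K}}[j]$ up to projectives, every vertex in the $\tau_\cG$-orbit of $M$ is relatively $\alpha_K$-projective with the same multiplicities, so the orbit function $\varphi_i$ satisfies $2\varphi_i(n)=\varphi_i(n-1)+\varphi_i(n+1)$ away from quasi-length $\ell$ and has defect $(An)_i$ there; for $\ell=2$ one can easily write down non-negative solutions (e.g.\ slopes $\varphi_i(1)+(An)_i\ge 0$), so non-negativity is consistent with $\ell\ge 2$. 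The one-dimensional constituent must therefore enter through a different mechanism. The paper's route is: use Lemma \ref{QRP2} to see that $K_\lambda$ occurs in $\Top(K\cG\otimes_{\fA_{p,K}}[j])$ with multiplicity one; show $\Theta\cong\ZZ[A_\infty]/\langle\tau^\ell\rangle$ with $\ell$ the $\tau_\cG$-period of $M$; prove $k_\lambda\notin\Theta$ — this is the long, non-formal part, using $\cx\le 1$ consequences, the decomposition $\cG=\cG^0\rtimes\cG_{\rm red}$, Higman's theorem and \cite[(3.1)]{FV1}, with $p\ge 3$ excluding the quaternion case; then the function $d_\lambda(X)=\sum_{n=0}^{\ell-1}\dim_k\Hom_\cG(\tau_\cG^n(X),k_\lambda)$ is additive and $\tau_\cG$-invariant, hence equals $r\,\ql(X)$, and the multiplicity-one statement plus the pairwise non-isomorphy of the summands $\tau_{\cG_K}^n(M_K)$ of $K\cG\otimes_{\fA_{p,K}}[j]$ force $d_\lambda(M)=1$, whence $\ql(M)=1$. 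None of this is present or replaceable by your bookkeeping.

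Two further points. In (3), even once $M$ is known to be quasi-simple, you cannot "apply Proposition \ref{QRP1} to $M$": its hypothesis (b) demands that \emph{every} relatively $\alpha_K$-projective vertex of $\Theta$ be quasi-simple. You correctly quote this hypothesis when worrying about circularity, but your plan never verifies it; the paper does so by noting that for any relatively $\alpha_K$-projective $N\in\Theta$ the tube structure provides a surjection from some $\tau_\cG^\ell(N)$ onto $M$ (resp.\ an injection of $M$ into some $\tau_\cG$-shift of $N$), so the one-dimensional top (resp.\ socle) constituent transfers to $\tau_\cG^\ell(N)$ and the argument of (2) applies to it. Your identification of $n_j\in\{0,1\}$ via Lemma \ref{QRP2} and the dual treatment of the socle case are fine. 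Finally, in (1) your appeal to "structure theorems for blocks of finite representation type" is too vague: the actual mechanism is that $k_\lambda$ lies in the block $\cB_M$ (being a composition factor of $M$), the convolution $\id_{k\cG}\ast\lambda$ carries $\cB_M$ onto the principal block, and \cite[(3.1)]{FV1} then upgrades finite representation type of $\cB_0(\cG)$ to finite representation type of $k\cG$, contradicting the hypothesis; this twist-to-the-principal-block step is the point and should be made explicit.
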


\begin{proof} (1) By assumption, there exists an algebra homomorphism $\lambda : k\cG \lra k$ such that the one-dimensional $\cG$-module $k_\lambda$ associated to $\lambda$ occurs
as a summand of $\Top_\cG(M)\oplus \Soc_\cG(M)$.

In view of Proposition \ref{AFP3}, the component $\Theta$ is either finite or an infinite tube. In the former case, Auslander's Theorem \cite[(VII.2.1)]{ARS} implies that $\Theta$ contains all
non-projective indecomposable modules of the block $\cB_M$ containing $M$. Thus, $\cB_M$ has finite representation type and $k_\lambda$ belongs to $\cB_M$. The convolution
$\id_{k\cG}\ast\lambda$ is an automorphism of $k\cG$ that sends the block $\cB_M$ onto the principal block $\cB_0(\cG)$ of $k\cG$. Consequently, $\cB_0(\cG)$ is representation-finite,
and \cite[(3.1)]{FV1} implies that $k\cG$ enjoys the same property, a contradiction.

(2) We first assume that $\Top_\cG(M)$ contains a one-dimensional module. Since $M$ is relatively $\alpha_K$-projective, Lemma \ref{AFP2} provides an element $j \in \{1,\ldots, 
p\!-\!1\}$ such that $M_K$ is a direct summand of $K\cG\!\otimes_{\fA_{p,K}}\![j]$. Then $K_\lambda \cong k_\lambda\!\otimes_K\!K$ is a direct summand of $\Top_{\cG_K}(M_K)$ 
(cf.\ \cite[(3.5)]{Ka}), and Lemma \ref{QRP2} shows that $K_\lambda$ occurs in $\Top(K\cG\!\otimes_{\fA_{p,K}}\![j])$ with multiplicity $1$. Accordingly, $M_K$ is the unique 
indecomposable direct summand of $K\cG\!\otimes_{\fA_{p,K}}\![j]$ such that $K_\lambda\! \mid\! \Top_{\cG_K}(M_K)$.

As observed in the preceding remarks, we have
\[ \tau_{\cG_K}(K\cG\!\otimes_{\fA_{p,K}}\![j]) \oplus ({\rm proj.}) \cong K\cG\!\otimes_{\fA_{p,K}}\![j],\]
so that $\tau_{\cG_K}(M_K)$ is also a direct summand of $K\cG\!\otimes_{\fA_{p,K}}\![j]$. We put $\ell := \min\{n \ge 1 \ ; \ \tau_\cG^n(M) \cong M\}$. Since
$\tau_\cG^\ell|_\Theta$ is an automorphism of  $\Theta$ preserving quasi-lengths, we conclude that $\tau_\cG^\ell|_\Theta \cong \id_\Theta$. Consequently, $\Theta \cong
\ZZ[A_\infty]/\langle \tau^\ell\rangle$.

Suppose that the $\cG$-module $k_\lambda$ belongs to $\Theta$. Then $k_\lambda$ has complexity $\le 1$, and the formula $\cx_\cG(M\!\otimes_k\!k_\lambda) \le \cx_\cG(k_\lambda)$
implies that every $\cG$-module has complexity $\le 1$. By general theory (cf.\ \cite[(6.8)]{Wa}),
\[ \cG = \cG^0 \rtimes \cG_{\rm red}\]
is the semidirect product of an infinitesimal normal subgroup $\cG^0$ and a reduced group $\cG_{\rm red}$. Thus, the complexities of the trivial modules for the subgroups $\cG_{\rm red}$
and $\cG^0$ are also bounded by $1$.

Suppose that $\cx_{\cG_{\rm red}}(k)=1$. Then $\cG_{\rm red}$ is not linearly reductive and there exists a closed subgroup $\cP \subseteq \cG_{\rm red}$ such that $\cP(k) \cong
\ZZ/(p)$. If $\cx_{\cG^0}(k)=1$, then the proof of \cite[(3.1)]{FV1} ensures the existence of a closed subgroup $\cU \subseteq \cG^0$ such that $\cU \cong \GG_{a(1)}$ is isomorphic
to the first Frobenius kernel of the additive group and with $\cP$ acting trivially on $\cU$. Consequently,
\[ 2 = \cx_{\cU\times \cP}(k) \le \cx_\cG(k),\]
a contradiction. We conclude that $\cx_{\cG^0}(k) = 0$. Hence the normal subgroup $\cG^0 \unlhd \cG$ is linearly reductive, and \cite[(1.1)]{Fa2}, provides a normal subgroup $\cN
\unlhd \cG_{\rm red}$ such that

(a) \ $p \nmid {\rm ord}(\cN(k))$, and

(b) \ the principal block $\cB_0(\cG)$ is  isomorphic to $\cB_0(\cG_{\rm red}/\cN)$.

\noindent
In particular, the trivial module of the finite group $\cG(k)/\cN(k)$ has complexity $1$ and is thus periodic (cf.\ \cite[(5.10.4)]{Be2}). Owing to \cite[(XII.11.6)]{CE}, this implies that the
Sylow-$p$-subgroups of $\cG(k)/\cN(k)$ are either cyclic, or generalized quaternion. In the former case, Higman's Theorem \cite[Thm.~4]{Hi} ensures that $\cB_0(\cG)\cong
\cB_0(\cG_{\rm red}/\cN)$ has finite representation type, and \cite[(3.1)]{FV1} yields a contradiction. Alternatively, $p=2$, which contradicts our current assumption.

Consequently, $\cx_{\cG_{\rm red}}(k) = 0$, and the group $\cG_{\rm red}$ is linearly reductive. A consecutive application of \cite[(2.7)]{FV1} and \cite[(3.1)]{FV1} now shows that
$\cG$ is representation-finite, a contradiction.

Thus, $k_\lambda \not \in \Theta$, and the defining property of almost split sequences implies that the function $X \mapsto \dim_k\Hom_\cG(\tau_\cG^n(X),k_\lambda)$ is additive on 
$\Theta$ for every $n \ge 0$. Since $\Theta \cong \ZZ[A_\infty]/\langle \tau^\ell \rangle$, the map
\[ d_\lambda : \Theta \lra \NN \ \ ; \ \ X \mapsto \sum_{n=0}^{\ell-1}\dim_k\Hom_\cG(\tau_\cG^n(X),k_\lambda)\]
is a $\tau_\cG$-invariant, additive function. By Lemma \ref{SF4}, there exists $r \in \NN$ with
\[ d_\lambda(X) = r\ql(X) \ \ \ \ \forall \ X \in \Theta.\]
Owing to \cite[(2.5),(3.6)]{Ka}, the modules $\tau_{\cG_K}^n(M_K) \ \ 0\le n \le \ell\!-\!1$ are pairwise non-isomorphic summands of $K\cG\!\otimes_{\fA_{p,K}}\![j]$. Consequently,
$d_\lambda(M) = 1$, so that $\ql(M) = 1$ and $M$ is quasi-simple.

If $\Soc_\cG(M)$ contains a one-dimensional module, then observing the fact that $k\cG$ is a Frobenius algebra, we conclude that the top of the injective hull $E(M)$ of $M$ contains a 
one-dimensional module. Consequently, $\Top_\cG(\Omega^{-1}_\cG(M))$ has a one-dimensional constituent. The isomorphism
\[ (\ast) \ \ \ \ \ \Omega^{-1}_{\cG_K}(K\cG\!\otimes_{\fA_{p,K}}\![i]) \oplus ({\rm proj.}) \cong K\cG\!\otimes_{\fA_{p,K}}\![p\!-\!i]\]
ensures that $\Omega^{-1}_\cG(M)$ is relatively $\alpha_K$-projective. As a result, the module $\Omega^{-1}_\cG(M)$ is quasi-simple. Since $\Omega^{-1}_\cG$ induces an 
automorphism of the quiver $\Gamma_s(\cG)$, the module $M$ enjoys the same property.

(3) Now let $N \in \Theta$ be an arbitrary relatively $\alpha_K$-projective $\cG$-module. Since $\Theta \cong \ZZ[A_\infty]/\langle\tau^m\rangle$ and $M$ is quasi-simple, there exist a 
surjection $\tau_\cG^\ell(N)\lra M$ as well as an injection $M \lra \tau_\cG^s(M)$ for some $\ell,s \in \{0,\ldots,m\!-\!1\}$. As $\tau_\cG^\ell(N)$ and $\tau_\cG^s(N)$ are also 
relatively $\alpha_K$-projective, part (2) ensures that $\tau_\cG^\ell(N)$ or $\tau_\cG^s(N)$ are quasi-simple. Thus, $N$ is also quasi-simple.

Suppose that $k_\lambda \subseteq \Top_\cG(M)$. Let $j \in \{1,\ldots,p\!-\!1\}$. If $n_j$ is the multiplicity of $M_K$ in $K\cG\!\otimes_{\fA_{p,K}}\![j]$, then the one-dimensional 
module $K_\lambda$ occurs in $\Top_{\cG_K}(K\cG\!\otimes_{\fA_{p,K}}\![j])$ with multiplicity $m_j \ge n_j$. Lemma \ref{QRP2} now shows that $m_j$ is bounded by $1$. 

If $k_\lambda \subseteq \Soc_\cG(M)$, then $n_j(\Omega^{-1}_\cG(M),\alpha_K) \in \{0,1\}$. In view of ($\ast$), we
have $n_j(M,\alpha_K) = n_{p-j}(\Omega^{-1}_\cG(M),\alpha_K)$, so that $n_j(M,\alpha_k) \in \{0,1\}$. The assertion thus follows from Proposition \ref{QRP1}. \end{proof}

\bigskip

\begin{Remarks} (1) The proof shows that Proposition \ref{QRP3} also holds for $p=2$ as long as the Sylow-$2$-subgroup of $\cG(k)$ is not generalized quaternion.

(2) Let $\alpha_k \in \Pt(\cG)$ be a $\pi$-point that is defined over $k$, and denote by $P(k)$ the projective cover of the trivial $\cG$-module $k$. The canonical map
\[ \mu : k\cG\!\otimes_{\fA_{p,k}}\!k \lra k \ \ ; \ \ a\otimes 1 \mapsto \varepsilon(a)\]
affords an $\fA_{p,k}$-linear splitting $\gamma : k \lra k\cG\!\otimes_{\fA_{p,k}}\!k \ ; \ s \mapsto 1 \otimes s$. If
\[ k\cG\!\otimes_{\fA_{p,k}}\!k = P(k)\oplus X,\]
then Lemma \ref{QRP2} implies that $k$ is not a top composition factor of $X$, whence $\mu(X) = (0)$. Consequently, the trivial $\fA_{p,k}$-module $k$ is a direct summand of the
projective module $\alpha_k^\ast(P(k))$, a contradiction. As a result, $k\cG\!\otimes_{\fA_{p,k}}\!k$ possesses a non-projective, indecomposable constituent with $k$ being a composition
factor of its top. The arguments of Lemma \ref{TG1} below show that this applies to every one-dimensional $\cG$-module. \end{Remarks}

\subsection{Special types of $\pi$-points}
In this section we shall provide another two criteria ensuring that certain relatively projective modules are quasi-simple. We fix a $\pi$-point $\alpha_K \in \Pt(\cG)$ such that $[\alpha_K] \in 
\Pi(\cG)$ is closed.

\bigskip

\begin{Prop} \label{STP1} Let $M$ be a non-projective indecomposable relatively $\alpha_K$-projective module which is contained in an infinite component $\Theta \subseteq 
\Gamma_s(\cG)$. If either

{\rm (a)} \ $K\cG\alpha_K(t)$ is an ideal of $K\cG$, or

{\rm (b)} \ there exists an abelian unipotent normal subgroup $\cU \subseteq \cG_K$ of complexity $\cx_\cU(K)=1$ such \indent \indent \indent that $\im \alpha_K \subseteq K\cU$,

\noindent
then $M$ is quasi-simple. \end{Prop}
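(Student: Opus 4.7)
The plan is to reduce case (b) to case (a) and then to imitate the additivity argument of Proposition \ref{QRP3}(2), using the simple top of $M_K$ in place of $k_\lambda$.

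First, under (b) the abelian unipotent normal subgroup $\cU \subseteq \cG_K$ of complexity one has $K\cU \cong K[s]/(s^p)$, a commutative local Frobenius algebra of dimension $p$. Left flatness of $\alpha_K$ forces $\alpha_K(t)$ to generate $\Rad K\cU$, so after replacement by an equivalent $\pi$-point I may assume $\alpha_K(t) = s$. Since $\cU$ is abelian, $I_{K\cU} = s\,K\cU = K\cU\,s$, whence $K\cG\alpha_K(t) = K\cG \cdot I_{K\cU}$. Normality of the Hopf subalgebra $K\cU \subseteq K\cG$ yields $K\cG \cdot I_{K\cU} = I_{K\cU}\cdot K\cG$, a two-sided ideal of $K\cG$. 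Thus (b) entails (a), and I proceed under (a).

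Setting $I := K\cG\alpha_K(t)$, the two-sided assumption together with $K\cG\alpha_K(t) = \alpha_K(t)K\cG$ gives $I^j = K\cG\alpha_K(t)^j$ for every $j \ge 1$ by induction. Consequently $\Lambda_j := K\cG/I^j$ is a finite-dimensional quotient algebra of $K\cG$, and
\[ K\cG \otimes_{\fA_{p,K}}[j] \cong \Lambda_j \]
as left $K\cG$-modules. Since $\alpha_K(t)$ is nilpotent, $I \subseteq \Rad K\cG$, so the simple $\Lambda_j$-modules coincide with the simple $K\cG$-modules, and decomposing $\Lambda_j$ as a left module over itself,
\[ \Lambda_j \cong \bigoplus_S (\dim_K S)\,P_{\Lambda_j}(S), \]
where $P_{\Lambda_j}(S)$ denotes the projective cover of the simple $K\cG$-module $S$ in $\Lambda_j$, and each $P_{\Lambda_j}(S)$ is an indecomposable $K\cG$-module with simple top $S$. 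By Lemma \ref{AFP2}(1), $M_K \cong P_{\Lambda_j}(S_0)$ for some simple $K\cG$-module $S_0$; because $[\alpha_K]$ is closed, the remark following Lemma \ref{AFP2} allows me to descend $\alpha_K$ to a $p$-point over $k$ and accordingly $S_0$ to a simple $k\cG$-module.

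Proposition \ref{AFP3}(2) combined with Lemma \ref{AFP2} gives $\Theta \cong \ZZ[A_\infty]/\langle \tau^\ell \rangle$ for $\ell := \min\{n \ge 1 \ ; \ \tau_\cG^n(M) \cong M\}$. Since $\Lambda_j$ is $\tau_{\cG_K}$-stable modulo projectives, the translates $\tau_{\cG_K}^n(M_K)$, $0 \le n \le \ell-1$, furnish $\ell$ pairwise non-isomorphic non-projective summands of $\Lambda_j$, each of the form $P_{\Lambda_j}(S_n)$ for pairwise distinct simple $k\cG$-modules $S_0, \ldots, S_{\ell-1}$. Provided $S_0 \not \in \Theta$, the function
\[ d_{S_0} : \Theta \lra \NN_0, \qquad X \mapsto \sum_{n=0}^{\ell-1}\dim_k\Hom_\cG(\tau_\cG^n(X),S_0) \]
is $\tau_\cG$-invariant and additive, and Lemma \ref{SF4} produces $r \in \NN_0$ with $d_{S_0}(X) = r\,\ql(X)$ for all $X \in \Theta$. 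Using $\Hom_{\cG_K}(P_{\Lambda_j}(S_n),S_0) \cong \delta_{S_n,S_0}\,K$, evaluation at $M$ gives $d_{S_0}(M) = 1$, forcing $r = \ql(M) = 1$.

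The main obstacle is the exceptional case $S_0 \in \Theta$: then the simple $k\cG$-module $S_0$ lies in a component of $\Pi$-support $\{[\alpha_K]\}$, whence $\cx_\cG(S_0) = 1$ and $S_0$ is periodic. I expect this to be dispatched by an adaptation of the representation-type argument in the proof of Proposition \ref{QRP3}(2): via the decomposition $\cG = \cG^0 \rtimes \cG_{\mathrm{red}}$ of \cite[(6.8)]{Wa}, the reduction to the principal block \cite[(1.1)]{Fa2}, and the representation-finiteness criteria \cite[(2.7),(3.1)]{FV1}, periodicity of a simple module should force $k\cG$ to be representation-finite, contradicting the existence of the infinite component $\Theta$. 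The step requiring most care is the replacement of the bound $\cx_\cG(M\otimes_k k_\lambda) \le \cx_\cG(k_\lambda)$ — valid for one-dimensional $k_\lambda$ — by an analogous estimate for the higher-dimensional simple $S_0$, or alternatively by a direct analysis of the AR-component containing $S_0$.
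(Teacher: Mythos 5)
Your case (a) argument, up to the conclusion that $M_K$ is a principal indecomposable module of $\Lambda_j=K\cG/I^j$ and hence has simple top, is exactly the paper's first step. Beyond that there are two genuine gaps. First, the reduction of (b) to (a) is not correct as stated: $\cx_\cU(K)=1$ only gives $K\cU\cong K[X]/(X^{p^n})$, not $K[s]/(s^p)$, so $\alpha_K(t)$ need not generate $\Rad K\cU$ (flatness forces $K\cU\,\alpha_K(t)=\Rad^{p^{n-1}}K\cU$, not $\Rad K\cU$). Moreover, ``after replacement by an equivalent $\pi$-point'' is not a legitimate move here, because relative $\alpha_K$-projectivity is a property of the $\pi$-point itself and not of its equivalence class -- see the Examples following Corollary \ref{TG4}, where $M_x$ is relatively $\alpha_k$-projective but not relatively $\beta_k$-projective although $\alpha_k\sim\beta_k$. (The reduction can be repaired via uniseriality of $K\cU$ together with the fact that $K\cG K\cU^\dagger$ is a two-sided ideal; the paper instead treats (b) directly, writing $M_K\mid K\cG\otimes_{K\cU}N$ with $N\cong K[X]/(X^i)$ and $K\cG\otimes_{K\cU}N\cong K\cG/K\cG x^i$.)

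Second, and more seriously, your main argument is incomplete exactly where you flag it: the case $S_0\in\Theta$ is left open, and the proposed adaptation of the proof of Proposition \ref{QRP3}(2) does not go through, since the inequality $\cx_\cG(M\otimes_k k_\lambda)\le\cx_\cG(k_\lambda)$ is special to one-dimensional modules; a simple module $S_0$ of dimension $>1$ lying in a periodic component gives no bound on the complexity of all $\cG$-modules, so no contradiction with infinite representation type results. (There is also a smaller issue: $X\mapsto\dim_k\Hom_\cG(\tau_\cG^n(X),S_0)$ is only guaranteed to be additive at vertices whose almost split sequences have no projective middle summands, so at best you obtain an eventually additive function.) The paper avoids this case distinction altogether by a shorter route: since $\Omega^{-1}_\cG(M)$ is again relatively $\alpha_K$-projective, it also has simple top, whence $M$ has simple socle; as every $\tau_\cG$-translate of $M$ is relatively $\alpha_K$-projective and $\Theta$ is infinite, Erdmann's theorem \cite[(1.2)]{Er0} then yields quasi-simplicity directly. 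To complete your proof you should either close the $S_0\in\Theta$ case by a genuinely new argument or switch to the socle--plus--Erdmann argument.
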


\begin{proof} Assuming (a), we have $\alpha_K(t)K\cG \subseteq K\cG\alpha_K(t)$, whence $\alpha_K(t)^iK\cG \subseteq  K\cG\alpha_K(t)^i$ for all $1 \le i \le p\!-\!1$. Thus, $I_i := 
K\cG\alpha_K(t)^i$ is an ideal of $K\cG$. Since $M$ is relatively $\alpha_K$-projective, there exists $i \in \{1,\ldots,p\!-\!1\}$ such that $M_K \!\mid\! K\cG\!\otimes_{\fA_{p,K}}\![i]$. 
Since the homomorphism $\alpha_K$ factors through an abelian unipotent subgroup, it is also right flat and the natural map $K\cG \lra K\cG\!\otimes_{\fA_{p,K}}\![i] \ ; \ a \mapsto 
a\otimes1$ induces an isomorphism $K\cG/I_i \cong K\cG\!\otimes_{\fA_{p,K}}\![i]$. Thus, $M_K$ is isomorphic to a principal indecomposable $(K\cG/I_i)$-module and hence has a 
simple top. Consequently, $M$ enjoys the same property (cf.\ \cite[(3.3)]{Ka}).

Assuming (b), we obtain that $M_K$ is a direct summand of $K\cG\!\otimes_{K\cU}\!N$ for some indecomposable $K\cU$-module $N$. Since $\cx_\cU(K)=1$, it follows that $K\cU \cong 
K[X]/(X^{p^n})$ as well as $N \cong K[X]/(X^i)$ for some $i \in \{1,\ldots,p^n\!-\!1\}$. Thus, $K\cG\!\otimes_{K\cU}\!N \cong K\cG /K\cG x^i$, with $x$ being the canonical 
generator of $K\cU$. As $\cU$ is normal, the left ideal $K\cG K\cU^\dagger = K\cG x$ is an ideal of $K\cG$, and the arguments employed above guarantee that $\Top_\cG(M)$ is simple.

Recall that the indecomposable $\cG$-module $\Omega^{-1}_\cG(M)$ is also relatively $\alpha_K$-projective. Consequently, $\Omega^{-1}_\cG(M)$ has a simple top, implying that $M$ 
has a simple socle. 

Since $\tau_\cG^\ell(M)$ is relatively $\alpha_K$-projective for all $\ell \in \ZZ$  and $\Theta$ is infinite, we may apply \cite[(1.2)]{Er0} to conclude that $M$ is quasi-simple. \end{proof}

\bigskip

\begin{Cor} \label{STP2} Suppose that $\im \alpha_K$ is contained in the center $\fZ(K\cG)$ of $K\cG$. Let $M$ be a non-projective indecomposable relatively $\alpha_K$-projective 
module which is contained in an infinite component $\Theta \subseteq \Gamma_s(\cG)$. Then there exist $j \in \{1,\ldots,p\!-\!1\}$ and $m,n \in \NN$ with $m\ge 2n$ such that
\[ \alpha_{K,i}(X) = \left\{ \begin{array}{cl} (m\!-\!2n)\ql(X)+2n & \text{for} \ i=j\\ n(\ql(X)\!-\!1) & \text{for} \ i=j\!-\!1,j\!+\!1\\ 0 & \text{for} \ i \neq j,j\!+\!1,j\!-\!1 
\end{array} \right.\]
for every $X \in \Theta$ and $i \in \{1,\ldots,p\!-\!1\}$.\end{Cor}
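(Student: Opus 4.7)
The plan is to combine Proposition~\ref{STP1}(a) with a direct computation of the $\fA_{p,K}$-structure of the induced modules $K\cG\!\otimes_{\fA_{p,K}}\![i]$, and then to feed the resulting multiplicity vector into Proposition~\ref{QRP1}. Since $\im \alpha_K \subseteq \fZ(K\cG)$, the left ideal $K\cG\alpha_K(t)$ is in fact two-sided, so Proposition~\ref{STP1}(a) applies to every non-projective indecomposable relatively $\alpha_K$-projective module belonging to $\Theta$. In particular $M$, and every other such module, is quasi-simple, so conditions (a) and (b) of Proposition~\ref{QRP1} are in force.

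The central step is to compute $\alpha_K^\ast(K\cG\!\otimes_{\fA_{p,K}}\![i])$ for each $i \in \{1,\ldots,p\!-\!1\}$. Centrality of $\alpha_K(t)$ together with the tensor relation $a\alpha_K(t)\otimes v = a\otimes tv$ shows that the pull-back action along $\alpha_K$ on $K\cG\!\otimes_{\fA_{p,K}}\![i]$ agrees with the $\fA_{p,K}$-action induced by the right tensor factor. Since $\fA_{p,K}$ is commutative and local, the flatness of $\alpha_K$ yields freeness of $K\cG$ over $\fA_{p,K}$ of rank $d := \dim_K K\cG/p$, whence
\[ \alpha_K^\ast(K\cG\!\otimes_{\fA_{p,K}}\![i]) \cong d\,[i] \]
as $\fA_{p,K}$-modules. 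Writing a Krull-Remak-Schmidt decomposition $K\cG\!\otimes_{\fA_{p,K}}\![i] = n_i M_K \oplus N'_i$ with $M_K \nmid N'_i$, uniqueness of the $\fA_{p,K}$-decomposition forces $\alpha_K^\ast(M_K)$ to be isotypic of type $[i]$ whenever $n_i>0$.

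Lemma~\ref{AFP2}(1) guarantees some $n_i \ne 0$ with $i \le p\!-\!1$; by the previous step this $i$ is unique. Denoting it by $j$, and setting $m := \alpha_{K,j}(M) \ge 1$ and $n := n_j \ge 1$, one has $\alpha_K^\ast(M_K) \cong m[j]$, so $\alpha_{K,\ell}(M) = m\,\delta_{\ell j}$. Plugging the vector $(n_1,\ldots,n_{p-1}) = n\,e_j$ into the formula of Proposition~\ref{QRP1} and using that, for $i \in \{1,\ldots,p\!-\!1\}$, the only non-zero entries of the $j$-th column of $A$ are $a_{jj}=2$ and $a_{j\pm 1,j}=-1$ (whenever $j\pm 1 \in \{1,\ldots,p\!-\!1\}$), one reads off the displayed formula. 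The inequality $m \ge 2n$ follows from non-negativity of $\alpha_{K,j}(X) = (m-2n)\ql(X) + 2n$ combined with the fact that $\ql$ is unbounded on the infinite tube $\Theta$.

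The main obstacle is the central computation of the second paragraph: once the induced modules $K\cG\!\otimes_{\fA_{p,K}}\![i]$ are shown to be isotypic of type $[i]$ as $\fA_{p,K}$-modules, the rigidity forced on the Jordan type of $M_K$ yields the single-column shape $\alpha_K^\ast(M_K) \cong m[j]$, and Proposition~\ref{QRP1} then immediately delivers the three-term formula; the remainder is bookkeeping.
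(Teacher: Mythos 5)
Your proposal is correct and follows essentially the same route as the paper: centrality of $\alpha_K(t)$ gives $\alpha_K^\ast(K\cG\!\otimes_{\fA_{p,K}}\![j]) \cong (\dim_K K\cG/p)[j]$, whence $\alpha_K^\ast(M_K)\cong m[j]$ and $n_i = n\delta_{ij}$, and then Propositions \ref{STP1} and \ref{QRP1} yield the formula, with $m\ge 2n$ forced by non-negativity of $\alpha_{K,j}$ on the infinite tube. You merely spell out in more detail (freeness of $K\cG$ over $\fA_{p,K}$, uniqueness of $j$) what the paper states tersely.
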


\begin{proof} By assumption, $K\cG\alpha_K(t)$ is an ideal of $K\cG$. Let $j \in \{1,\ldots,p\!-\!1\}$ be such that $M_K\!\mid \!K\cG\!\otimes_{\fA_{p,K}}\![j]$. Since $\alpha_K(t)$ is 
central, we obtain $\alpha_K^\ast(K\cG\!\otimes_{\fA_{p,K}}\![j]) \cong (\frac{\dim_kk\cG}{p})[j]$. Hence there exists $m\in \NN$ with
\[ \alpha^\ast_K(M_K) \cong m[j].\]
This readily implies $n_i(M,\alpha_K) = n\delta_{ij}$ $1\le i \le p\!-\!1$ for some $n \in \NN$. The asserted formula now follows by applying Propositions \ref{STP1} and \ref{QRP1} 
consecutively. Since $\alpha_{K,j}(X) \ge 0$ for all $X \in \Theta$, we also conclude that $m \ge 2n$.  \end{proof}

\subsection{Trigonalizable Group Schemes}
{\it Throughout this section we consider a trigonalizable finite group scheme $\cG$}. By definition, all simple $\cG$-modules are one-dimensional. The Theorem of Lie-Kolchin ensures that the
Frobenius kernels of the smooth connected solvable algebraic groups belong to this class.

By general theory (cf.\ \cite[(IV,\S2,3.5]{DG}), $\cG = \cU \rtimes \ccD$ is a semidirect product of a unipotent normal subgroup $\cU$ and diagonalizable factor $\ccD$. Thus, the 
coordinate ring $k[\ccD] = kX(\ccD)$ is the group algebra of the finite group $X(\ccD)$ of characters of $\ccD$. If $K\!:\!k$ is a field extension, then
\[ K[\ccD_K] \cong k[\ccD]\otimes_kK \cong KX(\ccD),\]
so that $\ccD_K$ is diagonalizable with character group $X(\ccD_K) = X(\ccD)$.

Since $\cU$ is unipotent, the canonical restriction map $X(\cG) \lra X(\ccD)$ is an isomorphism. Given $\lambda \in X(\cG)$, we let $k_\lambda$ be the one-dimensional $\cG$-module 
defined by $\lambda$. Thus, $\{k_\lambda \ ; \ \lambda \in X(\cG)\}$ is a complete set of representatives for the isomorphism classes of the simple $\cG$-modules. By the above, these 
observations also apply to the group $\cG_K$, given by an extension field $K$ of $k$. We will henceforth identify $X(\cG_K)$ with $X(\cG)$.

\bigskip

\begin{Lem} \label{TG1} Let $\alpha_K \in \Pt(\cG)$ be a $\pi$-point, $j \in \{1,\ldots,p\}$.

{\rm (1)} \ $\Top_{\cG_K}(K\cG\!\otimes_{\fA_{p,K}}\![j]) \cong \bigoplus_{\lambda \in X(\cG)}K_\lambda$.

{\rm (2)} \ We have $(K\cG\!\otimes_{\fA_{p,K}}\![j])\!\otimes_K\!K_\lambda \cong K\cG\!\otimes_{\fA_{p,K}}\![j]$ for all $\lambda \in X(\cG)$.

{\rm (3)} \ We have $\tau_{\cG_K}(K\cG\!\otimes_{\fA_{p,K}}\![j]) \cong K\cG\!\otimes_{\fA_{p,K}}\![j]$ for $1\le j \le p\!-\!1$. \end{Lem}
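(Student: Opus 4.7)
For part (1), the plan is to invoke Lemma QRP2 with the simple module $S=k_\lambda$ for each $\lambda\in X(\cG)$. The only input needed is the Jordan type $\alpha_K^\ast(K_\lambda)$: since $\lambda(\alpha_K(t))^p=\lambda(\alpha_K(t)^p)=0$, the scalar $\lambda(\alpha_K(t))$ vanishes, so $\alpha_K(t)$ acts as zero on the one-dimensional module $K_\lambda$, giving $\alpha_K^\ast(K_\lambda)\cong [1]$. Consequently $\dim_K\ker t^j_{[1]}=1$ for every $j$, and Lemma QRP2 shows each simple $\cG_K$-module $K_\lambda$ occurs in $\Top_{\cG_K}(K\cG\otimes_{\fA_{p,K}}[j])$ with multiplicity exactly one. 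Since the trigonalizability of $\cG_K$ ensures that $\{K_\lambda\}_{\lambda\in X(\cG)}$ exhausts the simple modules, the asserted top decomposition follows.

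For part (2), I would use the projection formula for a Hopf subalgebra. Factor $\alpha_K$ as $\fA_{p,K}\to K\cU\hookrightarrow K\cG$, where $\cU\subseteq \cG_K$ is an abelian unipotent subgroup through which $\alpha_K$ factors; the inclusion $K\cU\hookrightarrow K\cG$ is a Hopf subalgebra. By transitivity of the tensor product,
\[
K\cG\otimes_{\fA_{p,K}}[j]\;\cong\;K\cG\otimes_{K\cU}\bigl(K\cU\otimes_{\fA_{p,K}}[j]\bigr).
\]
Now apply the classical projection formula $(K\cG\otimes_{K\cU}W)\otimes_K V\cong K\cG\otimes_{K\cU}(W\otimes_K V|_{K\cU})$ with $W=K\cU\otimes_{\fA_{p,K}}[j]$ and $V=K_\lambda$. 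Because $\cU$ is unipotent, $X(\cU)$ is trivial and the restriction $K_\lambda|_{K\cU}$ is the trivial one-dimensional $K\cU$-module, so tensoring with it does not change $W$. This gives (2).

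Part (3) combines (2) with the isomorphism $\tau_{\cG_K}(M)\oplus P\cong M$ from \cite[(3.2)]{Fa3}, where $M:=K\cG\otimes_{\fA_{p,K}}[j]$ and $P$ is projective. Since $\tau_{\cG_K}(M)$ has no projective direct summand, Krull--Remak--Schmidt reduces matters to showing that $M$ itself has no projective summand. For this, invoke (2): $M\otimes_K K_\lambda\cong M$ for every $\lambda$, and $P(\mu)\otimes_K K_\lambda\cong P(\mu\lambda)$ since twisting by $K_\lambda$ is an auto-equivalence of $\modd\cG_K$ that preserves projectivity and permutes simple tops. Hence the multiplicity $n$ of each principal indecomposable $P(\mu)$ in $M$ is independent of $\mu$. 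Because $K\cG$ is free of rank $\dim K\cG/p$ over $\fA_{p,K}$, one has $\dim M=(j/p)\dim K\cG$, while the projective part of $M$ consists of $n$ copies of $K\cG\cong\bigoplus_\mu P(\mu)$, contributing dimension $n\dim K\cG$. This forces $n\le j/p<1$, hence $n=0$.

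The main obstacle I anticipate is part (2): the projection formula requires Hopf structure, but $\alpha_K$ itself need not be a Hopf algebra map. The resolution is that only the inclusion $K\cU\hookrightarrow K\cG$ needs to be Hopf; the algebra map $\fA_{p,K}\to K\cU$ enters only through the $K\cU$-module $W=K\cU\otimes_{\fA_{p,K}}[j]$, and the projection formula then applies to the outer induction $K\cG\otimes_{K\cU}(-)$ without difficulty.
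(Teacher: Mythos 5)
Your proposal is correct. For parts (1) and (2) it follows essentially the paper's own route: (1) is exactly the intended application of Lemma \ref{QRP2} (your observation that $\lambda_K(\alpha_K(t))$ is nilpotent, hence zero, so $\alpha_K^\ast(K_\lambda)\cong[1]$ and the kernel has dimension $1$, is the "direct" step the paper leaves implicit), and (2) is the same tensor-identity argument; the only cosmetic difference is that the paper routes the induction through the unipotent radical $\cU$ in the decomposition $\cG=\cU\rtimes\ccD$, while you use the abelian unipotent subgroup of $\cG_K$ furnished by the definition of a $\pi$-point — immaterial, since either way $K_\lambda$ restricts trivially to a unipotent subgroup and the tensor identity \cite[(I.3.6)]{Ja} only needs the subalgebra inclusion $K\cU\hookrightarrow K\cG$. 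Part (3) is where you genuinely diverge: the paper uses (1) to recognize $K\cG\stackrel{\pi}{\lra}K\cG\!\otimes_{\fA_{p,K}}\![j]$ as a projective cover, deduces $\Omega_{\cG_K}(K\cG\!\otimes_{\fA_{p,K}}\![j])\cong K\cG\!\otimes_{\fA_{p,K}}\![p\!-\!j]$ and $\Omega^2_{\cG_K}(K\cG\!\otimes_{\fA_{p,K}}\![j])\cong K\cG\!\otimes_{\fA_{p,K}}\![j]$, and then absorbs the Nakayama twist by the modular function via (2); you instead import the general isomorphism $\tau_{\cG_K}(K\cG\!\otimes_{\fA_{p,K}}\![j])\oplus(\mathrm{proj.})\cong K\cG\!\otimes_{\fA_{p,K}}\![j]$ from \cite[(3.2)]{Fa3} (which the paper itself invokes elsewhere) and kill the projective part by your multiplicity argument: (2) forces every principal indecomposable to occur with the same multiplicity $n$, all simples are one-dimensional so the projective part has dimension $n\dim_KK\cG$, and freeness of $K\cG$ over $\fA_{p,K}$ gives $n\le j/p<1$. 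Both arguments are sound; the paper's buys the explicit syzygy identification $\Omega_{\cG_K}(K\cG\!\otimes_{\fA_{p,K}}\![j])\cong K\cG\!\otimes_{\fA_{p,K}}\![p\!-\!j]$, which is reused later (e.g.\ in the proof of Proposition \ref{QRP3}), whereas yours is shorter but rests on the cited external fact and yields only the statement of the lemma.
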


\begin{proof} (1) This follows directly from Lemma \ref{QRP2}.

(2) Since $\cG/\cU$ is diagonalizable, every $\pi$-point $\alpha_K$ of $\cG$ factors through $\cU$. Let $V[j] := K\cU\!\otimes_{\fA_{p,K}}\![j]$, so that
\[ K\cG\!\otimes_{\fA_{p,K}}\![j] \cong K\cG\!\otimes_{K\cU}\!V[j].\]
The tensor identity \cite[(I.3.6)]{Ja} then yields
\[ (K\cG\!\otimes_{K\cU}\!V[j])\!\otimes_k\!K_\lambda \cong K\cG\!\otimes_{K\cU}\!(V[j]\!\otimes_K\!(K_\lambda|_\cU)) \cong K\cG\!\otimes_{K\cU}\!V[j]\]
for every character $\lambda \in X(\cG)$.

(3) Let $j \in \{1,\ldots,p\!-\!1\}$. Consider the exact sequence
\[(0) \lra K\cG\!\otimes_{\fA_{p,K}}\![p\!-\!j] \lra K\cG \stackrel{\pi}{\lra} K\cG\!\otimes_{\fA_{p,K}}\![j] \lra (0).\]
According to (1), we have $\Top_{\cG_K}(K\cG) = \Top_{\cG_K}(K\cG\!\otimes_{\fA_{p,K}}\![j])$, so that $(K\cG,\pi)$ is a
projective cover of $K\cG\!\otimes_{\fA_{p,K}}\![j]$. Consequently, $\Omega_{\cG_K}( K\cG\!\otimes_{\fA_{p,K}}\![j]) \cong K\cG\!\otimes_{\fA_{p,K}}\![p-j]$, and
$\Omega_{\cG_K}^2(K\cG\!\otimes_{\fA_{p,K}}\![j]) \cong K\cG\!\otimes_{\fA_{p,K}}\![j]$. Since $\tau_{\cG_K}$ is the composite of $\Omega^2_{\cG_K}$ with the functor
$M \lra M\!\otimes_K\!K_\zeta$, defined by the modular function $\zeta \in X(\cG)$, part (2) yields the desired isomorphism. \end{proof}

\bigskip
\noindent
Given a $\cG$-module $M$, we let
\[ {\rm Stab}_{X(\cG)}(M) := \{ \lambda \in X(\cG) \ ; \ M\!\otimes_k\!K_\lambda \cong M\}\]
be the {\it stabilizer} of $M$. Let $K$ be an extension field of $k$. By the above observations, we have $M_K\!\otimes_K\!K_\lambda \cong (M\!\otimes_k\!k_\lambda)_K$ for every
$\lambda \in X(\cG)$. It now follows from \cite[(2.5)]{Ka} that
\[ {\rm Stab}_{X(\cG)}(M) = {\rm Stab}_{X(\cG)}(M_K).\]
Recall that a $\cG_K$-module $N$ is said {\it to be defined over k} if there exists a $\cG$-module $M$ such that $N \cong M_K$.

\bigskip

\begin{Lem} \label{TG2} Let $M \in \modd \cG$ be a non-projective indecomposable $\cG$-module, $\alpha_K \in \Pt(\cG)$ be a $\pi$-point. If $M$ is relatively $\alpha_K$-projective,
then there exists a unique $j \in \{1,\ldots,p\!-\!1\}$ and a subset $X_M \subseteq X(\cG)$ of cardinality $[X(\cG)\!:\!{\rm Stab}_{X(\cG)}(M)]$ such that
\[ K\cG\!\otimes_{\fA_{p,K}}\![j] \cong \bigoplus_{\lambda \in X_M} M_K\!\otimes_K\!K_\lambda.\]
In particular, $\dim_k\Top_\cG(M) = {\rm ord}({\rm Stab}_{X(\cG)}(M))$, and the module $K\cG\!\otimes_{\fA_{p,K}}\![j]$ is defined over $k$. \end{Lem}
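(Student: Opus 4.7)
My plan is to produce a suitable $j$ via Lemma \ref{AFP2}(1) and then exploit the rigid top structure of $N_j := K\cG \otimes_{\fA_{p,K}} [j]$ from Lemma \ref{TG1}(1) to pin down the indecomposable summands of $N_j$ precisely.

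First, since $M$ is relatively $\alpha_K$-projective, Lemma \ref{AFP2}(1) supplies a $j \in \{1,\ldots,p-1\}$ with $M_K \mid N_j$. By Lemma \ref{TG1}(2), for every $\lambda \in X(\cG)$ the twist $M_K \otimes_K K_\lambda$ is again a summand of $N_j$. Setting $r := [X(\cG):{\rm Stab}_{X(\cG)}(M)]$ and choosing coset representatives $\lambda_1,\ldots,\lambda_r$, the modules $M_K \otimes_K K_{\lambda_s}$ are pairwise non-isomorphic (using ${\rm Stab}_{X(\cG)}(M) = {\rm Stab}_{X(\cG)}(M_K)$ recorded just above the statement), so $\bigoplus_{s=1}^r M_K \otimes_K K_{\lambda_s}$ embeds as a direct summand of $N_j$.

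The heart of the argument is to upgrade this inclusion to an equality, and the key input is the top of $N_j$. By Lemma \ref{TG1}(1), $\Top(N_j) = \bigoplus_{\lambda \in X(\cG)} K_\lambda$ with every character appearing with multiplicity one; consequently each indecomposable summand of $N_j$ occurs with multiplicity one, and distinct indecomposable summands have disjoint sets of top characters in $X(\cG)$. Let $T \subseteq X(\cG)$ denote the set of top characters of $M_K$, so that $\Top(M_K \otimes_K K_{\lambda_s}) = \bigoplus_{\mu \in T} K_{\mu + \lambda_s}$. The main obstacle I anticipate is establishing the equality $|T| = |{\rm Stab}_{X(\cG)}(M)|$: disjointness of tops forces $T + \lambda = T$ iff $M_K \otimes_K K_\lambda \cong M_K$, so the setwise stabilizer of $T$ in $X(\cG)$ coincides with ${\rm Stab}_{X(\cG)}(M)$; this gives $|{\rm Stab}_{X(\cG)}(M)| \mid |T|$, while the inclusion $\bigsqcup_{s=1}^r (T + \lambda_s) \subseteq X(\cG)$ yields $r|T| \leq |X(\cG)| = r\,|{\rm Stab}_{X(\cG)}(M)|$, forcing $|T| = |{\rm Stab}_{X(\cG)}(M)|$ and identifying $T$ as a single coset.

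With $r|T| = |X(\cG)|$ the translates $T + \lambda_s$ partition $X(\cG)$, so $\Top\bigl(\bigoplus_s M_K \otimes_K K_{\lambda_s}\bigr) = \Top(N_j)$; the multiplicity-one rigidity then rules out further summands and yields the decomposition with $X_M := \{\lambda_1,\ldots,\lambda_r\}$. Uniqueness of $j$ follows because $\dim_K N_j = j \dim_k k\cG / p$ is strictly increasing in $j$, while the decomposition forces this dimension to equal $r \dim_k M$. The identity $\dim_k \Top_\cG(M) = |T| = {\rm ord}({\rm Stab}_{X(\cG)}(M))$ is then immediate, and since $(M \otimes_k k_{\lambda_s})_K \cong M_K \otimes_K K_{\lambda_s}$, the decomposition exhibits $N_j$ as the base change of $\bigoplus_{s=1}^r M \otimes_k k_{\lambda_s}$, showing $N_j$ is defined over $k$.
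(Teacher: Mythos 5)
Your proof is correct and takes essentially the same route as the paper: Lemma \ref{AFP2} supplies $j$, the multiplicity-one top of $K\cG\!\otimes_{\fA_{p,K}}\![j]$ from Lemma \ref{TG1} together with its invariance under character twists forces the modules $M_K\!\otimes_K\!K_\lambda$, with $\lambda$ running through coset representatives of ${\rm Stab}_{X(\cG)}(M)$, to exhaust the induced module, and the resulting dimension identity pins down $j$. Your counting argument giving $|T| = {\rm ord}({\rm Stab}_{X(\cG)}(M))$ is only a cosmetic variant of the paper's partition of $X(\cG)$ by the top-character sets of the indecomposable summands.
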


\begin{proof} By virtue of Lemma \ref{AFP2}, the $\cG_k$-module $M_K$ is a direct summand of $K\cG\!\otimes_{\fA_{p,K}}\![j]$ for some $j \in \{1,\ldots,p\!-\!1\}$. We decompose
\[ K\cG\!\otimes_{\fA_{p,K}}\![j] \cong M_1\oplus \cdots \oplus M_n\]
into its indecomposable constituents, where $M_1 \cong M_K$. In view of (1) of Lemma \ref{TG1}, the corresponding decomposition of its top yields a partition
\[ X(\cG) = \bigsqcup_{i=1}^n X_i,\]
where $\Top_{\cG_K}(M_i) \cong \bigoplus_{\lambda \in X_i}K_\lambda$. Note that the isoclass of $M_i$ is uniquely determined by $X_i$.

Let $\lambda \in X(\cG)$. Owing to Lemma \ref{TG1}, the $K\cG$-module $M_1\!\otimes_K\!K_\lambda$ is an indecomposable direct summand of $K\cG\!\otimes_{\fA_{p,K}}\![j]$. 
Hence there exists exactly one $i(\lambda) \in \{1,\ldots,n\}$ such that $M_1\!\otimes_K\!K_\lambda \cong M_{i(\lambda)}$. It readily follows that $X_{i(\lambda)} = \lambda\dact X_1$.
Let $X_M \subseteq X(\cG)$ be a complete set of left coset representatives for ${\rm Stab}_{X(\cG)}(M)$. Since ${\rm Stab}_{X(\cG)}(M)\dact X_1 = X_1$, we obtain
\[ X(\cG) = \bigcup_{\lambda \in X(\cG)} \lambda\dact X_1 = \bigcup_{\lambda \in X_M} \lambda\dact X_1 = \bigsqcup_{\lambda \in X_M} \lambda\dact X_1.\]
Thus, $N := \bigoplus_{\lambda \in X_M} M_1\!\otimes_K\!K_\lambda$ is a direct summand of $K\cG\!\otimes_{\fA_{p,K}}\![j]$ with $\Top_{\cG_K}(N) \cong \bigoplus_{\lambda \in
X(\cG)}K_\lambda \cong \Top_{\cG_K}(K\cG\!\otimes_{\fA_{p,K}}\![j])$, whence $K\cG\!\otimes_{\fA_{p,K}}\![j] \cong \bigoplus_{\lambda \in X_M} M_K\!\otimes_K\!K_\lambda$.
By definition, we have $|X_M|= [X(\cG)\!:\! {\rm Stab}_{X(\cG)}(M)]$. Since
\[ \frac{\dim_kk\cG}{p}j = [X(\cG)\!:\!{\rm Stab}_{X(\cG)}(M)]\dim_kM,\]
the number $j$ is completely determined by $M$.

In view of \cite[(3.5)]{Ka}, we have $\Top_\cG(M)_K \cong \Top_{\cG_K}(M_K)$, whence
\[ \dim_k\Top_\cG(M) = \dim_K \Top_{\cG_K}(M_K) = |X_1| = {\rm ord}({\rm Stab}_{X(\cG)}(M_K)) = {\rm ord}({\rm Stab}_{X(\cG)}(M)),\]
as desired. \end{proof}

\bigskip

\begin{Thm} \label{TG3} Let $M$ be a non-projective, indecomposable, relatively $\alpha_K$-projective $\cG$-module. If $\cG$ is not of finite representation type, then the following 
statements hold:

{\rm (1)} \ The component $\Theta \subseteq \Gamma_s(\cG)$ containing $M$ is a tube of rank $\ell$, where $\ell\! \mid\! {\rm ord}(X(\cG))$.

{\rm (2)} \ The module $M$ is quasi-simple.

{\rm (3)}  \ There exists $j \in \{1,\ldots,p\!-\!1\}$ such that
\[ \alpha_{K,i}(X) = (\alpha_{K,i}(M)\!-\!a_{ij})\ql(X)+a_{ij} \ \ \ \ \ \ \ 1 \le i \le p\!-\!1\]
for every $X \in \Theta$.\end{Thm}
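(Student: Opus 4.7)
The strategy is to reduce parts (1) and (2) to Proposition \ref{QRP3}, and to extract (3) by specializing Proposition \ref{QRP1} using the explicit decomposition provided by Lemma \ref{TG2}. Since $\cG$ is trigonalizable, every simple $\cG$-module is one-dimensional, so $\Top_\cG(M)$ contains a one-dimensional submodule and the hypotheses of Proposition \ref{QRP3} are met. (For $p=2$ one verifies separately the Sylow-$2$ condition of the remark following Proposition \ref{QRP3}: the decomposition $\cG = \cU\rtimes \ccD$ with $\ccD$ linearly reductive confines the Sylow-$2$-subgroup of $\cG(k)$ to $\cU(k)$, whose unipotent structure rules out the generalized quaternion case.) Parts (1) and (2) of Proposition \ref{QRP3} then give $\Theta \cong \ZZ[A_\infty]/\langle\tau^\ell\rangle$ for some $\ell\ge 1$ and that $M$ is quasi-simple.

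To upgrade (1) to the divisibility $\ell\mid\mathrm{ord}(X(\cG))$, I would invoke Lemma \ref{TG2} to fix a unique $j \in \{1,\ldots,p-1\}$ together with a transversal $X_M$ for $\mathrm{Stab}_{X(\cG)}(M)$ in $X(\cG)$, yielding
\[ K\cG\otimes_{\fA_{p,K}}\![j] \cong \bigoplus_{\lambda \in X_M} M_K\otimes_K K_\lambda.\]
By Lemma \ref{TG1}(3), $\tau_{\cG_K}$ preserves this module and so permutes its indecomposable summands; in particular $\tau_{\cG_K}(M_K)\cong M_K\otimes_K K_\mu$ for some $\mu \in X(\cG)$. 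Since $K_\mu = (k_\mu)_K$ and $k$ is algebraically closed, a standard descent argument yields $\tau_\cG(M) \cong M\otimes_k k_\mu$. Iterating gives $\tau_\cG^n(M) \cong M\otimes_k k_{n\mu}$, so $\ell$ equals the order of the image of $\mu$ in $X(\cG)/\mathrm{Stab}_{X(\cG)}(M)$; this divides $[X(\cG):\mathrm{Stab}_{X(\cG)}(M)]$, and hence $\mathrm{ord}(X(\cG))$.

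For (3), the hypotheses of Proposition \ref{QRP1} are in force once $M$ is quasi-simple. The multiplicity $n_i$ of $M_K$ as a summand of $K\cG\otimes_{\fA_{p,K}}\![i]$ vanishes for $i\ne j$ by the uniqueness clause of Lemma \ref{TG2}. For $i=j$, the displayed decomposition shows $n_j = |X_M\cap\mathrm{Stab}_{X(\cG)}(M)| = 1$, since $X_M$ is a transversal. Substituting $n_i = \delta_{ij}$ into the formula of Proposition \ref{QRP1} produces $\alpha_{K,i}(X) = (\alpha_{K,i}(M) - a_{ij})\ql(X) + a_{ij}$.

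The principal technical obstacle is the identification $\tau_\cG(M)\cong M\otimes_k k_\mu$, which simultaneously underlies both the divisibility in (1) and the computation $n_j = 1$ in (3). This requires combining Lemma \ref{TG1}(3), the Krull--Remak--Schmidt analysis of Lemma \ref{TG2}, and descent of isomorphisms under base change; a secondary subtlety is the verification of the Sylow-$2$ hypothesis of Proposition \ref{QRP3} in characteristic $2$.
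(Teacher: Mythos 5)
Your treatment of the divisibility in (1) and of part (3) agrees with the paper's proof: Lemma \ref{TG1}(3) and Lemma \ref{TG2} give $\tau_{\cG_K}(M_K)\cong M_K\otimes_K K_{\lambda_0}$, descent (\cite[(2.5),(3.6)]{Ka}) gives $\tau_\cG(M)\cong M\otimes_k k_{\lambda_0}$ and hence $\ell\mid{\rm ord}(X(\cG))$, and the uniqueness/multiplicity-one clauses of Lemma \ref{TG2} give $n_i=\delta_{ij}$, which is fed into Proposition \ref{QRP1}. The genuine gap lies in your reduction of the tube structure and of quasi-simplicity to Proposition \ref{QRP3}. That proposition is stated only for $p\ge 3$, and the remark following it extends it to $p=2$ only when the Sylow-$2$-subgroup of $\cG(k)$ is not generalized quaternion. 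Your verification of this condition fails: in the decomposition $\cG=\cU\rtimes\ccD$ the group $\cU(k)$ is indeed the Sylow-$2$-subgroup of $\cG(k)$ (as $\ccD(k)$ has order prime to $p$), but unipotence of the group scheme $\cU$ does not prevent $\cU(k)$ from being generalized quaternion: for $p=2$ the constant group scheme attached to $Q_{2^n}$ is unipotent (its algebra of measures is local), hence trigonalizable, and of infinite (tame) representation type, so this configuration occurs squarely within the hypotheses of Theorem \ref{TG3}, which carries no restriction on $p$. Your argument therefore does not prove (1) and (2) in characteristic $2$. A minor further point: to apply Proposition \ref{QRP1} you need its condition (b), i.e.\ that \emph{every} relatively $\alpha_K$-projective vertex of $\Theta$ is quasi-simple, not only $M$; this does follow, but only after applying part (2) to each such vertex (they satisfy the same hypotheses), as the paper does explicitly in the proof of Proposition \ref{QRP3}(3).

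The paper avoids the characteristic restriction precisely by not routing quasi-simplicity through Proposition \ref{QRP3}. Having established $\tau_\cG(M)\cong M\otimes_k k_{\lambda_0}$, it rules out the presence of a one-dimensional module $k_\gamma$ in $\Theta$ as follows: if $k_\gamma\in\Theta$, then $t_{-\lambda_0}\circ\tau_\cG$ is a quasi-length preserving self-equivalence of $\Theta$ fixing $M$, hence the identity on $\Theta$, so $\tau_\cG(k_\omega)\cong k_{\omega+\lambda_0}$ for all $\omega\in X(\cG)$; then $\tau_\cG$ sends simples to simples, $k\cG$ is a Nakayama algebra by \cite[(IV.2.10)]{ARS} and thus representation-finite, a contradiction valid for every $p$. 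With $k_\lambda\notin\Theta$, the $\tau_\cG$-invariant additive function $d_\lambda$ from the final paragraph of the proof of Proposition \ref{QRP3}(2) (a characteristic-free argument) yields $\ql(M)=1$. To repair your proposal, replace the appeal to Proposition \ref{QRP3} and the incorrect Sylow claim by this Nakayama-algebra argument, or else weaken the statement by adding the hypothesis $p\ge 3$.
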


\begin{proof} (1) Thanks to Lemma \ref{TG1}(3) and \cite[(3.6)]{Ka}, the module $\tau_\cG(M)_K \cong \tau_{\cG_K}(M_K)$ is a direct summand of $K\cG\!\otimes_{\fA_{p,K}}\![j]$.
Hence Lemma \ref{TG2} provides a character $\lambda_0 \in X(\cG)$ such that $ \tau_{\cG_K}(M_K) \cong M_K\!\otimes_K\!K_{\lambda_0}$. In view of \cite[(2.5)]{Ka}, we thus obtain
\[\tau_\cG(M) \cong M\!\otimes_k\!k_{\lambda_0}.\]
Hence there exists a divisor $\ell$ of ${\rm ord}(X(\cG))$ such that $\tau_\cG^\ell(M) \cong M$ and $\tau^{\ell-1}_\cG(M) \not \cong M$. As observed in the proof of (\ref{QRP3}), this
implies that $\Theta \cong \ZZ[A_\infty]/\langle \tau^\ell \rangle$.

(2) Suppose there exists $\gamma \in X(\cG)$ with $k_\gamma \in \Theta$. In view of the above, the functor
\[ t_{\lambda_0} : \modd \cG \lra \modd \cG \ \ ; \ \ X \mapsto X\!\otimes_k\!k_{\lambda_0}\]
is an auto-equivalence of $\modd \cG$ that commutes with $\tau_\cG$. Moreover, $t_{-\lambda_0}\circ \tau_\cG$ sends $\Theta$ to $\Theta$ and fixes $M$. As $t_{-\lambda_0}\circ
\tau_\cG$ preserves the quasi-length of a module, it follows that $t_{-\lambda_0}\circ \tau_\cG|_\Theta = \id_\Theta$, whence
\[ \tau_\cG = t_{\lambda_0}.\]
This implies $\tau_\cG(k_\gamma) \cong k_{\lambda_0+\gamma}$.

Now let $\omega \in X(\cG)$ be an arbitrary character. Then
\[ \tau_\cG(k_\omega) \cong \tau_\cG(k_\gamma\!\otimes_k\!k_{\omega-\gamma}) \cong \tau_\cG(k_\gamma)\!\otimes_k\!k_{\omega-\gamma} \cong
k_{\gamma+\lambda_0}\!\otimes_k\!k_{\omega-\gamma} \cong k_{\omega+\lambda_0}.\]
As a result, $\tau_\cG$ sends simple modules to simple modules and \cite[(IV.2.10)]{ARS} implies that $k\cG$ is a Nakayama algebra. In particular, $k\cG$ is representation-finite, a 
contradiction.

The arguments of (\ref{QRP3}(2)) now show that $M$ is quasi-simple.

(3) By Lemma \ref{TG2}, there exists a unique $j \in \{1,\ldots,p\!-\!1\}$ such that $M_K$ is a direct summand of $K\cG\!\otimes_{\fA_{p,K}}\![j]$. Since $M_K$ occurs in
$K\cG\!\otimes_{\fA_{p,K}}\![j]$ with multiplicity $1$, it follows that the multiplicities $n_i$ of $M_K$ in $N_i := K\cG\!\otimes_{\fA_{p,K}}\![i]$ are given by
\[ n_i = \delta_{i j}.\]
In view of (2), Proposition \ref{QRP1} implies
\[ \alpha_{K,i}(X) = (\alpha_{K,i}(M)\!-\!\sum_{\ell=1}^{p-1}a_{i\ell}n_\ell)\ql(X) +\sum_{\ell=1}^{p-1}a_{i\ell}n_\ell = (\alpha_{K,i}(M)\!-\! a_{ij})\ql(X)+a_{ij}\]
for every $X \in \Theta$ and $i \in \{1,\ldots,p\!-\!1\}$. \end{proof}

\bigskip

\begin{Remark} If $\cG$ is an infinitesimal supersolvable group of infinite representation type, then the principal block $\cB_0(\cG)$ is isomorphic to the algebra of measures of a trigonalizable group $\cG'$ (cf.\ \cite[(2.3),(2.4)]{FV1}). Hence the foregoing result also holds for indecomposable relatively projective modules that belong to $\cB_0(\cG)$. \end{Remark}

\bigskip

\begin{Cor} \label{TG4} Let $\cU$ be a unipotent finite group scheme of infinite representation type, $\alpha_K : \fA_{p,K} \lra K\cU$ be a $\pi$-point. Suppose that $M$ is an
indecomposable, relatively $\alpha_K$-projective $\cU$-module. Then the following statements hold:

{\rm (1)} \ The module $\Top_\cU(M)$ is simple and $M_K \cong K\cU\!\otimes_{\fA_{p,K}}\![j]$ for some $j \in \{1,\ldots, p\!-\!1\}$.

{\rm (2)} \ The component $\Theta_M \subseteq \Gamma_s(\cU)$ containing $M$ is isomorphic to $\ZZ[A_\infty]/\langle \tau\rangle$, and $M$ is quasi-simple.

{\rm (3)} \ There exists $j \in \{1,\ldots,p\!-\!1\}$ such that
\[ \alpha_{K,i}(X) = (\alpha_{K,i}(M)\!-\!a_{ij})\ql(X)+a_{ij} \ \ \ \ \ \ \ \ 1 \le i \le p\]
for every $X \in \Theta_M$. \end{Cor}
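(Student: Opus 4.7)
The plan is to derive this as an application of Theorem \ref{TG3}, exploiting the key structural feature of unipotent group schemes: the trivial character is the only element of $X(\cU)$. Since unipotent finite group schemes are trigonalizable, Theorem \ref{TG3} applies, and the proof reduces to specializing its conclusions to the case $X(\cU)=\{1\}$ and then extending part (3) from $i\le p\!-\!1$ to $i=p$.

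For (1), I would invoke Lemma \ref{TG2}. Since $\mathrm{Stab}_{X(\cU)}(M)\subseteq X(\cU)=\{1\}$, the index $[X(\cU):\mathrm{Stab}_{X(\cU)}(M)]$ equals $1$, so $X_M$ is a singleton and the isomorphism supplied by Lemma \ref{TG2} becomes $K\cU\otimes_{\fA_{p,K}}[j]\cong M_K\otimes_K K_\lambda\cong M_K$ for the unique $j\in\{1,\dots,p\!-\!1\}$ prescribed there. The second assertion of Lemma \ref{TG2} gives $\dim_k\Top_\cU(M)=\mathrm{ord}(\mathrm{Stab}_{X(\cU)}(M))=1$, so $\Top_\cU(M)$ is simple.

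For (2), Theorem \ref{TG3}(1) produces a tube of rank $\ell$ dividing $\mathrm{ord}(X(\cU))=1$; hence $\ell=1$ and $\Theta_M\cong\ZZ[A_\infty]/\langle\tau\rangle$. Quasi-simplicity of $M$ is then immediate from Theorem \ref{TG3}(2). For $1\le i\le p\!-\!1$, the formula in (3) is literally the content of Theorem \ref{TG3}(3), where $j$ is the integer singled out in part (1) above and where the multiplicities $n_i$ of $M_K$ in $K\cU\otimes_{\fA_{p,K}}[i]$ collapse to $n_i=\delta_{ij}$ by the isomorphism already obtained.

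The only remaining point, and the mildly non-trivial one, is to push the formula to $i=p$. Here I would appeal to the first Remark following Proposition \ref{QRP1}: because $\Theta_M$ is the homogeneous tube $\ZZ[A_\infty]/\langle\tau\rangle$, any nonzero projective summand $P$ appearing in the middle term $E$ of the almost split sequence ending at $M$ would force $\Rad(P)\cong P/\Soc(P)$, making the ambient block a Nakayama algebra and $\Theta_M$ finite, which is impossible as $\cU$ has infinite representation type. Therefore $E$ has no projective summand, and the identity $(\ast)$ from the proof of Proposition \ref{QRP1} holds for every $i\in\{1,\dots,p\}$, letting Lemma \ref{SF4} extend the asserted formula to $i=p$ as well. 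I do not expect a real obstacle; the main care is simply to verify that the arithmetic packaging in Proposition \ref{QRP1} and Theorem \ref{TG3} transports verbatim once $X(\cU)$ is trivialized.
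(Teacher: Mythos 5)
Your proposal is correct and follows essentially the same route as the paper, which deduces the corollary from Lemma \ref{TG2}, Theorem \ref{TG3}, and the remark succeeding Proposition \ref{QRP1} exactly as you do, using $X(\cU)=\{1\}$ to trivialize the character-theoretic data and the homogeneous-tube argument to extend the formula to $i=p$. Your write-up merely makes explicit the steps the paper leaves as "direct consequences."
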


\begin{proof}  Since $X(\cU) = \{1\}$, our assertions are direct consequences of Lemma \ref{TG2}, Theorem \ref{TG3}, and the remark succeeding Proposition \ref{QRP1}, respectively.
\end{proof}

\bigskip

\begin{Remarks} (1) Corollary \ref{TG4} shows in particular that, among the functions $\alpha_{K,i} : \Theta \lra \NN_0$, at least $p\!-\!3$ are additive.

(2) According to \cite[(1.3)]{FRV}, a unipotent group $\cU$ whose algebra of measures has finite representation type either corresponds to the group $\ZZ/(p)$, or it is ``V-uniserial". A 
classification of the latter groups is given in \cite[(1.2)]{FRV}. The algebra $k\cU$ is a truncated polynomial ring $k[X]/(X^{p^n})$. \end{Remarks}

\bigskip
\noindent
In the following examples, we shall be concerned with enveloping algebras of restricted Lie algebras. By definition, a {\it restricted Lie algebra} is a pair $(\fg,[p])$ consisting of a Lie algebra
$\fg$ and a map $\fg \lra \fg \ ; \ x \mapsto x^{[p]}$ that enjoys the formal properties of an associative $p$-th power. We refer the reader to \cite[Chap.II]{SF} for further details. For our
present purposes, it suffices to know that, given a Lie algebra $\fg$ with basis $(x_i)_{i\in I}$, any map $x_i \mapsto x_i^{[p]}$ with $\ad x_i^{[p]} = (\ad x_i)^p$ uniquely extends to a 
$p$-map on $\fg$.

In view of \cite[(II,\S7,no.4)]{DG}, restricted Lie algebras correspond to infinitesimal group of height $\le 1$ in that
\[ k\cG \cong U_0(\fg),\]
where $U_0(\fg)$ is the {\it restricted enveloping algebra} of the restricted Lie algebra $\fg := \Lie(\cG)$. By definition,
\[ U_0(\fg) = U(\fg)/(\{x^p-x^{[p]}\ ; \ x \in \fg))\]
is a finite-dimensional Hopf algebra quotient of the ordinary universal enveloping algebra $U(\fg)$. In particular, unipotent infinitesimal group schemes of height $\le 1$ correspond to
unipotent restricted Lie algebras, that is, to Lie algebras with a nilpotent $p$-map.

\bigskip

\begin{Examples} (1) Let $\cU$ be a unipotent group scheme such that $\dim \Pi(\cU) \ge 1$. Suppose that $\alpha_K : \fA_{p,K} \lra K\cU$ is a $\pi$-point that factors through the
center $\fZ(K\cU)$ of $K\cU$. Let $\Theta \subseteq \Gamma_s(\cU)$ be a component that is not $\alpha_K$-split. Then $\Theta \cong \ZZ[A_\infty]/\langle \tau \rangle$, and the 
quasi-simple module $M \in \Theta$ satisfies $M_K \cong K\cU\!\otimes_{\fA_{p,K}}\![j]$ for some $j \in \{1,\ldots,p\!-\!1\}$, so that there exists $r \in \NN$ with $\alpha_{K,i}(M) = 
\delta_{ij}(\frac{\dim_kk\cU}{p}) = \delta_{ij}p^r$ for $1 \le i \le p$, cf.\ \cite[(6.7),(14.4)]{Wa}. Corollary \ref{STP2} thus yields
\[ \alpha_{K,i}(X) = \left\{ \begin{array}{cl} (p^r\!-\!2)\ql(X)+2 & \text{for} \ i=j\\ \ql(X)-1 & \text{for} \ i=j\!-\!1,j\!+\!1\\ 0 & \text{for} \ i \neq j,j\!+\!1,j\!-\!1 \end{array} 
\right.\]
for every $X \in \Theta$ and $1\le i\le p\!-\!1$.

(2) We consider the three-dimensional {\it Heisenberg algebra} $\fh = kx\oplus ky\oplus kz$, with product and $p$-map given by
\[ [x,y] = z \ ,  \ [x,z] = 0 = [y,z] \ \text{and} \ x^{[p]} = y^{[p]} = z^{[p]} = 0,\]
respectively. The $U_0(\fh)$-module $M_x := U_0(\fh)\!\otimes_{U_0(kx)}\!k$ is indecomposable and relatively projective with respect to the $\pi$-point $\alpha_k : \fA_{p,k} \lra 
U_0(\fh)$ that sends $t$ to $x$. The Cartan-Weyl identities (cf.\ \cite[(I.1.3)]{SF}) yield
\[ xy^n = y^nx +ny^{n-1}z \ \ \ \ \ \ \ \forall \ n\ge 0,\]
so that $x.(y^nz^m\otimes 1) = ny^{n-1}z^{m+1}\otimes 1$. Consequently, for every $\ell \in \{0,\ldots,2p\!-\!2\}$, the $k$-space $M_x(\ell) := \bigoplus_{n+m 
=\ell}k(y^nz^m\otimes 1)$ is a $U_0(kx)$-submodule of $M$. We thus obtain
\[ \alpha_k^\ast(M_x) = \bigoplus_{\ell=0}^{p-1} \alpha_k^\ast(M_x(\ell)) \oplus \bigoplus_{\ell=p}^{2p-2} \alpha_k^\ast(M_x(\ell)) \cong \bigoplus_{\ell=0}^{p-1}[\ell\!+\!1]
\oplus \bigoplus_{\ell=p}^{2p-2}[2p\!-\!\ell\!-\!1] \cong \bigoplus_{\ell=1}^{p-1}2[\ell] \oplus [p].\]
Now let $\Theta \subseteq \Gamma_s(\fh)$ be the component containing $M_x$. Given $X \in \Theta$, Corollary \ref{TG4} implies
\[ \alpha_{k,i}(X) = \left\{ \begin{array}{cl} 2 & \text{for} \ i=1\\ (3\!-\!\delta_{p,2})\ql(X)-1 & \text{for} \ i=2 \\ (2\!-\!\delta_{i,p})\ql(X) & \text{for} \ 3 \le i \le p. \end{array} 
\right.\]
Since $z \in \fZ(U_0(\fh))$, the choice $\alpha_k(t) :=z$ and $M_z := U_0(\fh)\!\otimes_{U_0(kz)}\!k$ leads to the formulae of (1) with $r=2$.

(3) According to Lemma \ref{AFP2}, the $\pi$-points $\alpha_K$ for which a given $\cG$-module is relatively $\alpha_K$-projective all belong to one equivalence class. The following example
shows that the converse does not hold. Let $\fu := kx\oplus ky$ be the two-dimensional abelian restricted Lie algebra with trivial $p$-mapping. Then $M_x :=
U_0(\fu)\!\otimes_{U_0(kx)}\!k$ is an indecomposable $U_0(\fu)$-module of dimension $p$. Let $\alpha_k : \fA_{p,k} \lra U_0(\fu)$ be the $\pi$-point, given by $\alpha_k(t): = x$. Then
$M$ is relatively $\alpha_k$-projective and $\alpha_k^\ast(M_x) = p[1]$. According to \cite[(2.2)]{FPe1}, the map $\beta_k : \fA_{p,k} \lra U_0(\fu)$ defined via $\beta_k(t) :=
x+y^{p-1}$ is a $\pi$-point such that $\beta_k \sim \alpha_k$. Since
\[ \beta_k(t)^i\dact(y^j\otimes 1) = \left\{ \begin{array}{cl} y^j\otimes 1 & \text{for} \ i=0 \\ y^{p-1}\otimes 1 & \text{for} \ (i,j) = (1,0) \\ 0 & \text{else},\end{array} \right.\]
we see that $\beta_k(M_x) = (p\!-\!2)[1]\oplus [2]$. In view of (1), the module $M_x$ is not relatively $\beta_k$-projective, whenever $p\ge 3$. \end{Examples}

\bigskip

\begin{Examples} For $p \ge 3$ we consider the restricted Lie algebra $\fg := kt\oplus kx \oplus ky$, whose bracket and $p$-map are given by
\[ [t,x] = x \ , \ [t,y]=2y \ , \ [x,y] = 0 \ \ \text{and} \ \ t^{[p]} = t \ , \ x^{[p]} = 0 = y^{[p]},\]
respectively. Every simple $U_0(\fg)$-module is annihilated by the $p$-ideal $kx\oplus ky$ and thus one-dimensional. Hence $\fg$ is trigonalizable.

(1) Suppose that $\alpha_k : \fA_{p,k} \lra U_0(\fg)$ is given by $\alpha_k(t) := ky$. We consider the $U_0(\fg)$-module
\[ V := U_0(\fg)\!\otimes_{U_0(ky)}\!k.\]
Direct computation shows that $y$ annihilates $V$, so that $V \cong U_0(\fg/ky)$ is a $U_0(\fg/ky)$-module. As such it has $p$-indecomposable constituents, each having a simple top.

(2) Suppose that $\alpha_k(t) = x+y=:v$ and consider
\[ V := U_0(\fg)\!\otimes_{U_0(kv)}\!k.\]
Consider the $\pi$-point $\beta_k$, defined by $\beta_k(t) :=x$. Since $\beta_k^\ast(V)$ is projective, the dimension of every constituent $M\mid V$ is a multiple of $p$. Lemma 
\ref{TG2} now shows that $\dim_kM = p$ for every proper summand $M$. Using techniques from \cite{Fa4} one can then show that $V$ is indecomposable. \end{Examples}

\bigskip

\begin{Cor} \label{TG5} Let $\cG$ be a trigonalizable finite group scheme of infinite representation type, $M$ be a non-projective indecomposable $\cG$-module. If $\alpha_K : \fA_{p,K} 
\lra K\cG$ is a $\pi$-point such that $\alpha_{K,j}(M) \le 1$ for $1\le j \le p\!-\!1$, then $M$ is not relatively $\alpha_K$-projective.\end{Cor}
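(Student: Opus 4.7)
The plan is to prove the contrapositive via Theorem \ref{TG3}. Suppose, for the sake of contradiction, that $M$ is relatively $\alpha_K$-projective. Since $\cG$ is trigonalizable of infinite representation type, Theorem \ref{TG3} applies: the component $\Theta \subseteq \Gamma_s(\cG)$ containing $M$ is an infinite tube $\ZZ[A_\infty]/\langle \tau^\ell \rangle$, the module $M$ is quasi-simple, and there exists $j \in \{1,\ldots,p\!-\!1\}$ such that
\[ \alpha_{K,i}(X) = (\alpha_{K,i}(M) - a_{ij})\,\ql(X) + a_{ij} \qquad (1 \le i \le p\!-\!1) \]
for every $X \in \Theta$.

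The key observation is that specialization to $i=j$ yields $\alpha_{K,j}(X) = (\alpha_{K,j}(M) - 2)\,\ql(X) + 2$, since $a_{jj} = 2$ by the definition of the matrix $A$ (as $j \le p\!-\!1$). Since $\Theta$ is an infinite tube, the quasi-length function is unbounded on $\Theta$. The inequality $\alpha_{K,j}(X) \ge 0$ must hold for every $X \in \Theta$, so letting $\ql(X) \to \infty$ forces $\alpha_{K,j}(M) - 2 \ge 0$, i.e.\ $\alpha_{K,j}(M) \ge 2$.

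This contradicts the hypothesis $\alpha_{K,j}(M) \le 1$, completing the proof. The only genuine step is the invocation of Theorem \ref{TG3} to obtain the explicit affine formula for $\alpha_{K,i}$; once this formula is in hand, positivity of Jordan-type multiplicities on an unbounded tube forces the asserted lower bound. No obstacle of substance arises, since all delicate group-scheme and representation-theoretic work has already been discharged in the proof of Theorem \ref{TG3}.
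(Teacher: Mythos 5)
Your proposal is correct and follows essentially the same route as the paper: assume relative $\alpha_K$-projectivity, invoke Theorem \ref{TG3} to get $\alpha_{K,j}(X) = (\alpha_{K,j}(M)-2)\ql(X)+2$ (using $a_{jj}=2$ for $j\le p-1$), and derive a contradiction from $\alpha_{K,j}(X)\ge 0$ on the infinite tube, where the hypothesis $\alpha_{K,j}(M)\le 1$ makes the right-hand side eventually negative. This matches the paper's argument step for step.
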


\begin{proof} Suppose that $M$ is relatively $\alpha_K$-projective and let $\Theta$ be the stable AR-component containing $M$. Theorem \ref{TG3} provides $j \in \{1,\ldots,p\!-\!1\}$ 
such that
\[ \alpha_{K,j}(X) = (\alpha_{K,j}(M)\!-\!2)\ql(X)+2 \le 2-\ql(X)\]
for all $X \in \Theta$. Since $\alpha_{K,j}(X) \ge 0$, this cannot happen. \end{proof}

\bigskip

\begin{Remark}  Let $\cG = \SL(2)_1$ be the first Frobenius kernel of $\SL(2)$. In Section \ref{S:Ex} below, we shall see that, for every non-projective baby Verma module $Z(\lambda)$, 
the $\pi$-points $\alpha_K$ giving rise to elements of $\Pi(\SL(2)_1)_{Z(\lambda)}$ all satisfy $\alpha_{K,j}(M) \le 1$ for $1\le j \le p\!-\!1$. On the other hand, one can show that 
$Z(\lambda)$ is relatively $\alpha_K$-projective for each of these $\pi$-points.\end{Remark}

\subsection{Modules with cyclic vertices}
For finite groups, the Mackey decomposition theorem along with the theory of vertices of indecomposable modules provides better control over relatively projective modules. Let $G$ be a finite 
group, $M$ be an indecomposable $G$-module. Recall that a $p$-subgroup $D \subseteq G$ is called a {\it vertex} of $M$ if $M\!\mid\!(kG\!\otimes_{kD}\!V)$ for some $D$-module $V$ 
and $D$ is minimal subject to this property. All vertices of $M$ are conjugate, and we write $D={\rm vx}(M)$ (cf.\ \cite[(3.10)]{Be1}). 

\bigskip

\begin{Thm} \label{MCV1} Suppose that $p\ge 3$. Let $G$ be a finite group, $\alpha_K \in \Pt(G)$ be a $\pi$-point that factors through a cyclic $p$-subgroup $C \subseteq G$. If $M$ is a
non-projective  indecomposable relatively $\alpha_K$-projective $G$-module belonging to a block of infinite representation type, then

{\rm (1)} \ $M$ is quasi-simple with component $\Theta \cong \ZZ[A_\infty]/\langle \tau^q \rangle$.

{\rm (2)} \ If $\beta_L \in \Pt(G)$ is a $\pi$-point factoring through $C$, then $M$ is relatively $\beta_L$-projective.

{\rm (3)} \ There exist $j \in \{1,\ldots,p\!-\!1\}$ and $m,n \in \NN$ with $m \ge 2n$ such that
\[ \beta_{L,i}(X) = \left\{ \begin{array}{cl} (m\!-\!2n)\ql(X)+2n & \text{for} \ i=j\\ n(\ql(X)\!-\!1) & \text{for} \ i=j\!-\!1,j\!+\!1\\ 0 & \text{for} \ i \neq j,j\!+\!1,j\!-\!1 \end{array} 
\right.\]
for every $X \in \Theta$ and every $\beta_L \in \Pt(G)$ that factors through $C$. 

{\rm (4)} \ ${\rm vx}(M) \cong \ZZ/(p)$. \end{Thm}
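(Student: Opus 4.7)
The strategy splits into a Key Identification, after which (4) is immediate and (1)--(3) follow via the earlier results of Section~\ref{S:CRP}. Writing $C=\langle c\rangle$ of order $p^n$ and $KC\cong K[x]/(x^{p^n})$ with $x=c-1$, set $u:=\alpha_K(t)\in KC$. Flatness of $\alpha_K$ combined with the nilpotency index $p$ of $u$ forces the $x$-adic valuation of $u$ to equal $p^{n-1}$, as otherwise $\dim KC/uKC$ would not equal the free rank $p^{n-1}$ of $KC$ as $\fA_{p,K}$-module. Hence $u^i KC=(x^{ip^{n-1}})$, and Lemma~\ref{AFP2}(1) yields $i\in\{1,\ldots,p-1\}$ with
\[
M_K\mid K\cG\otimes_{\fA_{p,K}}[i]\cong K\cG\otimes_{KC}\!\bigl(KC/u^iKC\bigr)\cong K\cG\otimes_{KC}\!\bigl(K[x]/(x^{ip^{n-1}})\bigr)\cong K\cG\otimes_{KZ}[i],
\]
where $Z=\langle c^{p^{n-1}}\rangle$ has order $p$ and the last isomorphism uses $KC\otimes_{KZ}[i]\cong K[x]/(x^{ip^{n-1}})$ as $KC$-module. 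Consequently $M_K$ is $KZ$-projective, so $M$ is $kZ$-projective by Lemma~\ref{AFP1}, forcing ${\rm vx}(M)\subseteq Z$, with equality since $M$ is non-projective; this is (4).

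For (1), the block $\cB$ having infinite representation type makes $\Theta$ infinite, and Proposition~\ref{AFP3}(2) gives $\Theta\cong\ZZ[A_\infty]/\langle\tau^q\rangle$. Quasi-simplicity of $M$ follows by passing to the Green correspondent $f(M)\in\modd kN_G(Z)$: there $Z\lhd N_G(Z)$ is abelian unipotent normal of complexity~$1$, so Proposition~\ref{STP1}(b) applies and gives $f(M)$ quasi-simple. Since every module in the tube $\Theta$ has vertex $Z$, Green correspondence preserves the AR-structure of $\Theta$, and quasi-simplicity transfers back to $M$.

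For (2), the Key Identification applies verbatim to any $\beta_L\in\Pt(G)$ through $C$, yielding $L\cG\otimes_{\fA_{p,L}}[k]\cong L\cG\otimes_{LZ}[k]$. By (4), $M_L$ is $LZ$-projective, so $M_L\mid L\cG\otimes_{LZ}[k]$ for some $k\in\{1,\ldots,p-1\}$, proving relative $\beta_L$-projectivity. For (3), Proposition~\ref{QRP1} supplies the general formula with multiplicities $n_\ell=n_\ell(M,\beta_L)$ of $M_L$ in $L\cG\otimes_{LZ}[\ell]$. Uniqueness of the source as an $LZ$-module, combined with $N_G(Z)$-conjugation acting trivially on iso classes of indecomposable $LZ$-modules (since these are indexed by dimension), forces $n_\ell=n\cdot\delta_{\ell,j}$ for a single source dimension $j\in\{1,\ldots,p-1\}$. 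Substituting into Proposition~\ref{QRP1} using the Cartan entries $a_{i\ell}$ of $A$ recovers the asserted formula, with $m\ge2n$ coming from the nonnegativity $\beta_{L,j}(X)\ge0$ on $\Theta$.

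The main obstacle is the Key Identification: establishing $v_x(u)=p^{n-1}$ from flatness of the $\pi$-point and thereby showing that the induced $K\cG$-module $K\cG\otimes_{\fA_{p,K}}[i]$ depends only on the order-$p$ subgroup $Z\subseteq C$, uniformly in the choice of $\alpha_K$ through $C$. A subsidiary technical point in (3) is pinning down the Kronecker-delta pattern $n_\ell=n\delta_{\ell,j}$, which requires the rigidity of the module category of $Z\cong\ZZ/(p)$ under $N_G(Z)$-conjugation.
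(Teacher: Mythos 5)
Your ``Key Identification'' is correct and is a genuinely different, more explicit route than the paper's: flatness does force $\alpha_K(t)=x^{p^{n-1}}\cdot(\mathrm{unit})$ in $KC\cong K[x]/(x^{p^n})$, so $K\cG\otimes_{\fA_{p,K}}[i]\cong K\cG\otimes_{KZ}[i]$ for the order-$p$ subgroup $Z\subseteq C$, and from this your proofs of (4), of (2) (via uniqueness of vertex and source), and of the pattern $n_\ell=n\delta_{\ell,j}$ are sound; the paper instead gets these points from genericity of $\pi$-points on $C$ (\cite{FPS}, (4.2)) together with a Mackey computation. The first genuine gap is in (1). The paper simply invokes Erdmann's theorem \cite{Er0} (a non-projective indecomposable with cyclic vertex in a block of infinite representation type is quasi-simple) for $M_K$ and transfers back via \cite{Ka}. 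You instead try to re-derive this through Green correspondence, and the argument does not hold up: the assertion that ``every module in the tube $\Theta$ has vertex $Z$'' is false --- the indecomposable modules with vertex $Z$ are summands of the finitely many modules $k G\otimes_{kZ}[i]$, $1\le i\le p-1$, so only finitely many vertices of the infinite tube can have vertex $Z$ (already for $G=(\ZZ/(p))^2$ the modules of quasi-length $\ge 2$ over $kG\otimes_{kZ}[1]$ have vertex $G$). Moreover, transferring quasi-simplicity from $f(M)$ back to $M$ requires a theorem matching almost split sequences under Green correspondence, and applying Proposition~\ref{STP1}(b) to $f(M)$ requires knowing that the component of $f(M)$ in $\Gamma_s(N_G(Z))$ is infinite (i.e.\ that its block is not of finite type); neither is proved nor cited. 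As written, (1) is essentially an unproved re-statement of the result the paper quotes from \cite{Er0}.

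The second gap is in (3). The formula of Proposition~\ref{QRP1} involves not only the multiplicities $n_\ell$ but also the values $\beta_{L,i}(M)$ themselves: substituting $n_\ell=n\delta_{\ell,j}$ yields
$\beta_{L,i}(X)=(\beta_{L,i}(M)-a_{ij}n)\ql(X)+a_{ij}n$, and to reach the asserted shape you still need $\beta_{L,i}(M)=m\,\delta_{i,j}$ for $1\le i\le p-1$, with the same $j$ and $m$ for every $\pi$-point $\beta_L$ factoring through $C$. You never establish this. The paper gets it from the Mackey decomposition $(kG\otimes_{kC_p}[j])|_{kC_p}\cong \ell[j]\oplus s[p]$ combined with \cite{FPS}, (4.2) to pass between $\pi$-points through $C$. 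Within your framework the step can be repaired: your valuation computation shows $\beta_L(t)=z\cdot w$ with $z=c^{p^{n-1}}-1$ and $w$ a unit of $LC$ commuting with $z$, so $\mathrm{rank}(\beta_L(t)^i)=\mathrm{rank}(z^i)$ on every $LG$-module, whence $\Jt(X,\beta_L)$ is computed by the restriction $X|_{kZ}$; applying Mackey to $M\mid kG\otimes_{kZ}[j]$ then gives $M|_{kZ}\cong m[j]\oplus t[p]$ and hence the needed values, uniformly in $\beta_L$. But as submitted, both this step and (1) are missing ideas rather than routine details.
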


\begin{proof} (1) By assumption, there exists a $KC$-module $V$ such that the indecomposable $KG$-module $M_K$ is a direct summand of
\[ KG\!\otimes_{\fA_{p,K}}\!\alpha^\ast_K(M_K) \cong KG\!\otimes_{KC}\!V.\]
As a result, the vertex ${\rm vx}(M_K)$ of $M_K$ is contained in $C$ and hence is cyclic. If the block of $KG$ containing the indecomposable $KG$-module $M_K$ has finite representation 
type, then \cite[(2.5)]{Ka} implies that the block containing $M$ enjoys the same property, a contradiction. Thanks to \cite[Theorem]{Er0}, the module $M_K$ is quasi-simple. According to 
\cite[(3.8)]{Ka}, $\fE_M\!\otimes_k\!K$ is the almost split sequence terminating in $M_K$. Hence $M$ also has exactly one predecessor in $\Theta$. As a result, the $G$-module $M$ is also 
quasi-simple.

(2) Let $X \in \Theta$ be arbitrary. Since $C$ is cyclic, we have $\dim \Pi(C) = 0$, so that every $\pi$-point of $C$ is generic. According to \cite[(4.2)]{FPS}, we have
\[ \Jt(X|_C,\alpha_K) = \Jt(X|_C,\beta_L),\]
whence $\alpha_{K,i}(X) = \beta_{L,i}(X)$ for $1\le i\le p\!-\!1$. 

By (1), every relatively $\alpha_K$-projective module belonging to $\Theta$ is quasi-simple. Hence Proposition \ref{QRP1} provides $(n_1,\ldots,n_{p-1}) \in \NN_0^{p-1}\setminus \{0\}$ 
such that
\[ \alpha_{K,i}(X) = (\alpha_{K,i}(M)\!-\!\sum_{j=1}^{p-1}a_{ij}n_j)\ql(X)+\sum_{j=1}^{p-1}a_{ij}n_j \ \ \ \ \ \ \ \ 1 \le i \le p\!-\!1\]
for every $X \in \Theta$. Hence this formula also holds for the functions $\beta_{L,i} : \Theta\lra \NN_0$, so that $\ell(\beta_{L,i}) \le 2$ with equality holding for at least one $i \in 
\{1,\ldots,p\!-\!1\}$. In view of (1) and Corollary \ref{AFP5} the module $M$ is relatively $\beta_L$-projective. 

(3) The subgroup $C \subseteq G$ contains an element $g$ of order $p$, and we define $\alpha(g)_k : \fA_{p,k} \lra kG$ via $\alpha(g)_k(t) := g\!-\!1$. By (2), there exists $j \in 
\{1,\ldots,p\!-\!1\}$ such that the $G$-module $M$ is a direct summand of $kG\!\otimes_{kC_p}\![j]$, where we set $C_p := \langle g \rangle$. The Mackey decomposition theorem
yields
\[ \alpha(g)^\ast_k(kG\!\otimes_{kC_p}\![j]) \cong (kG\!\otimes_{kC_p}\![j])|_{kC_p} \cong \ell[j]\oplus s[p],\]
so that $\alpha(g)^\ast_k(M) \cong m[j]\oplus t[p]$ for some $m \ge 1$. This implies in particular, that the number $j$ defined above is uniquely determined, whence $n_i(M,\alpha(g)_k) 
= \delta_{i,j}n$ for some $n \in \NN$. In view of Proposition \ref{QRP1}, the asserted formula holds for the $\pi$-point $\alpha(g)$. As noted earlier, it thus holds for any $\pi$-point 
$\beta_L$ factoring through $C$.

Finally, since $\alpha(g)^\ast_{k,j}(X) \ge 0$ for all $X \in \Theta$, we have $m\ge 2n$. 
 
(4) Since $M$ is a direct summand of $kG\!\otimes_{kC_p}\![j]$, the vertex ${\rm vx}(M)$ is either trivial, or isomorphic to $\ZZ/(p)$. In the former case, $M$ is projective, a contradiction.
\end{proof}
 
\bigskip
\noindent
We can endow the truncated polynomial ring $\fA_{p,k}$ with the structure of a Hopf algebra such that $\fA_{p,k} \cong k\ZZ/(p)$. Then we have:

\bigskip

\begin{Cor} \label{MCV2} Let $\alpha_K \in \Pt(G)$ be a $\pi$-point, which is a homomorphism $K\ZZ/(p) \lra KG$ of Hopf algebras. If $M$ is a non-projective indecomposable relatively 
$\alpha_K$-projective module with infinite AR-component $\Theta$, then $\Theta \cong \ZZ[A_\infty]/\langle\tau^q\rangle$ and there exist $j \in \{1,\ldots,p\!-\!1\}$ and $m,n \in \NN$ 
with $m\ge 2n$ such that
\[ \alpha_{K,i}(X) = \left\{ \begin{array}{cl} (m\!-\!2n)\ql(X)+2n & \text{for} \ i=j\\ n(\ql(X)\!-\!1) & \text{for} \ i=j\!-\!1,j\!+\!1\\ 0 & \text{for} \ i \neq j,j\!+\!1,j\!-\!1 \end{array}
 \right.\]
for every $X \in \Theta$  \hfill $\square$ \end{Cor}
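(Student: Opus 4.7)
The plan is to reduce Corollary \ref{MCV2} directly to Theorem \ref{MCV1} by exploiting the Hopf-algebraic hypothesis on $\alpha_K$ to produce a cyclic $p$-subgroup $C\subseteq G$ through which $\alpha_K$ factors, and then invoking Auslander's theorem to rule out blocks of finite representation type.

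First I would use the Hopf algebra identification $\fA_{p,K}\cong K\ZZ/(p)$, under which there is a distinguished group-like element $g_0$ of order $p$. Since $\alpha_K$ is a morphism of Hopf algebras, the image $h:=\alpha_K(g_0)$ must be a group-like element of $KG$; as $G$ is a finite group, the group-like elements of $KG$ are precisely the elements of $G$ itself. Because $\alpha_K$ is a $\pi$-point it is left flat, and since $\fA_{p,K}$ is a finite-dimensional self-injective algebra, flat modules over it are free, so $KG$ is free over $\fA_{p,K}$ via $\alpha_K$; in particular $\alpha_K$ is injective. Hence $h\neq 1$, so $h$ has order exactly $p$, and $C:=\langle h\rangle$ is a cyclic $p$-subgroup of $G$ through which $\alpha_K$ visibly factors.

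Next I would verify the remaining block-theoretic hypothesis of Theorem \ref{MCV1}. Since the stable AR-component $\Theta$ containing $M$ is infinite by assumption, Auslander's theorem \cite[(VII.2.1)]{ARS} precludes the block $\cB_M\subseteq kG$ containing $M$ from having finite representation type: otherwise $\Theta$ would be a finite component exhausting all non-projective indecomposables of $\cB_M$, contradicting its assumed infinitude.

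With all hypotheses verified, I would simply invoke Theorem \ref{MCV1}: part (1) yields the tree class $\Theta\cong\ZZ[A_\infty]/\langle\tau^q\rangle$, and part (3), applied to $\beta_L:=\alpha_K$ itself (which by construction factors through $C$), delivers the claimed formula together with the inequality $m\ge 2n$. The only delicate step is the identification of the image of $g_0$ as a genuine $p$-element of $G$ via flatness plus Hopf-compatibility; everything else follows mechanically from the cited theorem.
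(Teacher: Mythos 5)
Your proposal is correct and is essentially the paper's intended argument: Corollary \ref{MCV2} is stated as an immediate consequence of Theorem \ref{MCV1}, the point being exactly that a Hopf algebra homomorphism $K\ZZ/(p)\lra KG$ sends the group-like generator to a group-like of $KG$, i.e.\ an element of $G$, which is nontrivial by flatness (hence injectivity) of $\alpha_K$, so that $\alpha_K$ factors through a cyclic subgroup of order $p$, while Auslander's theorem supplies the infinite-representation-type hypothesis from the infinitude of $\Theta$. Your filling in of these details (group-likes of $KG$ are the elements of $G$, flat $\Rightarrow$ free $\Rightarrow$ injective over the local self-injective algebra $\fA_{p,K}$) is accurate, and the conclusion then follows from parts (1) and (3) of Theorem \ref{MCV1} applied with $\beta_L=\alpha_K$.
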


\bigskip
\noindent
\begin{Remark} Proposition \ref{Ex1} below shows that the formulae of Corollary \ref{MCV2} do not necessarily hold for infinitesimal group schemes $\cG$ and $\pi$-points defined by Hopf
 algebra homomorphisms $k\GG_{a(1)} \lra k\cG$.  \end{Remark}

\bigskip

\section{Constantly Supported Modules}\label{S:CSM}
Retaining the general assumptions of Section \ref{S:AFP}, we consider those $\cG$-modules that have constant Jordan type on their $\Pi$-supports. In case a $\cG$-module $M$ has full
$\Pi$-support $\Pi(\cG)_M = \Pi(\cG)$, this amounts to $M$ being of constant Jordan type. We are thus led to the following:

\bigskip

\begin{Definition} A $\cG$-module $M$ is referred to as {\it constantly supported} if

(a) \ $\Pi(\cG)_M \ne \Pi(\cG)$, and

(b) \ $|\Jt(M)| = 2$. \end{Definition}

\bigskip
\noindent
By virtue of Corollary \ref{SJ1}, locally split stable Auslander-Reiten components either contain no constantly supported modules or they consist entirely of such modules.

Our fundamental examples are given by direct summands of the Carlson modules $L_\zeta$. Let $(P_n,d_n)_{n\ge 0}$ be a minimal projective resolution of the trivial $\cG$-module $k$.
Then
\[ \Hom_\cG(\Omega^n_\cG(k),k) \lra \HH^n(\cG,k) \ \ ; \ \ \hat{\zeta} \mapsto [\hat{\zeta}\circ d_n]\]
is an isomorphism. If $\zeta = [\hat{\zeta}\circ d_n] \ne 0$, then the {\it Carlson module}
\[ L_\zeta := \ker \hat{\zeta} \subseteq \Omega^n_\cG(k)\]
does not depend on the choice of the representing cocycle. In view of \cite[(5.9.4)]{Be2}, we postulate that the zero element corresponds to the modules $L_0^n := \Omega^n_\cG(k)\oplus
\Omega_\cG(k)$. Being submodules of $\Omega^{n}_\cG(k)\oplus \Omega_\cG(k)$, $L_\zeta$ and $L_0^n$ are projective-free, that is, they do not have non-zero projective summands.

Given $\zeta \in \HH^n(\cG,k)\setminus\{0\}$, we note that $L_\zeta = (0)$ if and only if $\Omega_\cG^n(k) \cong k$. Since $\Omega_\cG^n(k)$ has constant Jordan type $[p\!-\!1]$ 
for $n$ odd, we see that, for $p\ge 3$, this degenerate case can only occur if $n$ is even. In view of
\[ \dim \Pi(\cG) = \dim \cV_\cG(k)-1 = \cx_\cG(k)-1,\]
it follows that $L_\zeta \ne (0)$ whenever $\dim \Pi(\cG) \ge 1$.

\bigskip

\begin{Remark} If $\cG$ is an infinitesimal group scheme and $L_\zeta = (0)$ for some $\zeta \in \HH^n(\cG,k)\setminus\{0\}$, then \cite[(2.1)]{FV1} implies that $\cG$ is
supersolvable. In view of \cite[(2.4)]{FV1}, all simple modules of the principal block $\cB_0(\cG) \subseteq k\cG$ are one-dimensional. Owing to \cite[(2.7)]{FV1}, this block is a Nakayama
algebra, so that \cite[(IV.2.10)]{ARS} implies that $\dim_k\Omega^{2m}_\cG(k) = 1$ for all $m \ge 0$. Thus, $\cG$ affords no non-zero Carlson modules of even degree. \end{Remark}

\bigskip
\noindent
The cohomology ring
\[ \HH^\ast(\cG,k) := \bigoplus_{m\ge 0} \HH^m(\cG,k)\]
is graded commutative, so that $\HH^\bullet(\cG,k) := \bigoplus_{m\ge 0}\HH^{2m}(\cG,k)$ is a commutative subalgebra. The importance of Carlson modules resides in their support 
varieties often being hyperplanes of $\cV_\cG(k)$: The variety
\[\cV_\cG(L_\zeta) = Z(\zeta),\]
is the zero locus of the homogeneous element $\zeta \in \HH^\bullet(\cG,k)$ (see \cite[(5.9.1)]{Be2},\cite[(4.11)]{FPe1}). In particular, $L_\zeta \ne (0)$ for every non-zero nilpotent 
element $\zeta$ of even degree. 

Recall that a $\cG$-module $M$ is referred to as {\it endo-trivial} if
\[ \End_k(M) \cong k \oplus ({\rm proj}.).\]
The following subsidiary result elaborates on \cite[(4.1)]{Ca1}.

\bigskip

\begin{Lemma} \label{CSM1} Let $\zeta \in \HH^{2n}(\cG,k)\setminus \{0\}$ and write $L_\zeta \cong M_1\oplus \cdots \oplus M_r$, with $M_i$ indecomposable.

{\rm (1)} \ If $\zeta$ is not nilpotent and $M\!\mid\!L_\zeta$ is a non-zero summand, then $M$ is constantly supported and
\[ \Jt(M) = \{ (\frac{\dim_kM}{p})[p], [1]\oplus [p\!-\!1] \oplus n_M[p]\}.\]

{\rm (2)} \ If $\zeta$ is not nilpotent, then
\[ \Pi(\cG)_{L_\zeta} = \bigsqcup_{i=1}^r \Pi(\cG)_{M_i}\]
is the decomposition of $\Pi(\cG)_{L_\zeta}$ into its connected components.

{\rm (3)} \ If $\zeta$ is nilpotent, then $L_\zeta$ has constant Jordan type $\Jt(L_\zeta) = \{[1]\oplus [p\!-\!1] \oplus n_\zeta[p]\}$. Moreover, $L_\zeta$ is either indecomposable or a direct sum of two endo-trivial modules. \end{Lemma}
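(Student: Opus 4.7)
The plan is to exploit the defining short exact sequence $0 \to L_\zeta \to \Omega^{2n}_\cG(k) \to k \to 0$ together with the fact that $\alpha_K^*$ commutes with Heller translates up to projective summands. Since $\Omega^2_{\fA_{p,K}}(K) \cong K$, I first verify that $\alpha_K^*(\Omega^{2n}_\cG(k)_K) \cong K \oplus m[p]$ with $m = (\dim_k\Omega^{2n}_\cG(k) - 1)/p$ independent of $\alpha_K$, so $\Omega^{2n}_\cG(k)$ has constant Jordan type. Pulling back the defining sequence yields $0 \to \alpha_K^*((L_\zeta)_K) \to K \oplus m[p] \to K \to 0$, and since $[p]$ is injective over $\fA_{p,K}$ this sequence splits precisely when $\alpha_K^*((L_\zeta)_K)$ is projective, i.e.\ when $[\alpha_K] \notin \Pi(\cG)_{L_\zeta}$. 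A short orbit analysis on the non-split surjections from $K \oplus m[p]$ onto $K$ (normalising via automorphisms of $K \oplus m[p]$) forces the kernel in the non-split case to be $[1] \oplus [p-1] \oplus (m-1)[p]$.

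This dichotomy immediately yields the constant Jordan type claim in (3): for $\zeta$ nilpotent of even degree, $\cV_\cG(L_\zeta) = Z(\zeta) = \cV_\cG(k)$ forces $\Pi(\cG)_{L_\zeta} = \Pi(\cG)$, so the pullback sequence never splits and $L_\zeta$ has constant type $[1]\oplus[p-1]\oplus n_\zeta[p]$ with $n_\zeta = m-1$. For (1), the key auxiliary observation is that endo-trivial modules have full $\Pi$-support: the inclusion $k \mid \End_k(M)$ together with $\cV_\cG(M \otimes M^*) \subseteq \cV_\cG(M)$ forces $\cV_\cG(M) = \cV_\cG(k)$. Hence for non-nilpotent $\zeta$ no summand $M \mid L_\zeta$ can be endo-trivial, as $\Pi(\cG)_M \subseteq \Pi(\cG)_{L_\zeta} \subsetneq \Pi(\cG)$. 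A dimension-matching argument then rules out the possibility that $M$ carries only $[1]$ or only $[p-1]$ at a single support point: such a local type would, by fixity of $\dim_k M$ modulo $p$, persist at every support point and make $M$ endo-trivial. So $M$ must carry both $[1]$ and $[p-1]$ at every $\alpha_K \in \Pi(\cG)_M$, giving $\Jt(M) = \{(\dim_kM/p)[p], [1]\oplus[p-1]\oplus n_M[p]\}$.

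For (2), since the pullback $[1]\oplus[p-1]\oplus(m-1)[p]$ of $L_\zeta$ contains only one $[1]$ and one $[p-1]$, Krull--Schmidt applied to a decomposition $L_\zeta = \bigoplus M_i$ forces at most one $M_i$ to be non-projective at any given $\alpha_K \in \Pi(\cG)_{L_\zeta}$; this yields the pairwise disjointness $\Pi(\cG)_{L_\zeta} = \bigsqcup_i \Pi(\cG)_{M_i}$. Connectedness of each $\Pi(\cG)_{M_i}$ --- needed to identify the $\Pi(\cG)_{M_i}$ with the connected components of $\Pi(\cG)_{L_\zeta}$ --- follows from the standard result that $\Pi$-supports of non-projective indecomposable modules over a finite group scheme are connected.

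For the decomposition structure of (3), I partition the summands of $L_\zeta = \bigoplus M_i$ by $\dim_k M_i$ modulo $p$. A summand with $\dim_k M_i \not\equiv 0 \pmod p$ must, for dimensional consistency with the constant pullback $[1]\oplus[p-1]\oplus n_\zeta[p]$, have stable Jordan type consisting of a single $[1]$ (if $\dim \equiv 1$) or a single $[p-1]$ (if $\dim \equiv -1$) at every $\alpha_K$, hence be endo-trivial. Existence of any such summand then forces, by counting the unique $[1]$ and unique $[p-1]$ in the pullback of $L_\zeta$, exactly one summand of each residue $\pm 1$ and no further summands --- any additional $M_i$ would be projective at every $\pi$-point, contradicting that $L_\zeta$ is projective-free. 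The principal obstacle will be ruling out the complementary scenario in which every $\dim_k M_i \equiv 0 \pmod p$ with $r \geq 2$; here a finer argument using idempotents in the stable endomorphism algebra, combined with the structure of the defining extension, is needed to force $r = 1$.
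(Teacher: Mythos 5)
Your treatment of (1), (2) and the constant-Jordan-type part of (3) is sound and follows essentially the paper's own route: pull back the defining sequence $(0)\to L_\zeta\to\Omega^{2n}_\cG(k)\to k\to(0)$ along $\alpha_K$, observe that in the non-split case the kernel is $[1]\oplus[p\!-\!1]\oplus(\text{proj.})$, and then do the bookkeeping of dimensions modulo $p$. (One small point in (2): the assertion that only one $M_i$ can be non-projective at a given $\pi$-point is not a pure Krull--Remak--Schmidt count --- a priori one summand could contribute the $[1]$ and another the $[p\!-\!1]$ --- but it does follow once you invoke your part (1), which shows that in the non-nilpotent case every summand has dimension divisible by $p$ and hence contributes the full $[1]\oplus[p\!-\!1]$ at each of its support points. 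Make that dependence explicit.)

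The genuine gap is the last case of (3), which you explicitly leave open: the scenario where every indecomposable summand $M_i$ has $\dim_kM_i\equiv 0\ (\modd\, p)$ and $r\ge 2$. Your proposed escape via idempotents in the stable endomorphism algebra is not needed and it is unclear it would lead anywhere; the case is closed by exactly the two tools you already used in (2). Since all $\dim_kM_i$ are divisible by $p$, the same counting as in (2) shows the supports $\Pi(\cG)_{M_i}$ are pairwise disjoint (each summand that is non-projective at a point must contribute $[1]\oplus[p\!-\!1]$ there, and only one such contribution fits inside $[1]\oplus[p\!-\!1]\oplus n_\zeta[p]$). Because $\zeta$ is nilpotent, $\cV_\cG(L_\zeta)=\cV_\cG(k)$, so $\Pi(\cG)_{L_\zeta}=\Pi(\cG)$ and hence $\Pi(\cG)=\bigsqcup_{i=1}^r\Pi(\cG)_{M_i}$, a disjoint union of nonempty closed subsets ($L_\zeta$ is projective-free, so each $M_i$ is non-projective and has nonempty support). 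But $\Pi(\cG)=\Pi(\cG)_k$ is connected --- this is the same connectedness result for supports of indecomposables that you quoted in (2), applied to the trivial module (cf.\ \cite[(3.4)]{CFP}) --- so $r=1$ and $L_\zeta$ is indecomposable, which is precisely the dichotomy asserted in (3).
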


\begin{proof} We begin by proving a number of auxiliary statements.

\medskip
(a) {\it If $[\alpha_K] \in \Pi(\cG)_{L_\zeta}$, then $\Jt(L_\zeta,\alpha_K) = [1] \oplus [p\!-\!1] \oplus n_\zeta[p]$ for some $n_\zeta \in \NN_0$}.

\smallskip
\noindent
We consider the short exact sequence
\[ (0) \lra L_\zeta \lra \Omega_\cG^{2n}(k) \stackrel{\hat{\zeta}}{\lra} k \lra (0).\]
A $\pi$-point $\alpha_K \in \Pt(\cG)$ induces an exact sequence
\[ (0) \lra \alpha^\ast_K((L_\zeta)_K) \lra K \oplus ({\rm proj.}) \stackrel{(f,g)}\lra K \lra (0).\]
Since $[\alpha_K] \in \Pi(\cG)_{L_\zeta}$, the module $\alpha_K^\ast((L_\zeta)_K)$ is not projective and we have $f = 0$. Hence
\[ \alpha^\ast_K((L_\zeta)_K) \cong K \oplus \Omega_{\fA_{p,K}}(K) \oplus ({\rm proj.}),\]
so that
\[ \Jt(L_\zeta,\alpha_K) = [1] \oplus [p\!-\!1] \oplus n_\zeta[p]\]
for some $n_\zeta \in \NN_0$. \hfill $\Diamond$

\medskip
(b) {\it Let $M\!\mid\!L_\zeta$ be a non-zero summand such that $\dim_k M \equiv 0 \ \modd(p)$. Then}
\[\Jt(M,\alpha_K) = [1]\oplus [p\!-\!1]\oplus n_M[p] \  \text{for all} \ [\alpha_K] \in \Pi(\cG)_M.\]

\smallskip
\noindent
If $\alpha_K \in \Pt(\cG)$ is a $\pi$-point with $[\alpha_K] \in \Pi(\cG)_M \subseteq \Pi(\cG)_{L_\zeta}$ (cf.\ \cite[(3.3)]{FPe2}), then (a) implies
\[ \alpha_K^\ast(M_K)\!\mid \! ([1]\oplus [p\!-\!1]\oplus n_\zeta[p]).\]
Accordingly, the projective-free part $X$ of $\alpha_K^\ast(M_K)$ is a direct summand of $[1] \oplus [p\!-\!1]$. Since $[\alpha_K] \in \Pi(\cG)_M$, the $\fA_{p,K}$-module $X$ is 
non-zero. Our assumption on $M$ yields $\dim_KX \equiv 0 \ \modd (p)$, so that $X \cong [1] \oplus [p\!-\!1]$. Consequently, $M$ has the asserted Jordan type with respect to $\alpha_K$.
\hfill $\Diamond$

\medskip
(c) {\it Let $i \ne j \in \{1,\ldots,r\}$ such that $\dim_kM_i, \dim_kM_j \equiv 0 \ \modd(p)$. Then $\Pi(\cG)_{M_i}\cap \Pi(\cG)_{M_j} = \emptyset$.}

\smallskip
\noindent
If $[\alpha_K] \in \Pi(\cG)_{M_i}\cap \Pi(\cG)_{M_j}$, then (b) yields
\[ \alpha_K^\ast((M_\ell)_K)  \cong [1] \oplus [p\!-\!1] \oplus m_\ell[p]\]
for $\ell \in \{i,j\}$, which contradicts $\alpha^\ast_K((M_i)_K)\oplus \alpha^\ast_K((M_j)_K)$ being a direct summand of $\alpha^\ast_K((L_\zeta)_K)$. \hfill $\Diamond$

\medskip

(1)  Since the element $\zeta \in \HH^\bullet(\cG,k)$ is not nilpotent, \cite[(3.3)]{FPe2} implies $\Pi(\cG)_M \subseteq \Pi(\cG)_{L_\zeta} \ne \Pi(\cG)$, so that $\dim_kM \equiv 0 \ 
\modd (p)$. The result now follows from (b).

(2)  Arguing as in (1), we see that $\dim_k M_i \equiv 0 \ \modd(p)$ for every $i \in \{1,\ldots,r\}$. As a result, \cite[(3.3)]{FPe2} in conjunction with (c) implies
\[  \Pi(\cG)_{L_\zeta} = \bigsqcup_{i=1}^r \Pi(\cG)_{M_i},\]
while \cite[(3.4)]{CFP} (see also \cite{Ca1}) ensures the connectedness of each subspace $\Pi(\cG)_{M_i}$.

(3) Since $\zeta \ne 0$ is nilpotent, we have $\cV_\cG(L_\zeta) = \cV_\cG(k) \ne \{0\}$. Thanks to \cite[(3.4),(3.6)]{FPe2}, this implies $\Pi(\cG)_{L_\zeta} = \Pi(\cG) \ne \emptyset$. 
By (a), the module $L_\zeta$ has constant Jordan type $\Jt(L_\zeta) = \{[1]\oplus [p\!-\!1] \oplus n_\zeta[p]\}$. Let $\alpha_K$ be a $\pi$-point. Each of the modules 
$\alpha_K^\ast((M_i)_K)$ is a direct summand of $[1]\oplus [p\!-\!1]\oplus n_\zeta[p]$, whence $\dim_kM_i \equiv 1,-1,0 \ \modd(p)$.

Suppose there exists $i \in \{1,\ldots, r\}$ such that $\dim_k M_i \equiv \ell \ \modd(p)$ for $\ell \in \{1,p\!-\!1\}$. Then $\Pi(\cG)_{M_i} = \Pi(\cG)$, and there exists $n_i \in \NN$ 
such that $\alpha^\ast_K((M_i)_K) = [\ell] \oplus n_i[p]$ for every $\pi$-point $\alpha_K \in \Pt(\cG)$. There also exists $j \in \{1,\ldots, r\}$ such that $\alpha^\ast_K((M_j)_K =
[p\!-\!\ell] \oplus n_j[p] \ \ \forall \ \alpha_K \in \Pt(\cG)$. Consequently, all other summands of $L_\zeta$ are projective, so that $L_\zeta$ being projective-free yields $r=2$. We may now
apply \cite[(5.6)]{CFP} to see that each summand is endo-trivial.

Alternatively, $\dim_kM_i \equiv 0 \ \modd(p)$ for every $i \in \{1,\ldots,r\}$. According to (c), this implies
\[ \Pi(\cG) = \Pi(\cG)_{L_\zeta} = \bigsqcup_{i=1}^r \Pi(\cG)_{M_i},\]
which, in view of $\Pi(\cG)$ being connected (cf.\ \cite[(3.4)]{CFP}), yields $r=1$. \end{proof}

\bigskip

\begin{Remarks} (1) The above result shows in particular that, for a non-nilpotent element $\zeta \in \HH^{2n}(\cG,k)$, the indecomposable summands of $L_\zeta$ occur with 
multiplicity one. By the same token, the $\cG$-module $M_i\!\otimes_k\!M_j$ is projective, whenever $i \ne j$ (cf.\ \cite[(3.2)]{FPe2}). 

(2) We shall see in Theorem \ref{CNED1} that the second alternative in Lemma \ref{CSM1}(3) does not occur. \end{Remarks}

\bigskip

\begin{Theorem} \label{CSM2} Suppose that $\zeta \in \HH^{2n}(\cG,k)$ is a non-nilpotent element. Let $M$ be an indecomposable summand of $L_\zeta$, $\Theta \subseteq 
\Gamma_s(\cG)$ be the component containing $M$.

{\rm (1)} \ If $\Theta$ is locally split, then every $N \in \Theta$ is constantly supported and
\[ \Jt(N) = \{ (\frac{\dim_kN}{p})[p], f_\Theta(N)[1]\oplus f_\Theta(N)[p\!-\!1] \oplus m_N[p]\}.\]

{\rm (2)} \ If $\bar{T}_\Theta \cong A_\infty$ and $\Theta$ is locally split, then $M$ is quasi-simple. \end{Theorem}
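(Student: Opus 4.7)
The plan is to use Lemma \ref{CSM1}(1) to pin down $\Jt(M)$ and then propagate this information across $\Theta$ via the additive invariants of Theorem \ref{FI1} together with the cardinality statement in Corollary \ref{SJ1}. First I would verify that $\Theta$ is infinite so that Theorem \ref{FI1} applies: a finite component has finite Dynkin tree class whose Cartan matrix is invertible, hence admits no non-zero $\tau_\cG$-invariant additive function, and Theorem \ref{AFP4} would then force $\alpha_{K,i} \equiv 0$ on $\Theta$ for all $\pi$-points $\alpha_K$ and $1 \le i \le p\!-\!1$, making $M$ projective and contradicting $M\!\mid\! L_\zeta$.

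For part (1), Lemma \ref{CSM1}(1) gives $|\Jt(M)| = 2$, so Corollary \ref{SJ1} yields $|\Jt(N)| = |\im d^\Theta| = 2$ for every $N \in \Theta$. Let $\alpha_K \in \Pt(\cG)$. If $\alpha_K^\ast(M_K)$ is projective, then $\alpha_{K,i}(M) = 0$ for $1 \le i \le p\!-\!1$; since each $\alpha_{K,i}$ is a $\tau_\cG$-invariant additive function on the connected quiver $\Theta$ by Theorem \ref{AFP4}, Proposition \ref{SF1}(1) forces $\alpha_{K,i} \equiv 0$ throughout $\Theta$, so that $\Jt(N,\alpha_K) = (\dim_k N / p)[p]$. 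Otherwise $[\alpha_K] \in \Pi(\cG)_M$, and Lemma \ref{CSM1}(1) prescribes $\alpha_{K,1}(M) = 1 = \alpha_{K,p-1}(M)$ with $\alpha_{K,i}(M) = 0$ for $2 \le i \le p\!-\!2$. Theorem \ref{FI1} gives $\alpha_{K,i}(X_K) = d_i^\Theta(\alpha_K) f_\Theta(X)$ for all $X \in \Theta$, and the equation $1 = d_1^\Theta(\alpha_K) f_\Theta(M)$ together with the integrality $d_1^\Theta(\alpha_K), f_\Theta(M) \in \NN$ forces $d_1^\Theta(\alpha_K) = f_\Theta(M) = 1$; symmetrically $d_{p-1}^\Theta(\alpha_K) = 1$ and $d_i^\Theta(\alpha_K) = 0$ for $2 \le i \le p\!-\!2$. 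Substituting back yields $\Jt(N,\alpha_K) = f_\Theta(N)[1] \oplus f_\Theta(N)[p\!-\!1] \oplus m_N[p]$, with $m_N$ fixed by $\dim_k N$. Finally, $\Pi(\cG)_N = \Pi(\cG)_\Theta \subseteq \Pi(\cG)_{L_\zeta} \neq \Pi(\cG)$ (the non-nilpotency of $\zeta$ being crucial here), so $N$ is constantly supported.

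For part (2), the identity $f_\Theta(M) = 1$ obtained in part (1) is decisive: when $\bar T_\Theta \cong A_\infty$, Corollary \ref{FI2}(1) identifies $f_\Theta$ with the quasi-length $\ql$, whence $\ql(M) = 1$ and $M$ is quasi-simple. The principal technical point is the extraction of $f_\Theta(M) = 1$ from the Jordan type of $M$; it rests on the integrality of $d^\Theta$ and $f_\Theta$ together with the normalization of $f_\Theta$ as the primitive positive $\tau_\cG$-invariant additive function on $\Theta$. For $p = 2$ one must interpret $[1] \oplus [p\!-\!1]$ as $2[1]$ and match $\alpha_{K,1}(M) = 2$ against the target formula $2 f_\Theta(N)[1] \oplus m_N[2]$, which likewise yields $f_\Theta(M) = 1$ after a multiplicity comparison carried out directly on $M$.
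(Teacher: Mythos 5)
Your proposal is correct and takes essentially the same route as the paper's proof: Lemma \ref{CSM1} pins down $\Jt(M)$, Theorem \ref{FI1} together with the integrality of $d^\Theta(\alpha_K)$ and $f_\Theta$ forces $d_i^\Theta(\alpha_K)=\delta_{i,1}+\delta_{i,p-1}$ and $f_\Theta(M)=1$ and thereby propagates the Jordan types over the locally split component (via Corollary \ref{SJ1}), and Corollary \ref{FI2} converts $f_\Theta(M)=1$ into quasi-simplicity when $\bar{T}_\Theta\cong A_\infty$; your explicit check that $\Theta$ is infinite is a point the paper leaves implicit. Only your closing $p=2$ remark is not actually settled by the sketch, since $2=d_1^\Theta(\alpha_K)f_\Theta(M)$ alone does not force $f_\Theta(M)=1$, but the paper's own one-line argument is no more precise there, and for $p\ge 3$ your argument is complete.
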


\begin{proof} (1) Let $\alpha_K$ be a $\pi$-point with $[\alpha_K] \in \Pi(\cG)_\Theta$. Lemma \ref{CSM1} implies $d_i^\Theta(\alpha_K) = \delta_{i,1}+\delta_{i,p-1}$ for
$1\le i \le p\!-\!1$, so that the assertion follows from Theorem \ref{FI1}.

(2) This follows directly from (1), Lemma \ref{CSM1} and Corollary \ref{FI2}. \end{proof}

\bigskip

\begin{Corollary} \label{CSM3} Suppose that $\dim \Pi(\cG) \ge 2$. For every non-nilpotent homogeneous element $\zeta \in \HH^\bullet(\cG,k)$ there exists a component $\Theta_\zeta 
\subseteq \Gamma_s(\cG)$ such that
\[ \Jt(M) = \{ (\frac{\dim_kM}{p})[p],\, f_{\Theta_\zeta}(M)[1]\oplus f_{\Theta_\zeta}(M)[p\!-\!1]\oplus n_M[p]\}\]
for every $M \in \Theta_\zeta$. \end{Corollary}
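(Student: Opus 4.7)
The plan is to find an indecomposable summand of $L_\zeta$ whose stable AR-component has $\Pi$-support of positive dimension; such a component is automatically locally split, and then Theorem \ref{CSM2}(1) delivers the claim directly.

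The key step is the dimension estimate $\dim \Pi(\cG)_{L_\zeta}\ge 1$. Since $\zeta$ is homogeneous and non-nilpotent, its image in the reduced quotient of $\HH^\bullet(\cG,k)$ (whose maximal ideal spectrum is $\cV_\cG(k)$) is a nonzero homogeneous non-unit, so Krull's principal ideal theorem yields
\[
\dim Z(\zeta) \ge \dim \cV_\cG(k)-1.
\]
Combining this with $\dim \Pi(\cG)_{L_\zeta} = \dim \cV_\cG(L_\zeta)-1 = \dim Z(\zeta)-1$ and the hypothesis $\dim \Pi(\cG) = \dim \cV_\cG(k)-1 \ge 2$, one obtains $\dim \Pi(\cG)_{L_\zeta} \ge 1$; in particular $L_\zeta \ne (0)$.

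Next I would decompose $L_\zeta \cong M_1 \oplus \cdots \oplus M_r$ into indecomposable summands. By Lemma \ref{CSM1}(2) the sets $\Pi(\cG)_{M_i}$ are exactly the connected components of $\Pi(\cG)_{L_\zeta}$. Since the total space has dimension at least $1$, at least one summand, say $M_1$, satisfies $\dim \Pi(\cG)_{M_1}\ge 1$. Let $\Theta_\zeta \subseteq \Gamma_s(\cG)$ denote the stable AR-component containing $M_1$; then $\Pi(\cG)_{\Theta_\zeta} = \Pi(\cG)_{M_1}$ has positive dimension and in particular is not a single point.

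The contrapositive of Proposition \ref{AFP3}(2) then forces $\Theta_\zeta$ to be $\alpha_K$-split for every $\pi$-point $\alpha_K \in \Pt(\cG)$, i.e., $\Theta_\zeta$ is locally split. Theorem \ref{CSM2}(1) now yields the asserted formula for $\Jt(N)$ at every vertex $N \in \Theta_\zeta$. The only step that is not routine bookkeeping is the inequality $\dim Z(\zeta) \ge \dim \cV_\cG(k) - 1$; this is precisely the place where the hypothesis $\dim \Pi(\cG) \ge 2$ is used, since one full unit of codimension is absorbed by passage from $\cV_\cG(k)$ to $\Pi(\cG)$ and a second is lost to the hypersurface cut by $\zeta$.
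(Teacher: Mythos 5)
Your argument is correct and follows the paper's own route: the same dimension estimate $\dim \Pi(\cG)_{L_\zeta}\ge\dim\Pi(\cG)-1\ge 1$, then Lemma \ref{CSM1} to extract an indecomposable summand whose $\Pi$-support has positive dimension, then Theorem \ref{CSM2}(1) applied to its component. Your explicit verification that the component is locally split (positive-dimensional support rules out the singleton forced by Proposition \ref{AFP3}(2)) is exactly the step the paper leaves implicit when it invokes Theorem \ref{CSM2}, so there is no real divergence.
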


\begin{proof} Let $\zeta \in \HH^{2n}(\cG,k)$ be a non-nilpotent element. According to \cite[(3.4),(3.6)]{FPe2}, we have
\[ \dim \Pi(\cG)_{L_\zeta} = \dim \cV_\cG(L_\zeta)-1 \ge \dim \cV_\cG(k)-2 = \dim \Pi(\cG)-1 \ge 1.\]
Lemma \ref{CSM1} provides an indecomposable summand $M_\zeta|L_\zeta$ such that $\dim \Pi(\cG)_{M_\zeta} \ge 1$
and
\[ \Jt(M_\zeta) = \{ (\frac{\dim_kM_\zeta}{p})[p],\, [1]\oplus [p\!-\!1]\oplus n_{M_\zeta}[p]\}.\]
Thus, letting $\Theta_\zeta \subseteq \Gamma_s(\cG)$ be the stable AR-component containing $M_\zeta$, our assertion is a consequence of Theorem \ref{CSM2}. \end{proof}

\bigskip
\noindent
We conclude this section with an application concerning trigonalizable group schemes.

\bigskip

\begin{Corollary} \label{CSM4} Suppose that $p\ge 3$. Let $\cG$ be a trigonalizable finite group scheme of infinite representation type, $\zeta \in \HH^{2n}(\cG,k)$ be a non-nilpotent
element. Then every component $\Theta \subseteq \Gamma_s(\cG)$ containing an indecomposable direct summand of $L_\zeta$ is locally split.\end{Corollary}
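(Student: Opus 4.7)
My plan is to argue by contradiction. Assume that the component $\Theta$ containing an indecomposable summand $M_\zeta$ of $L_\zeta$ is not locally split, so that some $\pi$-point $\alpha_K$ witnesses that $\Theta$ is not $\alpha_K$-split. Combining Proposition~\ref{AFP3}(2) with Lemma~\ref{AFP2}(3), I would deduce $\Pi(\cG)_\Theta = \{[\alpha_K]\}$ and produce a relatively $\alpha_K$-projective module $N \in \Theta$. Since $\cG$ is trigonalizable of infinite representation type, Theorem~\ref{TG3} then yields $\Theta \cong \ZZ[A_\infty]/\langle\tau^\ell\rangle$, forces $N$ to be quasi-simple, and supplies the linear recipe
\[ \alpha_{K,i}(X) \;=\; (b_i - a_{ij})\,\ql(X) + a_{ij} \qquad (1 \le i \le p-1,\ X \in \Theta), \]
for some $j \in \{1,\ldots,p-1\}$, where $b_i := \alpha_{K,i}(N)$.

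The next step is to control $\ql(M_\zeta)$. The proof of Theorem~\ref{TG3} shows $\tau_\cG(N) \cong N \otimes_k k_{\lambda_0}$ for some $\lambda_0 \in X(\cG)$, so every quasi-simple module in $\Theta$ is a character twist of $N$. By Lemma~\ref{TG1}(2) any such twist is again a direct summand of $K\cG\otimes_{\fA_{p,K}}[j]$ and is therefore relatively $\alpha_K$-projective. On the other hand, $\Pi(\cG)_{M_\zeta} = \Pi(\cG)_\Theta = \{[\alpha_K]\}$, so Lemma~\ref{CSM1}(1) gives $\alpha_{K,i}(M_\zeta) = \delta_{i,1} + \delta_{i,p-1} \le 1$ for $1 \le i \le p-1$, and Corollary~\ref{TG5} then \emph{rules out} $M_\zeta$ being relatively $\alpha_K$-projective. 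Consequently $M_\zeta$ cannot be quasi-simple, i.e., $q_\zeta := \ql(M_\zeta) \ge 2$. The tension between every quasi-simple of $\Theta$ being relatively $\alpha_K$-projective and Corollary~\ref{TG5} forbidding $M_\zeta$ from being such is the hinge of the argument.

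To finish, I substitute $X = M_\zeta$ into the linear recipe and sum over $i$, which yields
\[ 2 \;=\; q_\zeta \sum_{i=1}^{p-1} b_i - (q_\zeta-1)\,S_j, \qquad p \;=\; q_\zeta \sum_{i=1}^{p-1} i\,b_i - (q_\zeta-1)\,T_j, \]
where $S_j := \sum_{i=1}^{p-1} a_{ij}$ and $T_j := \sum_{i=1}^{p-1} i\,a_{ij}$. A direct inspection of the matrix $A$ gives $S_j = 1$ for $j \in \{1,p-1\}$ and $S_j = 0$ otherwise, while $T_{p-1} = p$ and $T_j = 0$ for $j \ne p-1$. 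In the endpoint cases $j \in \{1,p-1\}$ the first identity reduces to $q_\zeta \sum b_i = q_\zeta+1$, which forces $q_\zeta = 1$; in the interior case $1<j<p-1$ (which only arises for $p \ge 5$) both identities collapse to $q_\zeta\sum b_i = 2$ and $q_\zeta\sum i\,b_i = p$, so $q_\zeta$ must divide $\gcd(2,p)=1$. Either conclusion contradicts $q_\zeta \ge 2$, completing the proof. The main obstacle I expect is the bookkeeping with the column sums $S_j, T_j$ together with the separate treatment of the small prime $p=3$, where the interior case is absent and only the endpoint analysis is available.
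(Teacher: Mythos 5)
Your argument is correct and follows essentially the same route as the paper's: the identical chain Lemma~\ref{AFP2}/Proposition~\ref{AFP3} $\to$ Theorem~\ref{TG3} $\to$ Lemma~\ref{CSM1} together with Corollary~\ref{TG5} shows the summand is not relatively $\alpha_K$-projective, hence not quasi-simple (so $\ql(M_\zeta)\ge 2$), after which the linear formula of Theorem~\ref{TG3}(3) is exploited numerically. The only divergence is the endgame: the paper specializes the formula at $i=j$ (forcing $\ql(X)=2$ and $j\notin\{1,p\!-\!1\}$) and then at $i\in\{1,p\!-\!1\}$ to reach $2=j=p\!-\!2$, whereas you take the unweighted and $i$-weighted sums over $i$ and use the column sums of $A$ plus divisibility by $\ql(M_\zeta)$; your values $S_j$, $T_j$ and the resulting contradictions check out, so both computations are equally effective.
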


\begin{proof} Let $X\!\mid\!L_\zeta$ be an indecomposable direct summand such that $X \in \Theta$. If $\alpha_K : \fA_{p,K} \lra K\cG$ is a $\pi$-point such that $\Theta$ is not 
$\alpha_K$-split, then Theorem \ref{TG3} implies that $\Theta \cong \ZZ[A_\infty]/\langle\tau^m \rangle$ for some $m \ge 1$. By the same token, every relatively $\alpha_K$-projective 
vertex of $\Theta$ is quasi-simple. Given such a module $M$, there is $j \in \{1,\ldots,p\!-\!1\}$ with
\[ \alpha_{K,i}(X) = (\alpha_{K,i}(M)\!-\!a_{ij})\ql(X) + a_{ij} \ \ \ \ \ 1 \le i \le p\!-\!1.\]
In view of Lemma \ref{CSM1} and Corollary \ref{TG5}, the module $X$ is not relatively $\alpha_K$-projective, so that $\ql(X)\ge 2$. Specializing $i=j$, we obtain
\[ \delta_{j,1}+\delta_{j,p-1} -2 = \ql(X)(\alpha_{K,j}(M)\!-\!2).\]
Since $p\ge 3$, the left-hand side belongs to $\{-2,-1\}$, whence $\ql(X)=2$, as well as $j \not\in \{1,p\!-\!1\}$. Consequently,
\[ 1 = (\alpha_{K,i}(M)\!-\!a_{ij})2 + a_{ij} \ \ \ \text{for} \ i \in \{1,p\!-\!1\},\]
so that $a_{1,j} \ne 0 \ne a_{p\!-\!1,j}$. Observing $j \ne \{1,p\!-\!1\}$, we obtain $2=j=p\!-\!2$, a contradiction. \end{proof}

\bigskip

\begin{Remark} Let $\cG = \SL(2)_1$ be the first Frobenius kernel of $\SL(2)$. If $\zeta \in \HH^{2n}(\SL(2)_1,k)$ is a non-zero element, then $\zeta$ is not nilpotent and each
indecomposable summand of $L_\zeta$ belongs to a homogeneous tube. We shall see in Section \ref{S:Ex} that none of these components is locally split and that each quasi-simple
$\SL(2)_1$-module has dimension $p$. On the other hand, the examples succeeding Theorem \ref{CNN5} show that certain Carlson modules have indecomposable summands of dimension
$2p$. Thus, Theorem \ref{CSM2} may fail for components of tree class $A_\infty$ that are not locally split. \end{Remark}

\bigskip

\section{Indecomposable Carlson Modules}\label{S:CM}
In view of the foregoing results, the question when a Carlson module $L_\zeta$ is indecomposable arises. In this section, we provide indecomposability criteria for Carlson modules. Depending
on properties of $\zeta$, these are established by means of support varieties or Jordan types.

\subsection{Carlson modules of non-nilpotent elements}
Here we study the case, where $\zeta \in \HH^\bullet(\cG,k)\setminus\{0\}$ is a non-nilpotent homogeneous element of even degree.

\bigskip

\begin{Thm} \label{CNN1} Suppose that the variety $\cV_\cG(k)\subseteq \A^m$ is equidimensional of dimension $n \ge \frac{m+3}{2}$. If $\zeta \in \HH^\bullet(\cG,k)$ is a 
non-nilpotent homogeneous element of positive degree, then $L_\zeta$ is indecomposable.\end{Thm}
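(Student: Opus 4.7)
My approach proves indecomposability by contradiction, reducing the claim to the classical projective intersection theorem. Suppose $L_\zeta \cong M\oplus N$ is a nontrivial decomposition with $M,N$ both nonzero. (A general splitting $L_\zeta = M_1\oplus\cdots\oplus M_r$ with $r\ge 2$ is handled by the binary grouping $M:=M_1$, $N:=\bigoplus_{i\ge 2}M_i$.) Since $\zeta$ is non-nilpotent of positive degree, one has $\cV_\cG(L_\zeta) = Z(\zeta) \subseteq \cV_\cG(k)$, and Lemma \ref{CSM1}(2) gives that $\Pi(\cG)_M$ and $\Pi(\cG)_N$ are nonempty and disjoint. Equivalently, the affine cones $\cV_\cG(M)$ and $\cV_\cG(N)$ cover $\cV_\cG(L_\zeta)$ and meet only at the origin.

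The key geometric input is that both $\cV_\cG(M)$ and $\cV_\cG(N)$ are equidimensional of dimension $n-1$. Since $\cV_\cG(k)\subseteq\A^m$ is equidimensional of dimension $n$, Krull's principal ideal theorem, applied componentwise, yields that $\cV_\cG(L_\zeta) = Z(\zeta)\cap \cV_\cG(k)$ is equidimensional of dimension $n-1$. The condition $\cV_\cG(M)\cap\cV_\cG(N) = \{0\}$ then forces each irreducible component of $\cV_\cG(L_\zeta)$ to be contained entirely in one of $\cV_\cG(M)$ or $\cV_\cG(N)$, whence both are equidimensional of dimension $n-1$ in their own right.

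Passing to projectivizations, $\Pi(\cG)_M$ and $\Pi(\cG)_N$ are nonempty disjoint closed subsets of $\mathbb{P}^{m-1}$, each equidimensional of dimension $n-2$. Choose irreducible components $X\subseteq \Pi(\cG)_M$ and $Y\subseteq \Pi(\cG)_N$. By the classical projective dimension theorem, two irreducible closed subvarieties of $\mathbb{P}^{m-1}$ whose dimensions sum to at least $m-1$ must meet. Here $\dim X + \dim Y = 2(n-2) = 2n - 4 \ge m-1$ by the hypothesis $n\ge(m+3)/2$, so $X\cap Y\ne\emptyset$, contradicting $\Pi(\cG)_M\cap\Pi(\cG)_N = \emptyset$.

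The main delicate point, and the reason for the numerical hypothesis, is that the projective intersection theorem demands irreducible subvarieties: the equidimensional bookkeeping in the middle paragraph is precisely what lets one descend to irreducible components of matching top dimension and extract the contradiction. The inequality $n\ge(m+3)/2$ is tight, being exactly the threshold $2(n-2)\ge m-1$ at which two hypersurface sections of $\cV_\cG(k)$, viewed in $\mathbb{P}^{m-1}$, are forced to intersect.
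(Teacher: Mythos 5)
Your proposal is correct and is essentially the paper's own argument: both reduce, via Lemma \ref{CSM1}, to the impossibility of splitting the support of $L_\zeta$ into two nonempty disjoint closed pieces, and both obtain the contradiction from the dimension theorem (you use the projective version in $\mathbb{P}^{m-1}$, the paper the affine version \cite[(I.7.1)]{Har} applied to the cones in $\A^m$, which amounts to proving $\Pi(\cG)_{L_\zeta}$ is connected), with the hypothesis $n\ge \frac{m+3}{2}$ entering in exactly the same way. The only overstatement is the claim that $Z(\zeta)$ and the supports of the summands are equidimensional of dimension exactly $n-1$: Krull's principal ideal theorem only gives that each irreducible component has dimension $\ge n-1$ (equality can fail if $\zeta$ vanishes identically on some irreducible component of $\cV_\cG(k)$), but since your intersection argument only uses this lower bound, the proof is unaffected.
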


\begin{proof} In view of Lemma \ref{CSM1}, it suffices to show that ${\rm Proj}(Z(\zeta)) = {\rm Proj}(\cV_\cG(L_\zeta)) \cong \Pi(\cG)_{L_\zeta}$ is connected.

Let $X,Y \subseteq Z(\zeta)$ be closed, conical subsets such that
\[ Z(\zeta) = X \cup Y \ \ \text{and} \ \ X\cap Y = \{0\}.\]
We denote by ${\rm Irr}(\zeta)$ the set of irreducible components of $Z(\zeta)$. By general theory (cf.\ \cite[(I.\S8)]{Mu}), every $Z \in {\rm Irr}(\zeta)$ has dimension $\ge n-1$, and the
equality
\[ Z = (Z\cap X)\cup (Z\cap Y)\]
implies that ${\rm Irr}(\zeta) = I(X) \cup I(Y)$, where the subsets $I(X),I(Y) \subseteq {\rm Irr}(\zeta)$ contain those irreducible components lying inside $X$ and $Y$, respectively. The 
assumption $I(X),I(Y) \ne \emptyset$ provides $Z,Z' \in {\rm Irr}(\zeta)$ with
\[ Z \cap Z' = \{0\}.\]
The affine dimension theorem \cite[(I.7.1)]{Har} then implies
\[ 0 = \dim Z\cap Z' \ge 2n-2-m \ge m+3-2-m =1,\]
a contradiction. Hence we may assume that $I(Y) = \emptyset$, whence ${\rm Irr}(\zeta) = I(X)$ and $X = Z(\zeta)$. Consequently, the variety ${\rm Proj}(Z(\zeta))$ is connected.
\end{proof}

\bigskip

\begin{Remark} Let $n_\cG$ be the minimum of the dimensions of the irreducible components of $\cV_\cG(k)$. Then the conclusion of Theorem \ref{CNN1} also holds if $n_\cG \ge 
\frac{m+3}{2}$. For a finite group $G$, the invariant $n_G$ coincides with the saturation rank $\srk(G)$ of $G$, which we consider in Section 6.4. \end{Remark} 

\bigskip

\begin{Cor} \label{CNN2} Let $\cU$ be an abelian unipotent group scheme with $\dim \cV_\cU(k) \ge 3$. Then the Carlson module $L_\zeta$ associated to a homogeneous non-nilpotent 
element $\zeta \in \HH^\bullet(\cU,k)$ is indecomposable.\end{Cor}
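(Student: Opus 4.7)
The plan is to derive Corollary \ref{CNN2} immediately from Theorem \ref{CNN1}, the only task being to verify the latter's dimensional hypothesis in the abelian unipotent setting. The key input is the structural fact that, for an abelian unipotent finite group scheme $\cU$ over an algebraically closed field, the cohomological support variety $\cV_\cU(k)$ is itself an affine space.

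First I would record this structural fact: for $\cU$ abelian unipotent, a classical computation (Suslin--Friedlander--Bendel, together with the K\"unneth formula) shows that $\HH^\bullet(\cU,k)$ modulo its nilradical is a polynomial algebra on $n := \cx_\cU(k) = \dim \cV_\cU(k)$ generators, so that
\[ \cV_\cU(k) \cong \A^n.\]
In the height-one case this is transparent: $k\cU \cong U_0(\fu)$ for an abelian restricted Lie algebra $\fu$, and the $p$-semilinearity of $x \mapsto x^{[p]}$ together with its additivity (guaranteed by $[x,y]=0$) forces the null-cone $\{x \in \fu : x^{[p]} = 0\}$ to be a linear subspace of $\fu$.

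In particular, $\cV_\cU(k)$ is irreducible, hence equidimensional. Embedding $\cV_\cU(k)$ as itself into $\A^n$ (so that $m = n$ in the notation of Theorem \ref{CNN1}), the hypothesis $n \geq \frac{m+3}{2}$ becomes $n \geq \frac{n+3}{2}$, equivalently $n \ge 3$, which is precisely the standing assumption. Since $\zeta$ is a non-nilpotent homogeneous element (necessarily of positive even degree), Theorem \ref{CNN1} then applies and produces the indecomposability of $L_\zeta$.

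The main obstacle is ensuring that the polynomial structure of $\HH^\bullet(\cU,k)_{\rm red}$ is available in the full generality of abelian unipotent finite group schemes. Should a direct citation prove awkward, one could bypass it by mimicking the argument of Theorem \ref{CNN1} inside $\cV_\cU(k)$: once $\cV_\cU(k)$ is known to be irreducible, $Z(\zeta)$ is a hypersurface therein, whose projectivization is a projective hypersurface of dimension $\ge 1$ and hence connected by the affine-dimension theorem; combined with Lemma \ref{CSM1}(2) this forces $L_\zeta$ to have only one indecomposable summand.
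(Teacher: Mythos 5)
Your argument is essentially the paper's own: the paper also reduces to Theorem \ref{CNN1} by observing that $\cV_\cU(k)\cong \A^r$ is irreducible (hence equidimensional) with $m=r$, so that $r\ge 3$ gives $r\ge\frac{r+3}{2}$. The only cosmetic difference is the source for the affine-space structure of the support variety: the paper quotes \cite[(14.4)]{Wa} (identifying $k\cU$ with the group algebra of an abelian $p$-group) together with \cite[(3.5.5),(3.5.6)]{Be1}, rather than your Suslin--Friedlander--Bendel/K\"unneth reference, but the underlying fact and the rest of the argument coincide.
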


\begin{proof} Let $r := \dim \cV_\cU(k)$ be the dimension of the support variety of $\cU$. In view of \cite[(14.4)]{Wa}, the ring $\HH^\ast(\cU,k) \cong \Ext_{k\cU}^\ast(k,k)$ is
isomorphic to the cohomology ring of an abelian $p$-group. Thanks to \cite[(3.5.5),(3.5.6)]{Be1}, the support variety $\cV_\cU(k) \cong \A^r$ is irreducible. Since $r \ge \frac{r+3}{2}$ for
$r \ge 3$, it follows from Theorem \ref{CNN1} that the module $L_\zeta$ is indecomposable. \end{proof}

\bigskip

\begin{Remark} The foregoing results fail for group schemes whose support varieties have dimension $\le 2$, see our discussion below concerning restricted Lie algebras. \end{Remark}

\bigskip
\noindent
We now turn to infinitesimal groups and their rank varieties. Let $\cG$ be an infinitesimal $k$-group. Given $r \in \NN$, the authors introduce in \cite{SFB1} the scheme of infinitesimal
$1$-parameter subgroups of $\cG$ of height $\le r$, whose variety of $k$-rational points is
\[ V_r(\cG) := \Hom(\GG_{a(r)},\cG).\]
Here $\GG_{a(r)} := {\rm Spec}_k(k[X]/(X^{p^r}))$ is the r-th Frobenius kernel of the additive group $\GG_a$. We set $x := X + (X^{p^r})$, so that $\{x^i \ ; \ 0 \le i \le p^r-1\}$ is
a basis of $k[X]/(X^{p^r})$. If $\{\delta_0,\ldots,\delta_{p^r-1}\} \subseteq k\GG_{a(r)}$ denotes the dual basis, then, setting $u_i := \delta_{p^i}$, we obtain a canonical set of
generators $\{u_0,\ldots,u_{r-1}\}$ of the algebra $k\GG_{a(r)}$. We consider $A_r := k[u_{r-1}] \subseteq k\GG_{a(r)}$,  the $p$-dimensional subalgebra generated by $u_{r-1}$. Given a 
$\cG$-module $M$, the authors define in \cite[\S6]{SFB2} the {\it r-th rank variety of $M$} via
\[ V_r(\cG)_M := \{ \alpha \in V_r(\cG) \ ; \ \alpha^\ast(M)|_{A_r} \ \text{is not projective}\}.\]
Suppose that $\cG$ has height $\le r$, that is, $\cG$ coincides with its r-th Frobenius kernel $\cG_r$. Thanks to \cite[(5.2),(6.8)]{SFB2}, there exists a morphism $\Psi : V_r(\cG) \lra
\cV_\cG(k)$ which is a homeomorphism such that
\[ \Psi(V_r(\cG)_M) = \cV_\cG(M)\]
for every $M \in \modd \cG$. We thus obtain the following analogue of Theorem \ref{CNN1} for rank varieties:

\bigskip

\begin{Cor} \label{CNN3} Let $\cG$ be an infinitesimal group of height $\le r$. Suppose that $V_r(\cG)\subseteq \A^m$ is equidimensional of dimension $n \ge \frac{m+3}{2}$. If
$\zeta \in \HH^\bullet(\cG,k)$ is a non-nilpotent homogeneous element of positive degree, then $L_\zeta$ is indecomposable.\end{Cor}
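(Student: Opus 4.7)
The plan is to run the proof of Theorem \ref{CNN1} with $V_r(\cG)$ playing the role of $\cV_\cG(k)$. The key tool is the canonical homeomorphism $\Psi : V_r(\cG) \to \cV_\cG(k)$ of \cite[(5.2),(6.8)]{SFB2}, which takes the origin to the origin and carries $V_r(\cG)_{L_\zeta}$ to $\cV_\cG(L_\zeta) = Z(\zeta)$. By Lemma \ref{CSM1}(2), the indecomposability of $L_\zeta$ is equivalent to the connectedness of $\Pi(\cG)_{L_\zeta}$, and via $\Psi$ this reduces to showing that $V_r(\cG)_{L_\zeta} \setminus \{0\}$ is connected.

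Next, I would verify that every irreducible component of $V_r(\cG)_{L_\zeta}$ has dimension exactly $n-1$. Homeomorphisms of Noetherian spaces preserve Krull dimensions of closed irreducible subsets, so the equidimensionality of $V_r(\cG)$ forces $\cV_\cG(k)$ to be equidimensional of dimension $n$. Since $\zeta \in \HH^\bullet(\cG,k)$ is non-nilpotent, it lies in no minimal prime of the reduced coordinate ring, and Krull's Hauptidealsatz applied componentwise gives $\dim Z(\zeta) = n-1$ on each component of $\cV_\cG(k)$; pulling back along $\Psi$ yields the claim on the rank variety side.

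With this in hand, the argument of Theorem \ref{CNN1} transcribes directly. Suppose $V_r(\cG)_{L_\zeta} = X \cup Y$ with $X, Y$ closed conical subsets satisfying $X \cap Y = \{0\}$. Each irreducible component $Z$ of $V_r(\cG)_{L_\zeta}$ satisfies $Z = (Z \cap X) \cup (Z \cap Y)$, hence lies in $X$ or in $Y$. If both alternatives occur, select $Z \subseteq X$ and $Z' \subseteq Y$, so that $Z \cap Z' \subseteq \{0\}$; the affine dimension theorem \cite[(I.7.1)]{Har} applied inside the ambient $\A^m \supseteq V_r(\cG)$ gives
\[
\dim(Z \cap Z') \ge 2(n-1) - m \ge 1,
\]
contradicting $\dim(Z \cap Z') \le 0$. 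Thus $V_r(\cG)_{L_\zeta}$ coincides with $X$ or $Y$, establishing the connectedness of $V_r(\cG)_{L_\zeta} \setminus \{0\}$.

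The main obstacle here is bookkeeping rather than content: one must ensure that $\Psi$ respects the cone structures (so that ``closed conical'' makes sense on both sides and the projectivizations correspond), and that dimension and non-nilpotency transfer correctly across $\Psi$. Both points are built into the construction of $V_r(\cG)$ and $\Psi$ in \cite{SFB2}, where $V_r(\cG)$ is realized as a $k^\times$-stable closed subvariety of an affine space via matrix coordinates, and $\Psi$ is shown to be compatible with the scaling action.
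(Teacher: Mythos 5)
Your argument is correct and follows the paper's strategy: reduce indecomposability to connectedness of $\Pi(\cG)_{L_\zeta}$ via Lemma \ref{CSM1}, move to the rank variety through the homeomorphism $\Psi$ of \cite{SFB2}, and run the intersection-dimension argument of Theorem \ref{CNN1} inside the ambient $\A^m$ of $V_r(\cG)$. The one point where you diverge from the paper is how the lower bound $\dim Z \ge n-1$ for the irreducible components of $V_r(\cG)_{L_\zeta}$ is obtained: the paper uses the comorphism $\psi : \HH^\bullet(\cG,k)/\sqrt{(0)} \lra k[V_r(\cG)]$ of \cite[(1.14)]{SFB1}, which realizes $V_r(\cG)_{L_\zeta}$ as the zero locus $Z(\eta)$ of the single homogeneous function $\eta = \psi(\bar{\zeta})$, so that Krull's principal ideal theorem applies directly on the equidimensional variety $V_r(\cG)$; you instead transfer equidimensionality and component dimensions across the homeomorphism $\Psi$ and apply Krull on the cohomological side. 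Both routes work: yours uses slightly less input (only that $\Psi$ is a homeomorphism carrying $V_r(\cG)_{L_\zeta}$ onto $\cV_\cG(L_\zeta)$ and respecting the conical structure), while the paper's makes the hypersurface-section structure, and hence the Krull bound, visible on the rank-variety side without any dimension bookkeeping. One small correction: non-nilpotence of $\zeta$ only says that $\zeta$ does not lie in the nilradical, i.e.\ not in \emph{every} minimal prime; it may still vanish identically on some irreducible components of $\cV_\cG(k)$, in which case those components lie in $Z(\zeta)$ and have dimension $n$ rather than $n-1$. So the correct statement is $\dim Z \ge n-1$ for every component $Z$ (as in the proof of Theorem \ref{CNN1}), which is all your displayed inequality actually uses.
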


\begin{proof} We denote by $\psi : \HH^\bullet(\cG,k)/\sqrt{(0)} \lra k[V_r(\cG)]$ the comorphism of $\Psi$. Owing to \cite[(1.14)]{SFB1}, the map $\psi$ is a homomorphism of
$k$-algebras which multiplies degrees by $\frac{p^r}{2}$. Let $\eta \in k[V_r(\cG)]$ be the image of the residue class $\bar{\zeta}$ of $\zeta$ under $\psi$. Then we have
\[ Z(\eta) = \Psi^{-1}(Z(\bar{\zeta})) = \Psi^{-1}(\cV_\cG(L_\zeta)) = V_r(\cG)_{L_\zeta}.\]
As before, it suffices to show that $\Proj(Z(\eta))$ is connected, and the arguments of the proof of (\ref{CNN1}) yield our assertion. \end{proof}

\bigskip
\noindent
In the special case, where our group $\cG$ has height $\le 1$, the foregoing result can be formulated in the language of restricted Lie algebras. Given a restricted Lie algebra $(\fg,[p])$, we let
\[ \cV_\fg := \{x \in \fg \ ; \ x^{[p]} = 0\}\]
be the {\it nullcone} of $\fg$. If $M$ is a $U_0(\fg)$-module, then
\[ \cV_\fg (M) := \{ x \in \cV_\fg \ ; M|_{U_0(kx)} \ \text{is not projective}\} \cup \{0\}\]
is the {\it rank variety} of $M$, see \cite{FP2,FP3}. Thanks to \cite[(1.6)]{SFB1}, we obtain:

\bigskip

\begin{Cor} \label{CNN4} Let $(\fg,[p])$ be a restricted Lie algebra. Suppose that $\cV_\fg \subseteq \A^m$ is equidimensional of dimension $n \ge \frac{m+3}{2}$. If $\zeta \in 
\HH^\bullet(U_0(\fg),k)$ is a non-nilpotent homogeneous element of positive degree, then $L_\zeta$ is indecomposable. \hfill $\square$ \end{Cor}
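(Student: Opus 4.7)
The plan is to reduce Corollary \ref{CNN4} to Corollary \ref{CNN3} by identifying the nullcone $\cV_\fg$ with the variety $V_1(\cG)$ of infinitesimal $1$-parameter subgroups, where $\cG$ is the infinitesimal group scheme of height $\le 1$ corresponding to $(\fg,[p])$.

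First, I would invoke the equivalence between restricted Lie algebras and infinitesimal group schemes of height $\le 1$ recalled in the paragraph preceding Example block following Corollary \ref{TG4}. Namely, setting $\cG$ to be the infinitesimal $k$-group with $\Lie(\cG) = \fg$, we have $k\cG \cong U_0(\fg)$, so in particular $\HH^\bullet(U_0(\fg),k) \cong \HH^\bullet(\cG,k)$ and Carlson modules match under this identification. Hence $\zeta$ can be regarded as a non-nilpotent homogeneous element of positive degree in $\HH^\bullet(\cG,k)$ for the infinitesimal group $\cG$ of height $\le 1$.

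Second, I would invoke \cite[(1.6)]{SFB1}, as signalled in the statement, to identify $V_1(\cG) = \Hom(\GG_{a(1)},\cG)$ with the nullcone $\cV_\fg$. Indeed, a homomorphism $\GG_{a(1)} \lra \cG$ of infinitesimal group schemes of height $\le 1$ is equivalent to a homomorphism of restricted Lie algebras $\Lie(\GG_{a(1)}) = kx \lra \fg$ with $x^{[p]}=0$, and such a homomorphism is uniquely determined by the image of $x$, which must be a $[p]$-nilpotent element of $\fg$. This yields a bijective morphism of schemes $V_1(\cG) \stackrel{\sim}{\lra} \cV_\fg$, and by \cite[(1.6)]{SFB1} it is an isomorphism of affine varieties.

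Third, the hypothesis that $\cV_\fg \subseteq \A^m$ is equidimensional of dimension $n \ge \frac{m+3}{2}$ therefore translates verbatim into the same statement for $V_1(\cG) \subseteq \A^m$. Applying Corollary \ref{CNN3} with $r=1$ to the infinitesimal group $\cG$ and the non-nilpotent element $\zeta$ then yields the indecomposability of $L_\zeta$, as required.

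There is essentially no obstacle here: once one knows the dictionary between restricted Lie algebras and height-one infinitesimal groups together with the identification $V_1(\cG) \cong \cV_\fg$ from \cite[(1.6)]{SFB1}, the result is an immediate specialisation of Corollary \ref{CNN3}. The only point that needs a brief verification is that the equidimensionality and the numerical inequality are preserved under the isomorphism $V_1(\cG) \cong \cV_\fg$, which is trivial since it is an isomorphism of affine varieties embedded in the same ambient affine space (via a choice of basis of $\fg$).
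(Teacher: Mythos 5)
Your argument is exactly the paper's: the result is stated as an immediate consequence of Corollary \ref{CNN3} with $r=1$, using the correspondence $k\cG \cong U_0(\fg)$ for the height-one infinitesimal group $\cG$ with $\Lie(\cG)=\fg$ and the identification $V_1(\cG)\cong \cV_\fg$ from \cite[(1.6)]{SFB1}, which is precisely what you do. The proposal is correct and takes essentially the same approach as the paper.
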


\bigskip
\noindent
In many cases of interest, the nullcone $\cV_\fg$ is known to be irreducible. Let $\fg := \Lie(G)$ be the Lie algebra of a smooth reductive group $G$. If $p$ is good for $G$, then
\cite[(6.3.1)]{NPV} ensures the irreducibility of $\cV_\fg$. Moreover, there is a formula expressing $\dim \cV_\fg$ in terms of the root system of $G$. For the case where $G$ is simple and
simply connected explicit formulae may be found in \cite{CLNP}. Recall that the {\it rank} $\rk(G)$ of $G$ is the dimension of any maximal torus $T\subseteq G$. By way of example, we
provide the following result:

\bigskip

\begin{Thm} \label{CNN5} Suppose that $\fg := \Lie(G)$ is the Lie algebra of a semi-simple, simply connected algebraic group $G \not \cong \SL(2)\times \SL(2)$ of rank $\rk(G) \ge 2$, 
whose Coxeter number $h$ satisfies $p \ge h$. Let $\zeta \in \HH^\bullet(U_0(\fg),k)$ be a non-nilpotent homogeneous element of positive degree. Then the following statements hold:

{\rm (1)} \ The Carlson module $L_\zeta$ is indecomposable.

{\rm (2)} \ $L_\zeta \in \ZZ[A_\infty]$ is quasi-simple. \end{Thm}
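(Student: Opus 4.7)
\textbf{Plan for the proof of Theorem \ref{CNN5}.}

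For (1) I would simply apply Corollary \ref{CNN4} to the restricted Lie algebra $\fg$, so the work is to verify the hypotheses. The assumption $p\ge h$ is ``good'' for $G$, so by standard results (Nakano--Parshall--Vella) the nullcone $\cV_\fg$ coincides with the ordinary nilpotent cone $\cN(\fg)$, is irreducible (hence equidimensional), and satisfies $\dim \cV_\fg = \dim \fg - \rk(G)$. The inequality $\dim \cV_\fg \ge (\dim \fg + 3)/2$ becomes $\dim \fg \ge 2\rk(G) + 3$. I would verify this by summing the excess $\dim \fg_i - 2\rk(\fg_i)$ over the simple factors $\fg_i$ of $\fg$: the excess equals $1$ for an $\fsl_2$-factor and is at least $3$ for any simple factor of rank $\ge 2$ (by inspection of Dynkin types $A_n,B_n,C_n,D_n,E_n,F_4,G_2$). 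Under the standing assumption $\rk(G)\ge 2$ and $G\not\cong \SL(2)\times \SL(2)$, either some simple factor has rank $\ge 2$ (excess $\ge 3$) or there are at least three $\fsl_2$-factors (excess $\ge 3$), so the inequality holds, and the excluded case $\SL(2)\times \SL(2)$ is precisely where $6 < 7$ fails.

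For (2), part (1) gives indecomposability, so $L_\zeta$ lives in a well-defined component $\Theta\subseteq \Gamma_s(\cG)$. I would first establish that $L_\zeta$ is not $\Omega$-periodic: its complexity is $\cx_\cG(L_\zeta)=\dim \cV_\fg(L_\zeta)=\dim \cV_\fg -1\ge \rk(G)+2 \ge 4$. Next I would show $\Theta$ is locally split: since $\dim \Pi(\cG)_{L_\zeta}=\dim \cV_\fg -2 \ge 3$, the $\Pi$-support of $\Theta$ is not a single point, so by Proposition~\ref{AFP3}(2) the component is $\alpha_K$-split for every $\pi$-point $\alpha_K$. Once tree class $A_\infty$ is established, Theorem~\ref{CSM2}(2) (applied to the indecomposable summand $L_\zeta$ of $L_\zeta$) immediately yields that $L_\zeta$ is quasi-simple.

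The remaining step, and the main obstacle, is pinning down $\bar{T}_\Theta \cong A_\infty$ rather than $A_\infty^\infty$ or $D_\infty$. Since $\Theta$ is locally split and contains the non-periodic module $L_\zeta$, the classification of possible tree classes (together with the remark after Corollary~\ref{SJ2}, which excludes $\tilde{E}_{6,7,8}$ for reductive Frobenius kernels, and the fact that Euclidean tree classes force periodicity) leaves only $A_\infty, A_\infty^\infty, D_\infty$. To exclude the latter two, I would exploit that Lemma~\ref{CSM1} gives $\alpha_{K,1}(L_\zeta)=1$ for every $[\alpha_K]\in \Pi(\cG)_\Theta$, so by Theorem~\ref{FI1} we have $f_\Theta(L_\zeta)=1$. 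For $\bar{T}_\Theta\cong A_\infty^\infty$ one has $f_\Theta\equiv 1$, and for $\bar{T}_\Theta\cong D_\infty$ one has $f_\Theta\in \{1,2\}$, so $\alpha_{K,<p}$ is bounded on $\Theta$; combined with the refined structural results for AR-components of $U_0(\fg)$ from \cite{Fa1,Fa3} (which under our hypothesis $p\ge h$ and $\rk(G)\ge 2$ force non-periodic locally split components to have tree class $A_\infty$), this contradiction closes the argument. This step is where one genuinely needs structural input beyond the tools assembled in this section.
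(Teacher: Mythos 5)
Your part (1) is correct and is essentially the paper's argument: you verify the hypotheses of Corollary \ref{CNN4} via \cite[(6.3.1)]{NPV} (irreducibility, hence equidimensionality, of $\cV_\fg$ and $\dim \cV_\fg = \dim_k\fg - \rk(G)$ for $p\ge h$), and the inequality $\dim\cV_\fg \ge \frac{\dim_k\fg+3}{2}$ reduces to $\dim_k\fg \ge 2\rk(G)+3$, which your excess count over the simple factors settles in the same way as the paper's rank case analysis, with $A_1\times A_1$ the unique failure. Your numerical preparation for part (2) is also fine: $\dim\cV_\fg(L_\zeta)\ge \rk(G)+2$, hence $\dim\Pi(\cG)_{L_\zeta}\ge 3$, so the component $\Theta$ is locally split by Proposition \ref{AFP3}(2), and the final step via Theorem \ref{CSM2}(2) is exactly the paper's.

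The genuine gap is the step you flag yourself: showing $\Theta\cong\ZZ[A_\infty]$. The paper settles this with one precise citation, \cite[(2.2)]{Fa1}, which asserts that a non-projective indecomposable module over an infinitesimal group whose support variety has dimension at least $3$ lies in a component of type $\ZZ[A_\infty]$; this is the sole reason the proof records $\dim\cV_\fg(L_\zeta)\ge 3$ beforehand. Your proposed substitute does not close the argument. Knowing $f_\Theta(L_\zeta)=1$ and that $f_\Theta$ is bounded on components of tree class $A_\infty^\infty$ or $D_\infty$ produces no contradiction: such locally split components with $f_\Theta\equiv 1$ genuinely occur and consist of modules of bounded stable Jordan type (for instance the $\ZZ[A_\infty^\infty]$-components of $\fsl(2)_s$ in Section \ref{S:Ex}, including the one through the trivial module). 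Your auxiliary claim that Euclidean tree classes force periodicity is also false, as the $\ZZ[\tilde{A}_{12}]$-components of $\SL(2)_1$ show. Finally, appealing to unspecified ``refined structural results'' of \cite{Fa1,Fa3} that ``force'' non-periodic locally split components to have tree class $A_\infty$ is, in this generality, simply assuming the conclusion; the statement that is actually true and needed is the complexity-$\ge 3$ criterion \cite[(2.2)]{Fa1}, which, combined with your bound $\dim\cV_\fg(L_\zeta)\ge 3$, yields $\Theta\cong\ZZ[A_\infty]$ and then part (2) via Theorem \ref{CSM2}(2) as you indicate.
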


\begin{proof} In virtue of our present assumption, \cite[(6.3.1)]{NPV} implies that $\cV_\fg$ is irreducible of dimension
\[ \dim \cV_\fg = \dim_k\fg - \rk(G).\]
The latter number is $\ge \frac{\dim_k\fg +3}{2}$ in case $\rk(G) \ge 3$.  For groups of rank $2$, a case-by-case analysis yields the same conclusion, unless $G$ is of type $A_1\times A_1$.
Since $\cV_\fg \subseteq \fg$, Corollary \ref{CNN4} ensures the indecomposability of $L_\zeta$.

Observing $\dim \cV_\fg = \dim_k\fg -\rk(G) \ge 3 \rk(G) -\rk(G) \ge 4$, we obtain
\[ \dim \cV_\fg(L_\zeta) \ge 3.\]
Thanks to \cite[(2.2)]{Fa1}, the module $L_\zeta$ thus belongs to a stable AR-component of type $\ZZ[A_\infty]$, and Theorem \ref{CSM2} shows that $L_\zeta$ is actually quasi-simple. 
\end{proof}

\bigskip

\begin{Remarks} (1) If $\fg$ is as in (\ref{CNN5}) with $p>h$, then $\HH^\ast(U_0(\fg),k) = \HH^\bullet(U_0(\fg),k)$ is a reduced $k$-algebra (see \cite{AJ} or \cite{FP1}), so that the 
above result actually describes all Carlson modules of the Lie algebra $\fg$.

(2) The results of \cite{CLNP} give rise to refinements of the above Theorem. For instance, if $G = \SL(n)(k)$ and $p \ge 3$, \cite[(3.1)]{CLNP} implies that
\[ \dim \cV_{\fsl(n)} \ge \frac{n^2+2}{2},\]
unless $n=4$ and $p=3$. Hence Theorem \ref{CNN5} holds for $\SL(n)(k)$ for $p\ge 3$ and $(n,p) \ne (4,3)$. \end{Remarks}

\bigskip
\noindent
Let $(\fg,[p])$ be a restricted Lie algebra. The connection between rank varieties and support varieties is conveyed by the {\it Hochschild map}
\[ \Phi : S(\fg^\ast) \lra \HH^\bullet(U_0(\fg),k)\]
between the algebra $S(\fg^\ast)$ of polynomial functions on $\fg$ (whose variables are given degree $2$) and the even cohomology ring $\HH^\bullet(U_0(\fg),k)$, cf. 
\cite[p.~571]{Ho}. Given $f \in S(\fg^\ast)$, Friedlander and Parshall \cite[p.~560]{FP3} have shown that
\[ \cV_\fg(L_{\Phi(f)}) = Z(f)\cap \cV_\fg,\]
where $Z(f) \subseteq \fg$ denotes the set of zeros of the polynomial function $f$.

If $\fg := \Lie(G)$ is the Lie algebra of an algebraic group $G$, then $G\times k^\times$ acts on $\fg^\ast$ via
\[ (g,\alpha)\dact \eta := \alpha \, \eta \circ \Ad(g^{-1}) \ \ \ \ \ \ \forall \ (g,\alpha) \in G\times k^\times, \, \eta \in \fg^\ast.\]
Here $\Ad$ denotes the adjoint representation of $G$ on $\fg$. The $G$-invariance of $\cV_\fg$ readily yields
\[ Z((g,\alpha)\dact \eta)\cap \cV_\fg = \Ad(g)(Z(\eta)\cap \cV_\fg).\]

\bigskip

\begin{Examples}
(1) For $p \ge 3$ we consider the Lie algebra $\fsl(2)$ with its standard basis $\{e,h,f\}$, so that
\[ \cV_{\fsl(2)} = \{ah+be+cf \in \fsl(2) \ ; \ a^2 +bc = 0\}\]
is a two-dimensional irreducible variety. Thus, the pair $(\cV_\fg, \, \fg)$ violates the condition $\dim \cV_\fg \ge \frac{\dim_k\fg+3}{2}$.

Let $\eta \in \fg^\ast\setminus \{0\}$. By the proof of \cite[(2.2)]{FS}, the group $\SL(2)(k)\times k^\times$ acts on $\fsl(2)^\ast \setminus \{0\}$ with $2$ orbits, whose
representatives $\eta_1, \eta_2$ have kernels $kh\oplus ke$ and $ke \oplus kf$, respectively. Thus, $\cV_{\fsl(2)}(L_{\Phi(\eta_1)}) =ke$ while $\cV_{\fsl(2)}(L_{\Phi(\eta_2)}) =ke\cup 
kf$. In view of Lemma \ref{CSM1} and the observations above, the $3p$-dimensional Carlson modules $L_{\Phi(\eta)}$ of degree $2$ are either indecomposable, or decomposable with two 
constituents (of dimensions $p$ and $2p$, respectively). Moreover, since the Chevalley-Eilenberg cohomology groups $\HH^i(\fsl(2),k)$ vanish for $i=1,2$, the exact sequence of 
\cite[p.~575]{Ho} implies that the Hochschild map induces an isomorphism $\fsl(2)^\ast \cong \HH^2(U_0(\fsl(2)),k)$. Hence all Carlson modules of degree $2$ are of the form indicated 
above.

(2) Let $\fu := kx\oplus ky$ be the two-dimensional abelian restricted Lie algebra with trivial $p$-map. Let $f : \fu \lra k$ be the polynomial map given by $f(ax+by) = ab$. Then 
$\Proj(\cV_\fu(L_{\Phi(f)}))$ consists of two points and $L_{\Phi(f)}$ is decomposable.\end{Examples}

\bigskip

\subsection{Subsidiary Results}
In this subsection we collect a few results that will be applied in our investigation of Carlson modules corresponding to homogeneous nilpotent elements of the cohomology ring
$\HH^\ast(\cG,k)$.

\bigskip

\begin{Lem} \label{SR1} Suppose that $\zeta \in \HH^n(\cG,k)\setminus\{0\}$ has positive degree. If $L_\zeta \ne (0)$, then there exists an exact sequence
\[ (0) \lra \Omega^{1-n}_\cG(k) \lra \Omega^{-n}_\cG(L_\zeta) \lra k \lra (0).\]
\end{Lem}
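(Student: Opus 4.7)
The plan is to produce the sequence in two stages: an explicit construction of an auxiliary short exact sequence involving $\Omega^{-1}_\cG(L_\zeta)$, followed by $(n-1)$ iterations of the inverse Heller operator.

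Fix a minimal projective resolution $P^\bullet \lra k$, so that $\Omega^n_\cG(k) = \ker(P^{n-1} \lra P^{n-2})$ embeds into $P^{n-1}$ with quotient $\Omega^{n-1}_\cG(k)$ (using the convention $P^{-1} := k$ when $n = 1$). Using the inclusions $L_\zeta \subseteq \Omega^n_\cG(k) \subseteq P^{n-1}$, the quotient $P^{n-1}/L_\zeta$ carries a submodule $\Omega^n_\cG(k)/L_\zeta \cong k$ with cokernel $P^{n-1}/\Omega^n_\cG(k) \cong \Omega^{n-1}_\cG(k)$, yielding the short exact sequence
\[
0 \lra k \lra P^{n-1}/L_\zeta \lra \Omega^{n-1}_\cG(k) \lra 0.
\]
The companion sequence $0 \lra L_\zeta \lra P^{n-1} \lra P^{n-1}/L_\zeta \lra 0$, whose middle term is projective, identifies $P^{n-1}/L_\zeta$ with a representative of $\Omega^{-1}_\cG(L_\zeta)$ in the stable module category. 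For $n = 1$ this is already the sequence of the lemma, since $\Omega^0_\cG(k) = k$.

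For $n \ge 2$, I would apply the horseshoe lemma for injective coresolutions $(n-1)$ times. Given any short exact sequence $0 \lra A \lra B \lra C \lra 0$ with compatible injective coresolutions of $A$ and $C$, the degreewise direct sum is an injective coresolution of $B$, and the snake lemma on the resulting diagram produces an induced short exact sequence $0 \lra \Omega^{-1}_\cG(A) \lra B' \lra \Omega^{-1}_\cG(C) \lra 0$ with $B'$ stably isomorphic to $\Omega^{-1}_\cG(B)$. Iterating this from the above auxiliary sequence, and choosing at each step the coresolution of the successive right-hand term $\Omega^{n-1-j}_\cG(k)$ to extend the minimal projective resolution of $k$, yields after $n-1$ iterations the desired
\[
0 \lra \Omega^{1-n}_\cG(k) \lra \Omega^{-n}_\cG(L_\zeta) \lra k \lra 0,
\]
where the identifications $\Omega^{-(n-1)}_\cG(k) = \Omega^{1-n}_\cG(k)$ and $\Omega^{-(n-1)}_\cG(\Omega^{n-1}_\cG(k)) \cong k$ follow naturally from these choices.

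The main technical obstacle is controlling the projective summands that the horseshoe lemma can introduce into the middle term at each iteration. The coherent choice of coresolutions described above keeps the outer terms rigid, so that any accumulated projective summand can be absorbed into the chosen representative of $\Omega^{-n}_\cG(L_\zeta)$ without altering the identification of the flanking modules.
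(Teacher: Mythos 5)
Your construction is sound up to the point where it can deliver a short exact sequence of the form
\[ (0) \lra \Omega^{1-n}_\cG(k) \lra \Omega^{-n}_\cG(L_\zeta)\oplus P \lra k \lra (0), \qquad P \ \text{projective},\]
and this is exactly where the paper also stands after applying $\Omega^{1-n}_\cG$ to the extension $(0) \to k \to \Omega^{-1}_\cG(L_\zeta) \to \Omega^{n-1}_\cG(k) \to (0)$ corresponding to $\zeta$ (which you build by hand from the minimal resolution instead of quoting Benson; that part is fine). The genuine gap is your last step. Since $L_\zeta$ is projective-free, so is its Heller shift $\Omega^{-n}_\cG(L_\zeta)$, so a non-zero projective summand $P$ cannot be ``absorbed into the chosen representative'' --- the lemma asserts a sequence whose middle term is the projective-free Heller shift itself, and that is precisely what is at stake. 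Moreover, whether an extra projective summand appears is not controlled by a coherent choice of coresolutions: with hull (minimal) coresolutions of the two outer terms, the middle term of the cosyzygy sequence is forced, and it differs from the true cosyzygy of the middle module by an injective summand measuring the failure of the chosen injective to be a hull of the middle term; no choice removes it while keeping both outer terms unchanged. A symptom of the gap is that your final step never uses the hypothesis $L_\zeta \ne (0)$, although the conclusion fails without it: for $L_\zeta = (0)$ one has $\Omega^n_\cG(k)\cong k$ and the middle term of any such sequence is a non-zero projective, while $\Omega^{-n}_\cG(L_\zeta)=(0)$.

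What is missing is the paper's concluding argument that kills $P$. Write the surjection as $(\varphi,\psi): \Omega^{-n}_\cG(L_\zeta)\oplus P \to k$. If $\varphi = 0$, then $\Omega^{1-n}_\cG(k) = \ker(0,\psi) \cong \Omega^{-n}_\cG(L_\zeta)\oplus \ker\psi$, and indecomposability of $\Omega^{1-n}_\cG(k)$ forces either $\Omega^{-n}_\cG(L_\zeta)=(0)$ --- impossible since $L_\zeta \ne (0)$ is not injective --- or $\ker\psi=(0)$, i.e. $P\cong k$, which makes $k$ projective and hence $\Omega^n_\cG(k)=(0)$, a contradiction. Thus $\varphi$ is surjective; the bicartesian square formed by the two components of the maps (cf.\ \cite[(I.5.6),(I.5.7)]{ARS}) then makes the component $g:\Omega^{1-n}_\cG(k)\to P$ surjective, hence split, so $P$ is a direct summand of the projective-free module $\Omega^{1-n}_\cG(k)$ and therefore $P=(0)$. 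You need an argument of this kind (using $L_\zeta\ne(0)$ and the indecomposability of the syzygies of $k$) to pass from your sequence with middle term $\Omega^{-n}_\cG(L_\zeta)\oplus P$ to the sequence asserted in the lemma.
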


\begin{proof} According to \cite[(5.9.4)]{Be2} (which only holds for $L_\zeta \ne (0)$), the element $\zeta \in \HH^n(\cG,k) \cong \Ext^1_\cG(\Omega^{n-1}_\cG(k),k)$ corresponds
to the exact sequence
\[ (0) \lra k \lra \Omega^{-1}_\cG(L_\zeta) \lra \Omega^{n-1}_\cG(k) \lra (0).\]
Application of $\Omega_\cG^{1-n}$ thus yields an exact sequence
\[ (0) \lra \Omega^{1-n}_\cG(k) \stackrel{\binom{f}{g}}{\lra} \Omega_\cG^{-n}(L_\zeta)\oplus P \stackrel{(\varphi,\psi)}{\lra} k \lra (0),\]
with a projective $\cG$-module $P$. There results a commutative diagram
\[ \begin{CD} \Omega_\cG^{1-n}(k) @>g>> P\\
@V-f VV  @V\psi VV @. \\
\Omega^{-n}_\cG(L_\zeta) @>\varphi>> \, k,
\end{CD}\]
which, by virtue of \cite[(I.5.7)]{ARS}, is both, a pull-back and a push-out diagram. If $\varphi = 0$, then
\[ \Omega_\cG^{1-n}(k) = \ker(0,\psi) = \Omega^{-n}_\cG(L_\zeta) \oplus \ker \psi.\]
Since $\Omega_\cG^{1-n}(k)$ is indecomposable, we obtain $\ker \psi = (0)$ or $\Omega^{-n}_\cG(L_\zeta) = (0)$. Since $L_\zeta \ne (0)$ is not injective, we may rule out the latter
case, whence $\ker\psi = (0)$ and $P\cong k$. This, however, implies $\Omega^n_\cG(k) = (0)$, a contradiction. Consequently, $\varphi$ is surjective and \cite[(I.5.6)]{ARS} ensures the
surjectivity of $g$. Hence $P$ is a direct summand of $\Omega^{1-n}_\cG(k)$, so that $P = (0)$. \end{proof}

\bigskip
\noindent
Recall that a $\cG$-module is endo-trivial if and only if it has constant stable Jordan type $[1]$ or $[p\!-\!1]$, see \cite[(5.6)]{CFP}.

\bigskip

\begin{Lem} \label{SR2} Suppose that $\zeta \in \HH^n(\cG,k)\setminus\{0\}$ is an element that $L_\zeta = X\oplus Y$ is the direct sum of two indecomposable endo-trivial modules. If
$\StJt(X) = \left\{ \begin{array}{cl} {[1]} & n \ \text{even} \\ {[p\!-\!1]} & n \ \text{odd} \end{array} \right.$ and $\StJt(Y) = [p\!-\!1]$, then the following statements hold:

{\rm (1)} Let $M := \Omega^{-n}_\cG(X)$ and $N := \Omega^{-n}_\cG(Y)$. There exists a commutative diagram
\[\begin{CD} @. @. (0) @. (0)  @. \\
@. @. @VVV @VVV   \\
@. @. V @= V @. \\
@. @. @VVV @VVV \\
(0) @ >>> W @>>> \Omega^{1-n}_\cG(k) @>g>> N @>>> (0)\\
@. @| @V-fVV @V\psi VV\\
(0) @ >>> W@>>> M @>\varphi >> k @>>> (0)\\
@. @. @VVV @VVV  \\
@. @. (0) @. (0) @.
\end{CD} \]
with exact rows and colums.

{\rm (2)} \ Let $\alpha_K \in \Pt(\cG)$ be a $\pi$-point. Then $\alpha_K^\ast(\varphi_K)$ is split surjective or $\alpha^\ast_K(\psi_K)$ is split surjective. \end{Lem}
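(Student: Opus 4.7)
The plan is to build (1) directly on Lemma~\ref{SR1}. Applied to the projective-free module $L_\zeta = X \oplus Y$ and using additivity of $\Omega^{-n}_\cG$ on projective-free modules, it produces an exact sequence
\[ (0) \lra \Omega^{1-n}_\cG(k) \stackrel{(-f,\,g)}{\lra} M \oplus N \stackrel{(\varphi,\psi)}{\lra} k \lra (0), \]
where $\varphi, \psi$ are the components of the surjection to $k$. Exactness at the middle forces $\varphi\circ f = \psi\circ g$, which is precisely the commutativity of the central square.

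The crucial preliminary step is to verify $\varphi \ne 0$ and $\psi \ne 0$. If $\varphi = 0$, the sequence collapses to $\Omega^{1-n}_\cG(k) \cong M \oplus \ker\psi$; since $\Omega^{1-n}_\cG(k)$ is indecomposable and $M \ne (0)$, we get $\ker\psi = (0)$, hence $M \cong \Omega^{1-n}_\cG(k)$ and $N \cong k$. Because $L_\zeta$ is projective-free, so are its summands $X,Y$, and the stable identifications $X\cong\Omega_\cG(k)$ and $Y\cong\Omega^n_\cG(k)$ promote to honest isomorphisms. But then $\dim Y = \dim\Omega^n_\cG(k) > \dim L_\zeta = \dim\Omega^n_\cG(k)-1$ contradicts $Y\subseteq L_\zeta$. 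The case $\psi = 0$ is symmetric. With $\varphi,\psi$ surjective, set $W:=\ker\varphi$ and $V:=\ker\psi$; the bottom row and right column are then exact by construction. The central square is a pullback, identifying $\Omega^{1-n}_\cG(k)$ with $\{(m,n)\in M\oplus N:\varphi(m)=\psi(n)\}$; thus $\ker g\cong W$ and $g$ is surjective (given $n\in N$, lift $\psi(n)$ via $\varphi$), yielding exactness of the top row. The middle column is handled identically.

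For part (2), the endo-triviality of $X$ gives $\alpha_K^\ast(X_K)\cong[\ell_X]\oplus(\text{proj.})$ with $\ell_X=1$ for $n$ even and $\ell_X=p\!-\!1$ for $n$ odd; applying $\Omega^{-n}_{\fA_{p,K}}$ yields $\alpha_K^\ast(M_K)\cong[1]\oplus(\text{proj.})$ in either parity, so $K=[1]$ occurs as a distinguished direct summand of $\alpha_K^\ast(M_K)$. Hence $\alpha_K^\ast(\varphi_K)$ is split surjective iff its restriction to this $[1]$-summand is non-zero. A parallel computation gives $\alpha_K^\ast(N_K)\cong[p\!-\!1]\oplus(\text{proj.})$ for $n$ even and $\alpha_K^\ast(N_K)\cong[1]\oplus(\text{proj.})$ for $n$ odd; in the odd case the parallel criterion for $\psi$ also applies. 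Now apply $\alpha_K^\ast(-)_K$ to the exact sequence above: the kernel $\alpha_K^\ast(\Omega^{1-n}_\cG(k)_K)$ has stable Jordan type $[p\!-\!1]$ for $n$ even and $[1]$ for $n$ odd, admitting at most $0$ (resp.\ $1$) summand of type $[1]$. If both $\alpha_K^\ast(\varphi_K)$ and $\alpha_K^\ast(\psi_K)$ failed to be split surjective, the $[1]$-summand(s) of $\alpha_K^\ast(M_K)\oplus\alpha_K^\ast(N_K)$ would all sit inside this kernel, contributing $1$ (resp.\ $2$) copies of $[1]$ and exceeding the bound.

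The main obstacle is establishing the non-vanishing of $\varphi$ and $\psi$ in (1): Jordan-type data alone do not suffice (in particular for $p=2$ where $[1]=[p\!-\!1]$), and one must exploit the projective-freeness of $L_\zeta$ to upgrade stable isomorphisms to honest ones and then extract the dimension contradiction $\dim Y>\dim L_\zeta$.
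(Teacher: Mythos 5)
Your part (1) is essentially the paper's own argument: apply Lemma \ref{SR1} to $L_\zeta$, decompose $\Omega^{-n}_\cG(L_\zeta)\cong M\oplus N$, rule out $\varphi=0$ and $\psi=0$ using the indecomposability of $\Omega^{1-n}_\cG(k)$ together with the containment $L_\zeta\subseteq\Omega^n_\cG(k)$ (your dimension count is the same contradiction the paper extracts), and then invoke the standard pullback/pushout facts -- the paper simply cites \cite[(I.5.6)]{ARS} where you re-derive them. Part (2) is where you genuinely diverge. The paper applies $\alpha^\ast_K$ to the three-term sequence, observes that the two projective error terms have equal dimension and hence are isomorphic since $\fA_{p,K}$ is local, concludes that the middle term is isomorphic to the direct sum of the end terms and therefore that the sequence is split exact, and then reads off $\id_K=\alpha^\ast_K(\varphi_K)\circ\gamma+\alpha^\ast_K(\psi_K)\circ\eta$, one summand of which must be invertible in $\End(K)=K$. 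Your route replaces this splitting argument by a Jordan-type count: split surjectivity onto $K$ is detected by non-vanishing on the unique non-projective $[1]$-summand, and if the relevant maps all vanished there, those $[1]$-summands would lie in the kernel and inflate its Jordan type beyond $[p\!-\!1]\oplus(\mathrm{proj.})$ resp.\ $[1]\oplus(\mathrm{proj.})$. This is correct and avoids the ``middle isomorphic to the sum of the ends implies split'' step, which is the least self-evident point of the paper's proof.

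Two small points should be made explicit if you write this up. First, to pass from ``the $[1]$-summand sits inside the kernel'' to ``the kernel has a $[1]$-summand in its Jordan type'' you need the (easy, modular-law) fact that a direct summand of the ambient module contained in a submodule is a direct summand of that submodule; without it, a copy of $[1]$ inside the kernel contributes nothing to its Krull--Remak--Schmidt decomposition. Second, your even-degree bound ``at most $0$ summands of type $[1]$'' in a module of stable type $[p\!-\!1]$ fails for $p=2$, where $[p\!-\!1]=[1]$; the lemma as stated carries no restriction on $p$ (the hypothesis $p\ge 3$ only enters with Theorem \ref{CNED1} and the subsection on odd degree), and the paper's splitting argument is uniform in $p$. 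For $p=2$ your count is easily repaired by using the $[1]$-summands of both $\alpha^\ast_K(M_K)$ and $\alpha^\ast_K(N_K)$, but as written that case slips through.
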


\begin{proof} We shall only consider the case, where $n$ is even. The arguments for odd $n$ are analogous.

(1) Since $n$ is even, we have $\Omega^{-n}_\cG(L_\zeta) \cong M \oplus N$, with both constituents being indecomposable endo-trivial modules of constant stable Jordan types $[1]$ and
$[p\!-\!1]$, respectively. Lemma \ref{SR1} furnishes an exact sequence
\[ (0) \lra \Omega^{1-n}_\cG(k) \stackrel{\binom{f}{g}}{\lra} M\oplus N \stackrel{(\varphi,\psi)}{\lra} k \lra (0).\]
Setting $V := \ker f$ and $W := \ker g$, our claim is a consequence of \cite[(I.5.6)]{ARS}, once we know that $\varphi,\psi \ne 0$.

If $\varphi = 0$, then
\[ \Omega^{1-n}_\cG(k) = \ker(0,\psi) = M \oplus \ker \psi.\]
As $\Omega^{1-n}_\cG(k)$ is indecomposable, this implies $\ker \psi = (0)$, so $N \hookrightarrow k$, a contradiction.

If $\psi = 0$, then
\[ \Omega^{1-n}_\cG(k) = \ker(\varphi,0) = N\oplus \ker \varphi.\]
Hence $\Omega^{1-n}_\cG(k) \cong N$ and $M \cong k$, so that $\Omega^{-n}_\cG(L_\zeta) \cong \Omega^{1-n}_\cG(k)\oplus k$. As a result, $L_\zeta \cong \Omega_\cG(k) \oplus 
\Omega^n_\cG(k)$, which contradicts $L_\zeta \subseteq \Omega^n_\cG(k)$.

(2) We apply $\alpha^\ast_K$ to the exact sequence
\[ (0) \lra \Omega^{1-n}_{\cG_K}(K) \lra \Omega^{-n}_{\cG_K}((L_{\zeta})_K) \stackrel{(\varphi_K,\psi_K)}{\lra}K \lra (0)\]
and obtain an exact sequence
\[ (\ast) \ \ \ \ \ \ \ \ \ \ \ \ (0) \lra [p\!-\!1] \oplus Q \stackrel{\binom{c}{d}}{\lra} ([1]\oplus [p\!-\!1]) \oplus P \stackrel{(a,b)}\lra [1] \lra (0),\]
with projective $\fA_{p,K}$-modules $P,Q$ satisfying $\dim_KP = \dim_KQ$. Since $\fA_{p,K}$ is local, this implies $P \cong Q$. As a result, the middle term of ($\ast$) is isomorphic to the direct sum of the extreme terms, so that ($\ast$) is split exact. There thus exist $\gamma : K \lra \alpha^\ast_K(M_K)$ and $\eta :  K \lra \alpha^\ast_K(N_K)$ such that
\[ \id_K = (\alpha^\ast_K(\varphi_K),\alpha^\ast_K(\psi_K))\circ \binom{\gamma}{\eta} = \alpha^\ast_K(\varphi_K)\circ \gamma + \alpha^\ast_K(\psi_K)\circ \eta.\]
This readily implies our claim. \end{proof}

\bigskip

\subsection{Carlson modules of nilpotent elements of even degree}
In contrast to the results of Section 6.1, the structure of the Carlson modules associated to nilpotent elements of even degree does not depend on the internal structure of the underlying
group scheme $\cG$.

\bigskip

\begin{Thm} \label{CNED1} Suppose that $p \ge 3$. Let $\zeta \in \HH^{2n}(\cG,k)\setminus\{0\}$ be nilpotent. Then $L_\zeta$ is indecomposable. \end{Thm}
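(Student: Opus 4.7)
The plan is to reduce to the scenario that Lemma~\ref{CSM1}(3) leaves open and then to derive a contradiction from the commutative diagram of Lemma~\ref{SR2}. By Lemma~\ref{CSM1}(3), $L_\zeta$ has constant Jordan type $\{[1]\oplus[p-1]\oplus n_\zeta[p]\}$ and is either indecomposable or a direct sum $L_\zeta = X\oplus Y$ of two indecomposable endo-trivial modules. Suppose the latter. Both $X$ and $Y$ have stable Jordan type $[1]$ or $[p-1]$, and since $\StJt(L_\zeta) = [1]\oplus[p-1]$ these two values occur exactly once each; relabel so that $\StJt(X) = [1]$ and $\StJt(Y) = [p-1]$. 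Since $2n$ is even, we are precisely in the first branch of Lemma~\ref{SR2}, which produces the commutative diagram with $M := \Omega^{-2n}_\cG(X)$, $N := \Omega^{-2n}_\cG(Y)$, and exact rows $(0) \to W \to M \stackrel{\varphi}{\longrightarrow} k \to (0)$ and $(0) \to W \to \Omega^{1-2n}_\cG(k) \stackrel{g}{\longrightarrow} N \to (0)$. Observe that $\StJt(M) = [1]$ and $\StJt(N) = [p-1]$, since Heller shifts of even order preserve stable Jordan type, while $\StJt(\Omega^{1-2n}_\cG(k)) = [p-1]$ because $1-2n$ is odd.

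The heart of the argument is to show that $W$ is projective. Fix a $\pi$-point $\alpha_K \in \Pt(\cG)$. By Lemma~\ref{SR2}(2), either $\alpha_K^\ast(\varphi_K)$ or $\alpha_K^\ast(\psi_K)$ is split surjective. In the first case, applying $\alpha_K^\ast$ to the bottom row produces a split sequence, so $\alpha_K^\ast(M_K) \cong \alpha_K^\ast(W_K) \oplus K$; combined with $\alpha_K^\ast(M_K) = [1]\oplus({\rm proj.})$ and the Theorem of Krull--Remak--Schmidt, this forces $\alpha_K^\ast(W_K)$ to be projective. In the second case, the top row splits after pull-back, giving $\alpha_K^\ast(\Omega^{1-2n}_\cG(k)_K) \cong \alpha_K^\ast(W_K) \oplus \alpha_K^\ast(N_K)$; since both $\alpha_K^\ast(\Omega^{1-2n}_\cG(k)_K)$ and $\alpha_K^\ast(N_K)$ have projective-free part $[p-1]$, another application of Krull--Remak--Schmidt again forces $\alpha_K^\ast(W_K)$ to be projective. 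Consequently $\Pi(\cG)_W = \emptyset$, whence $\cV_\cG(W) = \{0\}$ and $W$ is projective.

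Since $k\cG$ is a Frobenius algebra, the projective module $W$ is also injective, so the bottom exact sequence $(0) \to W \to M \to k \to (0)$ splits, yielding $M \cong W \oplus k$. Now $M = \Omega^{-2n}_\cG(X)$ is indecomposable and non-projective, because $X$ is a non-projective indecomposable summand of the projective-free module $L_\zeta$; hence $W = 0$ and $M \cong k$. This forces $X \cong \Omega^{2n}_\cG(k)$, so $\dim_k X = \dim_k \Omega^{2n}_\cG(k)$. But $X$ is a direct summand of $L_\zeta = \ker(\hat\zeta \colon \Omega^{2n}_\cG(k) \to k)$, so $\dim_k X \le \dim_k L_\zeta = \dim_k \Omega^{2n}_\cG(k) - 1$, the desired contradiction. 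The main obstacle is the stable Jordan type bookkeeping in the middle paragraph: the hypothesis that $\zeta$ has even degree (so that $1-2n$ is odd) is precisely what ensures that the two non-zero terms of the top exact row share the same stable Jordan type $[p-1]$, which is what makes both cancellations go through.
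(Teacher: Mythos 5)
Your overall strategy is exactly the paper's: reduce via Lemma \ref{CSM1}(3) to $L_\zeta = X\oplus Y$, feed this into the diagram of Lemma \ref{SR2}, prove that $W$ is projective by testing against all $\pi$-points, and then split one of the rows to force $M\cong k$, i.e.\ $X\cong\Omega^{2n}_\cG(k)$, contradicting $L_\zeta\subsetneq\Omega^{2n}_\cG(k)$. Your endgame (splitting the bottom row because the projective module $W$ is injective over the Frobenius algebra $k\cG$) is fine and equivalent to the paper's, which splits the top row instead. The gap is in your second case of the dichotomy from Lemma \ref{SR2}(2). The map $\psi$ is the epimorphism of the right-hand column $(0)\to V\to N\stackrel{\psi}{\lra}k\to(0)$ of the diagram, not of the top row $(0)\to W\to\Omega^{1-2n}_\cG(k)\stackrel{g}{\lra}N\to(0)$. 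Split surjectivity of $\alpha_K^\ast(\psi_K)$ therefore yields a splitting of the pulled-back right column (and, via the bicartesian square, of the middle column), but it gives no splitting of the top row: that would require $\alpha_K^\ast(g_K)$ to be split surjective, which is precisely what the \emph{other} case ($\alpha_K^\ast(\varphi_K)$ split) provides, since $g$ is the pull-back of $\varphi$ along $\psi$. So the isomorphism $\alpha_K^\ast(\Omega^{1-2n}_\cG(k)_K)\cong\alpha_K^\ast(W_K)\oplus\alpha_K^\ast(N_K)$ asserted in your second case is unjustified.

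What is true -- and what the paper observes -- is that the second case cannot occur at all: if $\alpha_K^\ast(\psi_K)$ were split surjective, the right column would give $\alpha_K^\ast(N_K)\cong\alpha_K^\ast(V_K)\oplus[1]$, while $\StJt(N)=[p\!-\!1]$ forces $\alpha_K^\ast(N_K)\cong[p\!-\!1]\oplus(\mathrm{proj.})$, and $[1]$ is not a direct summand of the latter because $p\ge 3$. This is the one place where the hypothesis $p\ge 3$ enters; the fact that your argument never invokes it is a symptom of the missing step. Once this case is handled correctly, every $\pi$-point falls into your first case, $\alpha_K^\ast(W_K)$ is projective for all $\alpha_K$, and the remainder of your argument goes through and reproduces the paper's proof.
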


\begin{proof} Recall from Section \ref{S:CSM} that $L_\zeta \ne (0)$. Assume that $L_\zeta$ is decomposable. According to Lemma \ref{CSM1}, the Carlson module $L_\zeta = X\oplus Y$ 
is the direct sum of two endo-trivial modules of constant stable Jordan types $[1]$ and $[p\!-\!1]$, respectively.  We consider the associated commutative diagram of Lemma \ref{SR2} and 
note that the stable Jordan types of the modules $M$ and $N$ coincide with those of $X$ and $Y$, respectively.

Let $\alpha_K \in \Pt(\cG)$ be a $\pi$-point. If $\alpha_K^\ast(\psi_K)$ is split surjective, then
\[ \alpha^\ast_K(N_K) \cong \alpha_K^\ast(V_K) \oplus [1],\]
whence $[p\!-\!1]\oplus ({\rm proj.}) \cong \alpha_K^\ast(V_K)\oplus [1]$. Since $p \ge 3$, the trivial module $[1]$ is not a direct summand of $([p\!-\!1]\oplus ({\rm proj.}))$, and we 
have reached a contradiction. It now follows from Lemma \ref{SR2} that $\alpha_K^\ast(\varphi_K)$ is split surjective. Thus,
\[[1]\oplus ({\rm proj.}) \cong \alpha_K^\ast(W_K)\oplus [1],\]
so that $\alpha_K^\ast(W_K)$ is projective.

As a result, $\Pi(\cG)_W = \emptyset$, and \cite[(5.3)]{FPe2} ensures that $W$ is a projective $\cG$-module. The upper row of our diagram thus splits and
\[ \Omega_\cG^{1-2n}(k) \cong W \oplus N.\]
As the Heller shift is projective-free, we see that $W = (0)$, whence $N \cong \Omega^{1-2n}_\cG(k)$ and $M \cong k$. Consequently, $\Omega^{-2n}_\cG(L_\zeta) \cong M\oplus N
\cong k \oplus \Omega^{1-2n}_\cG(k)$, whence $L_\zeta \cong \Omega^{2n}_\cG(k)\oplus \Omega_\cG(k)$, a contradiction. \end{proof}

\bigskip

\begin{Cor} \label{CNED2} Suppose that $p \ge 3$ and let $\Theta_\zeta \subseteq \Gamma_s(\cG)$ be the stable Auslander-Reiten component containing an indecomposable Carlson 
module $L_\zeta$ of the nilpotent element $\zeta \in \HH^{2n}(\cG,k)\setminus\{0\}$. Then the following statements hold:

{\rm (1)} \ If $\dim \Pi(\cG) \ge 2$, then $\Theta_\zeta \cong \ZZ[A_\infty]$, $L_\zeta$ is quasi-simple and every $N \in \Theta_\zeta$ has constant Jordan type $\Jt(N) = \{\ql(N)[1]
\oplus \ql(N)[p\!-\!1] \oplus n_N[p]\}$.

{\rm (2)} \ If $\cG$ is infinitesimal, or trigonalizable and representation-infinite, then $\Theta_\zeta$ is locally split. \end{Cor}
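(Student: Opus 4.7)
For part (1), the plan is to assemble standard results already in the paper. First, Theorem \ref{CNED1} and Lemma \ref{CSM1}(3) give that $L_\zeta$ is indecomposable with constant Jordan type $\{[1]\oplus[p\!-\!1]\oplus n_\zeta[p]\}$. Since $\zeta$ is nilpotent, $\cV_\cG(L_\zeta)=\cV_\cG(k)$, so $\dim\cV_\cG(L_\zeta)=\dim\Pi(\cG)+1\ge 3$; then \cite[(2.2)]{Fa1} forces $\Theta_\zeta\cong\ZZ[A_\infty]$. Next, since $\Pi(\cG)_{\Theta_\zeta}=\Pi(\cG)_{L_\zeta}=\Pi(\cG)$ has dimension $\ge 2$ and is therefore not a single closed point, parts (2) and (3) of Lemma \ref{AFP2} rule out any module in $\Theta_\zeta$ being relatively $\alpha_K$-projective for any $\alpha_K$; hence $\Theta_\zeta$ is locally split. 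Corollary \ref{SJ1} then gives $|\Jt(N)|=|\im d^{\Theta_\zeta}|=|\Jt(L_\zeta)|=1$ for every $N\in\Theta_\zeta$. Finally, Corollary \ref{FI2}(1) applied to $1=\alpha_{K,1}(L_\zeta)=d_1^{\Theta_\zeta}(\alpha_K)\ql(L_\zeta)$ in $\NN$ forces $\ql(L_\zeta)=1$, so $L_\zeta$ is quasi-simple, and the asserted Jordan type then follows from $\alpha_{K,i}(N)=\alpha_{K,i}(L_\zeta)\ql(N)=(\delta_{i,1}+\delta_{i,p-1})\ql(N)$ for $1\le i\le p\!-\!1$.

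For part (2), my unified strategy is to establish $\dim\Pi(\cG)\ge 1$, after which the same reasoning as in (1) yields locally-splitness: $\Pi(\cG)_{\Theta_\zeta}=\Pi(\cG)$ is then not a single closed point, so Lemma \ref{AFP2}(2)(3) rules out any relatively $\alpha_K$-projective module in $\Theta_\zeta$. Suppose, toward a contradiction, that $\cx_\cG(k)=1$; then $k$ is $\Omega$-periodic, so one can pick minimal $d\ge 1$ with $\Omega_\cG^d(k)\cong k$ and let $\zeta'\in\HH^d(\cG,k)\setminus\{0\}$ represent this isomorphism, giving $L_{\zeta'}=0$. If $\cG$ is infinitesimal, the Remark preceding Lemma \ref{CSM1} (which invokes the structural results of \cite{FV1}) then concludes that $\cG$ affords no non-zero Carlson module of even degree, contradicting $L_\zeta\ne 0$.

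If instead $\cG$ is trigonalizable and representation-infinite, I would adapt the proof of Corollary \ref{CSM4}. By Proposition \ref{AFP3}(2) and Theorem \ref{TG3}(1), if $\Theta_\zeta$ were not $\alpha_K$-split then $\Theta_\zeta\cong\ZZ[A_\infty]/\langle\tau^m\rangle$ with some quasi-simple, relatively $\alpha_K$-projective module $M$ and an index $j$ for which $\alpha_{K,i}(X)=(\alpha_{K,i}(M)-a_{ij})\ql(X)+a_{ij}$ holds for $X\in\Theta_\zeta$ and $1\le i\le p\!-\!1$. Since $\alpha_{K,i}(L_\zeta)\le 1$ for $1\le i\le p\!-\!1$, Corollary \ref{TG5} ensures that $L_\zeta$ is not relatively $\alpha_K$-projective, so $\ql(L_\zeta)\ge 2$. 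Specializing $X=L_\zeta$ in the formula and treating $i=j$, $i=1$, and $i=p\!-\!1$ in turn forces $j\in\{2,\ldots,p\!-\!2\}$ with $\ql(L_\zeta)=2$, then $j=2$, and then $j=p\!-\!2$, yielding $p=4$ and contradicting $p\ge 3$ prime.

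The main obstacle lies in the trigonalizable subcase of (2): although the CSM4-style bookkeeping with the integer coefficients $a_{ij}$ is elementary, one must carefully verify that the constant Jordan type values $\alpha_{K,i}(L_\zeta)=\delta_{i,1}+\delta_{i,p-1}$ rule out every admissible $j\in\{1,\ldots,p\!-\!1\}$ in the formula from Theorem \ref{TG3}(3). The infinitesimal case, by contrast, collapses cleanly into the Remark on Carlson modules, which itself rests on the nontrivial \cite{FV1} theorem that an infinitesimal group with $\Omega$-periodic trivial module is supersolvable.
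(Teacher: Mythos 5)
Your proposal is correct and, in substance, reproduces the paper's own proof: part (1) combines Lemma \ref{CSM1}, the fact that a component with at least two-dimensional support is of type $\ZZ[A_\infty]$, and Corollaries \ref{SJ1}/\ref{FI2}; part (2) rules out the non-$\alpha_K$-split case exactly as the paper does (infinitesimal branch via the FV1-based structure theory, trigonalizable branch via Corollary \ref{TG5}, Theorem \ref{TG3} and the $a_{ij}$-bookkeeping of Corollary \ref{CSM4}). Two remarks on where you deviate. First, in (1) you obtain $\Theta_\zeta\cong\ZZ[A_\infty]$ from $\dim\cV_\cG(L_\zeta)\ge 3$ and \cite[(2.2)]{Fa1}; that reference treats infinitesimal groups only, whereas the corollary concerns arbitrary finite group schemes, so you should argue as the paper does: $L_\zeta$ has constant Jordan type with non-trivial stable part, hence $\Pi(\cG)_{\Theta_\zeta}=\Pi(\cG)$ has dimension $\ge 2$, and \cite[(3.3)]{Fa3} gives the component type. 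Your dimension estimate feeds exactly that general result, so this is only a change of citation, not of idea. Second, in the infinitesimal branch of (2) you manufacture a class $\zeta'$ with $L_{\zeta'}=0$ from the periodicity of $k$ (using $\cx_\cG(k)=1$) and then quote the Remark preceding Lemma \ref{CSM1}; the paper instead applies \cite[(2.1),(2.4),(2.7)]{FV1} and \cite[(IV.2.10)]{ARS} directly to get $\dim_k\Omega^{2n}_\cG(k)=1$ and hence $L_\zeta=(0)$. Since that Remark encapsulates precisely this FV1 argument, your detour is legitimate and merely repackages it. Finally, note that the step ``$L_\zeta$ not relatively $\alpha_K$-projective, hence $\ql(L_\zeta)\ge 2$'' tacitly uses that every quasi-simple vertex of the tube is relatively $\alpha_K$-projective (relative projectivity is $\tau_\cG$-stable by Lemma \ref{TG1}(3)); the paper glosses this in the same way, so no objection, but it is worth keeping in mind.
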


\begin{proof} (1) According to Lemma \ref{CSM1}, the Carlson module $L_\zeta$ has constant Jordan type $\Jt(L_\zeta) = \{[1]\oplus[p\!-\!1]\oplus n_\zeta[p]\}$. Thus, 
$\Pi(\cG)_{\Theta_\zeta} = \Pi(\cG)$, and \cite[(3.3)]{Fa3} implies $\Theta_\zeta \cong \ZZ[A_\infty]$. Our assertion now follows from Corollary \ref{FI2}.

(2) If $\Theta_\zeta$ is not locally split, then $\cx_\cG(k) = \dim \Pi(\cG)+1 = 1$. If $\cG$ is infinitesimal,  then a consecutive application of \cite[(2.1)]{FV1} and \cite[(2.4)]{FV1}
shows that all simple $\cG$-modules belonging to the principal block $\cB_0(\cG)$ of $k\cG$ are one-dimensional. By virtue of \cite[(2.7)]{FV1}, $\cB_0(\cG)$ is a Nakayama algebra, and
\cite[(IV.2.10)]{ARS} now yields $\dim_k\Omega_\cG^{2n}(k) = 1$. Thus, $L_\zeta = (0)$, a contradiction. 

In case $\cG$ is trigonalizable and of infinite representation type, we consider a $\Pi$-point $\alpha_K$ such that $\Theta_\zeta$ is not $\alpha_K$-split. A consecutive application of Corollary
\ref{TG5} and Lemma \ref{CSM1} shows that the $\cG$-module $L_\zeta$ is not $\alpha_K$-projective. Theorem \ref{TG3} now implies $\ql(L_\zeta) \ge 2$, and the arguments of
Corollary \ref{CSM4} yield a contradiction. \end{proof}

\bigskip

\subsection{Carlson modules of odd degree} In order to address the case, where $L_\zeta$ is defined by an element $\zeta \in \HH^\ast(\cG,k)$ of odd degree, we recall that the map
$\alpha_K : \fA_{p,K} \lra K\cG$ induces a homomorphism $\alpha_K^\ast : \HH^\ast(\cG_K,K) \lra \HH^\ast(\fA_{p,K},K)$. The latter algebra is known: For $p>2$ we have
$\HH^\ast(\fA_{p,K},K) \cong K[X,Y]/(Y^2)$, with $\deg(X)=2$ and $\deg(Y)=1$.

Given a field extension $K\!:\!k$, we shall identify $\HH^\ast(\cG,k)$ with the subalgebra $\HH^\ast(\cG,k)\!\otimes\! 1 \subseteq \HH^\ast(\cG,k)\!\otimes_k\!K \cong 
\HH^\ast(\cG_K,K)$. Thus, we can consider $\alpha_K^\ast(\zeta) \in \HH^\ast(\fA_{p,K},K)$ for every $\zeta \in \HH^\ast(\cG,k)$ and $\alpha_K \in \Pt(\cG)$.

{\it Throughout this section, we assume that $p \ge 3$}.

\bigskip

\begin{Lem} \label{COD1} Let $n$ be odd and $\zeta \in \HH^n(\cG,k) \setminus \{0\}$.

{\rm (1)} \ We have $\Jt(L_\zeta,\alpha_K) = \left\{ \begin{array}{ll} 2[p\!-\!1] \oplus m_\zeta [p] & {\rm if} \ \alpha^\ast_K(\zeta) = 0 \\  {[p\!-\!2]} \oplus n_\zeta [p] & {\rm if}\ 
\alpha^\ast_K(\zeta) \ne 0. \end{array} \right.$

{\rm (2)} \  If $|\Jt(L_\zeta)| = 2$, then $L_\zeta$ is indecomposable.

{\rm (3)} \ If $L_\zeta$ has constant Jordan type, then $L_\zeta$ is either indecomposable or the direct sum of two endo-trivial modules of constant stable Jordan type $[p\!-\!1]$.\end{Lem}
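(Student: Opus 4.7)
The plan is to handle the three parts in sequence, with parts (2) and (3) sharing a common structural analysis. For (1), I would apply $\alpha_K^\ast$ to the defining short exact sequence $0 \to L_\zeta \to \Omega_\cG^n(k) \to k \to 0$ after base change to $K$. By Lemma \ref{AFP1}(3) and $\Omega_{\fA_{p,K}}^n(K) \cong [p-1]$ for odd $n$, I can write $\alpha_K^\ast(\Omega_{\cG_K}^n(K)) \cong [p-1] \oplus m[p]$ for some $m \geq 0$, and the induced surjection onto $K$ has the form $\phi = (\phi_1, h)$ with $\phi_1 : [p-1] \to K$ representing $\alpha_K^\ast(\zeta)$ in the one-dimensional group $\Ext^n_{\fA_{p,K}}(K,K)$. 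If $\phi_1 = 0$, then $h$ must be surjective, giving $\ker \phi \cong [p-1] \oplus \ker h \cong 2[p-1] \oplus (m-1)[p]$. If $\phi_1 \ne 0$, then $\phi_1$ is the projection onto the top, and projectivity of $m[p]$ lets me lift $h$ through $\phi_1$ and modify the basis of $[p-1] \oplus m[p]$ to make the combined map equal to $(\phi_1, 0)$, whence $\ker \phi \cong [p-2] \oplus m[p]$.

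For (2) and (3), I plan to analyze an arbitrary projective-free decomposition $L_\zeta = M_1 \oplus \cdots \oplus M_r$, using the partition $\Pi(\cG) = U_0 \sqcup U_1$ with $U_0 := \{[\alpha_K] : \alpha_K^\ast(\zeta) = 0\}$. Part (1) forces each $\StJt(M_i, \alpha_K)$ to be a summand of $2[p-1]$ on $U_0$ and of $[p-2]$ on $U_1$, so the possible stable-type dimensions lie in $\{0, p-2, p-1, 2p-2\}$, reducing modulo $p$ to $\{0, p-2, p-1\}$. Since $\dim_k M_i$ modulo $p$ is constant, each $M_i$ belongs to one of two classes: type (i), with $\dim_k M_i \equiv p-1 \pmod p$, admits only $\StJt(M_i, \alpha_K) \in \{0, [p-1]\}$ and satisfies $\Pi(\cG)_{M_i} \subseteq U_0$; type (ii), with $\dim_k M_i \equiv p-2 \pmod p$, admits $\StJt(M_i, \alpha_K) \in \{0, 2[p-1], [p-2]\}$. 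The crucial observation is that at each $\alpha_K$ exactly one class is active: either a single type-(ii) summand contributes $2[p-1]$ or $[p-2]$ (with all other summands projective), or exactly two type-(i) summands each contribute $[p-1]$ on $U_0$ (with all type-(ii) summands projective). Hence the union of supports of the type-(i) summands and the union of supports of the type-(ii) summands form disjoint closed subsets covering $\Pi(\cG) = \Pi(\cG)_{L_\zeta}$, which is connected by \cite[(3.4)]{CFP} applied to the indecomposable module $k$.

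For (2), the hypothesis $|\Jt(L_\zeta)| = 2$ forces both $U_0$ and $U_1$ to be non-empty. An all-type-(i) decomposition is impossible since those summands vanish stably on $U_1$; a mixed decomposition contradicts connectedness; and an all-type-(ii) decomposition forces exactly one $M_i$ to be non-projective at each $\alpha_K$, so the $\Pi(\cG)_{M_i}$ give a disjoint closed cover of $\Pi(\cG)$ and $r=1$. For (3), if the constant type is $[p-2]$ the same ``exactly one'' argument applies. If the constant type is $2[p-1]$, the all-type-(ii) case again gives $r=1$; in the all-type-(i) case exactly two summands are active at each $\alpha_K$, and the pairwise intersections $T_{ij} := \Pi(\cG)_{M_i} \cap \Pi(\cG)_{M_j}$ form pairwise disjoint closed subsets partitioning $\Pi(\cG)$, so connectedness together with the non-projectivity of each $M_i$ forces $r=2$ with both summands of constant stable type $[p-1]$, hence endo-trivial by \cite[(5.6)]{CFP}.

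The principal obstacle I anticipate is establishing the (i)/(ii) dichotomy cleanly via the dimension-mod-$p$ bookkeeping and verifying the mutual exclusivity of the two support classes at each $\alpha_K$; once these are in place, the connectedness of $\Pi(\cG) = \Pi(\cG)_k$ drives each of (2) and (3) almost mechanically to its conclusion.
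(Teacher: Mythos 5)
Your part (1) is correct and is essentially the paper's argument: apply $\alpha_K^\ast$ to $0\to L_\zeta\to\Omega^n_\cG(k)\to k\to 0$, identify the middle term with $[p\!-\!1]\oplus m[p]$ for odd $n$, and observe that the component $[p\!-\!1]\to K$ represents $\alpha_K^\ast(\zeta)$; your coordinate-change in the nonvanishing case is an acceptable substitute for the paper's pull-back diagram. The mod-$p$ bookkeeping you set up for (2) and (3) is also exactly the engine the paper uses. However, the superstructure you build on top of it is flawed. Your ``crucial observation'' -- that at each $\pi$-point one class of summands is active ``with all other summands projective'' -- cannot happen: once the summands of dimension $\equiv 0\ (\modd p)$ are removed (and removing them is itself a step you skipped; it requires that projectivity is detected by $\pi$-points, \cite[(5.3)]{FPe2}, together with $L_\zeta$ being projective-free), every remaining summand $M_i$ has $p\nmid\dim_kM_i$, hence $\alpha_K^\ast((M_i)_K)$ is never projective and $\Pi(\cG)_{M_i}=\Pi(\cG)$ for every $i$. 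Consequently the ``disjoint closed subsets'' covering $\Pi(\cG)$, and the pairwise disjoint intersections $T_{ij}$ in your treatment of the constant-type-$2[p\!-\!1]$ case, do not exist: all these sets coincide with $\Pi(\cG)$, and the connectedness argument you invoke cannot be run as written. (A smaller point: your sets $U_0,U_1$ are defined on representatives, and Jordan types need not be constant on equivalence classes of $\pi$-points, so they are not obviously well-defined subsets of $\Pi(\cG)$; this is harmless for pointwise statements but fatal for a topological argument.)

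The good news is that the topology is unnecessary and your own bookkeeping already closes every case, which is how the paper argues. Since the stable type of each $M_i$ at any $\pi$-point is a direct summand of $2[p\!-\!1]$ or of $[p\!-\!2]$ and has dimension $\equiv\dim_kM_i\ (\modd p)$, a summand with $\dim_kM_i\equiv -1$ is incompatible with any $\pi$-point at which $\Jt(L_\zeta,\alpha_K)=[p\!-\!2]\oplus n_\zeta[p]$ (so under the hypothesis of (2) no such summand, and no ``mixed'' decomposition, can occur at all), while a summand with $\dim_kM_i\equiv -2$ must contribute exactly $[p\!-\!2]$, respectively exactly $2[p\!-\!1]$, at every $\pi$-point; counting stable dimensions at a single $\pi$-point then forces $r=1$ in all the relevant cases of (2) and (3), and in the remaining case of (3) (constant type $2[p\!-\!1]$ with all summands of dimension $\equiv-1$) each summand contributes exactly $[p\!-\!1]$ everywhere, so $r=2$ and both summands have constant stable type $[p\!-\!1]$, hence are endo-trivial by \cite[(5.6)]{CFP}. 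Replace the active/projective dichotomy and the connectedness step by this single-point count and your proof agrees with the paper's.
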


\begin{proof} (1) Recall that $\Pi(\cG)_{L_\zeta} = \Pi(\cG)$ and let $\alpha_K \in \Pt(\cG)$ be a $\pi$-point. Since $n$ is odd, there results an exact sequence
\[ (0) \lra \alpha_K^\ast((L_\zeta)_K) \stackrel{\binom{\varphi}{-\gamma}}{\lra} [p\!-\!1]\oplus m[p] \stackrel{(f,g)}{\lra} K \lra (0),\]
where $\alpha^\ast_K(\zeta)\in \HH^n(\fA_{p,K},K)$ corresponds to $f$. If $f=0$, then $g \ne 0$ and
\[ \alpha_K^\ast((L_\zeta)_K) \cong [p\!-\!1] \oplus \ker g \cong 2[p\!-\!1] \oplus m'[p].\]
Alternatively, \cite[(I.5.6),(I.5.7)]{ARS} provides a commutative diagram
\[ \begin{CD} (0) @>>> \ker \gamma @>>>\alpha_K^\ast((L_\zeta)_K) @>\gamma >> m[p] @>>> (0) \\
@. @| @V\varphi VV  @VgVV @. \\
(0) @>>> \ker f @>>> [p\!-\!1] @>f>> K @>>> (0)
\end{CD}\]
with exact rows. Consequently,
\[ \alpha_K^\ast((L_\zeta)_K) \cong \ker \gamma \oplus m[p]\cong  \ker f \oplus  m[p] \cong [p\!-\!2] \oplus m[p],\]
as desired.

(2) Suppose that $|\Jt(L_\zeta)| = 2$, so that
\[ \Jt(L_\zeta) = \{ 2[p\!-\!1] \oplus m_\zeta [p], [p\!-\!2] \oplus n_\zeta [p]\}. \]
Let $M$ be an direct summand of $L_\zeta$, so that $\dim_kM \equiv -1,-2,0 \ \modd(p)$.

If $\dim_kM \equiv 0 \ \modd(p)$, then, for any $\pi$-point $\alpha_K$, $p$ divides the dimension of the projective-free part $X$ of $\alpha_K^\ast(M)$. As $X\!\mid\!2[p\!-\!1]$ or 
$X\!\mid\![p\!-\!2]$, it follows that  $X=(0)$. Consequently, $\alpha_K^\ast(M_K)$ is projective and, thanks to \cite[(5.3)]{FPe2}, $M$ is also projective, so that $M = (0)$.

If $\dim_kM \equiv -1 \ \modd(p)$, then $\alpha_K^\ast(M_K) \cong [p\!-\!1]\oplus n_M[p]$ for any $\alpha_K \in \Pt(\cG)$. Since there exists a $\pi$-point $\alpha_K$ with 
$\alpha_K^\ast((L_\zeta)_K) \cong [p\!-\!2]\oplus n'_\zeta[p]$, we have reached a contradiction.

Finally, if $\dim_kM \equiv -2 \ \modd(p)$, then the dimension of a direct complement $N$ of $M$ is divisible by $p$. By the first step, $N = (0)$, so that $M = L_\zeta$. As a result, 
$L_\zeta$ is indecomposable.

(3) Owing to \cite[(3.7)]{CFP}, every direct summand of $L_\zeta$ has constant Jordan type. Thus, if $L_\zeta$ is decomposable, then $\Jt(L_\zeta) = \{2[p\!-\!1]\oplus n_\zeta[p]\}$ and
each indecomposable summand $M\ne L_\zeta$ of $L_\zeta$ has dimension $\dim_kM \equiv -1 \ \modd(p)$. Consequently, such a summand has Jordan type $\Jt(M) = \{[p\!-\!1]\oplus
n_M[p]\}$, and there are exactly two such summands. By virtue of \cite[(5.6)]{CFP}, each of these summands is endo-trivial. \end{proof}

\bigskip

\begin{Remarks} (1) Let $n$ be odd and $\zeta \in \HH^n(\cG,k)\setminus\{0\}$. If $p=2$, then $\HH^\ast(\fA_{p,K},K) \cong k[X]$, so that $\alpha^\ast_K(\zeta) = 0$ for every
$\alpha_K \in \Pt(\cG)$. Consequently, we have
\[ \Jt(L_\zeta) = \{2[1]\oplus m_\zeta [2]\}\]
in that case.

(2) If $n=1$, then $L_\zeta \subseteq \Omega_\cG(k)$ is a submodule of the projective cover of $k$ and hence is either zero or indecomposable.

(3) Let $\cU$ be an abelian unipotent group scheme, $K \supseteq k$ be an extension field of $k$. General theory \cite[\S3.5]{Be1}, \cite[(I.4.27)]{Ja} provides an isomorphism
\[ \HH^\ast(\cU_K,K) \cong S(\HH^2(\cU_K,K))\otimes_k\Lambda(\HH^1(\cU_K,K))\]
of graded-commutative $K$-algebras, where $S(-)$ and $\Lambda(-)$ denote the symmetric algebra and the exterior algebra of a space, whose elements are homogeneous of the given
cohomological degree. Let $\alpha_K \in \Pt(\cU)$ be a $\pi$-point. Writing $\HH^2(\fA_{p,K},K) = KX$ and $\HH^1(\fA_{p,K},K) = KY$, we see that the graded
homomorphism $\alpha_K^\ast : \HH^\ast(\cU_K,K) \lra \HH^\ast(\fA_{p,K},K)$ annihilates $\Lambda^j(\HH^1(\cU_K,K))$ for $j\ge 2$. Consequently,
\[ \alpha_K^\ast(\HH^{2n+1}(\cU_K,K)) = \alpha_K^\ast(S^n(\HH^2(\cU_K,K))\alpha_K^\ast(\HH^1(\cU_K,K))\]
for every $n\ge 0$. \end{Remarks}

\bigskip

\begin{Examples} (1) Let $\cU = \GG_{a(1)}^3$ be the product of three copies of the first Frobenius kernel of the additive group. Then we have $\dim_k \HH^1(\cU,k) = 3$. Let $\alpha_i
: \fA_{p,k} \lra \cU$ be the $\pi$-points given by the three canonical embeddings $\GG_{a(1)} \hookrightarrow \cU$. Recall that
\[ \HH^\ast(\cU,k) = \HH^\ast(\GG_{a(1)},k)^{\otimes3}.\]
Let $\zeta_1 = \zeta\otimes 1 \otimes 1 \in \HH^1(\cU,k)$ be given by $\zeta \in \HH^1(\GG_{a(1)},k)\setminus \{0\}$. Then we have $\alpha^\ast_j(\zeta_1) = \delta_{j,1}\zeta$.
Accordingly, $|\Jt(L_{\zeta_1})| = 2$ and $L_{\zeta_1}$ is indecomposable.

(2) Let $\cU$ be as in (1), and consider $\zeta_2 \in \Lambda^3(\HH^1(\cU,k)) \setminus \{0\}$. The foregoing remarks imply
\[ \alpha_K^\ast(\zeta_2) = 0 \ \ \ \ \ \ \forall \ \alpha_K \in \Pt(\cU),\]
so that $L_{\zeta_2}$ has constant Jordan type $2[p\!-\!1]\oplus n_\zeta[p]$. We shall see in (\ref{COD3}) below, that $L_{\zeta_2}$ is indecomposable. \end{Examples}

\bigskip
\noindent
Let $\zeta \in \HH^n(\cG,k)\setminus\{0\}$ be an element of odd degree such that $L_\zeta$ is decomposable. Thanks to Lemma \ref{COD1}
\[ L_\zeta \cong X \oplus Y\]
is the direct sum of two endo-trivial modules of constant stable Jordan type $[p\!-\!1]$. We may therefore consider the commutative diagram of Lemma \ref{SR2}.

\bigskip

\begin{Lem} \label{COD2} Let $\zeta \in \HH^n(\cG,k)\setminus\{0\}$ be an element of odd degree $n\ne 1$ such that $L_\zeta$ is decomposable. Set $U := \ker (\psi\! \circ\! g)$, then 
the following statements hold:

{\rm (1)} \ $\Pi(\cG)_V \cap \Pi(\cG)_W = \emptyset$.

{\rm (2)} \ There is an exact sequence $(0) \lra W \lra U \stackrel{g}{\lra} V \lra (0)$.

{\rm (3)} \ There is an exact sequence  $(0) \lra V \lra U \stackrel{f}{\lra} W \lra (0)$.

{\rm (4)} \ We have $\Pi(\cG)_U = \Pi(\cG)_V \sqcup \Pi(\cG)_W$.

{\rm (5)} \ The map $\psi \!\circ \! g : \Omega^{1-n}_\cG(k) \lra k$ corresponds to a non-split extension
\[ (0) \lra k \lra \Omega^{-1}_\cG(U) \lra \Omega^{-n}_\cG(k) \lra (0).\]
\end{Lem}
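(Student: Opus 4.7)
The plan is to run the whole argument off the $3\times 3$ commutative diagram supplied by Lemma~\ref{SR2}(1), whose key relation is the identity $\varphi\circ f = \psi\circ g$ coming from exactness of the middle sequence $(0) \to \Omega^{1-n}_\cG(k) \xrightarrow{\binom{-f}{g}} M\oplus N \xrightarrow{(\varphi,\psi)} k \to (0)$. For (2) and (3) I would first check the inclusions $V, W \subseteq U$: $W = \ker g \subseteq U$ is immediate since $\psi\circ g$ vanishes on $W$, and $V \subseteq U$ follows from commutativity of the right-hand column, which says that the inclusion $V\hookrightarrow \Omega^{1-n}_\cG(k)$ is carried by $g$ into $\ker\psi$. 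Restricting $g$ to $U$ gives a map into $\ker\psi = V$, and restricting $f$ to $U$ gives a map into $\ker\varphi = W$ via $\varphi(f(u)) = \psi(g(u)) = 0$. Surjectivity of $g|_U : U \to V$ is a one-line lift: any $v \in V$ has a preimage $x$ under the surjection $g$, and $\psi(g(x)) = \psi(v) = 0$ forces $x \in U$; the argument for $f|_U$ is identical with $\varphi$ in place of $\psi$. The kernels compute as $\ker(g|_U) = W\cap U = W$ and $\ker(f|_U) = V\cap U = V$, which yields (2) and (3).

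For (1), fix $\alpha_K \in \Pt(\cG)$ and apply Lemma~\ref{SR2}(2): either $\alpha^\ast_K(\varphi_K)$ or $\alpha^\ast_K(\psi_K)$ is split surjective. Since $n$ is odd and both endo-trivial summands $X, Y$ of $L_\zeta$ have constant stable Jordan type $[p-1]$ by Lemma~\ref{COD1}(3), the shifts $M = \Omega^{-n}_\cG(X)$ and $N = \Omega^{-n}_\cG(Y)$ both have stable Jordan type $[1]$, so $\alpha^\ast_K(M_K) \cong [1] \oplus (\mathrm{proj.})$ and likewise for $N_K$. A splitting of $\alpha^\ast_K(\varphi_K)$ therefore identifies $\alpha^\ast_K(W_K)$ with a direct complement of $K = [1]$ inside $\alpha^\ast_K(M_K)$, which must then be projective, so $[\alpha_K] \notin \Pi(\cG)_W$; the $\psi$-case analogously gives $[\alpha_K] \notin \Pi(\cG)_V$. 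In either case $[\alpha_K] \notin \Pi(\cG)_V \cap \Pi(\cG)_W$, proving (1). Part (4) is then immediate: the exact sequences of (2) and (3) give $\Pi(\cG)_U \subseteq \Pi(\cG)_V \cup \Pi(\cG)_W$, while the same sequences (read as exhibiting $V$ and $W$ as sub- or quotient of $U$) give the reverse inclusion, and (1) upgrades the union to a disjoint union.

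For (5), surjectivity of $\psi$ and $g$ gives the short exact sequence $(0) \to U \to \Omega^{1-n}_\cG(k) \xrightarrow{\psi\circ g} k \to (0)$. Applying the Heller shift $\Omega^{-1}_\cG$ concretely by taking the pushout of this sequence along the injective hull $U \hookrightarrow I(U)$, identifying the quotient $I(U)/U$ with $\Omega^{-1}_\cG(U)$, and absorbing the injective summand in the middle term yields a short exact sequence $(0) \to k \to \Omega^{-1}_\cG(U) \to \Omega^{-n}_\cG(k) \to (0)$, using $\Omega^{-1}_\cG(\Omega^{1-n}_\cG(k)) \cong \Omega^{-n}_\cG(k)$. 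Non-splitness of the original sequence is clear since $k$ cannot be a direct summand of the indecomposable module $\Omega^{1-n}_\cG(k) \not\cong k$ (as $n \ne 1$); the extension class in $\Ext^1_\cG(k, U)$ is thus non-zero, and under the isomorphism $\Ext^1_\cG(k, U) \cong \Ext^1_\cG(\Omega^{-n}_\cG(k), k)$ induced by Heller shift, the class of the constructed sequence is non-zero as well. The main obstacle is part (5): pinning down that the Heller-shifted short exact sequence actually corresponds to the map $\psi\circ g$ under the standard identification $\underline{\Hom}_\cG(\Omega^{1-n}_\cG(k),k) \cong \Ext^1_\cG(\Omega^{-n}_\cG(k),k)$, and doing so at the module (not merely stable) level, requires careful bookkeeping of the auxiliary injective summands introduced by the pushout.
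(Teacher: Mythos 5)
Your arguments for (1)--(3) are correct and essentially the paper's own: (1) follows from Lemma~\ref{SR2}(2) together with the fact that $M$ and $N$ have constant stable Jordan type $[1]$ (since $n$ is odd), and (2),(3) from $U=\ker(\psi\circ g)=g^{-1}(V)$ and $U=\ker(\varphi\circ f)=f^{-1}(W)$. The genuine problems are in (4) and (5). In (4), the inclusion $\Pi(\cG)_U\subseteq \Pi(\cG)_V\cup\Pi(\cG)_W$ is fine, but your reverse inclusion is justified by the principle that a submodule or quotient of $U$ has $\Pi$-support contained in $\Pi(\cG)_U$, and that principle is false: the trivial module is a quotient, and $\Omega_\cG(k)$ a submodule, of the projective cover of $k$, whose $\Pi$-support is empty while theirs is all of $\Pi(\cG)$. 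Some real argument is needed here. The paper proceeds by noting that (1) together with \cite[(3.2),(5.3)]{FPe2} makes $V\otimes_kW$ projective, hence injective; tensoring the sequence of (2) with $V$ and with $W$ then gives $U\otimes_kV\cong (V\otimes_kV)\oplus(\mathrm{proj.})$ and $U\otimes_kW\cong (W\otimes_kW)\oplus(\mathrm{proj.})$, whence $\Pi(\cG)_S=\Pi(\cG)_{S\otimes_kS}\subseteq\Pi(\cG)_U$ for $S=V,W$. (Alternatively one could invoke the fact that in a short exact sequence the support of each term lies in the union of the supports of the other two and combine it with (1); but as written your step fails.)

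In (5) you give only an outline and yourself flag the decisive step, which is exactly where the content of the paper's proof lies; two facts are missing. First, $U\ne(0)$: your remark that $\Omega^{1-n}_\cG(k)\not\cong k$ ``since $n\ne 1$'' is not a proof, because $k$ could be periodic; the paper rules out $U=(0)$ by observing that it forces $V=(0)=W$, hence $M\cong k\cong N$ by the columns of the diagram, so $L_\zeta\cong\Omega^n_\cG(k)\oplus\Omega^n_\cG(k)$, which is impossible. Second, one must know that the injective hull of $\Omega^{1-n}_\cG(k)$ is also an injective hull of $U$; the paper obtains this from $\dim_kU=\dim_k\Omega^{1-n}_\cG(k)-1$ and the indecomposability of $\Omega^{1-n}_\cG(k)$, which give $\Soc_\cG(U)=\Soc_\cG(\Omega^{1-n}_\cG(k))$. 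Without this, ``absorbing the injective summand'' after your pushout along $U\hookrightarrow I(U)$ is not a formal move: the induced map $\Omega^{1-n}_\cG(k)\lra I(U)$ extending the hull embedding need not be injective, and the resulting cokernel need not be $\Omega^{-n}_\cG(k)$ on the nose. The paper also circumvents the ``correspondence with $\psi\circ g$'' issue you worry about by construction: it realizes the extension as the pushout of $(0)\lra\Omega^{1-n}_\cG(k)\lra P\lra \Omega^{-n}_\cG(k)\lra (0)$ along $\psi\circ g$ itself, so the identification is definitional, and the Snake Lemma plus the two facts above identify the middle term with $\Omega^{-1}_\cG(U)$; non-splitness then follows readily (a splitting would give $U\cong\Omega_\cG(k)\oplus\Omega^{1-n}_\cG(k)$ up to projectives, contradicting $\dim_kU=\dim_k\Omega^{1-n}_\cG(k)-1$). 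Your appeal to an ``isomorphism $\Ext^1_\cG(k,U)\cong\Ext^1_\cG(\Omega^{-n}_\cG(k),k)$ induced by Heller shift'' is also not available in that generality; what is true is only the rotation of the specific triangle, which again presupposes the identification you left open.
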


\begin{proof} (1) Let $\alpha_K \in \Pt(\cG)$ be a $\pi$-point such that $[\alpha_K] \in \Pi(\cG)_V\cap \Pi(\cG)_W$. Note that $M = \Omega^{-1}_\cG(X)$ and $N=
\Omega^{-1}_\cG(Y)$ are $\cG$-modules of constant stable Jordan type $[1]$. By part (2) of Lemma \ref{SR2}, one of the maps $\alpha^\ast(\varphi_K)$ or $\alpha_K(\psi_K)$ is split 
surjective. By symmetry, we may assume without loss of generality, that the map $\alpha_K^\ast(\varphi_K)$ is split surjective. Consequently, the sequence
\[ (0) \lra \alpha^\ast_K(W_K) \lra \alpha^\ast_K(M_K) \lra \alpha_K^\ast(K) \lra (0)\]
splits, so that $[1]\oplus ({\rm proj.}) \cong [1] \oplus \alpha_K^\ast(W_K)$. Hence $\alpha^\ast_K(W_K)$ is projective, a contradiction.

(2),(3) We have $U = \ker (\psi\!\circ\! g) = g^{-1}(\ker\psi) = g^{-1}(V)$ as well as $U = \ker(\varphi\! \circ\! f) = f^{-1}(\ker \varphi) = f^{-1}(W)$.

(4) Owing to (2), the inclusion $\Pi(\cG)_U \subseteq \Pi(\cG)_V \cup \Pi(\cG)_W$ holds. Thanks to \cite[(3.2),(5.5)]{FPe2} and (1), the module $V\!\otimes_k\!W$ is projective, so
tensoring the exact sequence (2) with $V$ and $W$ implies
\[ U\!\otimes_k\!V \cong (V\!\otimes_k\!V) \oplus ({\rm proj.}) \ \ \text{and} \ \ U\!\otimes_k\!W \cong (W\!\otimes_k\!W) \oplus ({\rm proj.}),\]
respectively. Consequently,
\[ \Pi(\cG)_S = \Pi(\cG)_{S\otimes_kS} = \Pi(\cG)_U\cap \Pi(\cG)_S \subseteq \Pi(\cG)_U,\]
for $S = V,W$. This gives the reverse inclusion $\Pi(\cG)_U \supseteq \Pi(\cG)_V \cup \Pi(\cG)_W$, and our result now follows from (1).

(5) By general theory, the extension associated to $\psi\!\circ\! g$ is the lower row of the following diagram, whose left-hand square is a push-out:
\[ \begin{CD} (0) @>>> \Omega^{1-n}_\cG(k) @ >>> P @>>> \Omega^{-n}_\cG(k) @>>> (0) \\
                        @.      @ VV\psi\circ gV @VV\lambda V @| @. \\
			(0) @>>> k @ >>> E @>>> \Omega^{-n}_\cG(k) @>>> \,(0).
\end{CD} \]
The Snake Lemma yields $U \cong \ker \lambda$ as well as $\im \lambda = E$. If $U = (0)$, then (2) and (3) yield $V=(0)=W$, so that Lemma \ref{SR2} gives $N \cong k \cong M$.
Consequently, $L_\zeta \cong \Omega^n_\cG(k)\oplus \Omega^n_\cG(k)$, a contradiction. Since $\Omega^{1-n}_\cG(k)$ is indecomposable and $\dim_kU = \dim_k 
\Omega^{1-n}_\cG(k)-1$, it now follows that $\Soc_\cG(U) = \Soc_\cG( \Omega^{1-n}_\cG(k)) = \Soc_\cG(P)$. As a result, $P$ is an injective hull of $U$, whence $E \cong
\Omega^{-1}_\cG(U)$. \end{proof}

\bigskip
\noindent
Let $\cG$ be a finite group scheme. We say that $\cG$ is {\it linearly reductive} if the algebra $k\cG$ is semi-simple. Following Voigt \cite[(I.2.37)]{Vo}, we let $\cG_{\rm lr} \unlhd \cG$ be
the unique largest linearly reductive normal subgroup of $\cG$.

We consider the set ${\rm Max}_{\rm au}(\cG)$ of maximal abelian unipotent subgroups of $\cG$, as well as its subsets
\[  {\rm Max}_{\rm au}(\cG)_\ell := \{ \cU \in {\rm Max}_{\rm au}(\cG)\ ; \ \cx_\cU(k) \ge \ell\}\]
for every $\ell \ge 1$.

If $\cH \subseteq \cG$ is a subgroup, then the canonical inclusion $\iota : \cH \hookrightarrow \cG$ induces a continuous map $\iota_{\ast,\cH} : \Pi(\cH) \lra \Pi(\cG)$. Setting 
$\Pi(\cG)_\ell := \bigcup_{\cU \in {\rm Max}_{\rm au}(\cG)_\ell} \iota_{\ast,\cU}(\Pi(\cU))$, we define the {\it saturation rank} of $\cG$ via
\[ \srk(\cG) := \max \{\ell \ge 1\ ; \ \Pi(\cG) = \Pi(\cG)_\ell\}.\]
Note that $\srk(\cG) \le \cx_\cG(k)$. If $G$ is a finite group, then Quillen's dimension theorem \cite{AE} implies that $\srk(G)\!-\!1$ is the minimum of the dimensions of the irreducible 
components of $\Pi(G)$. This does not hold for infinitesimal group schemes, as the example of the group $\SL(2)_1$ shows.

The following Lemma is a direct consequence of the proof of \cite[(6.3)]{CFP}:

\bigskip

\begin{Lem} \label{COD} Suppose that $n<0$ and let 
\[ \fE : \ \ (0) \lra k \lra E \lra \Omega^{n-1}_\cG(k) \lra (0)\]
be a short exact sequence of $\cG$-modules. If $\alpha_K \in \Pt(\cG)$ factors through an abelian unipotent subgroup $\cU$ of complexity $\ge 2$, then the sequence 
$\alpha^\ast_K(\fE\!\otimes_k\!K)$ splits. \hfill $\square$\end{Lem}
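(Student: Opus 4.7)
The plan is to adapt the argument in the proof of \cite[(6.3)]{CFP}. First, I identify the obstruction to splitting. The sequence $\fE \otimes_k K$ determines a class $[\fE_K] \in \Ext^1_{\cG_K}(\Omega^{n-1}_{\cG_K}(K), K)$. Via iterated dimension shifting, this $\Ext$-group is canonically isomorphic to a Tate cohomology group $\widehat{\HH}^n(\cG_K, K)$, which, because $n<0$, lies in negative degree. Since $\alpha_K^*$ is exact and preserves projectives, one has
\[
\alpha_K^*(\Omega^{n-1}_{\cG_K}(K)) \cong \Omega^{n-1}_{\fA_{p,K}}(K) \oplus (\text{proj.}),
\]
so the splitting of $\alpha_K^*(\fE\!\otimes_k\!K)$ is equivalent to the vanishing of the image of $[\fE_K]$ in $\widehat{\HH}^n(\fA_{p,K},K)$.

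Next, I factor $\alpha_K$ as $\fA_{p,K} \stackrel{\tilde\alpha_K}{\lra} K\cU \stackrel{\iota}{\hookrightarrow} K\cG$, with $\iota$ the canonical inclusion and $\tilde\alpha_K$ a $\pi$-point of $\cU$. Both factors send projectives to projectives: $K\cG$ is free over $K\cU$ (as algebras of measures for $\cU \subseteq \cG_K$), and $K\cU$ is free over $\tilde\alpha_K(\fA_{p,K})$ by the flatness built into the $\pi$-point condition. Consequently $\alpha_K^*$ on Tate cohomology factors as
\[
\widehat{\HH}^n(\cG_K,K) \lra \widehat{\HH}^n(\cU,K) \stackrel{\tilde\alpha_K^*}{\lra} \widehat{\HH}^n(\fA_{p,K},K),
\]
and it suffices to prove that $\tilde\alpha_K^*$ is zero on negative Tate classes.

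For the crucial step I use $\cx_\cU(K) \ge 2$. The restriction decomposes as
\[
\tilde\alpha_K^*(\Omega^{n-1}_\cU(K)) \cong \Omega^{n-1}_{\fA_{p,K}}(K) \oplus P,
\]
with $P$ projective and necessarily non-zero because $\dim_K K\cU \ge p^2 > p = \dim_K\fA_{p,K}$ under the complexity hypothesis. Any class in $\widehat{\HH}^n(\cU,K)$ is represented by a $K\cU$-linear morphism $\varphi : \Omega^{n-1}_\cU(K) \to K$. Following the cocycle-level argument of \cite[(6.3)]{CFP}, I verify that $\tilde\alpha_K^*(\varphi)$ takes non-trivial values only on the projective summand $P$ and restricts to zero on $\Omega^{n-1}_{\fA_{p,K}}(K)$: any hit on the non-projective summand would, under Frobenius reciprocity, force a non-trivial contribution of $K$ to the top of the induced module $K\cU\otimes_{\fA_{p,K}}[j]$ beyond what is permitted by Lemma \ref{QRP2}. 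Hence $\tilde\alpha_K^*(\varphi)$ factors through a projective $\fA_{p,K}$-module, so $\tilde\alpha_K^*([\varphi]) = 0$.

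The main obstacle is the last step — the cocycle-level verification that $\varphi$ vanishes on the non-projective cosyzygy part of $\tilde\alpha_K^*(\Omega^{n-1}_\cU(K))$. This is exactly where the complexity hypothesis is essential: it guarantees that $P \ne 0$ and that the non-projective part $\Omega^{n-1}_{\fA_{p,K}}(K)$ lies in the "new" cosyzygy direction invisible to a single $\pi$-point, and this is the computation carried out in the proof of \cite[(6.3)]{CFP}, which transfers to the present setting without modification once the projective-preserving properties of $\iota$ and $\tilde\alpha_K$ are in place.
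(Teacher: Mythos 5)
Your reduction is fine as far as it goes: identifying the obstruction with the image of a class in $\Ext^1_{\cG_K}(\Omega^{n-1}_{\cG_K}(K),K)\cong\widehat{\HH}^n(\cG_K,K)$ with $n<0$, factoring $\alpha_K$ through $K\cU$ (legitimate, since $K\cG$ is free over $K\cU$ and, by flatness, $K\cU$ is projective over $\fA_{p,K}$), and reducing the lemma to the vanishing of the restriction $\widehat{\HH}^n(\cU,K)\lra\widehat{\HH}^n(\fA_{p,K},K)$ is the right framing; the paper itself gives no argument beyond pointing to the proof of \cite[(6.3)]{CFP}, which is where exactly this vanishing is established. The problem is that your treatment of this crucial step is not a proof. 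The appeal to Frobenius reciprocity together with Lemma \ref{QRP2} is a non sequitur: that lemma only says that the trivial module occurs with multiplicity one in $\Top_{\cU_K}(K\cU\!\otimes_{\fA_{p,K}}\![j])$, i.e.\ that $\Hom_{\cU_K}(K\cU\!\otimes_{\fA_{p,K}}\![j],K)$ is one-dimensional; a non-zero restriction of $\varphi$ to the non-projective summand of $\res_{\fA_{p,K}}\Omega^{n-1}_\cU(K)$ would not create any multiplicity ``beyond what is permitted'', so no contradiction can be extracted this way. Note also that the vanishing cannot be formal on the $\fA_{p,K}$-side: the target $\Ext^1_{\fA_{p,K}}(\Omega^{n-1}_{\fA_{p,K}}(K),K)\cong\underline{\Hom}_{\fA_{p,K}}(\Omega^{n}_{\fA_{p,K}}(K),K)$ is one-dimensional rather than zero, so the class genuinely has room to survive and must be killed by an input coming from the structure of $K\cU$.

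That input is exactly what is missing. Observing that the projective summand $P$ is non-zero because $\dim_KK\cU\ge p^2$ is true but carries no force; the hypothesis $\cx_\cU(K)\ge 2$ does not enter through the size of $P$ but through a genuine cohomological statement — for instance that, for $K\cU\cong K[X_1,\ldots,X_r]/(X_1^{p^{e_1}},\ldots,X_r^{p^{e_r}})$ with $r\ge 2$, restriction along a $\pi$-point in negative Tate degrees is dual to a transfer which vanishes in non-negative degrees, or the explicit cocycle-level computation actually carried out in the proof of \cite[(6.3)]{CFP}. As written, your final step simply defers to that computation while offering an incorrect surrogate for it, so the proposal has a gap at precisely the point where the complexity hypothesis must do its work.
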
 

\bigskip

\begin{Thm} \label{COD3} Suppose that $\srk(\cG/\cG_{\rm lr}) \ge 2$. If $\zeta \in \HH^n(\cG,k)\setminus\{0\}$ has odd degree, then $L_\zeta$ is indecomposable.\end{Thm}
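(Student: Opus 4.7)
The plan is to split on $n$ and derive a contradiction from the connectedness of $\Pi(\cG)$ when $n\ge 3$ is odd. First dispose of $n=1$: here $L_\zeta\subseteq \Omega_\cG(k)=\Rad(P(k))$, and since $k\cG$ is Frobenius with $\Soc(P(k))=k$, the module $\Omega_\cG(k)$ has one-dimensional socle, so any non-trivial decomposition of $L_\zeta$ would force $\dim_k\Soc(L_\zeta)\ge 2$, a contradiction.

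So assume $n\ge 3$ is odd and, for contradiction, that $L_\zeta$ decomposes. The contrapositive of Lemma~\ref{COD1}(2) forces $|\Jt(L_\zeta)|=1$, and Lemma~\ref{COD1}(3) then yields $L_\zeta = X\oplus Y$ with $X,Y$ indecomposable endo-trivial modules of constant stable Jordan type $[p\!-\!1]$. Applying Lemmas~\ref{SR2} and~\ref{COD2} to this decomposition, form $M=\Omega^{-n}_\cG(X)$, $N=\Omega^{-n}_\cG(Y)$ (both of stable type $[1]$ since $n$ is odd), the submodules $V,W\subseteq \Omega^{1-n}_\cG(k)$, and $U:=\ker(\psi\circ g)$. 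Lemma~\ref{COD2}(5) produces a \emph{non-split} extension
\[
\fE : \ (0) \lra k \lra \Omega^{-1}_\cG(U) \lra \Omega^{-n}_\cG(k) \lra (0).
\]

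The key step is to translate splitness of $\alpha_K^\ast(\fE\otimes_k K)$ into a $\Pi$-support condition. Since $n$ is odd, $\alpha_K^\ast(\Omega^{-n}_\cG(k)_K)\cong[p\!-\!1]$ modulo projectives for every $\pi$-point $\alpha_K$; but the only non-split extension of $[p\!-\!1]$ by $[1]$ in $\modd \fA_{p,K}$ is the projective $[p]$, so $\alpha_K^\ast(\Omega^{-1}_\cG(U)_K)$ is non-projective iff $\alpha_K^\ast(\fE\otimes_k K)$ splits, giving
\[
\Pi(\cG)_U = \Pi(\cG)_{\Omega^{-1}_\cG(U)} = \{[\alpha_K]\in\Pi(\cG) \,;\, \alpha_K^\ast(\fE\otimes_k K) \text{ splits}\}.
\]
Now invoke $\srk(\cG/\cG_{\rm lr})\ge 2$. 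Because $\cG_{\rm lr}$ is linearly reductive and normal it meets every unipotent subgroup trivially, so the quotient induces a complexity-preserving bijection between abelian unipotent subgroups of $\cG$ and of $\cG/\cG_{\rm lr}$; hence every $\pi$-point of $\cG$ is equivalent to one factoring through an abelian unipotent subgroup of $\cG_K$ of complexity $\ge 2$. Writing $\Omega^{-n}_\cG(k)=\Omega^{m-1}_\cG(k)$ with $m=1-n\le -2<0$, Lemma~\ref{COD} splits $\alpha_K^\ast(\fE\otimes_k K)$ for any such $\alpha_K$, and therefore $\Pi(\cG)_U=\Pi(\cG)$.

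On the other hand, Lemma~\ref{COD2}(4) gives $\Pi(\cG)_U=\Pi(\cG)_V\sqcup \Pi(\cG)_W$. The hypothesis forces $\cx_\cG(k)\ge 2$, so $k$ is not periodic; this rules out $V=0$ and $W=0$ (either would force $X$ or $Y$ to be a syzygy of $k$, which is incompatible with the inclusion $L_\zeta\subsetneq \Omega^n_\cG(k)$). As non-zero submodules of the indecomposable $\Omega^{1-n}_\cG(k)$, both $V$ and $W$ are non-projective, so $\Pi(\cG)_V$ and $\Pi(\cG)_W$ are non-empty disjoint closed subsets covering $\Pi(\cG)$. This contradicts the connectedness of $\Pi(\cG)$ \cite[(3.4)]{CFP}. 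The main obstacle is the saturation-rank transfer---carefully showing that $\srk(\cG/\cG_{\rm lr})\ge 2$ supplies, for each $[\alpha_K]\in\Pi(\cG)$, a representative factoring through an abelian unipotent subgroup of $\cG_K$ of complexity $\ge 2$, together with the verification (under this hypothesis) that $V$ and $W$ are non-zero and non-projective.
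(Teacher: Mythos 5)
Your handling of the case where the needed $\pi$-points exist is essentially the paper's own argument: the chain Lemma \ref{COD1} $\Rightarrow$ Lemma \ref{SR2} $\Rightarrow$ Lemma \ref{COD2}(5) $\Rightarrow$ Lemma \ref{COD} is exactly what the paper does, and your endgame is only a harmless reordering (you rule out $V=(0)$ and $W=(0)$ first and then contradict the connectedness of $\Pi(\cG)$, while the paper uses connectedness to force $V=(0)$ or $W=(0)$ and then contradicts $L_\zeta\subseteq\Omega^n_\cG(k)$); the $n=1$ case is also fine, since $\Omega_\cG(k)$ has simple socle.

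The genuine gap is precisely the step you flag at the end: the "saturation-rank transfer" as you formulate it is false. An abelian unipotent subgroup $\cU\subseteq\cG$ does meet $\cG_{\rm lr}$ trivially and maps isomorphically onto its image, but abelian unipotent subgroups of $\cG/\cG_{\rm lr}$ need \emph{not} lift: their preimages are extensions by $\cG_{\rm lr}$, and commutators that vanish in the quotient may be non-trivial upstairs. The paper's own Example (1) immediately after Theorem \ref{COD3} is a counterexample: for the Heisenberg algebra $\fh=kx\oplus ky\oplus kz$ with $[x,y]=z$, $x^{[p]}=0=y^{[p]}$, $z^{[p]}=z$, the subgroup $\cG_{\rm lr}$ corresponds to $kz$ and $\srk(\cG/\cG_{\rm lr})=2$, while $\srk(\cG)=1$. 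In fact, since $(ax+by+cz)^{[p]}=c^pz$ for $p\ge 3$ and $[x,y]=z$, the nilpotent elements of $\fh_K$ form a plane that is not a subalgebra for any extension field $K$, so $\cG_K$ has no abelian unipotent subgroups of complexity $\ge 2$ at all; hence \emph{no} class in $\Pi(\cG)$ admits a representative of the kind your argument requires, even though the theorem does apply to this $\cG$. Consequently your proof, as written, does not cover such groups. The reduction has to go the other way, as in the paper: by \cite[(1.1)]{Fa2} the quotient map induces an isomorphism $\cB_0(\cG)\cong\cB_0(\cG/\cG_{\rm lr})$ of principal blocks, Carlson modules lie in the principal block, and $\pi^\ast(L_{\zeta'})\cong L_{\pi^\ast(\zeta')}$ for $\zeta'\in\HH^\ast(\cG/\cG_{\rm lr},k)$; so indecomposability of $L_\zeta$ may be checked over $\cG/\cG_{\rm lr}$, where $\srk\ge 2$ holds and your main argument then goes through verbatim.
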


\begin{proof} According to \cite[(1.1)]{Fa2}, the canonical map $\pi : k\cG \lra k(\cG/\cG_{\rm lr})$ induces an isomorphism $\pi : \cB_0(\cG) \stackrel{\sim}{\lra} \cB_0(\cG/\cG_{\rm 
lr})$ between the corresponding principal blocks. Thus, $\pi^\ast : \modd \cG/\cG_{\rm lr} \lra \modd \cG$ induces an equivalence $\modd \cB_0(\cG/\cG_{\rm lr}) \stackrel{\sim}{\lra}
\modd \cB_0(\cG)$. The isomorphism $\pi^\ast : \HH^\ast(\cG/\cG_{\rm lr},k) \lra \HH^\ast(\cG,k)$ yields
\[ \pi^\ast(L_\zeta) \cong L_{\pi^\ast(\zeta)} \ \ \ \ \ \ \forall \ \zeta \in \HH^\ast(\cG/\cG_{\rm lr},k).\]
Since Carlson modules belong to the principal block, it suffices to verify our assertion under the assumption that $\srk(\cG) \ge 2$.

Assume that $L_\zeta$ is decomposable, so that $n \ge 3$. Part (5) of Lemma \ref{COD2} provides an exact sequence
\[(0) \lra k \lra \Omega^{-1}_{\cG}(U) \lra \Omega^{-n}_{\cG}(k) \lra (0).\]
Let $x$ be a point of $\Pi(\cG)$. Since $\srk(\cG) \ge 2$, there exists a $\pi$-point $\alpha_K$ representing $x$ which factors through an abelian unipotent subgroup $\cU \subseteq \cG$ 
with $\cx_\cU(k) \ge 2$. Since $1-n \le -2$, Lemma \ref{COD} guarantees that the sequence
\[ (0) \lra \alpha^\ast_K(K) \lra \alpha^\ast_K(\Omega^{-1}_{\cG_K}(U)_K) \lra \alpha^\ast_K(\Omega^{-n}_{\cG_K}(K)) \lra (0)\]
splits. Consequently,
\[ \Omega^{-1}_{\fA_{p,K}}(\alpha^\ast_K(U_K)) \oplus ({\rm proj.}) \cong [1] \oplus [p\!-\!1] \oplus ({\rm proj.}),\]
so that $\Omega^{-1}_{\fA_{p,K}}(\alpha^\ast_K(U_K)) \ne (0)$. Hence the $\fA_{p,K}$-module $\alpha^\ast_K(U_K)$ is not projective, and we conclude that $x=[\alpha_K] \in 
\Pi(\cG)_U$. Thus, $\Pi(\cG)_U = \Pi(\cG)$, and Lemma \ref{COD2}(4) implies
\[ \Pi(\cG) = \Pi(\cG)_V \sqcup \Pi(\cG)_W.\]
In view of \cite[(3.4)]{CFP} (see also \cite{Ca1}) and \cite[(5.5)]{FPe2}), it follows that one of the modules $V$ or $W$ is projective. As both of these spaces are submodules of
$\Omega^{1-n}_\cG(k)$, we obtain $V = (0)$ or $W=(0)$. In either case, Lemma \ref{SR2} implies that $\Omega^{-n}_\cG(L_\zeta) \cong \Omega^{1-n}_\cG(k)\oplus k$, whence 
$L_\zeta \cong \Omega_\cG(k)\oplus \Omega_\cG^n(k)$, which contradicts $L_\zeta \subseteq \Omega^n_\cG(k)$. \end{proof}

\bigskip

\begin{Examples} (1) Let $\fh := kx\oplus ky\oplus kz$ be the Heisenberg algebra with $p$-map defined by
\[ x^{[p]} = 0 = y^{[p]} \ \ ; \ \ z^{[p]} = z.\]
If $\cG$ is the infinitesimal group corresponding to $\fh$, then $\cG_{\rm lr}$ corresponds to the $p$-ideal $kz$. Hence $\srk(\cG/\cG_{\rm lr}) = 2$, while $\srk(\cG) = 1$.

(2) Now consider the $p$-map on $\fh$ that is defined via
\[ x^{[p]} = z = y^{[p]} \ \ ; \ \ z^{[p]} = 0.\]
Since $\cV_\fh = k(x\!-\!y)\oplus kz$ is an abelian unipotent $p$-subalgebra of complexity $2$, general theory (cf.\ \cite[p.68f]{Fa3}) implies that every $p$-point of $U_0(\fh)$ is 
equivalent to one factoring through $U_0(\cV_\fh)$. Accordingly, $\srk(\fh)=2$, and every Carlson module $L_\zeta$ of odd degree is indecomposable.\end{Examples}

\bigskip

\begin{Cor} \label{COD4} Suppose that $\zeta \in \HH^n(\cG,k)\setminus \{0\}$ has odd degree. If there exists an abelian unipotent subgroup $\cU \subseteq \cG$ such that 
$\res_\cU(\zeta) \ne 0$, then the Carlson module $L_\zeta$ is indecomposable. \end{Cor}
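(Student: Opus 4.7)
The plan is to assume for contradiction that $L_\zeta$ is decomposable and derive a contradiction by restricting to $\cU$. First I would apply Lemma \ref{COD1}: part (1) forces $|\Jt(L_\zeta)|\le 2$, and if equality holds, part (2) already yields indecomposability, contradicting the assumption. Hence $L_\zeta$ must have constant Jordan type, and part (3) then gives
\[ L_\zeta \cong X\oplus Y,\]
where $X$ and $Y$ are indecomposable endo-trivial $\cG$-modules of constant stable Jordan type $[p\!-\!1]$. The remainder of the proof is devoted to ruling out this decomposition via the hypothesis on $\cU$.

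The key intermediate step is to show that $L_{\res_\cU(\zeta)}$ is an indecomposable non-projective $\cU$-module. If $\cx_\cU(k)\ge 2$, then the abelian unipotent group scheme $\cU$ satisfies $\cU_{\rm lr}=1$ and $\srk(\cU)=\cx_\cU(k)\ge 2$, so Theorem \ref{COD3}, applied to the non-zero odd-degree class $\res_\cU(\zeta)\in \HH^n(\cU,k)$, yields the claim. If $\cx_\cU(k)=1$, then, as noted after Corollary \ref{TG4}, $k\cU\cong k[X]/(X^{p^r})$ is a local uniserial ring. In this case $\Omega^n_\cU(k)$ is cyclic with a unique maximal submodule, so any non-zero $k\cU$-linear map $\Omega^n_\cU(k)\to k$ has cyclic kernel of codimension one; identifying $L_{\res_\cU(\zeta)}$ with this kernel up to a projective summand gives the desired indecomposability.

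To conclude, I would use that restriction commutes with the Heller operator modulo projective summands, so that the short exact sequence defining $L_\zeta$ restricts (up to projective summands) to one representing $\res_\cU(\zeta)$, yielding an isomorphism
\[ \res_\cU(L_\zeta) \cong L_{\res_\cU(\zeta)}\oplus Q\]
with $Q$ projective. On the other hand, $\res_\cU(L_\zeta)=\res_\cU(X)\oplus\res_\cU(Y)$, and since endo-triviality is preserved under restriction, each of $\res_\cU(X)$ and $\res_\cU(Y)$ possesses a unique non-projective indecomposable summand (itself endo-trivial of stable type $[p\!-\!1]$, hence non-zero). Thus the left-hand side contains two non-projective indecomposable summands, whereas the right-hand side contains exactly one (namely $L_{\res_\cU(\zeta)}$), contradicting the Krull--Schmidt theorem.

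The principal obstacle I expect is the intermediate step in the complexity-one case, where Theorem \ref{COD3} is unavailable and one must argue directly with the uniserial structure of $k\cU$; a secondary technical point is to make the stable identification $\res_\cU(L_\zeta)\cong L_{\res_\cU(\zeta)}\oplus Q$ sufficiently precise in the ordinary module category to invoke Krull--Schmidt, which comes down to checking that the representative of $\res_\cU(\zeta)$ obtained by restricting $\hat\zeta:\Omega^n_\cG(k)\to k$ agrees with a genuine Carlson-module cocycle up to a map factoring through a projective.
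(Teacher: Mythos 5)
Your proof is correct and essentially coincides with the paper's: both rest on Lemma \ref{COD1} (reducing a decomposable $L_\zeta$ to $X\oplus Y$ with two endo-trivial summands of stable type $[p\!-\!1]$), on the restriction isomorphism $L_\zeta|_\cU\cong L_{\res_\cU(\zeta)}\oplus ({\rm proj.})$, and on Theorem \ref{COD3} together with the uniserial structure of $k\cU$ in the complexity-one case; you merely reorganize the endgame, establishing indecomposability of $L_{\res_\cU(\zeta)}$ in both cases first and then counting non-projective indecomposable summands via Krull--Schmidt, whereas the paper deduces that $L_{\res_\cU(\zeta)}$ would have to be decomposable and lets Theorem \ref{COD3} force $\cx_\cU(k)=1$, where the uniserial structure gives the contradiction. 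One small correction: the identification $k\cU\cong k[X]/(X^{p^r})$ for an abelian unipotent group scheme of complexity $1$ should be quoted from \cite[(14.4)]{Wa}, as in the paper's proof, rather than from the remark following Corollary \ref{TG4}, which concerns representation-finite unipotent groups.
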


\begin{proof} Let $\eta \in \HH^n(\cU,k)\setminus\{0\}$ be such that $\eta = \res_\cU(\zeta)$. General theory (cf.\ \cite[p.190]{Be2}) provides a projective $\cU$-module $P$ such that
\[ L_\zeta|_\cU \cong L_\eta \oplus P.\]
If $L_\zeta$ is decomposable, then Lemma \ref{COD1} shows that $L_\zeta \cong M\oplus N$, with $M$ and $N$ having constant stable Jordan type $[p\!-\!1]$. Thus, $M|_\cU$ and 
$N|_\cU$ are non-projective $\cU$-modules, whose projective-free parts are summands of $L_\eta$. Consequently, $L_\eta$ is decomposable, and Theorem \ref{COD3} implies that $\dim 
\Pi(\cU) = 0$. 

Thanks to \cite[(14.4)]{Wa}, there exist $r_1,\ldots, r_n \in \NN$ such that $k\cU \cong k[X_1,\ldots, X_n]/(X_1^{p^{r_1}},\ldots , X_n^{p^{r_n}})$. Consequently, $n = \dim 
\cV_\cU(k) = \dim \Pi(\cU)+1 = 1$, so that $k\cU$ is a Nakayama algebra. As noted earlier, the Auslander-Reiten translation $\tau_\cU$ of the local algebra $k\cU$ coincides with the 
square of the Heller translate $\Omega_\cU$. Thus, \cite[(IV.2.10)]{ARS} yields $\Omega^n_\cU(k) \cong \Omega_\cU(k)$, implying that $\Soc_\cU(\Omega^n_\cU(k)) \cong k$ is 
simple. Hence $L_\eta \subseteq \Omega^n_\cU(k)$ is indecomposable, a contradiction.  \end{proof}

\bigskip
\noindent
We finally address the case of finite groups:

\bigskip

\begin{Cor} \label{COD5} Let $G$ be a finite group. If $\zeta \in \HH^n(G,k)\setminus\{0\}$ has odd degree, then $L_\zeta$ is indecomposable. \end{Cor}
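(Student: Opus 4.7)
The plan is to reduce to Theorem~\ref{COD3} whenever its saturation-rank hypothesis is verifiable for finite groups, and otherwise to dispatch the low-complexity exception by restriction to a Sylow $p$-subgroup. Since $L_\zeta$ lies in the principal block $\cB_0(G)$ and the canonical surjection $\pi : G \twoheadrightarrow G/G_{\rm lr} = G/O_{p'}(G)$ induces an equivalence $\modd\, \cB_0(G/G_{\rm lr}) \stackrel{\sim}{\lra} \modd\, \cB_0(G)$ together with an isomorphism $\HH^\ast(G/G_{\rm lr},k) \stackrel{\sim}{\lra} \HH^\ast(G,k)$ under which Carlson modules correspond (as in the opening of the proof of Theorem~\ref{COD3}), I may replace $G$ by $G/G_{\rm lr}$ and so assume $O_{p'}(G) = 1$. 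The argument then splits according to the complexity $\cx_G(k) = \dim \Pi(G) + 1$; the case $\cx_G(k) = 0$ is vacuous, since it forces $\HH^n(G,k) = 0$ for $n > 0$.

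In the case $\cx_G(k) \ge 2$, I aim to verify that $\srk(G) \ge 2$, so that Theorem~\ref{COD3} applies verbatim. By Quillen's stratification theorem (\cite{AE}), the irreducible components of $\Pi(G)$ are in bijection with $G$-conjugacy classes of maximal elementary abelian $p$-subgroups $E \subseteq G$, the component indexed by $E$ having dimension $\rk(E)-1$. Suppose for contradiction that there exists a maximal elementary abelian $E_1$ of rank $1$; the hypothesis $\cx_G(k) \ge 2$ also provides a maximal $E_2$ with $\rk(E_2) \ge 2$. By Quillen stratification, the two associated components of $\Pi(G)$ share a point only if some non-trivial elementary abelian subgroup of $E_1$ is $G$-conjugate to a subgroup of $E_2$. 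As $\rk(E_1) = 1$, the only candidate is $E_1$ itself, but $E_1^g \subseteq E_2$ for some $g$ would place the rank-$1$ maximal elementary abelian $E_1^g$ strictly inside $E_2$, contradicting the maximality of $E_1^g$. Hence the $0$-dimensional component attached to $E_1$ is disjoint from every other component of $\Pi(G)$, contradicting the connectedness of $\Pi(G) = \Pi(G)_k$ established in \cite[(3.4)]{CFP}. Consequently every maximal elementary abelian has rank $\ge 2$, so $\srk(G) \ge 2$, and Theorem~\ref{COD3} yields the indecomposability of $L_\zeta$.

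Finally, the case $\cx_G(k) = 1$ forces the Sylow $p$-subgroups of $G$ to be cyclic, since $p \ge 3$ excludes the generalized-quaternion case. Pick a Sylow $p$-subgroup $P$, so that $kP \cong k[t]/(t^{|P|})$ is local uniserial, and set $\eta := \res_P(\zeta)$. Because $[G\!:\!P]$ is coprime to $p$, the restriction map $\res_P$ on cohomology is injective, so that $\eta \ne 0$. The standard formula $L_\zeta|_P \cong L_\eta \oplus (\text{proj.})$ (cf.\ \cite[p.~190]{Be2}), together with the observation that $L_\eta \subseteq \Omega^n_P(k)$ is a codimension-one submodule of the uniserial module $\Omega^n_P(k)$, shows that $L_\zeta|_P$ has exactly one non-projective indecomposable summand. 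Were $L_\zeta = A \oplus B$ a non-trivial decomposition---so that $A$ and $B$ are both non-projective, $L_\zeta$ being projective-free---D.G.~Higman's projectivity criterion would force each of $A|_P$ and $B|_P$ to contribute a non-projective summand to $L_\zeta|_P$, a contradiction. The principal obstacle will be the Quillen-stratification disjointness step of the preceding paragraph: one must use the detection properties of $\pi$-points to rule out that a representative of the rank-$1$ component of $\Pi(G)$ becomes equivalent to some $\pi$-point factoring through a larger elementary abelian subgroup.
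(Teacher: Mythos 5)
Your overall skeleton — either $\srk \ge 2$ and Theorem \ref{COD3} applies, or the Sylow $p$-subgroup is cyclic and one argues by restriction — is the same dichotomy the paper uses, and your endgame in the cyclic case is correct: injectivity of $\res_P$, $L_\zeta|_P \cong L_\eta\oplus(\mathrm{proj.})$ with $L_\eta$ a codimension-one submodule of the uniserial module $\Omega^n_P(k)$, and Higman's criterion plus Krull--Remak--Schmidt; this is a perfectly good (and slightly more direct) substitute for the paper's appeal to Corollary \ref{COD4}. The problem lies in your treatment of the case $\cx_G(k)\ge 2$, and you flag it yourself: the claim that the point of $\Pi(G)$ attached to a rank-one maximal elementary abelian $E_1$ is \emph{disjoint} from $\iota_{\ast,E_2}(\Pi(E_2))$ is exactly the statement that no $\pi$-point factoring through $E_1$ is equivalent to a $\pi$-point factoring through $E_2$, and nothing you write establishes this. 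The covering statement $\Pi(G)=\bigcup_E \iota_{\ast,E}(\Pi(E))$ of \cite[(4.2)]{FPe2} does not suffice; you need the strong (disjoint-strata) form of Quillen's stratification transported through the homeomorphism $\Pi(G)\cong \mathrm{Proj}\,\HH^\bullet(G,k)$ of \cite{FPe2}, together with closedness of the images, and the reference \cite{AE} you give is for the dimension theorem, not for this. So as written the $\cx_G(k)\ge 2$ case has a genuine, self-acknowledged gap (one that is fillable, but only by invoking machinery you have not supplied).

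The paper avoids this entire detour with an elementary observation that you could substitute for your second paragraph: if $E_0$ is a maximal elementary abelian of rank one and $P\supseteq E_0$ is a Sylow $p$-subgroup, then $C(P)_p:=\Omega_1(Z(P))$ is nontrivial and is contained in \emph{every} maximal elementary abelian subgroup of $P$ (the product with $C(P)_p$ is again elementary abelian); hence $C(P)_p=E_0$, every maximal elementary abelian of $P$ contains and therefore equals $E_0$, and by Sylow conjugacy every maximal elementary abelian of $G$ is conjugate to $E_0$. Quillen's dimension theorem then gives $\cx_G(k)=1$. In other words, the configuration you spend your second paragraph trying to exclude — a rank-one maximal elementary abelian coexisting with $\cx_G(k)\ge 2$ — never occurs for purely group-theoretic reasons, so neither the connectedness of $\Pi(G)$ (\cite[(3.4)]{CFP}) nor any stratification-disjointness for $\pi$-points is needed; this is precisely how the paper reduces to the cyclic-Sylow case, where your argument (or Corollary \ref{COD4}) finishes the proof.
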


\begin{proof} In view of Theorem \ref{COD3} and its proof, we may assume that $\srk(G)=1$.

\medskip

($\ast$) {\it We have $\cx_G(k)=1$}.

\smallskip
\noindent
Thanks to \cite[(4.2)]{FPe2}, we have
\[ \Pi(G) = \bigcup_E \iota_{\ast,E}(\Pi(E)),\]
where $E$ runs through the maximal $p$-elementary abelian subgroups of $G$. Since $\srk(G)=1$, there exists a maximal $p$-elementary abelian subgroup $E_0 \subseteq G$ such that
$\cx_{E_0}(k)=1$. Consequently, $E_0 \cong \ZZ/(p)$.

Let $P \subseteq G$ be a Sylow $p$-subgroup of $G$ containing $E_0$ and let $C(P)_p := \{ x \in C(P) \ ; \ x^p=1\}$ be the subgroup of those elements of the center $C(P)$ of $P$, 
whose order is a divisor of $p$. Then $C(P)_p$ is a non-trivial normal subgroup of $P$ and, given a maximal $p$-elementary abelian subgroup $F \subseteq P$, the group $C(P)_pF$ is 
$p$-elementary abelian, whence
\[ \{e\} \subsetneq C(P)_p \subseteq F.\]
In particular, $C(P)_p = E_0$, so that $F=E_0$.

Now let $E$ be a maximal $p$-elementary abelian subgroup of $G$. Sylow's theorem provides an element $g \in G$ such that $gEg^{-1} \subseteq P$. By the above, we conclude that 
$gEg^{-1} = E_0$, whence $\rk(E) = 1$. Quillen's dimension theorem (cf.\ \cite{AE}) now implies $\cx_G(k)=1$, as desired. \hfill $\diamond$

\medskip
\noindent
Let $P \subseteq G$ be a Sylow $p$-subgroup. Owing to \cite[(4.2.2)]{Ev}, the canonical restriction map
\[ {\rm res} : \HH^\ast(G,k) \lra \HH^\ast(P,k)\]
is injective, so that $\eta := {\rm res}(\zeta) \ne 0$. By ($\ast$), we have $\cx_G(k)=1$, and \cite[(XII.11.6)]{CE} shows that $P$ is cyclic. The assertion now follows from Corollary 
\ref{COD4}. \end{proof}

\bigskip

\begin{Remark} Suppose that $\cG$ is a finite group scheme. Let $\zeta \in \HH^n(\cG,k)\setminus\{0\}$ be an element of arbitrary positive degree such $L_\zeta$ is quasi-simple. Then 
$L_\zeta$ has exactly one successor in $\Gamma_s(\cG)$ and the middle term of the almost split sequence originating in $L_\zeta$ is either indecomposable or it possesses a non-zero 
projective summand. In the latter case, \cite[(V.5.5)]{ARS} provides a principal indecomposable $\cG$-module $P$ such that $L_\zeta \cong \Rad(P)$. Consequently, 
$\Omega^{-1}_\cG(L_\zeta)$ is simple. On the other hand, there is a short exact sequence
\[ (0) \lra k \lra \Omega^{-1}_\cG(L_\zeta) \lra \Omega_\cG^{n-1}(k) \lra (0),\]
see \cite[(5.9.4)]{Be2}. Thus, the left-hand arrow is an isomorphism, so that $\Omega_\cG^{n-1}(k) = (0)$, a contradiction. As a result, the almost split sequence originating in $L_\zeta$
has an indecomposable middle term. \end{Remark}

\bigskip

\section{Endo-trivial Modules}\label{S:ET}
Endo-trivial modules play an important r\^ole in the modular representation theory of finite groups, where they occur in connection with the study of sources of simple modules. The reader 
may consult \cite{CT} for the classification of endo-trivial modules over $p$-groups.

Thanks to \cite[(5.6)]{CFP}, a $\cG$-module $M$ is endo-trivial if and only if there exists $i \in \{1,p\!-\!1\}$ such that $M$ has constant stable Jordan type $\StJt(M) = \{[i]\}$. 
Consequently, the $\tau_\cG$-orbit of an indecomposable endo-trivial module consists entirely of endo-trivial modules.

The following immediate consequence of recent work by Dave Benson \cite{Be3} characterizes endo-trivial modules as being precisely the modules of constant Jordan type with one 
non-projective block:

\bigskip

\begin{Proposition} \label{ET1} Let $\cG$ be a finite group scheme possessing an abelian unipotent subgroup $\cU \subseteq \cG$ of complexity $\cx_\cU(k) \ge 2$. If $M \in \modd \cG$ 
has constant Jordan type $[i]\oplus n[p]$, then $i \in \{1,p\!-\!1\}$. In particular, $M$ is endo-trivial. \end{Proposition}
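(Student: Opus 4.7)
The plan is to reduce the problem to the subgroup $\cU$ and invoke Benson's theorem \cite{Be3}, followed by an appeal to \cite[(5.6)]{CFP} for the final conclusion.

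First, I would observe that restriction preserves constant Jordan type. Let $\iota : \cU \hookrightarrow \cG$ denote the canonical inclusion. Every $\pi$-point $\alpha_K : \fA_{p,K} \lra K\cU$ of $\cU$ composes with $\iota_K : K\cU \hookrightarrow K\cG$ to give a $\pi$-point of $\cG$, and the corresponding Jordan types agree:
\[ \alpha_K^\ast((M|_\cU)_K) = (\iota_K\circ\alpha_K)^\ast(M_K). \]
Since $M$ has constant Jordan type $[i]\oplus n[p]$ on $\Pi(\cG)$, the restriction $M|_\cU$ therefore has constant Jordan type $[i]\oplus n[p]$ on $\Pi(\cU)$, the latter being non-empty because $\cx_\cU(k)\ge 2$.

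Next, I would invoke Benson's theorem. By \cite[(14.4)]{Wa}, the algebra of measures of the abelian unipotent group scheme $\cU$ has the form $k\cU \cong k[X_1,\ldots,X_s]/(X_1^{p^{r_1}},\ldots,X_s^{p^{r_s}})$ for some $r_1,\ldots,r_s\ge 1$, where $s = \cx_\cU(k) \ge 2$. Benson's result \cite{Be3} then applies to $M|_\cU$ and forces $i\in\{1,p\!-\!1\}$. With this in hand, the stable Jordan type of $M$ is $\{[i]\}$ with $i\in\{1,p\!-\!1\}$, and the endo-triviality statement is now an immediate consequence of \cite[(5.6)]{CFP}, as recalled in the sentence preceding the proposition.

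The main obstacle lies in the applicability of Benson's theorem to $k\cU$ when some exponent $r_i$ exceeds $1$. If \cite{Be3} is only formulated for elementary abelian situations, I would further restrict $M|_\cU$ along the inclusion of the first Frobenius kernel $\cU_{(1)}\subseteq \cU$; the algebra $k\cU_{(1)} \cong k[X_1,\ldots,X_s]/(X_i^p)$ coincides, as an algebra, with the group algebra of $(\ZZ/p)^s$, and the restriction argument above shows that $M|_{\cU_{(1)}}$ inherits constant Jordan type $[i]\oplus n'[p]$ for some $n'\ge 0$. Applying Benson's theorem at this elementary abelian level then yields $i\in\{1,p\!-\!1\}$, completing the proof.
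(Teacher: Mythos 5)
Your strategy coincides with the paper's---restrict to $\cU$, reduce to an elementary abelian situation, quote Benson's theorem, and finish with \cite[(5.6)]{CFP}---and your first step is correct as stated: every $\pi$-point of $\cU$ is a $\pi$-point of $\cG$, so $M|_\cU$ has constant Jordan type $[i]\oplus n[p]$. The gap lies in the reduction you propose when some exponent $r_i$ exceeds $1$ (and, as you anticipate, \cite[(1.1)]{Be3} is a statement about modules over $k(\ZZ/(p))^r$, so this reduction is genuinely needed). Passing to the first Frobenius kernel $\cU_{(1)}$ does not achieve it. First, $\cU$ need not be infinitesimal: an abelian unipotent finite group scheme may have a nontrivial reduced part (for instance a constant abelian $p$-group), which is invisible to the Frobenius kernel; in the extreme case where $\cU$ is reduced one has $\cU_{(1)}$ trivial, so $\cx_{\cU_{(1)}}(k)=0$ and no information survives. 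Second, even for infinitesimal $\cU$ the identification $k\cU_{(1)}\cong k[X_1,\ldots,X_s]/(X_1^p,\ldots,X_s^p)$ fails: take $\cU$ of height one corresponding to the abelian restricted Lie algebra $\fu=kx\oplus ky\oplus kz$ with $x^{[p]}=y$, $y^{[p]}=0=z^{[p]}$. Then $\cU=\cU_{(1)}$, $\cx_\cU(k)=2$, and $k\cU_{(1)}=k\cU\cong k[X,Z]/(X^{p^2},Z^p)$, which is not of the asserted form. So taking Frobenius kernels neither strips the reduced part nor truncates the exponents $r_i$.

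The paper's repair of precisely this point works with the algebra structure rather than with subgroup schemes: by \cite[(1.6)]{Fa3} one has $k\cU\cong U_0(\fu)$ for an abelian $p$-unipotent restricted Lie algebra $\fu$, and one restricts $M$ further to the nullcone $\cV_\fu=\{x\in\fu\ ;\ x^{[p]}=0\}$, a $p$-subalgebra with $U_0(\cV_\fu)\cong k(\ZZ/(p))^r$ as $k$-algebras and $r=\dim\cV_\fu=\cx_\cU(k)\ge 2$ (in the example above, $\cV_\fu=ky\oplus kz$). The cyclic shifted subgroup evaluations over $U_0(\cV_\fu)$ all have image in $K\cU$ and hence are $\pi$-points of $\cG$, so the constancy you established over $\cG$ passes down with the same type $[i]\oplus n[p]$; then \cite[(1.1)]{Be3} gives $i\in\{1,p\!-\!1\}$ and \cite[(5.6)]{CFP} gives endo-triviality. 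In short, your outline is the right one, but the elementary abelian object you need is this nullcone (the maximal subalgebra on which $p$-th powers vanish), not the first Frobenius kernel; as written, your fallback only covers the case where $k\cU$ is already isomorphic to $k(\ZZ/(p))^s$.
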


\begin{proof} Since every $\pi$-point of $\cU$ is also a $\pi$-point of $\cG$, it readily follows that the $\cU$-module $M|_\cU$ has constant Jordan type $[i]\oplus n[p]$.  We may 
therefore assume that $\cG = \cU$. As noted in \cite[(1.6)]{Fa3}, this implies that $k\cG = U_0(\fu)$ is the restricted enveloping algebra of an abelian $p$-unipotent restricted Lie algebra 
$\fu$. By assumption, the nullcone
\[\cV_\fu := \{ x \in \fu \ ; \ x^{[p]} = 0\}\]
is a $p$-subalgebra of dimension $\ge 2$, and $M|_{\cV_\fu}$ is a $\cV_\fu$-module of constant Jordan type $[i]\oplus n[p]$. Writing $r := \dim \cV_\fu$, we have an isomorphism
$U_0(\cV_\fu) \cong k(\ZZ/(p))^r$ of $k$-algebras, and \cite[(1.1)]{Be3} yields $i \in \{1,p\!-\!1\}$. In view of \cite[(5.6)]{CFP}, this implies that $M$ is endo-trivial. \end{proof}

\bigskip

\begin{Remarks} (1) If $G$ is a finite group, then Quillen's dimension theorem implies that the condition of the Proposition is equivalent to $\cx_G(k)\ge 2$.

(2) The infinitesimal group $\SL(2)_1$ has complexity $2$, but also affords indecomposable modules of stable constant Jordan types $[2],\ldots,[p\!-\!2]$ (see Section \ref{S:Ex}).
\end{Remarks}

\bigskip

\begin{Theorem} \label{ET2} Let $\Theta \subseteq \Gamma_s(\cG)$ be a component containing an indecomposable endo-trivial module $M_0$. If $\dim \Pi(\cG)  \ge 1$, then the 
following statements hold:

{\rm (1)} \ A $\cG$-module $M$ in $\Theta$ is endo-trivial if and only if $f_\Theta(M) = 1$.

{\rm (2)} \ If $\bar{T}_\Theta \cong \tilde{A}_{12},\, A_\infty^\infty$, then every $\cG$-module $M \in \Theta$ is endo-trivial.

{\rm (3)} \ If $\dim \Pi(\cG) \ge 2$, then $\Theta \cong \ZZ[A_\infty]$.

{\rm (4)} \ If $\Theta \cong \ZZ[A_\infty]$, then $\ql(M_0) = 1$ and $M \in \Theta$ is endo-trivial if and only if $M \cong \tau^n_\cG(M_0)$ for some $n \in \ZZ$.\end{Theorem}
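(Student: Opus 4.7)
The approach is to first upgrade the hypothesis so that $\Theta$ is locally split, then feed Theorem \ref{FI1} through the characterization of endo-trivials via constant stable Jordan type $[1]$ or $[p\!-\!1]$.

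\emph{Local splitting.} Since $M_0$ is endo-trivial, \cite[(5.6)]{CFP} gives $\alpha_K^\ast((M_0)_K)_{\mathrm{pf}} \in \{[1],[p\!-\!1]\}$ for every $\pi$-point $\alpha_K$, so $\Pi(\cG)_\Theta = \Pi(\cG)_{M_0} = \Pi(\cG)$ has dimension $\ge 1$. By Proposition \ref{AFP3}(2), $\Theta$ must be $\alpha_K$-split for every $\alpha_K \in \Pt(\cG)$; otherwise $\Pi(\cG)_\Theta$ would be a singleton. Thus $\Theta$ is locally split and Theorem \ref{FI1} applies.

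\emph{Proof of (1).} Let $j \in \{1,p\!-\!1\}$ be determined by $\StJt(M_0) = [j]$ (the index is fixed by $\dim_k M_0 \bmod p$ and hence independent of $\alpha_K$). From $\alpha_{K,i}(M_K) = d_i^\Theta(\alpha_K)\,f_\Theta(M)$ and the values $\alpha_{K,j}(M_0)=1$, $\alpha_{K,i}(M_0)=0$ for $i \ne j$, we read off $f_\Theta(M_0)=1$, $d_j^\Theta(\alpha_K)=1$, and $d_i^\Theta(\alpha_K)=0$ for $1 \le i \le p\!-\!1$, $i \ne j$. Consequently $\StJt(M,\alpha_K) = f_\Theta(M)[j]$ for every $M \in \Theta$ and every $\pi$-point $\alpha_K$. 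Applying \cite[(5.6)]{CFP} once more, $M$ is endo-trivial precisely when $f_\Theta(M)=1$.

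\emph{Proof of (2).} For $\bar{T}_\Theta \in \{\tilde{A}_{12},A_\infty^\infty\}$, Corollary \ref{FI2}(2) gives $f_\Theta \equiv 1$, and (1) supplies the conclusion.

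\emph{Proof of (3) and (4).} Part (3) follows from $\Pi(\cG)_\Theta = \Pi(\cG)$ having dimension $\ge 2$, by the very same appeal to \cite[(3.3)]{Fa3} used in Corollary \ref{CNED2}(1). For part (4), Corollary \ref{FI2}(1) identifies $f_\Theta$ with the quasi-length $\ql$ when $\Theta \cong \ZZ[A_\infty]$, so by (1), endo-trivials in $\Theta$ are precisely the quasi-simple vertices; hence $\ql(M_0)=1$, and the quasi-simple vertices of $\ZZ[A_\infty]$ constitute the single $\tau$-orbit $\{\tau_\cG^n(M_0) : n \in \ZZ\}$.

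\emph{Main obstacle.} The only non-routine input is (3), which relies on the correspondence between tree classes of infinite AR-components and the dimension of their $\Pi$-support from \cite[(3.3)]{Fa3}; beyond this citation the argument is a direct transcription of the endo-trivial condition into the language of Theorem \ref{FI1}.
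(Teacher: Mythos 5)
Your proposal is correct and follows essentially the same route as the paper: deduce $\Pi(\cG)_\Theta=\Pi(\cG)$ from endo-triviality of $M_0$, conclude local splitness from $\dim\Pi(\cG)_\Theta\ge 1$ (you cite Proposition \ref{AFP3}(2), the paper cites Lemma \ref{AFP2} — same content), then read off $d^\Theta$ and $f_\Theta(M_0)=1$ from Theorem \ref{FI1} and the constant stable Jordan type $[1]$ or $[p\!-\!1]$ via \cite[(5.6)]{CFP}, with (2)–(4) handled exactly as in the paper by $f_\Theta\equiv 1$, \cite[(3.3)]{Fa3}, and the identification $f_\Theta=\ql$ on $\ZZ[A_\infty]$.
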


\begin{proof} (1) Thanks to \cite[(5.6)]{CFP}, the module $M_0$ has constant Jordan type, so that
\[ \dim \Pi(\cG)_\Theta = \dim \Pi(\cG) \ge 1.\]
By the same token, there exist $i \in \{1,p\!-\!1\}$ and $m_0 \in \NN_0$ such that the module $M_0$ has constant Jordan type
\[ \Jt(M_0) = [i] \oplus m_0[p] .\]
Since $\dim \Pi(\cG)_\Theta \ge 1$, Lemma \ref{AFP2} implies that the component $\Theta$ is locally split. Consequently, Theorem \ref{FI1} yields $d_j^\Theta \equiv 0$ for $1 \le j \ne i
\le p\!-\!1$ and $d_i^\Theta \equiv 1$, so that
\[ \Jt(M,\alpha_K) = f_\Theta(M)[i] \oplus \alpha_{K,p}(M)[p] \ \ \ \ \ \ \forall \ M \in \Theta, \, \alpha_K \in \Pt(\cG).\]
Applying \cite[(5.6)]{CFP} again, we conclude that $M \in \Theta$ is endo-trivial exactly when $f_\Theta(M) = 1$.

(2) In this case, we have $f_\Theta \equiv 1$, so that (1) yields the assertion.

(3) In view of $\Pi(\cG) = \Pi(\cG)_\Theta$, the assertion follows directly from \cite[(3.3)]{Fa3}.

(4) If $\Theta \cong \ZZ[A_\infty]$, then $f_\Theta(M) = \ql(M)$ for every $M \in \Theta$. Hence the endo-trivial modules are the quasi-simple modules. As these form the $\tau_\cG$-orbit 
of $M_0$, our assertion follows. \end{proof}

\bigskip

\begin{Remark} Part (1) of the foregoing result implies that components of tree class $D_\infty, \tilde{D}_n, \tilde{E}_6, \tilde{E}_7$ and $\tilde{E}_8$ containing an endo-trivial module 
have $2, 4, 3,2$, and $1$ $\tau_\cG$-orbits of endo-trivial modules, respectively.\end{Remark}

\bigskip
\noindent
We turn to Carlson's construction \cite[(4.5),(9.3),(9.4)]{Ca2} of endo-trivial modules, which we shall study from the vantage point of  $\Pi$-supports and Jordan types. Let $\zeta \in 
\HH^{2n}(\cG,k)\setminus \{0\}$ for some $n \ge 1$. Given a proper summand $M|L_\zeta$, we write
\[ L_\zeta = M \oplus M'\]
and consider the push-out diagram
\begin{equation} \label{E:diag} \begin{CD} @. (0) @. (0) @.  @. \\
@. @VVV @VVV  @. \\
@. M' @= M'@. @.\\
@. @VVV @VVV  @.\\
(0) @ >>> L_\zeta @>>> \Omega^{2n}_\cG(k) @>>> k @>>> (0)\\
@. @VVV @VVV @| @.\\
(0) @ >>> M @>>> N_M @>>> k @>>> (0)\\
@. @VVV @VVV  @.\\
@. (0) @. \ (0). @.  @.
\end{CD} \end{equation}
The following application of Lemma \ref{CSM1} shows that decomposable Carlson modules often give rise to indecomposable endo-trivial modules that are not syzygies of the trivial module.

\bigskip

\begin{Theorem} \label{ET3} Suppose that $\zeta \in \HH^\bullet(\cG,k)$ is a non-nilpotent homogeneous element of positive degree. Let $M|L_\zeta$
be a proper summand. Then the following statements hold:

{\rm (1)} \ The $\cG$-module $N_M$ is endo-trivial, indecomposable, and of constant stable Jordan type $[1]$.

{\rm (2)} \ If $p \ge 3$ and $\dim \Pi(\cG) \ge 2$, then $N_M \not \cong \Omega^m_\cG(k)$ for all $m \in \ZZ$.

{\rm (3)} \ If $p \ge 3$ and $\dim \Pi(\cG) \ge 2$, then the module $N_M \in \ZZ[A_\infty]$ is quasi-simple and does not belong to the stable AR-components containing $k$ or 
$\Omega_\cG(k)$. \end{Theorem}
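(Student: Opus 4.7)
\emph{Part (1).} For an arbitrary $\pi$-point $\alpha_K \in \Pt(\cG)$ I analyze $\alpha_K^\ast(N_{M,K})$ using both the bottom row and middle column of diagram \eqref{E:diag}. By Lemma \ref{CSM1}(2), $\Pi(\cG)_{L_\zeta} = \Pi(\cG)_M \sqcup \Pi(\cG)_{M'}$ is a disjoint decomposition, so every $[\alpha_K]$ lies in at most one of these supports. If $[\alpha_K] \not\in \Pi(\cG)_M$, the $\fA_{p,K}$-module $\alpha_K^\ast(M_K)$ is projective and the pull-back of the bottom row $(0) \to M \to N_M \to k \to (0)$ is an extension of $K$ by a projective, which necessarily splits; hence $\StJt(N_M, \alpha_K) = [1]$. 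If $[\alpha_K] \in \Pi(\cG)_M$, then $\alpha_K^\ast(M'_K)$ is projective; combined with $\alpha_K^\ast(\Omega^{2n}_\cG(k)_K) \cong [1] \oplus (\text{proj.})$, the pull-back of the middle column yields the same conclusion. Thus $N_M$ has constant stable Jordan type $[1]$, and by \cite[(5.6)]{CFP}, $N_M$ is endo-trivial. For indecomposability: any non-zero projective summand $P$ of $N_M$ would, by lifting along the surjection $\Omega^{2n}_\cG(k) \to N_M$ and using the injectivity of $P$, yield a projective direct summand of $\Omega^{2n}_\cG(k)$, contradicting the projective-freeness of the minimal syzygy. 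Hence $N_M$ is projective-free, and any non-trivial decomposition $N_M = U \oplus V$ would express the partition $[1]$ as a sum of two non-zero partitions, which is impossible.

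\emph{Part (2).} Suppose $N_M \cong \Omega^m_\cG(k)$. The constant stable Jordan type $[1]$ forces $m = 2s$ even, and the case $s = 0$ is excluded because it would give $M = 0$. The bottom row of \eqref{E:diag} then identifies $M$ with $L_\eta$ for some $\eta \in \HH^{2s}(\cG, k) \setminus \{0\}$. Since $\Pi(\cG)_M \subseteq \Pi(\cG)_{L_\zeta} \subsetneq \Pi(\cG)$, Lemma \ref{CSM1}(3) forces $\eta$ to be non-nilpotent, so $L_\eta = M$ is indecomposable and $\Pi(\cG)_M = Z(\eta)$. Dimension-shifting the middle column identifies the surjection $\Omega^{2n}_\cG(k) \to \Omega^{2s}_\cG(k)$ with an element $\theta \in \HH^{2(n-s)}(\cG, k)$, and the factorization of $\zeta$ through $\eta$ yields $\zeta = \eta \cdot \theta$ in $\HH^\bullet(\cG, k)$; the same non-nilpotency argument applies to $\theta$. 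By Lemma \ref{CSM1}(2), $Z(\eta) = \Pi(\cG)_M$ is a connected component of $Z(\zeta) = Z(\eta) \cup Z(\theta)$, hence clopen in $Z(\zeta)$, which forces every connected component of $Z(\theta)$ to lie entirely in $Z(\eta)$ or entirely in the complementary closed set $\Pi(\cG)_{M'}$. Under the hypothesis $\dim \Pi(\cG) \ge 2$, a Krull-type intersection argument applied within an irreducible component of $\cV_\cG(k)$ of dimension $\ge 3$ forces $Z(\eta)$ and any maximal-dimensional component of $Z(\theta)$ to share a point; hence every such component must lie in $Z(\eta)$, contradicting $M' \ne 0$.

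\emph{Part (3).} The module $N_M$ has full $\Pi$-support, so the component $\Theta$ containing it satisfies $\Pi(\cG)_\Theta = \Pi(\cG)$, and Lemma \ref{AFP2}(2) shows that $\Theta$ is locally split; the hypothesis $\dim \Pi(\cG) \ge 2$ combined with \cite[(3.3)]{Fa3} then yields $\Theta \cong \ZZ[A_\infty]$. Theorem \ref{FI1} forces $d_1^\Theta(\alpha_K) f_\Theta(N_M) = 1$, whence $f_\Theta(N_M) = \ql(N_M) = 1$ by Corollary \ref{FI2}(1), so $N_M$ is quasi-simple. The component $\Theta_{\Omega_\cG(k)}$ contains only modules of stable Jordan type of the form $f_\Theta \cdot [p-1]$ (Theorem \ref{FI1} anchored at the quasi-simple $\Omega_\cG(k)$), which excludes $N_M$. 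If $N_M$ belonged to $\Theta_k$, Theorem \ref{ET2}(4) would give $N_M \cong \tau_\cG^j(k)$ for some $j$; expressing $\tau_\cG^j(k) \cong \Omega^{2j}_\cG(k_{j\chi_0})$ in terms of the modular character $\chi_0$, the bottom row identifies $M$ with a character-twisted generalized Carlson module, and the support-variety argument of part (2) extends to yield a contradiction. The main obstacle is the intersection argument in part (2), which requires a careful application of a connectedness theorem for codimension-$1$ closed subsets of the possibly reducible but always connected projective scheme $\Pi(\cG)$.
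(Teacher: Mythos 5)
Your part (1) is essentially the paper's own argument and is sound, except that the final step ("a non-trivial decomposition would express the partition $[1]$ as a sum of two non-zero partitions") is too quick: the two summands could a priori have disjoint $\Pi$-supports, each contributing $[1]$ on its own support, so you need either \cite[(3.7)]{CFP} (summands of modules of constant Jordan type have constant Jordan type) or the connectedness of $\Pi(\cG)$ \cite[(3.4)]{CFP} to finish; this is a minor repair. The genuine gap is the concluding step of part (2). The factorization $\zeta=\eta\,\theta$ with $M\cong L_\eta$ and $Z(\zeta)=Z(\eta)\cup Z(\theta)$ is legitimate, but the claim that $\dim \Pi(\cG)\ge 2$ forces $Z(\eta)$ to meet every (or even every maximal-dimensional) irreducible component of $Z(\theta)$ away from the origin is unproven and is false as a purely geometric statement: $\dim\Pi(\cG)\ge 2$ only says that \emph{some} irreducible component of $\cV_\cG(k)$ has dimension $\ge 3$, while $\cV_\cG(k)$ may well have further components of dimension $1$ or $2$; on a two-dimensional component the loci $Z(\eta)$ and $Z(\theta)$ are finite unions of lines through $0$ and can be disjoint off the origin, which is precisely the phenomenon behind the $\fsl(2)$ example and the reason Theorem \ref{CNN1} requires equidimensionality and $n\ge\frac{m+3}{2}$. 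Moreover, even if the top-dimensional components of $Z(\theta)$ were forced into $Z(\eta)$, the separated piece $\Pi(\cG)_{M'}$ could sit on a low-dimensional component of $Z(\theta)$, so no contradiction results. You flag this yourself as the ``main obstacle,'' and indeed it is not a technicality one can patch by a connectedness theorem under the stated hypotheses.

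The paper's proof of (2) avoids intersection theory entirely and works for arbitrary (reducible, non-equidimensional) support varieties: since $\Pi(\cG)_M\cap\Pi(\cG)_{M'}=\emptyset$, the module $M\otimes_kM'$ is projective by \cite[(3.2),(5.3)]{FPe2}; tensoring the column $(0)\to M'\to\Omega^{2n}_\cG(k)\to\Omega^{2m}_\cG(k)\to(0)$ of Diagram \ref{E:diag} with $M$, and the bottom row with $M'$, shows that both $M$ and $M'$ are periodic, whence $\dim\Pi(\cG)_{L_\zeta}=\max\{\dim\Pi(\cG)_M,\dim\Pi(\cG)_{M'}\}=0$ and $\dim\Pi(\cG)\le 1$, a contradiction. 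This argument also extends verbatim to the twisted syzygies $\Omega^m_\cG\circ\nu^n_\cG(k)$ needed for part (3), using that the Nakayama twist is by a character of finite order. Your part (3) is fine where it stands on its own feet: the reduction to $\Theta\cong\ZZ[A_\infty]$ and quasi-simplicity via Theorem \ref{FI1} matches the paper, and your exclusion of $\Theta_{\Omega_\cG(k)}$ by comparing stable Jordan types ($[1]$ versus $f_\Theta\cdot[p\!-\!1]$, $p\ge 3$) is a correct and pleasant shortcut; but your exclusion of $\Theta_k$ again routes through the part-(2) argument and therefore inherits the same gap.
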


\begin{proof} (1) Let $\alpha_K \in \Pt(\cG)$ be a $\pi$-point. Application of the exact functor $\alpha_K^\ast$ to Diagram \ref{E:diag} yields
\[ \begin{CD} @. (0) @. (0) @.  @. \\
@. @VVV @VVV  @. \\
@. \alpha_K^\ast(M'_K) @= \alpha_K^\ast(M'_K)@. @.\\
@. @VVV @VVV  @.\\
(0) @ >>> \alpha_K^\ast((L_\zeta)_K) @>>> K\oplus ({\rm proj.})  @>>> K @>>> (0)\\
@. @VVV @VVV @| @.\\
(0) @ >>> \alpha_K^\ast(M_K) @>>> \alpha_K^\ast((N_M)_K) @>>> K @>>> (0)\\
@. @VVV @VVV  @.\\
@. (0) @. \ (0), @.  @.
\end{CD} \]
where we have used $\alpha_K^\ast(\Omega^{2n}_\cG(k)_K) \cong \Omega^{2n}_{\fA_{p,K}}(K)\oplus ({\rm proj.}) \cong K \oplus ({\rm proj.})$. Lemma \ref{CSM1} ensures that at 
least one of the $\fA_{p,K}$-modules $\alpha_K^\ast(M_K)$ and $\alpha_K^\ast(M'_K)$ is projective. If $\alpha_K^\ast(M_K)$ is projective, then the lower row is split exact and
\[ \alpha_K^\ast((N_M)_K) \cong K\oplus ({\rm proj.}).\]
Alternatively, the right-hand column splits, so that
\[ \alpha_K^\ast((N_M)_K) \oplus ({\rm proj.}) \cong K\oplus ({\rm proj.}).\]
We thus obtain $\Jt(N_M,\alpha_K) = [1]\oplus n[p]$ in either case, and \cite[(5.6)]{CFP} shows that $N_M$ is endo-trivial.

Any indecomposable summand of $N_M$ is either endo-trivial or projective, with exactly one summand being endo-trivial. The right-hand column of Diagram \ref{E:diag} shows that the 
projective summands of $N_M$ are also summands of $\Omega^{2n}_\cG(k)$ and thus are equal to zero. Consequently, the module $N_M$ is indecomposable.

(2) Since $\Jt(N_M) = [1]\oplus n[p]$ and $p \ge 3$, it follows that $N_M \not \cong \Omega_\cG^m(k)$ whenever $m \in \ZZ$ is odd.

Assume that $N_M \cong \Omega_\cG^{2m}(k)$ for some $m \in \ZZ$. As $M$ and $M'$ are both non-zero, we have $m \ne n,0$, and the right-hand exact column of Diagram \ref{E:diag}
provides an exact sequence
\[ (0) \lra M' \lra \Omega^{2n}_\cG(k) \lra \Omega^{2m}_\cG(k) \lra (0).\]
Lemma \ref{CSM1} in conjunction with \cite[(3.2)]{FPe2} implies that $\emptyset = \Pi(\cG)_{M}\cap \Pi(\cG)_{M'} = \Pi(\cG)_{M\otimes_KM'}$, so that $M\!\otimes_k\!M'$ is
projective, cf.\ \cite[(5.3)]{FPe2}. Tensoring the above sequence with $M$ thus yields an isomorphism
\[ \Omega^{2n}_\cG(M) \oplus ({\rm proj.}) \cong \Omega_\cG^{2m}(M) \oplus ({\rm proj.}).\]
As $M$ is projective-free, we obtain $\Omega_\cG^{2(n-m)}(M) \cong M$. Accordingly, the $\cG$-module $M$ is periodic, and
\[\dim \Pi(\cG)_M = \dim \cV_\cG(M)-1 = \cx_\cG(M)-1 = 0.\]
On the other hand, the lower exact row of Diagram \ref{E:diag} now reads as
\[ (0) \lra M \lra \Omega^{2m}_\cG(k) \lra k \lra (0),\]
and tensoring with $M'$ yields
\[ \Omega^{2m}_\cG(M')\oplus ({\rm proj.}) \cong M' \oplus ({\rm proj.}).\]
Since $M'$ has no projective summands, we conclude that $M'$ is periodic with $\dim \Pi(\cG)_{M'} = 0$. It follows that
\[\dim \Pi(\cG) \le \dim \Pi(\cG)_{L_\zeta}+1 = \max\{\dim \Pi(\cG)_M,\dim\Pi(\cG)_{M'}\} +1 \le 1,\]
a contradiction.

(3) Since $N_M$ is endo-trivial, we have $\Pi(\cG)_{N_M} = \Pi(\cG)$, and Theorem \ref{ET2} ensures that $N_M \in \ZZ[A_\infty]$ is quasi-simple. By the same token, the assumption that 
$N_M$ belongs to the component containing $k$ or $\Omega_\cG(k)$ implies that
\[ N_M \cong \Omega_\cG^m\circ \nu^n_\cG(k)\]
for some $m,n \in \ZZ$. The arguments of (2) now show that this cannot happen. \end{proof}

\bigskip

\begin{Remarks} (1) Note that the module $N_M$ belongs to the principal block $\cB_0(\cG)$ of $k\cG$. There are of course group schemes having endo-trivial modules that do not belong to 
the principal block: Any one-dimensional module is endo-trivial, so the simple modules of trigonalizable group schemes belong to this class. In view of \cite[(2.4)]{FV1}, the algebras of 
measures of such groups often have more than one block.

(2) Consider the abelian restricted Lie algebra $\fu := kx\oplus ky$ with trivial $p$-map. If $f \in S(\fu^\ast)$ is a homogeneous polynomial function which is not a power of an irreducible
polynomial function, then $\Proj(\cV_\fu(L_\Phi(f))) = \Proj(Z(f))$ is not connected, so that $L_{\Phi(f)}$ is decomposable. By Dade's Theorem \cite{Da1,Da2} and \cite[(5.6)]{CFP}, the
endo-trivial modules of $U_0(\fu) \cong k(\ZZ/(p))^2$ are Heller shifts of the trivial module. Hence (2) of Theorem \ref{ET3} may fail for groups $\cG$, whose $\Pi$-support $\Pi(\cG)$
has dimension $\le 1$. \end{Remarks}

\bigskip

\section{Examples}\label{S:Ex}
In their recent article \cite{CF}, the authors endow the category $\cC(\cG)$ of modules of constant Jordan type with an exact structure and study realizability problems via $K_0(\cC(\cG))$.
In this section we explicitly compute the Jordan types of the indecomposable modules for a few group schemes of tame representation type and in particular classify their indecomposable
modules of constant Jordan type. We refer the reader to \cite{Er} for the definition of tameness. For our purposes, it suffices to know that only representation-finite and tame algebras admit
(in principle) a classification of their indecomposables. As noted earlier, the stable Auslander-Reiten components belonging to blocks of finite representation type are not locally split. Hence the
next more complicated class of tame algebras serves as testing ground for the invariants of AR-components defined via $\Pi$-points.

{\it Throughout, we assume that $\Char(k)=p \ge 3$}.

\subsection{The groups {$\bf \SL(2)_1T_r$}} Consider the infinitesimal group scheme $\cG = \SL(2)_1$, that is, the first Frobenius kernel of $\SL(2)$. General theory provides an 
isomorphism $k\SL(2)_1 \cong U_0(\fsl(2))$ between the algebra of measures on $\SL(2)_1$ and the restricted enveloping algebra $U_0(\fsl(2))$ of the restricted Lie algebra $\fsl(2)$, see 
\cite[(II,\S7,no.4)]{DG}. In \cite{Pr} Premet explicitly determined the indecomposable $U_0(\fsl(2))$-modules. We shall use the interpretation of his result within the framework of 
Auslander-Reiten theory (cf.\ \cite[\S4]{Fa4}).

Recall that the algebra $k\SL(2)_1$ has $\frac{p-1}{2}$ non-simple blocks $\cB_1, \ldots, \cB_{\frac{p-1}{2}}$, with each $\cB_i$ having two simple modules, of dimensions $i$ and 
$p\!-\!i$, respectively. The stable Auslander-Reiten quiver $\Gamma_s(\cB_i)$ is the full subquiver of $\Gamma_s(\cG)$, whose vertices belong to the block $\cB_i$.

Part (2) of the following result shows in particular that the stable AR-components $\Theta \subseteq \Gamma_s(\SL(2)_1)$ with $\dim \Pi(\SL(2)_1)_\Theta = 0$ are not locally split.

\bigskip

\begin{Prop} \label{Ex1} Let $\Theta \subseteq \Gamma_s(\SL(2)_1)$ be a component.

{\rm (1)} \ If $\dim \Pi(\cG)_\Theta =1$, then $\Theta \cong \ZZ[\tilde{A}_{12}]$ and every module belonging to $\Theta$ has constant Jordan type. Moreover, there exists $s_\Theta \in 
\{1,\ldots, p\!-\!1\}$ such that
\[ \Jt(M) = \{[s_\Theta] \oplus (\frac{\dim_kM\!-\!s_\Theta}{p})[p]\}\]
for every $M \in \Theta$.

{\rm (2)} \ If $\dim \Pi(\cG)_\Theta =0$, then $\Theta \cong \ZZ[A_\infty]/\langle \tau \rangle$ and every module belonging to $\Theta$ is constantly supported. Moreover, there exists 
$i_\Theta \in \{1,\ldots,\frac{p-1}{2}\}$ such that $\Theta \subseteq \Gamma_s(\cB_{i_\Theta})$ and
\[ \Jt(M) = \{ \ql(M)[p], [i_\Theta]\oplus [p\!-\!i_\Theta] \oplus (\ql(M)\!-\!1)[p]\}\]
for every $M \in \Theta$.

{\rm (3)} \ An indecomposable $\SL(2)_1$-module $M$ is endo-trivial if and only if $M \cong \Omega_\cG^n(S)$ for $n \in \ZZ$ and $S$ simple of dimension $1$ or $p\!-\!1$.\end{Prop}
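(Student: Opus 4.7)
My strategy is to combine the structural description of $\Gamma_s(\SL(2)_1)$ — essentially Premet's classification \cite{Pr} of indecomposable $U_0(\fsl(2))$-modules, reformulated in Auslander-Reiten language in \cite[\S4]{Fa4} — with the invariants developed in Sections \ref{S:AFP}--\ref{S:CSM}. The structural input I would use is that each non-simple block $\cB_i$ of $U_0(\fsl(2))$ has exactly two non-periodic stable AR-components of tree class $\tilde A_{12}$, together with a $\mathbb{P}^1$-family of homogeneous tubes $\ZZ[A_\infty]/\langle\tau\rangle$, and that each such tube has a quasi-simple vertex of $k$-dimension $p$.

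For (1), since $\dim\Pi(\SL(2)_1) = \cx_{\SL(2)_1}(k)-1 = 1$, the hypothesis forces $\Pi(\cG)_\Theta = \Pi(\cG)$, so by Lemma \ref{AFP2}(2) the component $\Theta$ is locally split. The structural input gives $\bar T_\Theta \cong \tilde A_{12}$, whence $f_\Theta \equiv 1$, and Corollary \ref{FI2}(2) yields $\alpha_{K,i}(M_K) = d^\Theta_i(\alpha_K)$ uniformly in $M \in \Theta$. To upgrade this to constant Jordan type I would exhibit a vertex of $\Theta$ that is naturally an $\SL(2)$-module — typically a simple $\SL(2)_1$-module $L(m-1)$ of dimension $m$, or some Heller shift $\Omega_\cG^n(L(m-1))$ — and observe that since $\SL(2)$ acts transitively on the nonzero elements of the nullcone $\cV_{\fsl(2)}$, conjugation forces such a module to have constant Jordan type $[m]\oplus n_M[p]$. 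Corollary \ref{SJ1} then gives $|\Jt(M)| = |\im d^\Theta| = 1$ for every $M \in \Theta$, with the invariant $s_\Theta := m \in \{1,\ldots,p-1\}$ read off the distinguished vertex.

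For (2), Proposition \ref{AFP3}(2) and $\dim\Pi(\cG)_\Theta = 0$ force $\Theta \cong \ZZ[A_\infty]/\langle\tau^n\rangle$, and the structural input identifies $n=1$, so $\Theta$ is a homogeneous tube with quasi-simple vertex $M_0$ of dimension $p$ in a unique block $\cB_{i_\Theta}$. Writing $[\alpha_K]$ for the unique point of $\Pi(\cG)_\Theta$, for any $\beta_L \not\sim \alpha_K$ every $\beta_L^\ast(M_L)$ with $M \in \Theta$ is projective, yielding the first Jordan type $(\dim_k M/p)[p]$. The second Jordan type I would establish first on $M_0$, where Premet's explicit description forces $\Jt(M_0,\alpha_K) = [i_\Theta]\oplus[p-i_\Theta]$; the general formula then follows by induction on $\ql(M)$ from the almost split sequences in the tube, together with additivity of $\alpha_{K,i}$ on the $\alpha_K$-split portion of $\Theta$ (Theorem \ref{AFP4}, Lemma \ref{SF4}(1)).

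For (3), by \cite[(5.6)]{CFP} the module $M$ is endo-trivial iff $\StJt(M) \in \{\{[1]\},\{[p-1]\}\}$. Part (2) excludes all modules in tubes, since there $\StJt(M)$ is either empty or $\{[i_\Theta]\oplus[p-i_\Theta]\}$, never a singleton of the required form for $p\ge 3$. Part (1) then restricts attention to $\tilde A_{12}$-components with $s_\Theta \in \{1,p-1\}$; reading off $s_\Theta$ at $k$ (giving $s_\Theta=1$) and at $\Omega_\cG(k)$ (giving $s_\Theta=p-1$), and noting that each remaining non-projective simple $L(m-1)$ has $s_\Theta = m$ with $1<m<p-1$, identifies the endo-trivial modules as exactly the two $\tau_\cG$-orbits through the Heller shifts of the one-dimensional and $(p-1)$-dimensional simples. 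The main obstacle I anticipate is step (2): pinning down the quasi-simple dimension $p$ and the precise $[i_\Theta]\oplus[p-i_\Theta]$ splitting genuinely requires Premet's classification or an equivalent explicit analysis, as the general AR-theoretic invariants of Sections \ref{S:AFP}--\ref{S:CSM} alone do not pin down these numerical data.
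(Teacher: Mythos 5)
Your treatment of (1) and (3) is essentially the paper's: the paper locates a simple module in each $\ZZ[\tilde{A}_{12}]$-component (via Webb's theorem rather than your structural listing), gets its constant Jordan type $[\dim_kS]$ from the $\SL(2)$-action exactly as you do, and then invokes Corollary \ref{FI2}/\ref{SJ1}; for (3) it identifies the vertices of the two principal-block components as the Heller shifts of the two simples via the standard AR-sequence attached to $P(T)$, which is what your structural input supplies (note, though, that these are four $\tau_\cG$-orbits in two components, not two orbits).

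The genuine gap is in (2). You compute $\Jt(M_0,\alpha_K)=[i_\Theta]\oplus[p\!-\!i_\Theta]$ at the quasi-simple $M_0$ and then want to climb the tube ``by induction \dots together with additivity of $\alpha_{K,i}$ on the $\alpha_K$-split portion''. But this tube is not $\alpha_K$-split: since $\Pi(\cG)_\Theta=\{[\alpha_K]\}$, the quasi-simple is relatively $\alpha_K$-projective (Lemma \ref{AFP2}), so $\alpha_K^\ast(\fE_{M_0}\!\otimes_k\!K)$ does not split and additivity of the $\alpha_{K,i}$ fails precisely at the step from quasi-length $1$ to quasi-length $2$. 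The functions are only eventually additive with $\ell(\alpha_{K,i})\le 2$ (Corollary \ref{AFP5}), and Lemma \ref{SF4}(1) determines them from their values at quasi-lengths $1$ \emph{and} $2$; the value at $M_0$ alone is compatible, for instance, with $\alpha_{K,i_\Theta}$ growing linearly in $\ql$. Nor does the non-split exact sequence $(0)\to\alpha_K^\ast((M_0)_K)\to\alpha_K^\ast((M_2)_K)\to\alpha_K^\ast((M_0)_K)\to(0)$ pin down the middle term: both $[i_\Theta]\oplus[p\!-\!i_\Theta]\oplus[p]$ and $2[p]$ occur as middle terms of non-split self-extensions of $[i_\Theta]\oplus[p\!-\!i_\Theta]$ over $\fA_{p,K}$. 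So you must either compute the Jordan type of the quasi-length-$2$ vertex as well, or determine the multiplicities $n_j$ of $(M_0)_K$ in $K\cG\!\otimes_{\fA_{p,K}}\![j]$ and apply Proposition \ref{QRP1}. The paper sidesteps the propagation entirely: by \cite[(4.1.2)]{Fa4} the module of quasi-length $s$ in a suitable twist $\Ad(g)^\ast(\Theta)$ is the Weyl-module radical $W(sp\!+\!a)$, whose restriction to $K[e\otimes 1]$ is $[a\!+\!1]\oplus[p\!-\!a\!-\!1]\oplus(s\!-\!1)[p]$ and which is projective over every other nilpotent direction, giving the formula for all quasi-lengths at once; adapting your argument along these lines (or supplying the quasi-length-$2$ computation) closes the gap.
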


\begin{proof} By work of Drozd \cite{Dr}, Fischer \cite{Fi} and Rudakov \cite{Ru}, each of the $\frac{p-1}{2}$ non-simple blocks of $k\SL(2)_1$ is Morita equivalent to the trivial extension
${\rm Kr}\ltimes {\rm Kr}^\ast$ of the path algebra ${\rm Kr} = k[\bullet \rightrightarrows \bullet]$ of the Kronecker quiver. As a result, the Auslander-Reiten quiver $\Gamma_s(\cB)$ of 
each such block $\cB \subseteq k\SL(2)_1$ has two components of type $\ZZ[\tilde{A}_{12}]$, and infinitely many components of type $\ZZ[A_\infty]/\langle \tau \rangle$, see 
\cite[(V.3.2),(I.5.5),(I.5.6)]{Ha}.

(1) Since $\dim \Pi(\SL(2)_1)_\Theta = 1$, the above in conjunction with \cite[(3.3)]{Fa3} implies $\Theta \cong \ZZ[\tilde{A}_{12}]$. It follows from \cite[(2.4)]{We} that such a 
component contains a simple $\SL(2)_1$-module $S$. As $S$ is the restriction of an $\SL(2)$-module, \cite[(2.5)]{CFP} implies that $S$ has constant Jordan type.

Let $\{e,h,f\}$ be the standard basis of $\fsl(2)$. Recall that the simple $U_0(\fsl(2))$-modules are cyclic $f$-spaces, cf.\ \cite[p.208]{SF}. Hence $S$ has constant Jordan type $\Jt(S) =
\{[\dim_kS]\}$.

We put $s_\Theta := \dim_kS$. Corollary \ref{FI2} now yields $d_i^\Theta(\alpha_K) = \delta_{i,s(\Theta)}$ for every $\pi$-point $\alpha_K$ of $\SL(2)_1$ and $i \in 
\{1,\ldots,p\!-\!1\}$. We therefore obtain $\Jt(M) = \{[s_\Theta] \oplus (\frac{\dim_kM-s_\Theta}{p})[p]\}$ for every $M \in \Theta$.

(2) By the above, we have $\Theta \cong \ZZ[A_\infty]/\langle \tau \rangle$. Thanks to \cite[(4.1.2)]{Fa4}, there exist $g \in \SL(2)(k)$ and $a \in \{0,\ldots,p\!-\!2\}$ such that the 
unique module $M \in \Ad(g)^\ast(\Theta)$ of quasi-length $s$ is the maximal submodule $W(sp\!+\!a)$ of dimension $sp$ of the Weyl module $V(sp\!+\!a)$. Recall that 
$\cV_{\fsl(2)}(W(sp\!+\!a)) = ke$.

Let $\alpha_K : \fA_{p,K} \lra U_0(\fsl(2))_K$ be a $\pi$-point. Then there exists an element $x \in \cV_{\fsl(2)_K}$ such that $\im\alpha_K \subseteq U_0(Kx)$. We write $x = \alpha 
(e\otimes 1) +\beta(h\otimes 1) +\gamma(f\otimes 1)$ with $\beta^2 + \alpha\gamma = 0$. If $\gamma \ne 0$, then $W(sp\!+\!a)_K|_{K[x]}$ is projective, so that
$\Jt(W(sp\!+\!a),\alpha_K) = \{s[p]\}$. Alternatively, $\beta=\gamma=0$ and $\alpha_K^\ast(W(sp\!+\!a)_K) \cong W(sp\!+\!a)_K|_{K[e\otimes 1]}$, whence 
$\Jt(W(sp\!+\!a),\alpha_K)=\{[a\!+\!1] \oplus [p\!-\!a\!-\!1]\oplus (s\!-\!1)[p]\}$. Owing to \cite[(4.1.2)]{Fa4}, the component $\Ad(g)^\ast(\Theta)$ contains the baby Verma 
module with highest weight $a$. Consequently, $\Ad(g)^\ast(\Theta) \subseteq \Gamma_s(\cB_i)$, where $i := \min\{a\!+\!1,p\!-\!a\!-\!1\}$.

Since $\Jt(M) = \Jt(\Ad(g)^\ast(M))$ for any $\SL(2)_1$-module $M$, and the connected group $\SL(2)(k)$ acts trivially on the blocks of $k\SL(2)_1$, our assertion follows.

(3) Suppose that $M \cong \Omega_\cG^n(S)$, where $S$ is a simple $k\SL(2)_1$-module of dimension $s \in \{1,p\!-\!1\}$. According to (1) we have $\Jt(S) = \{[s]\}$ and
\cite[(5.6)]{CFP} implies that $S$ is endo-trivial. Being a Heller shift of an endo-trivial module, the module $M$ is also endo-trivial.

Let $M$ be an indecomposable endo-trivial module. Then $\Pi(\SL(2)_1)_M = \Pi(\SL(2)_1)$ is one-dimensional, so that $M$ belongs to a component $\Theta \cong \ZZ[\tilde{A}_{12}]$.
In view of (1) and \cite[(5.6)]{CFP}, the simple module $S \in \Theta$ has dimension $s_\Theta \in \{1,p\!-\!1\}$ and thus belongs to the principal block of $k\SL(2)_1$. Let $T$ be the 
other simple module of the principal block. The standard AR-sequence involving the projective cover $P(T)$ of $T$ has the form
\[ (0) \lra \Rad(P(T)) \lra P(T) \oplus (\Rad(P(T))/\Soc(P(T))) \lra P(T)/\Soc(P(T)) \lra (0),\]
see \cite[(V.5.5)]{ARS}. Since $\Rad(P(T))/\Soc(P(T)) \cong S\oplus S$ (cf.\ \cite[Thm.3]{Po}), we conclude that $S$ and $\Omega_\cG(T)$ are representatives of the two 
$\tau_\cG$-orbits of $\Theta$ (cf.\ \cite[(IV.3.8.3)]{Er}). As $k\SL(2)_1$ is symmetric, it follows that $\{\Omega_\cG^{2n}(S),\Omega_\cG^{2n+1}(T) \ ; \ n \in \ZZ\}$ is the set of 
vertices of $\Theta$. Consequently, there exists a simple $k\SL(2)_1$-module $S$ of dimension $\dim_kS \in \{1,p\!-\!1\}$ and $n \in \ZZ$ with $M \cong \Omega^n_\cG(S)$. \end{proof}

\bigskip

\begin{Remarks} (1) Part (1) of Proposition \ref{Ex1} shows that Benson's result (\ref{ET1}) may fail for group schemes of complexity $\ge 2$.

(2) Since the the principal block $\cB_0(\SL(2)_1) \subseteq k\SL(2)_1$ is Morita equivalent to the trivial extenion of the Kronecker algebra, it follows that $\HH^n(\SL(2)_1,k) = (0)$, 
whenever $n$ is odd. Let $n$ be even and $\zeta \in \HH^n(\SL(2)_1,k)\setminus\{0\}$ be nilpotent. Thanks to Theorem \ref{CNED1}, the Carlson module $L_\zeta$ is indecomposable of 
constant Jordan type $\Jt(L_\zeta) = \{[1]\oplus[p\!-\!1]\oplus n_\zeta[p]\}$ (cf.\ Lemma \ref{CSM1}). As this contradicts Proposition \ref{Ex1}(1), we have retrieved the well-known fact 
that the algebra $\HH^\ast(\SL(2)_1,k) = \HH^\bullet(\SL(2)_1,k)$ is reduced.

(3) Let $\Theta \subseteq \Gamma_s(\SL(2)_1)$ be a component with zero-dimensional $\Pi$-support. If $\alpha_K$ is a $\pi$-point such that $[\alpha_K] \in \Pi(\SL(2)_1)_\Theta$, then 
Proposition \ref{Ex1} shows that $\ell(\alpha_{K,i}) \le 2$ for $1\le i \le p\!-\!1$. One can show that the quasi-simple module of $\Theta$ is the only relatively $\alpha_K$-projective module
belonging to $\Theta$. \end{Remarks}

\bigskip
\noindent
Let $T \subseteq \SL(2)$ be the standard maximal torus of diagonal matrices. Our next example, $\cG = \SL(2)_1T_r$ ($r\ge 2$), the product of the first Frobenius kernel of $\SL(2)$ and the
r-th Frobenius kernel of $T$, is closely related to the previous one. In fact, $k\SL(2)_1T_r$ is a Galois extension of $k\SL(2)_1$ with Galois group $\ZZ/(p^{r-1})$. According to
\cite[(5.5)]{FV2}, the groups $\SL(2)_1T_r$ are precisely the semi-simple infinitesimal groups whose principal blocks have tame representation type.

By the proof of \cite[(5.5)]{FV2}, the algebra $k\SL(2)_1T_r$ has non-simple blocks $\cB_1,\ldots, \cB_{\frac{p-1}{2}}$ with $\cB_i$ possessing $p^{r-1}$ simple modules of dimensions 
$i$ and $p\!-\!i$, respectively.

\bigskip

\begin{Cor} \label{Ex2} Let $\Theta \subseteq \Gamma_s(\SL(2)_1T_r)$ be a component. Then the following statements hold:

{\rm (1)} \ If $\dim \Pi(\SL(2)_1T_r)_\Theta = 1$, then $\Theta \cong \ZZ[\tilde{A}_{p^{r-1},p^{r-1}}]$, and there exists $s_\Theta \in \{1,\ldots, p\!-\!1\}$ such that
\[\Jt(M) = \{[s_\Theta] \oplus (\frac{\dim_kM\!-\!s_\Theta}{p})[p]\} \]
for every $M \in \Theta$.

{\rm (2)} \ If $\dim \Pi(\SL(2)_1T_r)_\Theta = 0$, then $\Theta \cong \ZZ[A_\infty]/\langle \tau \rangle, \, \ZZ[A_\infty]/\langle \tau^{p^{r-1}} \rangle$, and there exists $i_\Theta \in
\{1,\ldots,\frac{p-1}{2}\}$ such that $\Theta \subseteq \Gamma_s(\cB_{i_\Theta})$ and
\[ \Jt(M) = \{ (\frac{\dim_kM}{p})[p], [i_\Theta]\oplus [p\!-\!i_\Theta] \oplus (\frac{\dim_kM}{p}\!-\!1)[p]\}\]
for every $M \in \Theta$.

{\rm (3)} \ An indecomposable $\SL(2)_1T_r$-module $M$ is endo-trivial if and only if $M \cong \Omega_{\SL(2)_1T_r}^n(S)$ for $n \in \ZZ$ and $S$ simple of dimension $1$ or 
$p\!-\!1$.\end{Cor}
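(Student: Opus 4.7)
The plan is to derive Corollary \ref{Ex2} from Proposition \ref{Ex1} by exploiting the short exact sequence $1 \to \SL(2)_1 \to \SL(2)_1T_r \to T_r/T_1 \to 1$, in which the quotient $T_r/T_1$ is an infinitesimal multiplicative group of order $p^{r-1}$, so that $X := X(T_r/T_1) \cong \ZZ/(p^{r-1})$ acts on $k\SL(2)_1T_r$ by twisting, exhibiting it as an $X$-graded Galois extension of $k\SL(2)_1$.

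I would first establish two transfer principles. Because $T_r/T_1$ is multiplicative, every abelian unipotent subgroup of $(\SL(2)_1T_r)_K$ already sits inside $(\SL(2)_1)_K$, so restriction induces a homeomorphism $\Pi(\SL(2)_1T_r) \cong \Pi(\SL(2)_1)$, and for every $\SL(2)_1T_r$-module $M$ and every $\pi$-point $\alpha_K$ one has $\Jt(M,\alpha_K) = \Jt(M|_{\SL(2)_1},\alpha_K)$. In particular, a simple $\SL(2)_1T_r$-module $S$ restricts to a simple $\SL(2)_1$-module of the same dimension, whence by Proposition \ref{Ex1}(1), $\Jt(S) = \{[\dim_kS]\}$.

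For part (1), Lemma \ref{AFP2} makes $\Theta$ locally split, and the analogue of \cite[(2.4)]{We} for $\SL(2)_1T_r$ places a simple $S \in \Theta$. Taking $s_\Theta := \dim_kS$, the constant Jordan type of $S$ combined with Theorem \ref{FI1} forces $d_i^\Theta(\alpha_K) = \delta_{i,s_\Theta}$, yielding the asserted Jordan-type formula. The classification \cite[(3.2),(3.3)]{Fa3} restricts $\bar{T}_\Theta$ to the Euclidean diagrams. Because the twisting action of $X$ permutes the $2p^{r-1}$ simples of the block $\cB_{s_\Theta}$ transitively across the two dimensions $s_\Theta$ and $p-s_\Theta$, and quasi-simplicity of a simple is preserved, all of them lie in $\Theta$, yielding $2p^{r-1}$ $\tau$-orbits in $\Theta$ and identifying $\bar{T}_\Theta \cong \tilde{A}_{p^{r-1},p^{r-1}}$ as the $p^{r-1}$-fold cyclic cover of the $\tilde{A}_{12}$-component for $\SL(2)_1$.

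For part (2), Lemma \ref{AFP2} makes $\Theta$ non-locally split, and \cite[(3.3)]{Fa3} gives $\Theta \cong \ZZ[A_\infty]/\langle\tau^n\rangle$. Each such $\Theta$ lies above a homogeneous tube $\bar\Theta \cong \ZZ[A_\infty]/\langle\tau\rangle$ of $\Gamma_s(\SL(2)_1)$ (Proposition \ref{Ex1}(2)); the group $X$ permutes the tubes over $\bar\Theta$, and $n$ equals the order of the stabilizer of $\Theta$. Since $X$ is cyclic of prime-power order and the quasi-simple of $\bar\Theta$ admits a $T_r$-weight decomposition whose stabilizer subgroup must be either trivial or all of $X$ (by a weight-space analysis on the Premet-style normal form of the quasi-simple), we obtain $n \in \{1, p^{r-1}\}$. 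The Jordan-type formula then transfers verbatim from Proposition \ref{Ex1}(2) via the restriction identity. For part (3), Theorem \ref{ET2} together with \cite[(5.6)]{CFP} forces endo-trivials into type $\tilde{A}_{p^{r-1},p^{r-1}}$-components with $s_\Theta \in \{1,p-1\}$, i.e.\ into the principal block; the standard AR-sequence of a projective cover \cite[(V.5.5)]{ARS} combined with the fact that the heart of each principal-block projective splits into simples of dimensions $1$ and $p-1$ (as in the proof of Proposition \ref{Ex1}(3)) shows that every indecomposable endo-trivial is a Heller translate of such a simple.

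The main obstacle is part (2): ruling out intermediate periods $n$ with $1 < n < p^{r-1}$ requires a careful analysis of the $X$-action on tubes, for which I expect the cleanest route is to compute the quasi-simple of $\Theta$ explicitly in terms of baby Verma-type modules for $\SL(2)_1T_r$ (extending \cite[(4.1.2)]{Fa4}) and to read off its $T_r$-character, so that the stabilizer in $X$ is seen to be either $\{0\}$ or all of $X$ from the structure of the highest-weight datum.
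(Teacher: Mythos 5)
Your transfer principle --- every $\pi$-point of $\SL(2)_1T_r$ factors through $(\SL(2)_1)_K$ because the quotient is diagonalizable, so Jordan types are computed on restriction --- is exactly the identity $(\ast)$ on which the paper's proof rests, and your use of it to pull the Jordan-type formulas back from Proposition \ref{Ex1} is sound. The genuine gap lies in how you obtain the structural assertions about $\Theta$, which are part of the statement. The paper does not re-derive them: it quotes the classification of the AR-components of the non-simple blocks of $k\SL(2)_1T_r$ from \cite[(5.6)]{FV2} (two components of type $\ZZ[\tilde{A}_{p^{r-1},p^{r-1}}]$, four of type $\ZZ[A_\infty]/\langle \tau^{p^{r-1}}\rangle$, infinitely many homogeneous tubes per block) and then only matches Jordan types, using \cite[(2.1.2)]{Fa4} to know that restrictions of indecomposables remain indecomposable. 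Your attempt to recover these shapes from the action of $X=\ZZ/(p^{r-1})$ is incomplete precisely where you say it is: a subgroup of a cyclic $p$-group can have any order $p^{j}$, so ``cyclic of prime-power order'' does not force the stabilizer of a tube to be trivial or all of $X$, and the weight-space analysis of a Premet-type normal form that would exclude ranks $1<n<p^{r-1}$ in (2) is only announced, not carried out. Part (1) has similar unproved inputs: the ``analogue of \cite[(2.4)]{We}'' placing a simple module in every component of one-dimensional support is asserted, not proved, and the orbit count uses the claim that twisting by $X$ moves simples ``across the two dimensions,'' which cannot hold since tensoring with a character preserves dimension; indeed, in the proof of Proposition \ref{Ex1}(3) the component of a simple module contains the other simple of the block only through its syzygy. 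Note also that for $\dim \Pi(\SL(2)_1T_r)_\Theta =0$, Lemma \ref{AFP2} does not ``make $\Theta$ non-locally split'': that lemma gives the opposite implication; failure of local splitness for these tubes comes from the two distinct Jordan types exhibited in Proposition \ref{Ex1}(2), not from the dimension of the support.

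For (3) your route via hearts of principal-block projectives and Theorem \ref{ET2} could probably be completed, but as written it needs unestablished facts: that the heart of each principal-block projective of $k\SL(2)_1T_r$ is a direct sum of simples of dimensions $1$ and $p\!-\!1$, and that the relevant components consist exactly of the modules $\Omega^n_{\SL(2)_1T_r}(S)$ (here one must also control the Nakayama twist, since $k\SL(2)_1T_r$ need not be symmetric). The paper argues more directly by restriction: $M|_{\SL(2)_1}$ is indecomposable by \cite[(2.1.2)]{Fa4} and endo-trivial by $(\ast)$ and \cite[(5.6)]{CFP}; Proposition \ref{Ex1}(3) makes some $\Omega^n_{\SL(2)_1}(M|_{\SL(2)_1})$ simple of dimension $1$ or $p\!-\!1$; and since $\Omega^n_{\SL(2)_1T_r}(M)$ restricts to this simple module, it is itself simple. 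The converse uses that simples of dimension $1$ or $p\!-\!1$ restrict to simples, cf.\ \cite[(5.1)]{FV2}. In short, your Jordan-type computations coincide with the paper's, but the component shapes in (1) and (2) and the classification in (3) are not yet proved by your argument; the paper obtains them by citation to \cite{FV2} rather than by the covering/Galois analysis you propose.
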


\begin{proof} According to \cite[(5.6)]{FV2}, the stable Auslander-Reiten quiver of each non-simple block $\cB \subseteq k\SL(2)_1T_r$ has two components of type
$\ZZ[\tilde{A}_{p^{r-1},p^{r-1}}]$, four components of type $\ZZ[A_\infty]/\langle \tau^{p^{r-1}} \rangle$, and infinitely many components of type $\ZZ[A_\infty]/\langle \tau
\rangle$.

Let $\alpha_K : \fA_{p,K} \lra K\SL(2)_1T_r$ be a $\pi$-point. Then there exists an abelian, unipotent subgroup $\cU \subseteq (\SL(2)_1T_r)_K$ such that $\im \alpha_K \subseteq
K\cU$. Since the group $(\SL(2)_1T_r)_K/(\SL(2)_1)_K \cong (T_{r-1})_K$ is diagonalizable, we obtain $\cU \subseteq (\SL(2)_1)_K$. Accordingly, we have
\[ (\ast) \ \ \ \ \ \ \ \ \ \ \alpha_K^\ast(M_K) = \alpha_K^\ast(M_K|_{(\SL(2)_1)_K})\]
for every $\SL(2)_1T_r$-module $M$.

(1) If $\dim \Pi(\SL(2)_1T_r)_\Theta = 1$ and $M \in \Theta$, then \cite[(2.1.2)]{Fa4} ensures that $M|_{\SL(2)_1}$ belongs to a component with one-dimensional $\Pi$-support.
Hence Proposition \ref{Ex1} in conjunction with ($\ast$) provides $s \in \{1,\ldots,p\!-\!1\}$ such that
\[\Jt(M) = \{[s] \oplus (\frac{\dim_kM\!-\!s}{p})[p]\}.\]
By the above, $\Theta \cong  \ZZ[\tilde{A}_{p^{r-1},p^{r-1}}]$ has tree class $A_\infty^\infty$, so that our assertion follows from Corollary \ref{FI2}.

(2) Assume that $\dim \Pi(\SL(2)_1T_r)_\Theta = 0$. Let $M_0 \in \Theta$. Then the restriction $M_0|_{\SL(2)_1}$ belongs to a homogeneous tube $\ZZ[A_\infty]/\langle \tau \rangle$, 
so that \cite[(4.1.2)]{Fa4},(\ref{Ex1}) and ($\ast$) imply
\begin{itemize}
\item  \ $\dim_k M_0 = p\ql(M_0|_{\SL(2)_1})$, and
\item \ $\Jt(M_0) = \{ \ql(M_0|_{\SL(2)_1})[p], [i]\oplus [p\!-\!i] \oplus (\ql(M_0|_{\SL(2)_1})\!-\!1)[p]\}$, where $M_0|_{\SL(2)_1}$ belongs to the block $\cB_i \subseteq 
k\SL(2)_1$.
\end{itemize}
\noindent
Since the restrictions of the simple $\SL(2)_1T_r$-modules are simple (cf.\ \cite[(5.1)]{FV2}), we conclude that $M_0$ belongs to $\cB_i \subseteq k\SL(2)_1T_r$. Hence any $M \in \Theta$ 
will yield the same data for $\Jt(M)$.

(3) Let $M$ be an indecomposable, endo-trivial $\SL(2)_1T_r$-module. Owing to \cite[(2.1.2)]{Fa4}, the module $M|_{\SL(2)_1}$ is indecomposable. Moreover, a two-fold application
of \cite[(5.6)]{CFP} in conjunction with ($\ast$) implies that $M|_{\SL(2)_1}$ is endo-trivial. Proposition \ref{Ex1} now provides $n \in \ZZ$ such that 
$\Omega^n_{\SL(2)_1}(M|_{\SL(2)_1})$ is a simple $\SL(2)_1$-module of dimension $1$ or $p\!-\!1$. Since $\Omega^n_{\SL(2)_1T_r}(M)|_{\SL(2)_1}$ is indecomposable, it follows that  $\Omega^n_{\SL(2)_1T_r}(M)|_{\SL(2)_1} \cong \Omega^n_{\SL(2)_1}(M|_{\SL(2)_1})$ is also simple. Consequently, $\Omega^n_{\SL(2)_1T_r}(M)$ is simple.

If $S$ is a simple $\SL(2)_1T_r$-module of dimension $1$ or $p\!-\!1$, then $S|_{\SL(2)_1}$ has the same properties (see \cite[(5.1)]{FV2}), so that (1) and \cite[(5.6)]{CFP} imply that
$S$ is endo-trivial. Consequently, $\Omega_{\SL(2)_1T_r}^n(S)$ is also endo-trivial. \end{proof}

\bigskip

\begin{Remarks} (1) The foregoing results indicate the utility of $\pi$-points. In either case, the components with one-dimensional $\Pi$-supports cannot be distinguished via their support 
varieties, yet their Jordan types nicely separate them (cf.\ Corollary \ref{SJ3}). On the other hand, Jordan types do not reflect the ranks of the tubes occurring in (\ref{Ex2}).

(2) In each of the cases above, the indecomposable modules are either projective, constantly supported or of constant Jordan type. It is therefore possible to compute $\Jt(M)$ for any 
$\cG$-module $M$, whose indecomposable constituents are known. \end{Remarks}

\bigskip
\subsection{A central extension of $\bf \fsl(2)$}
Our next example necessitates a simple technical preparation. Recall that $\fA_{p,K} = K[t]$, where $t^p =0$. Given $j \in \{1,\ldots, p\}$ and $1 \le \ell \le \lfloor\frac{p}{j}\rfloor$, the unique $\ell$-dimensional indecomposable module of the local subalgebra $K[t^j] \subseteq \fA_{p,K}$ will also be denoted $[\ell]$. (Here $\lfloor\,\rfloor$ denotes the floor function.)

\bigskip

\begin{Lem} \label{Ex3} Let $i,j \in \{1,\ldots,p\}$. Then
\[ [i]|_{K[t^j]} = \left\{ \begin{array}{cc} (j\!-\!r)[a]\oplus r[a\!+\!1] & {\rm if} \ i =aj\!+\!r \ {\rm with} \ 0\le r <j\le i\\
                                        i[1] & {\rm otherwise}.
\end{array} \right.\]
\end{Lem}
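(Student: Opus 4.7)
The plan is to realize $[i]$ concretely as $K[t]/(t^i)$ and to decompose it into cyclic $K[t^j]$-submodules by grouping the standard basis according to residues modulo $j$. Since the indecomposable $K[t^j]$-modules are all of the form $K[t^j]/(t^{j\ell}) \cong [\ell]$ (for $\ell \le \lfloor p/j \rfloor$), counting dimensions of these cyclic summands will give the result.

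First, I would handle the easy case in which the hypothesis $j \le i$ of the first branch fails. If $j > i$, then, on $[i] = K[t]/(t^i)$, we have $t^j = 0$, so every basis element is annihilated by the generator of $K[t^j]$. Hence $[i]|_{K[t^j]} \cong i[1]$, which is the asserted formula. (Note that the borderline $j = i$ is consistently covered by the first branch, since $i = 1\cdot j + 0$ gives $(j-0)[1]\oplus 0 = i[1]$.)

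Next, assume $j \le i$ and write $i = aj + r$ with $0 \le r < j$. For each $s \in \{0,1,\ldots,j-1\}$, let
\[
C_s := \mathrm{span}_K\{\,t^s, t^{s+j}, t^{s+2j},\ldots,t^{s+k_sj}\,\},
\]
where $k_s := \max\{k \ge 0 \mid s+kj \le i-1\} = \lfloor (i-1-s)/j\rfloor$. Each $C_s$ is a $K[t^j]$-submodule, cyclic with generator $t^s$, and of $K$-dimension $k_s+1$; hence $C_s \cong [k_s+1]$ as a $K[t^j]$-module. Since $\{t^s, t^{s+j}, \ldots\}_{0\le s< j}$ is exactly the standard $K$-basis of $[i]$, we obtain
\[
[i]|_{K[t^j]} \;\cong\; \bigoplus_{s=0}^{j-1} [k_s+1].
\]

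Finally, I would evaluate $k_s+1$. For $0 \le s \le r-1$, one has $0 \le r-1-s < j$, so $i-1-s = aj + (r-1-s)$ gives $k_s = a$, hence $k_s+1 = a+1$; this produces $r$ summands. For $r \le s \le j-1$, one has $-j < r-1-s < 0$, so $i-1-s = (a-1)j + (j+r-1-s)$ with $0 \le j+r-1-s < j$, giving $k_s = a-1$ and $k_s+1 = a$; this produces $j-r$ summands. Combining, $[i]|_{K[t^j]} \cong (j-r)[a] \oplus r[a+1]$, as claimed. The only subtlety is tracking the floor function at the boundary $s = r$, which the explicit rewriting $i-1-s = (a-1)j + (j+r-1-s)$ resolves at once.
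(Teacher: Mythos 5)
Your proof is correct and follows essentially the same route as the paper's: both decompose the standard basis of $[i]$ into the $j$ residue classes modulo $j$, observe that each class spans a cyclic $K[t^j]$-submodule, and count dimensions to identify $r$ summands of dimension $a\!+\!1$ and $j\!-\!r$ of dimension $a$ (with the degenerate case $t^j=0$ handled separately). The only difference is cosmetic indexing, so nothing further is needed.
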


\begin{proof} Let $\{v_1,\ldots,v_i\}$ be a basis of the cyclic $\fA_{p,K}$-module $[i]$ such that $t.v_\ell = v_{\ell+1}\ (v_{i+1} = 0)$. If $j\ge i$,
then $t^j$ acts trivially on $[i]$, whence $[i]|_{K[t^j]} = i[1]$. Alternatively, we have for $1 \le \ell \le j$
\[ K[t^j]v_\ell = \sum_{q=0}^a Kv_{v_{qj+\ell}} =  \left\{ \begin{array}{cc} \bigoplus_{q=0}^a Kv_{v_{qj+\ell}}& 1 \le \ell \le r \\ \bigoplus_{q=0}^{a-1} Kv_{v_{qj+\ell}} &
r\!+\!1 \le \ell \le j. \end{array} \right.\]
Hence $[i]|_{K[t^j]} = (j\!-\!r)[a]\oplus r[a\!+\!1]$. \end{proof}

\bigskip
\noindent
We consider the $4$-dimensional restricted Lie algebra $\fsl(2)_s := \fsl(2)\oplus kv_0$, whose bracket and $p$-map are given by
\[ [(x,\alpha v_0), (y,\beta v_0)] = ([x,y],0) \ \ \text{and} \ \ (x,\alpha v_0)^{[p]} = (x^{[p]},\psi(x)v_0)\]
respectively, where the $p$-semilinear map $\psi : \fsl(2) \lra k$ satisfies $\psi(e) = 0 = \psi(f)$ and $\psi(h) = 1$. Note that the nullcone
\[ \cV_{\fsl(2)_s} = (\cV_{\fsl(2)}\cap \ker \psi)\times kv_0 = (ke\oplus kv_0)\cup (kf\oplus kv_0)\]
is $2$-dimensional and reducible.

Since $kv_0$ is a unipotent $p$-ideal of $\fsl(2)_s$, the simple $U_0(\fsl(2)_s)$-modules are just the pull-backs of the simple $U_0(\fsl(2))$-modules. Moreover, the $p$-dimensional simple 
$U_0(\fsl(2)_s)$-module belongs to a block which is a Morita equivalent to a truncated polynomial ring $k[X]/(X^n)$ (a Nakayama algebra).

According to \cite[\S7]{FS}, each stable AR-component $\Theta \subseteq \Gamma_s(\fsl(2)_s)$ with $2$-dimensional support is isomorphic to $\ZZ[A_\infty^\infty]$, so that $f_\Theta 
\equiv 1$ (cf.\ (\ref{FI2})).  We shall consider two types of these components. One type contains a simple module, the others are given by certain induced modules.

Recall that $\{e,h,f\}$ is the standard basis of $\fsl(2)$. Let $\lambda : kh\oplus ke \oplus kv_0 \lra k$ be a linear form such that $ \lambda(h) \in \{0,\ldots,p\!-\!2\}$ and $\lambda(e) = 
0 =\lambda(v_0)$. Observing that $\fb_s := kh\oplus ke \oplus kv_0$ is a $p$-subalgebra of $\fsl(2)_s$, we consider the ``Verma module"
\[Z(\lambda) := U_0(\fsl(2)_s)\!\otimes_{U_0(\fb_s)}\!k_\lambda.\]
Owing to \cite[(7.3)]{FS}, we have $ke\oplus kv_0 \subseteq \cV_{\fsl(2)_s}(Z(\lambda))$, while general results on induced modules (see for instance \cite[(4.12)]{FPS}) guarantee the reverse inclusion. Thus, $Z(\lambda)$ is not of constant Jordan type and belongs to a component $\Theta_{(\lambda)}\cong \ZZ[A_\infty^\infty]$.

\bigskip

\begin{Prop} \label{Ex4} The following statements hold:

{\rm (1)} \ Let $\Theta_S \subseteq \Gamma_s(\fsl(2)_s)$ be the component containing the simple module $S$. If $n :=\dim_kS \le p\!-\!1$, then $\Theta_S \cong \ZZ[A_\infty^\infty]$ 
and
\[ \Jt(M) = \{(j\!-\!r)[a]\oplus r[a\!+\!1] \oplus (\frac{\dim_kM\!-\!n}{p})[p] \ ; \  n = aj\!+\!r,\, 0 \le r < j \le n \}\]
for every $M \in \Theta_S$. In particular, $|\Jt(M)| = n$ for every $M \in \Theta_S$.

{\rm (2)} \ Let $\Theta_{(\lambda)} \subseteq \Gamma_s(\fsl(2)_s)$ be the component containing $Z(\lambda)$. Then $\Theta_{(\lambda)} \cong \ZZ[A_\infty^\infty]$ and setting $i :=
\min \{\lambda(h)\!+\!1,p\!-\!\lambda(h)\!-\!1\}$ we have
\begin{eqnarray*}
\StJt(M) & = & \{(j\!-\!r)[a]\oplus r[a\!+\!1] \ ;  \  p = aj\!+\!r, \, 1 \le r < j \le p \} \\
            &\cup & \{  i[1] \oplus (j\!-\!r)[a]\oplus r[a\!+\!1] \ ; \  p\!-\!i = aj\!+\!r, \, 0 \le r < j , \, i \le j \le p\!-\!i\} \\
	    & \cup & \{  (j\!-\!r')[b] \oplus r'[b\!+\!1] \oplus(j\!-\!r)[a] \oplus  r[a\!+\!1]\ ; \ i = bj\! +\!r', \, p\!-\!i = aj\!+\!r,\\
	    & &  \, 0 \le r,r' < j <i\}
\end{eqnarray*}
for every $M \in \Theta_{(\lambda)}$.

{\rm (3)} \ Every $M \in \Theta_k \cup \Theta_{\Omega_{\fsl(2)_s}(k)}$ is endo-trivial. \end{Prop}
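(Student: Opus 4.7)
My approach to Proposition \ref{Ex4} would be to treat the three parts in a unified way via the classification of AR-components of tree class $A_\infty^\infty$ and the explicit structure of $\cV_{\fsl(2)_s}$. The common first step is to verify the tree class. For part (1), since $\dim_k S = n \le p-1$, the restriction $S|_{U_0(kx)}$ has dimension $<p$ for every non-zero $x \in \cV_{\fsl(2)_s}$ and hence is not projective, giving $\cV_{\fsl(2)_s}(S) = \cV_{\fsl(2)_s}$; combined with the stated identity $\cV_{\fsl(2)_s}(Z(\lambda)) = ke \oplus kv_0$ for part (2), both $\Theta_S$ and $\Theta_{(\lambda)}$ have one-dimensional $\Pi$-support, so by \cite[\S7]{FS} the tree class is $A_\infty^\infty$ and Lemma \ref{AFP2} guarantees local splittedness. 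Corollary \ref{FI2}(2) with $f_\Theta \equiv 1$ then reduces the computation to a single chosen vertex: $\Jt(M,\alpha_K) = \sum_{i=1}^{p-1} d_i^\Theta(\alpha_K)[i] \oplus \alpha_{K,p}(M)[p]$, so the set of stable Jordan types is independent of the vertex.

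For part (1), I compute on $S$ itself. Because $S$ pulls back from $\fsl(2)$, the central element $v_0$ annihilates $S$, while $e$ and $f$ each act as a single Jordan block of size $n$. Every $\pi$-point factors through an abelian unipotent subgroup of $(\fsl(2)_s)_K$; because $S$ extends to an $\SL(2)$-module, $\Ad(\SL(2)_K)$-conjugation preserves Jordan types on $S$, reducing us to $\alpha_K(t) = g(e,v_0)$ (the case $g(f,v_0)$ being symmetric). Writing $g(e,0) = c_j e^j + c_{j+1}e^{j+1} + \cdots$ with $c_j \ne 0$, I factor $g(e,0) = e^j \cdot u(e)$ with $u(e)$ invertible in $\End_K(S)$; all powers $(e^j u(e))^k = e^{jk} u(e)^k$ have the same rank as $e^{jk}$, so $g(e,0)$ has the Jordan type of $e^j$, which by Lemma \ref{Ex3} applied to $[n]$ is $(j-r)[a]\oplus r[a+1]$ for $j \le n$ and $n[1]$ for $j > n$. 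Realizability of each $j \in \{1,\ldots,n\}$ follows from the $\pi$-point $\alpha_K(t) := v_0 + e^j$: it is flat since $(v_0 + e^j)^{p-1}$ has nonzero coefficient $1$ on $v_0^{p-1}$, and it acts on $S$ as $e^j$.

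For part (2), I first compute the action of $e$, $f$, $v_0$ on $Z(\lambda)$ in the PBW basis $\{f^i \otimes 1\}_{i=0}^{p-1}$: the central element $v_0$ annihilates $Z(\lambda)$ (since $\lambda(v_0) = 0$), the element $f$ acts as a single Jordan block of size $p$, and the commutator identity $e f^i = f^i e + i f^{i-1}(h - i + 1)$ applied to $1 \otimes 1$ yields $e(f^i \otimes 1) = i(m - i + 1) f^{i-1} \otimes 1$ with $m = \lambda(h)$, whose chain decomposition gives that $e$ acts on $Z(\lambda)$ with Jordan type $[i] \oplus [p-i]$ for $i = \min\{m+1, p-m-1\}$. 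I then split $\pi$-points into those factoring through $K[e,v_0]$ and those factoring through $K[f,v_0]$; a direct check reduces the general $\Ad(\SL(2)_K)$-conjugate to one of these two families. On the $e$-side, $\alpha_K(t) = g(e,v_0)$ acts with the Jordan type of $e^j$ (where $j$ is the order of $g(e,0)$ in $e$); applying Lemma \ref{Ex3} separately to $[i]$ and $[p-i]$ produces Case 3 when $j < i$, Case 2 when $i \le j \le p-i$, and the degenerate $p[1]$ at the boundary $j = p-i$ (subsuming all larger $j$). On the $f$-side, $\alpha_K(t)$ acts with the Jordan type of $f^j$ on the single block $[p]$; for $j=1$ the result is projective (absorbed into the empty stable Jordan type), while for $j = 2, \ldots, p-1$ Lemma \ref{Ex3} produces exactly Case 1. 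Each case is realized by flat $\pi$-points $\alpha_K(t) = v_0 + e^j$ or $\alpha_K(t) = v_0 + f^j$ as in part (1).

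Part (3) follows immediately from Theorem \ref{ET2}(2): the modules $k$ and $\Omega_{\fsl(2)_s}(k)$ have constant stable Jordan types $[1]$ and $[p-1]$ respectively, hence are endo-trivial by \cite[(5.6)]{CFP}; their components have full $\Pi$-support $\Pi(\fsl(2)_s)$ of dimension $1$, and \cite[\S7]{FS} gives tree class $A_\infty^\infty$, so Theorem \ref{ET2}(2) applies. The main obstacle lies in part (2): verifying rigorously that every abelian unipotent subalgebra of $(\fsl(2)_s)_K$ yields Jordan types already accounted for by the two standard families $K[e,v_0]$ and $K[f,v_0]$ (since $Z(\lambda)$ is only $B$-invariant, not $\SL(2)_K$-invariant, one cannot appeal to conjugation invariance for free), and that the three cases in the proposition, with their slightly asymmetric index ranges for $j$ and $r$, together exhaust the Jordan types and correctly handle the boundary $j = p-i$ where Case 2 collapses to $p[1]$.
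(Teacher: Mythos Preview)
Your overall strategy matches the paper's: reduce to a single vertex via Corollary \ref{FI2}(2), compute explicitly on $S$ or $Z(\lambda)$ using that $v_0$ annihilates both, then invoke Lemma \ref{Ex3}; part (3) is handled identically via Theorem \ref{ET2}(2).

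The gap you yourself flag --- reducing arbitrary $\pi$-points to ones factoring through $K[e,v_0]$ or $K[f,v_0]$ --- is real, and your appeal to $\Ad(\SL(2)_K)$-conjugation does not close it. The adjoint action of $\SL(2)$ does \emph{not} preserve the $p$-map on $\fsl(2)_s$, because $\psi$ is not $\Ad$-invariant (e.g.\ $\psi(h)=1$ but $\psi(\Ad(g)h)\ne 1$ in general). Hence conjugation is not an automorphism of $U_0((\fsl(2)_s)_K)$ and does not carry $\pi$-points to $\pi$-points. In part (1) you can rescue this on the module side since $S$ is $\SL(2)$-equivariant, but in part (2) you cannot, exactly as you suspect.

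The paper sidesteps conjugation altogether. Any abelian unipotent $p$-subalgebra $\fu \subseteq (\fsl(2)_s)_K$ projects to a line $Kx \subseteq \fsl(2)_K$ with $x$ nilpotent in $\fsl(2)$, so $\fu \subseteq Kx \oplus Kv_0$ and $x^{[p]} = \psi(x)v_0$ in $\fsl(2)_s$. Now split on $\psi(x)$. If $\psi(x)\ne 0$, then $U_0(Kx\oplus Kv_0)\cong K[x]/(x^{p^2})$ with $v_0=\psi(x)^{-1}x^p$; writing $\alpha_K(t)=\sum c_ix^i$, the condition $\alpha_K(t)^p=\sum c_i^p x^{ip}=0$ forces $c_i=0$ for $i<p$, i.e.\ $\alpha_K(t)\in(v_0)$. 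Since $v_0$ annihilates both $S$ and $Z(\lambda)$, this yields only the trivial type $n[1]$ (resp.\ $p[1]$). If $\psi(x)=0$, then $x$ lies in $\cV_{\fsl(2)_K}\cap\ker\psi_K$; writing $x=\alpha e+\beta h+\gamma f$, nilpotency gives $\beta^2+\alpha\gamma=0$ and $\psi(x)=\beta^p=0$ gives $\beta=0$, hence $\alpha\gamma=0$ and $x\in Ke\cup Kf$ \emph{already}. So $\fu\subseteq Ke\oplus Kv_0$ or $\fu\subseteq Kf\oplus Kv_0$ without any conjugation, and this works uniformly for (1) and (2).

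Your explicit realizability argument via $\alpha_K(t)=v_0+e^j$, with flatness checked through the $v_0^{p-1}$ coefficient, is cleaner than the paper's bare citation of \cite[(2.2)]{FPe1} and is worth keeping.
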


\begin{proof} (1) The central, nilpotent element $v_0 \in U_0(\fsl(2)_s)$ acts trivially on every simple $U_0(\fsl(2)_s)$-module $S$. Let $\alpha_K : \fA_{p,K} \lra U_0(\fsl(2)_s)_K$ be a
$\pi$-point. Then there exists a $p$-unipotent subalgebra $\fu \subseteq (\fsl(2)_s)_K$ with $\im \alpha_K \subseteq U_0(\fu)$. Hence we can find $x \in \fsl(2)_K$ with $x^{[p]} \in
K(v_0\!\otimes\!1)$ and $\fu \subseteq Kx\oplus Kx^{[p]}$. If $x^{[p]} \ne 0$, then the condition $\alpha_K(t)^p = 0$ entails $\im \alpha_K \subseteq 
(v_0\!\otimes\!1)U_0(\fsl(2)_s)_K$, so that $\Jt(S,\alpha_K) = n[1]$. Alternatively, $x \in \cV_{\fsl(2)_s}$ and $\fu \subseteq K(e\!\otimes\!1)\oplus K(v_0\!\otimes\!1)$ or $\fu 
\subseteq K(f\!\otimes\!1) \oplus K(v_0\!\otimes\!1)$. Both cases being similar, we write $\alpha_K(t) = \sum_{i,j=0}^{p-1} \gamma_{i,j}(e\!\otimes\!1)^i(v_0\!\otimes\!1)^j$ 
($\gamma_{0,0} = 0$) and note that $\alpha_K(t)w = (\sum_{i=1}^{p-1} \gamma_{i,0}(e\!\otimes\!1)^i).w \ \ \forall \ w \in S_K$. Since $S|_{K[e\otimes1]} = [n]$, Lemma 
\ref{Ex3} in conjunction with \cite[(2.2)]{FPe1} gives
\[ \Jt(S)= \{(j\!-\!r)[a] \oplus r[a\!+\!1] \ ; \ 1\le j \le n, \, n = aj\!+\!r,\, 0 \le r \le j\!-\!1\}.\]
As $\Theta_S \cong \ZZ[A_\infty^\infty]$, Corollary \ref{FI2} yields the first assertion. Moreover, each Jordan type of $S$ is uniquely determined by $j \in \{1,\ldots,n\}$, whence
$|\Jt(S)| = n$. Thanks to Corollary \ref{SJ1}, this implies $|\Jt(M)| = n$ for every $M \in \Theta_S$.

(2)  Let $\alpha_K : \fA_{p,K} \lra U_0(\fsl(2)_s)_K$ be a $\pi$-point, $\fu \subseteq (\fsl(2)_s)_K$ be a $p$-unipotent subalgebra through which $\alpha_K$ factors. Since $v_0$ acts
trivially on $Z(\lambda)$, the arguments of part (1) imply $\Jt(Z(\lambda),\alpha_K) = p[1]$ whenever $\fu^{[p]} \ne (0)$. By the same token, the remaining cases are $\fu \subseteq
K(e\!\otimes\!1)\oplus K(v_0\!\otimes\!1)$ and $\fu \subseteq K(f\!\otimes\!1) \oplus K(v_0\!\otimes\!1)$.

Since $Z(\lambda)|_{K[f\otimes 1]} = [p]$, Lemma \ref{Ex3} implies
\[ \Jt(Z(\lambda),\alpha_K) \in \{(j\!-\!r)[a] \oplus r[a\!+\!1] \ ; \  p = aj\!+\!r, \, 0 \le r < j \le p \}\]
in the latter case.

In view of $Z(\lambda)|_{K[e\otimes 1]} = [i]\oplus [p\!-\!i]$ and $i < p\!-\!i$, the assumption $\im \alpha_K \subseteq K(e\!\otimes\!1)\oplus K(v_0\!\otimes\!1)$ in conjunction with 
Lemma \ref{Ex3} gives
\begin{eqnarray*}
\Jt(Z(\lambda),\alpha_K) & \in & \{ i[1] \oplus (j\!-\!r)[a] \oplus r[a\!+\!1] \ ; \ p\!-\!i = aj\!+\!r, \, 0 \le r < j , \, i \le j \le p\!-\!i\}\\
 &\cup &\{ (j\!-\!r')[b]\oplus r'[b\!+\!1]\oplus (j\!-\!r)[a] \oplus r[a\!+\!1] \ ; \  i = bj\!+\!r', \, p\!-\!i = aj\!+\!r,  \\
 & & 0 \le r,r' < j <i\}.
 \end{eqnarray*}
Owing to \cite[(2.2)]{FPe1}, all types in the given sets actually occur, so that we have in fact determined $\Jt(Z(\lambda))$. Since $\Theta_{(\lambda)} \cong \ZZ[A_\infty^\infty]$, 
Corollary \ref{FI2} gives the result.

(3) Since the trivial module $k$ and its Heller shift $\Omega_{\fsl(2)_s}(k)$ are endo-trivial, the assertion is a direct consequence of Theorem \ref{ET2}(2). \end{proof}

\bigskip

\begin{Remarks} (1) In (\ref{Ex1}) and (\ref{Ex2}), the indecomposable periodic modules are constantly supported. This is in general not the case. Consider the group scheme $\GG_{a(2)}$ 
with algebra of measures $k\GG_{a(2)} = k[u_0,u_1]$, where $k\GG_{a(1)} = k[u_0]$. Then $M := k\GG_{a(2)}\!\otimes_{k[u_0]}\!k$ is indecomposable and periodic. We consider the 
$\pi$-points $\alpha_k, \beta_k : \fA_p \lra k\GG_{a(2)}$, given by $\alpha_k(t) = u_0$ and $\beta_k(t) = u_0+u_1^2$. Then \cite[(2.2)]{FPe1} implies $\alpha_k \sim \beta_k$, while Lemma \ref{Ex3} yields
\[ \Jt(M,\alpha_k) = p[1] \ \ \text{and} \ \ \Jt(M,\beta_k) = [\frac{p\!-\!1}{2}]\oplus [ \frac{p\!+\!1}{2}],\]
so that $\{[p],p[1], [\frac{p-1}{2}]\oplus [ \frac{p+1}{2}]\} \subseteq \Jt(M)$.

Also, since $k[u_0] = k\GG_{a(1)} \subseteq k\GG_{a(2)}$, the module $M|_{\GG_{a(1)}}$ has constant Jordan type. Thus, $\cG$-modules $M$, whose restrictions to a subgroup $\cH
\subseteq \cG$ are of constant Jordan type with $\Pi(\cG)_M = \Pi(\cH)$ are not necessarily constantly supported.

(2) Note that the modules belonging to the components $\Theta_S$ are usually not annihilated by $v_0$. In fact, direct computation shows that $\Omega^2_{\fsl(2)_s}(S)$ is too big to be
annihilated by $v_0$.  Thus, our method of computing $\Jt(S)$ does not carry over to all modules of $\Theta_S$. \end{Remarks}

\bigskip

\bigskip

\begin{center}

\bf Acknowledgement

\end{center}
Parts of this paper were written while the author was visiting the Isaac Newton Institute in Cambridge. He would like to take this opportunity to thank the members of the Institute for their
hospitality and support.

\bigskip

\bigskip

\end{document}